\documentclass[a4paper]{scrartcl}

\usepackage[utf8]{inputenc}
\usepackage[T1]{fontenc}
\usepackage[english]{babel}
\usepackage{amsmath,amssymb,amsfonts,siunitx,commath}
\usepackage{amsthm}
\usepackage{tikz,url}
\usepackage{geometry}
\usepackage{mathtools}
\mathtoolsset{showonlyrefs}
\usepackage{subcaption}
\usepackage{algorithm}
\usepackage{algpseudocode}
\usepackage{hyperref}
\usepackage{enumerate} 

\DeclareSIUnit\year{a}
\DeclareMathOperator*{\argmax}{arg\,max}
\DeclareMathOperator*{\argmin}{arg\,min}
\DeclareMathOperator*{\esssup}{ess\,sup}
\begin{document}


\begin{titlepage}
	\newgeometry{margin = 2.5cm}
	\thispagestyle{empty}
	\begin{tikzpicture}[remember picture,overlay]%
		\node [anchor=north west, inner xsep=0pt, inner ysep=2.1cm] at (current page.north west) 
		{%
			\includegraphics[height=16mm]{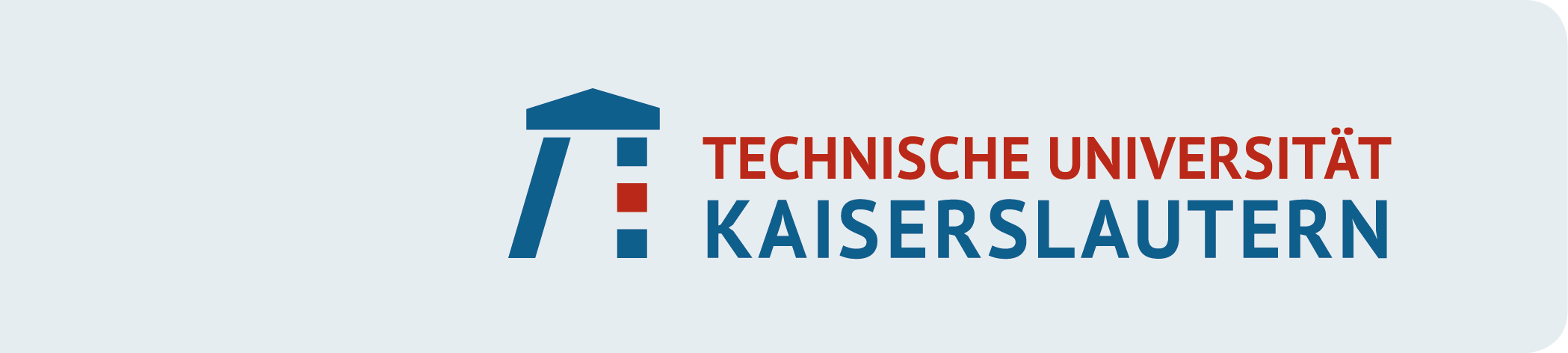}
		};%
	\end{tikzpicture}
	\vspace{1.5cm}
	\begin{center}
		{\Huge \textbf{Infinity-Laplacians on  Scalar- and Vector-Valued Functions}\\
		\textbf{and}\\
		\bigskip
		\textbf{Optimal Lipschitz Extensions on Graphs}}
		
		\vspace{2cm}
		
		{\LARGE \textbf{Bachelor Thesis}}\\
		{\Large {--verbesserte Version--}}
		
		\vspace{2cm}
		
		{\Large University of Kaiserslautern
		
		Department of Mathematics}
		
		\vspace{2cm}
		
		{\Large Johannes Hertrich}
		
		\vspace{2.5cm}
		
		{\Large supervised by}
		
		\vspace{0.25cm}
		
		{\Large Prof.\! Dr.\! Gabriele Steidl\\
		Sebastian Neumayer}
		
	\end{center}
	\vfill
	{\flushright \Large Kaiserslautern, Submitted on 04.\,October 2018}
	
	\thispagestyle{empty}
	\restoregeometry
\end{titlepage}


\newcommand{\R}{\mathbb{R}}
\newcommand{\C}{\mathbb{C}}
\newcommand{\Z}{\mathbb{Z}}
\newcommand{\Nat}{\mathbb{N}}
\newcommand{\Knoten}{\mathcal{H}}
\newcommand{\infLaplace}{\mathcal{L}_{w ,\infty}}
\newcommand\pLaplace[1]{\mathcal{L}_{w ,#1}}
\newcommand{\Patch}{P}
\newcommand{\Norm}[2]{\norm{#1}_{#2}}
\newcommand{\Neighbourhood}{\mathcal{N}}
\newcommand\inner[2]{\left\langle #1, #2 \right\rangle}
\newcommand\diverg[1]{\mathrm{div}\left( #1 \right)}
\newcommand{\todo}[1]{\textcolor{red}{\textbf{TODO:} #1}}
\newcommand\diam[1]{\mathrm{diam}(#1)}
\newcommand\prox{\mathrm{prox}}
\newcommand\vecmax{\mathrm{vecmax}}

\theoremstyle{plain}
\newtheorem{lemma}{Lemma}[section]
\newtheorem{theorem}[lemma]{Theorem}

\theoremstyle{definition}
\newtheorem{definition}[lemma]{Definition}
\newtheorem{example}[lemma]{Example}
\newtheorem*{notation}{Notation}
\newtheorem{remark}[lemma]{Remark}

\null
\thispagestyle{empty}
\newpage
\tableofcontents
\thispagestyle{empty}

\newpage

\section{Introduction}

Extending functions from boundary values plays an important role in various applications. In this thesis, we are interested in image inpainting. We consider discrete and continuous formulations of the problem based on $p$-Laplacians, in particular for $p=\infty$ and tight Lipschitz extensions. The thesis gives an overview of the existing theory and provides some novel results on the approximation of tight Lipschitz extensions for vector-valued functions.\\

Our work is structured in two parts. In Section \ref{sec:plap_inflap} and Section \ref{sec:vec-valLip} we consider the problem to extend Lipschitz continuous boundary values $g\colon \partial\Omega \to \R^m$ to a function $f\colon \overline{\Omega} \to \R^m$ with $f=g$ on $\partial \Omega$ for an open region $\Omega\subset \R^d$ such that $f$ has the same Lipschitz constant as $g$. Kirszbrauns theorem (Theorem \ref{thm:kirszbraun}, see \cite{lipLectures}) says that such an extension exists for all Lipschitz continuous boundary values $g$. But there are easy examples that such an extension is in general not unique (Lemma \ref{lem:forts}, see \cite{aronsson, lindqvist}). In Section \ref{sec:plap_inflap} we limit ourself to the case of scalar function, i.e. to the case $m=1$. We introduce some stronger optimality criteria to be the ``best'' extension of Lipschitz continuous boundary values than preserving the Lipschitz constant. This criteria is called absolute minimality and was introduced first by Aronsson in \cite{aronsson}. Following the line of \cite{lindqvist} we characterize absolute minimality by the so called $\infty$-Laplace equation. We show existence and cite uniqueness of its solutions. We call the solutions $\infty$-harmonic functions.
In Section \ref{sec:vec-valLip} we consider a canonical generalization of absolute minimality to the case $m>1$. We show some parallels to the case $m=1$ and that uniqueness fails. Therefore, we define one more stronger optimality criteria from \cite{sheffield2012vector}, called tightness. But we can neither show existence nor uniqueness of functions fulfilling this criteria. On the other hand we do not know any negative examples regarding existence or uniqueness.\\

In the second part consisting of Section \ref{sec:opt_Lip_ext_graphs} and Section \ref{sec:numexamples} we consider a discrete formulation of tight extensions on graphs. On a connected weighted graph $G=(V,E,\omega)$ and a subset $U\subset V$ we extend a functions $g\colon U\to \R^m$ to a function $f\colon V\to \R^m$ such that $f=g$ on $U$. In Section \ref{sec:opt_Lip_ext_graphs} we show existence and uniqueness of tight extensions on graphs based on \cite{sheffield2012vector}. Similar as in the continuous case we can characterize in the case $m=1$ tightness by a discrete $\infty$-Laplace equation. This leads to an approximation algorithm from \cite{ETT} of the tight extension of $g$. We implemented this algorithm in \cite{Fachpraktikum} in collaboration with Benedict Grevelhörster within the Fachpraktikum. In the case $m>1$ we show based on \cite{sheffield2012vector} that the tight extension is the limit of minimizers of some energy functionals $I_p$ as $p\to \infty$. We introduce a new algorithm to minimize these energy functionals $I_p$ to approximate the tight extension.
We implemented this minimization algorithm and in Section \ref{sec:numexamples} we apply these methods to the problem of image inpainting using the methods from \cite{Fachpraktikum}. 


\section{Preliminaries}

\subsection{Notations}

In the following we will write for $x\in \R^d$ that $\abs{x}\coloneqq \Norm{x}{2}$.\\
Let $\Omega\subset \R^d$ be an open and bounded domain. For a Norm $\Norm{\cdot}{}$ we use the notation $L^p_{\Norm{\cdot}{}}(\Omega,\R^m)\coloneqq \{ f\colon \Omega\to \R^m: \Norm{f}{}\in L^p(\Omega)\}$. For $f\in L^p_{\Norm{\cdot}{}}(\Omega,\R^m)$ we write $\Norm{f}{L^p_{\Norm{\cdot}{}}(\Omega,\R^m)}\coloneqq \Norm{\Norm{f}{}}{L^p(\Omega)}$. As in the scalar case $L^p_{\Norm{\cdot}{}}(\Omega,\R^m)$ is a reflexive Banach space. For $\Norm{\cdot}{}=\abs{\cdot}$ we write $L^p_{\abs{\cdot}}(\Omega,\R^m)=L^p(\Omega,\R^m)$\\
For arbitrary $D\subseteq \R^d$ we denote the set of Hölder continuous functions on $D$ to the exponent $\alpha \in [0,1]$ by 
\begin{equation}
C^{0,\alpha}(D)\coloneqq \{f \in C(D): \abs{f(x)-f(y)}\leq C\abs{x-y}^\alpha \text{ for some }C\in\R\text{ and for all } x,y\in D\}.
\end{equation}
In particular, for $\alpha =1$ the set $C^{0,1}(D)$ is the set of the Lipschitz continuous functions on $D$. We denote the set of the Lipschitz continuous functions with Lipschitz constant $L>0$ by
\begin{equation}
C^{0,1}_L(D)\coloneqq \{f\in C(D): \abs{f(x)-f(y)}\leq L\abs{x-y} \text{ for all } x,y\in D \}\subset C^{0,1}(D).
\end{equation}


\subsection{Sobolev spaces}

Since we will use Soblolev spaces later we give a short introduction. We use the introduction from \cite[Chapter 1.27]{altLinFuAna}. For convenience we write it down again.\\

To define Sobolev spaces we first consider completions of metric spaces.\\

For a metric space $(X,d)$ we define set of Cauchy sequences in $Y$ by
\begin{equation}
Y\coloneqq \{ (x_j)_{j\in \Nat}\subseteq Y: (x_j)_j \text{ is a Cauchy sequence}\}.
\end{equation}
We consider the equivalence relation $\sim$ on $Y$ defined by
\begin{equation}
(x_j)_j\sim (y_j)_j \text{ if and only if } d(x_j,y_j) \to 0 \text{ as } j\to \infty
\end{equation}
for $(x_j)_j,(y_j)_j\in Y$. Now we define $\tilde{X}=Y/\sim$ and 
\begin{equation}
\tilde{d}((x_j)_j, (y_j)_j)=\lim\limits_{j\to \infty} d(x_j,y_j).
\end{equation}
It is easy to check that $\tilde{d}$ is well defined on $Y$ and that $\tilde{d}((w_j)_j,(x_j)_j)=\tilde{d}((y_j)_j,(z_j)_j)$ if $(w_j)_j\sim (y_j)_j$ and $(x_j)_j\sim (z_j)_j$. Hence $\tilde{d}$ is well defined on $\tilde{X}$.\\
Further $(\tilde{X},\tilde{d})$ is a complete metric space (see \cite[Chapter 0.24]{altLinFuAna}). We call $(\tilde{X},\tilde{d})$ the \emph{completion} of $(X,d)$.\\

Let $\Omega\subseteq \R^d$ be an open domain, $m\geq 0$ and $1\leq p\leq \infty$. We consider the normed vector space
\begin{equation}
X\coloneqq \{ f\in C^\infty(\Omega) : \Norm{f}{X}<\infty \} \text{ with } \Norm{f}{X}\coloneqq \sum_{\abs{s}\leq m} \Norm{\partial^s f}{L^p(\Omega)}.
\end{equation}
We consider the completion $\tilde{X}$ of $X$. Since for $(f_j)_j\in \tilde{X}$ the sequences $(\partial^s f_j)_j$ are Cauchy sequences in $L^p(\Omega)$ there exist some $f^{(s)}\in L^p(\Omega)$ with $\partial^s f_j \to f^{(s)}$ in $L^p(\Omega)$. Due to partial integration we have for all $h\in C^\infty_c(\Omega)$ that
\begin{equation}
\int_\Omega (\partial^s h) f_j dx=(-1)^{\abs{s}} \int_\Omega h \partial^s f_j dx.
\end{equation}
Using Hölder's inequality this leads to
\begin{equation}\label{eq:sob_char}
\int_\Omega (\partial^s h) f^{(0)} dx=(-1)^{\abs{s}} \int_\Omega h f^{(s)} dx.
\end{equation}
Now we define the \emph{Sobolev space} of order $m$ with the exponent $p$ by
\begin{equation}
W^{m,p}(\Omega)\coloneqq\{ f\in L^p(\Omega): \text{ For } \abs{s}\leq m \text{ there exists } f^{(s)} \text{ such that } f^{(0)}=f \text{ and } \eqref{eq:sob_char}\}.
\end{equation}
The $f^{(s)}$ are called \emph{weak derivatives} of $f$. It can be shown that these $f^{(s)}$ are unique. We directly get that $W^{m,p}(\Omega)$ is a vector space. We equip $W^{m,p}(\Omega)$ with the norm
\begin{equation}
\Norm{f}{W^{m,p}(\Omega)}\coloneqq \sum_{\abs{s}\leq m} \Norm{f^{(s)}}{L^p(\Omega)}.
\end{equation}
It can be shown that $W^{m,p}(\Omega)$ characterizes $\tilde{X}$ through the bijective isometry 
\begin{equation}
J\colon \tilde{X} \to W^{m,p}(\Omega) \text{ defined by }J((f_j)_j)=\lim\limits_{j \to \infty} f_j,
\end{equation}
where the limit is the limit in $L^p(\Omega)$ (see \cite[Chapter 1.27]{altLinFuAna}). This shows the completeness of $W^{m,p}(\Omega)$. Hence $W^{m,p}(\Omega)$ is a Banach space. Further it can be shown that for $1<p<\infty$ the Banach space $W^{m,p}(\Omega)$ is reflexive (see \cite[Example 6.11 (3)]{altLinFuAna}). \\

We define the closed subspace of Sobolev functions vanishing at the boundary $W^{m,p}_0(\Omega) \subset W^{m,p}(\Omega)$ by
\begin{equation}
\begin{aligned}
W^{m,p}_0(\Omega)\coloneqq \{ f\in W^{m,p}(\Omega):&\text{ There exist } (f_j)_j \subset C^\infty_c(\Omega) \\
&\text{ with } \Norm{f_j-f}{W^{m,p}(\Omega)}\to 0 \text{ as } j\to \infty \}.
\end{aligned}
\end{equation}

\subsection{Preliminary results}

We give some definitions and cite some results, which we use later.

\begin{lemma}[Morrey's inequality]\label{lem:morrey}
Let $p>d$ and let $\Omega\subset \R^d$ be open, connected and bounded. Then for $v\in W^{1,p}_0(\Omega)$ it holds
\begin{equation}
\abs{v(x)-v(y)} \leq \frac{2pd}{p-d}\abs{x-y}^{1-\frac{d}{p}}\Norm{\nabla v}{L^p(\Omega)}.
\end{equation}
Further $v$ can be redefined in a set of measure zero and extended to the boundary, such that $v\in C^{1-\frac{d}{p}}(\overline{\Omega})$ and $v\mid_{\partial \Omega}=0$.
\end{lemma}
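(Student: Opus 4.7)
The plan is to prove the inequality first for smooth functions with compact support in $\Omega$ and then extend to all of $W^{1,p}_0(\Omega)$ by density, using the inequality itself to transfer Cauchy-ness in $W^{1,p}$ to Cauchy-ness in a Hölder space.

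First, I would assume $v \in C_c^\infty(\Omega)$ and extend $v$ by zero to all of $\R^d$, so that $v \in C_c^\infty(\R^d)$. The core analytic step is a ball-averaging estimate: for every ball $B_r(z)\subset\R^d$,
\begin{equation}
\frac{1}{|B_r(z)|}\int_{B_r(z)} |v(y)-v(z)|\,dy \;\leq\; C_1\, r^{1-d/p}\, \Norm{\nabla v}{L^p(B_r(z))}.
\end{equation}
I would derive this by writing $v(y)-v(z)=\int_0^{|y-z|} \nabla v(z+t\omega)\cdot\omega\,dt$ with $\omega=(y-z)/|y-z|$, integrating in polar coordinates around $z$, applying Fubini to exchange the order of integration in the radial variables, and then invoking Hölder's inequality with exponents $p$ and $p/(p-1)$. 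The hypothesis $p>d$ is used exactly here, since it is the condition under which the resulting singular radial integral $\int_0^r s^{-(d-1)p/(p-1)+d-1}\,ds$ converges and produces the factor $r^{1-d/p}$.

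Second, for two points $x,y\in\R^d$ with $r\coloneqq|x-y|$, I would consider the intersection $W\coloneqq B_r(x)\cap B_r(y)$, whose Lebesgue measure is a dimensional constant times $|B_r|$. Then
\begin{equation}
|v(x)-v(y)| \;\leq\; \frac{1}{|W|}\int_W \bigl(|v(x)-v(z)|+|v(z)-v(y)|\bigr)\,dz,
\end{equation}
and applying the ball-average estimate on $B_r(x)$ and $B_r(y)$ separately gives a bound of the desired form $C\, r^{1-d/p}\,\Norm{\nabla v}{L^p(\Omega)}$. Tracking constants carefully through this two-step reduction and through the polar-coordinates computation is what produces the explicit factor $\tfrac{2pd}{p-d}$; this bookkeeping is really the only place where care is required.

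Finally, for general $v\in W^{1,p}_0(\Omega)$ I would pick $(v_j)_j\subset C_c^\infty(\Omega)$ with $v_j\to v$ in $W^{1,p}(\Omega)$. Extending each $v_j$ by zero and applying the just-proved inequality to $v_j-v_k$ shows that $(v_j)_j$ is a Cauchy sequence in $C^{0,1-d/p}(\overline{\Omega})$, hence converges uniformly on $\overline{\Omega}$ to some $\tilde v\in C^{0,1-d/p}(\overline{\Omega})$. Since $v_j\to v$ in $L^p(\Omega)$ as well, $\tilde v=v$ almost everywhere, so $\tilde v$ is the continuous representative of $v$. The inequality passes to $\tilde v$ by taking the limit, and because each $v_j$ vanishes on $\partial\Omega$ and convergence is uniform up to the boundary, we obtain $\tilde v\mid_{\partial\Omega}=0$. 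The main obstacle in this proof is really not conceptual but computational, namely getting the sharp constant $\frac{2pd}{p-d}$ out of the Fubini/Hölder step; a secondary technical point is verifying that the measure-of-intersection constant is handled correctly so that the final constant is the one stated.
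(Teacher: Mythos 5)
The paper does not actually prove this lemma; it only cites \cite[Lemma 3]{lindqvist}, so there is no in-paper argument to compare against line by line. Your outline is the standard proof of Morrey's estimate (ball-average bound via polar coordinates, Fubini and H\"older with the exponent condition $p>d$ entering exactly where you say it does, then a density argument transferring the bound from $C_c^\infty(\Omega)$ to $W^{1,p}_0(\Omega)$ and producing the H\"older-continuous representative vanishing on $\partial\Omega$). That structure is sound and does prove the qualitative statement, including the exponent $1-\tfrac{d}{p}$ and the blow-up of the constant as $p\to d^+$.

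The one genuine gap is the constant. You assert that careful bookkeeping through your two-ball reduction yields exactly $\tfrac{2pd}{p-d}$, but it does not: the step
\begin{equation}
\abs{v(x)-v(y)}\leq \frac{1}{\abs{W}}\int_W\bigl(\abs{v(x)-v(z)}+\abs{v(z)-v(y)}\bigr)\,dz,\qquad W=B_r(x)\cap B_r(y),
\end{equation}
forces you to pay the factor $\abs{B_r}/\abs{W}$, and for $\abs{x-y}=r$ this ratio is a dimensional constant that grows rapidly with $d$ (the lens $W$ occupies an exponentially small fraction of $B_r$ in high dimensions), while the H\"older step produces $\bigl(\tfrac{p-1}{p-d}\bigr)^{1-1/p}$ rather than $\tfrac{p}{p-d}$ times something clean. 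So the constant your argument delivers is of the form $C(d)\bigl(\tfrac{p-1}{p-d}\bigr)^{1-1/p}$ with $C(d)$ not dominated by $2pd/(p-d)$. To land on the stated constant one usually avoids the intersection altogether, e.g.\ by comparing both $v(x)$ and $v(y)$ to the average of $v$ over a single ball (or cube) of radius $\abs{x-y}$ containing both points, which is how the cited source arranges the computation. For the only place the paper uses the explicit constant (the equicontinuity estimate in Theorem \ref{thm:varsolex}, where one needs $\tfrac{2pd}{p-d}$ bounded as $p\to\infty$), your constant is equally bounded in $p$, so the discrepancy is harmless there --- but as a proof of the lemma \emph{as stated}, the claimed constant is not established.
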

\begin{proof}
see \cite[Lemma 3]{lindqvist}.
\end{proof}

\begin{lemma}[Poincaré inequality]\label{lem:poincare}
Let $1\leq p <\infty$ and let $\Omega\subset \R^d$ be open and bounded. Then it holds for all $u\in W_0^{1,p}(\Omega)$
\begin{equation}
\Norm{u}{L^p(\Omega)}\leq C \Norm{\nabla u}{L^p(\Omega)}
\end{equation}
for some $0<C\in \R$ depending on $\Omega$ and $p$.
\end{lemma}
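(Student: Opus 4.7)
The plan is to prove the inequality first for test functions $u\in C_c^\infty(\Omega)$ and then extend to all of $W_0^{1,p}(\Omega)$ by the density and continuity built into the definition of $W_0^{1,p}(\Omega)$. More precisely, if the estimate holds on $C_c^\infty(\Omega)$ with constant $C$ depending only on $\Omega$ and $p$, and if $u_j\to u$ in $W^{1,p}(\Omega)$ with $u_j\in C_c^\infty(\Omega)$, then $\|u_j\|_{L^p(\Omega)}\to \|u\|_{L^p(\Omega)}$ and $\|\nabla u_j\|_{L^p(\Omega)}\to \|\nabla u\|_{L^p(\Omega)}$, and the inequality passes to the limit.

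Since $\Omega$ is bounded, I fix an enclosing slab $\Omega\subset \{x\in\R^d : a<x_1<b\}$ of width $L\coloneqq b-a$. Given $u\in C_c^\infty(\Omega)$, I extend $u$ by zero to all of $\R^d$; the extension remains smooth and vanishes outside a compact subset of the slab. For each $x=(x_1,x')\in \Omega$ the fundamental theorem of calculus then gives
\begin{equation}
u(x) = \int_a^{x_1} \partial_1 u(t,x')\,dt.
\end{equation}

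For $p>1$, I apply Hölder's inequality with exponents $p$ and $p/(p-1)$ on the interval $[a,x_1]$ of length at most $L$, yielding
\begin{equation}
\abs{u(x)}^p \leq L^{p-1}\int_a^b \abs{\partial_1 u(t,x')}^p\,dt,
\end{equation}
and for $p=1$ the even simpler bound $|u(x)|\leq \int_a^b|\partial_1 u(t,x')|\,dt$ works directly. Integrating these estimates over $\Omega$ (using Fubini on the right and the fact that $|\partial_1 u|\leq |\nabla u|$) produces the desired inequality with $C=L$. Passing to the limit along a sequence in $C_c^\infty(\Omega)$ approximating $u\in W_0^{1,p}(\Omega)$ finishes the argument.

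There is no serious obstacle here; the only point that requires a little care is confirming that the convergence $u_j\to u$ in the $W^{1,p}$-norm carries over to convergence of both sides in $L^p$, which is immediate from the definition of the Sobolev norm, and that the constant $C$ ultimately depends only on the width of a slab containing $\Omega$, hence only on $\Omega$ and (trivially) $p$.
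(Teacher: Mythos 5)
Your argument is correct. Note, however, that the paper does not prove this lemma at all -- it simply cites \cite[U8.10]{altLinFuAna} -- so there is no in-paper proof to compare against. What you give is the standard self-contained ``slab'' argument: enclose the bounded set $\Omega$ in a strip $\{a<x_1<b\}$, use the fundamental theorem of calculus along the $x_1$-direction for $u\in C_c^\infty(\Omega)$ extended by zero, apply H\"older to get $\abs{u(x)}^p\leq L^{p-1}\int_a^b\abs{\partial_1 u(t,x')}^p\,dt$, integrate, and conclude with $C=L=b-a$. The density step is legitimate precisely because the paper defines $W_0^{1,p}(\Omega)$ as the set of $W^{1,p}$-limits of $C_c^\infty(\Omega)$-sequences, and convergence in the $W^{1,p}$-norm forces convergence of both $\Norm{u_j}{L^p(\Omega)}$ and $\Norm{\nabla u_j}{L^p(\Omega)}$, so the inequality survives the limit. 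This is a complete and standard proof, and arguably an improvement on the paper's bare citation.
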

\begin{proof}
see \cite[U8.10]{altLinFuAna}.
\end{proof}

\begin{theorem}\label{thm:rademacher}
Let $\Omega\subset \R^d$ open. Then $C^{0,1}(\overline{\Omega})\subset W^{1,\infty}(\Omega)$ and if $f\in C^{0,1}_L(\Omega)$ then it holds $\Norm{\nabla f}{L^\infty(\Omega)}\leq L$.
\end{theorem}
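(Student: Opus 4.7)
The plan is to produce the weak gradient by mollification and a weak-$*$ compactness argument, taking advantage of the uniform Lipschitz bound.

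First, I would extend $f\in C^{0,1}_L(\overline{\Omega})$ to an $L$-Lipschitz function $\tilde f\colon \R^d\to \R$ (by McShane's componentwise formula $\tilde f(x)=\inf_{y\in \overline{\Omega}}\{f(y)+L\abs{x-y}\}$; this does not require Kirszbraun since the target is scalar). Then let $\varphi_\varepsilon$ be a standard mollifier and set $f_\varepsilon\coloneqq \tilde f \ast \varphi_\varepsilon \in C^\infty(\R^d)$. For any $x\in \R^d$ and any unit vector $e\in \R^d$ the difference quotient satisfies
\begin{equation}
\abs[\Big]{\frac{f_\varepsilon(x+he)-f_\varepsilon(x)}{h}} \leq \int_{\R^d} \abs[\Big]{\frac{\tilde f(x+he-y)-\tilde f(x-y)}{h}} \varphi_\varepsilon(y)\,dy \leq L,
\end{equation}
so passing $h\to 0$ gives $\abs{\nabla f_\varepsilon(x)}\leq L$ uniformly in $\varepsilon$, in particular $\Norm{\partial_i f_\varepsilon}{L^\infty(\Omega)}\leq L$ for each $i$.

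Next I would exploit reflexivity of the predual: since $L^1(\Omega)^*=L^\infty(\Omega)$, the family $\{\partial_i f_\varepsilon\}_{\varepsilon>0}$ is bounded in $L^\infty(\Omega)$ and therefore, by the Banach--Alaoglu theorem, along a subsequence $\varepsilon_k\to 0$ there exist $g_i\in L^\infty(\Omega)$ with $\Norm{g_i}{L^\infty(\Omega)}\leq L$ such that $\partial_i f_{\varepsilon_k}\rightharpoonup^* g_i$. On the other hand, standard mollifier properties give $f_{\varepsilon_k}\to \tilde f=f$ uniformly on $\overline{\Omega}$, hence also in $L^1(\Omega)$.

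Finally I would check that $g_i$ is the weak partial derivative of $f$ in the sense of \eqref{eq:sob_char}. For $h\in C_c^\infty(\Omega)$, classical integration by parts yields
\begin{equation}
\int_\Omega (\partial_i h) f_{\varepsilon_k} \, dx = -\int_\Omega h\, \partial_i f_{\varepsilon_k}\, dx.
\end{equation}
The left-hand side converges to $\int_\Omega (\partial_i h) f\,dx$ by $L^1$-convergence of $f_{\varepsilon_k}$, and the right-hand side converges to $-\int_\Omega h\, g_i\,dx$ by weak-$*$ convergence (using $h\in L^1(\Omega)$). Hence $g_i$ is the weak partial $\partial_i f$, so $f\in W^{1,\infty}(\Omega)$ with $\Norm{\nabla f}{L^\infty(\Omega)}\leq L$. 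The main technical point is the simultaneous passage to the limit on both sides of the integration by parts identity, which is why the weak-$*$ compactness step together with uniform convergence of the mollifications is essential; everything else is routine.
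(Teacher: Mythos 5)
Your argument is correct in substance and is the standard mollification proof; the paper itself gives no proof at all but simply cites \cite[Satz 5.23]{dobrowolski}, so you are supplying a self-contained derivation where the thesis outsources to a textbook. Your route has the advantage that it never invokes the full strength of Rademacher's theorem (almost-everywhere classical differentiability): the McShane extension, the uniform bound $\abs{\nabla f_\varepsilon}\leq L$ on the mollifications, weak-$*$ compactness in $L^\infty=(L^1)^*$, and the limit passage in the integration-by-parts identity are exactly what is needed to verify the characterization \eqref{eq:sob_char} of the weak derivative, and nothing more. Two small points deserve attention. First, extracting a weak-$*$ convergent \emph{subsequence} from Banach--Alaoglu requires sequential compactness, i.e.\ separability of the predual $L^1(\Omega)$; this holds here but should be said. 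Second, and more substantively, your final bound is obtained componentwise, $\Norm{g_i}{L^\infty(\Omega)}\leq L$ for each $i$, which only yields $\Norm{\nabla f}{L^\infty(\Omega)}\leq \sqrt{d}\,L$ for the Euclidean norm of the gradient that the paper uses (cf.\ the role of $\Norm{\abs{\nabla g}}{L^\infty(\Omega)}$ in Theorem \ref{thm:varsolex}). The fix is immediate: perform the weak-$*$ extraction for the full gradient vector in $(L^\infty(\Omega))^d$ viewed as the dual of $L^1(\Omega,\R^d)$ with the Euclidean norm on fibers; the ball $\{v:\abs{v(x)}\leq L \text{ a.e.}\}$ is convex and weak-$*$ closed, so the pointwise bound $\abs{\nabla f_\varepsilon}\leq L$ survives the limit and gives the sharp constant. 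Finally, your claim of $L^1(\Omega)$-convergence of $f_{\varepsilon_k}$ tacitly uses $\abs{\Omega}<\infty$; since the test function $h$ has compact support, local uniform convergence already suffices, and for unbounded $\Omega$ the inclusion $C^{0,1}(\overline{\Omega})\subset W^{1,\infty}(\Omega)$ would in any case fail (a Lipschitz function need not be bounded), so the theorem should be read with $\Omega$ bounded as everywhere else in the thesis.
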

\begin{proof}
See \cite[Satz 5.23]{dobrowolski}.
\end{proof}

\begin{lemma}\label{lem:Lpschachtel}
Let $(\Omega,\mathfrak{A},\mu)$ be a probability space, $1\leq p \leq q \leq \infty$. Then it holds ${L^q(\Omega)\subset L^p(\Omega)}$ and we have for all $f\in L^q(\Omega)$ that $\Norm{f}{L^p(\Omega)}\leq \Norm{f}{L^q(\Omega)}$.
\end{lemma}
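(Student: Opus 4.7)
The plan is to split into two cases based on whether $q$ is finite or not, and in both cases exploit the fact that $\mu(\Omega)=1$. The case $p=q$ is trivial, so I will assume $p<q$.

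For the case $q<\infty$, I would apply Hölder's inequality to the integrand $\abs{f}^p = \abs{f}^p \cdot 1$ with the conjugate exponents $r = q/p > 1$ and $r' = q/(q-p)$. This gives
\begin{equation}
\int_\Omega \abs{f}^p \, d\mu \leq \left(\int_\Omega \abs{f}^{pr} \, d\mu\right)^{1/r} \left(\int_\Omega 1^{r'} \, d\mu\right)^{1/r'} = \Norm{f}{L^q(\Omega)}^p \cdot \mu(\Omega)^{(q-p)/q}.
\end{equation}
Since $\mu(\Omega)=1$, the second factor is $1$, and taking $p$-th roots yields $\Norm{f}{L^p(\Omega)}\leq \Norm{f}{L^q(\Omega)}<\infty$, which also shows $f\in L^p(\Omega)$.

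For the case $q=\infty$, I would use that $\abs{f(x)}\leq \Norm{f}{L^\infty(\Omega)}$ for $\mu$-almost every $x$, whence
\begin{equation}
\int_\Omega \abs{f}^p \, d\mu \leq \Norm{f}{L^\infty(\Omega)}^p \cdot \mu(\Omega) = \Norm{f}{L^\infty(\Omega)}^p,
\end{equation}
and again taking $p$-th roots gives the claim.

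There is no real obstacle here; the only subtlety is remembering that the inclusion $L^q\subset L^p$ for $p\leq q$ requires finiteness of the measure (here guaranteed by $\mu(\Omega)=1$), and that the constant in front of $\Norm{f}{L^q(\Omega)}$ collapses to $1$ precisely because of the probability-space assumption.
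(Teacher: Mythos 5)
Your proposal is correct and follows essentially the same route as the paper: the same case split on $q<\infty$ versus $q=\infty$, the same application of Hölder's inequality with conjugate exponents $q/p$ and $q/(q-p)$, and the same use of $\mu(\Omega)=1$ to make the constant collapse to $1$.
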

\begin{proof}
Case 1: $q=\infty$. Let $f\in L^\infty(\Omega)$. Then it holds 
\begin{equation}
\int_\Omega \abs{f}^p dx\leq \int_\Omega \Norm{f}{L^\infty(\Omega)}^p dx\leq \Norm{f}{L^\infty(\Omega)}^p<\infty.
\end{equation}
Thus we get $f\in L^p(\Omega)$ and $\Norm{f}{L^p(\Omega)}\leq \Norm{f}{L^\infty(\Omega)}$.\\
Case 2: $q<\infty$. Let $f\in L^q(\Omega)$. Then we get using Hölder's inequality:
\begin{equation}
\int_\Omega \abs{f}^p dx\leq \Norm{\abs{f}^p}{L^\frac{q}{p}}\Norm{1}{L^\frac{q}{q-p}}= \Norm{f}{L^q(\Omega)}^p<\infty.
\end{equation}
Thus we get $f\in L^p(\Omega)$ and $\Norm{f}{L^p(\Omega)}\leq \Norm{f}{L^q(\Omega)}$.
\end{proof}

\begin{definition}
Let $f\colon \R^d\to \R\cup \{+\infty\}$ be proper, convex and lower semi-continuous. We define the \emph{subdifferential} by 
\begin{equation}
\partial f(x_0)=\{ p\in \R^d: f(x)-f(x_0)\geq \inner{p}{x-x_0} \text{ for all } x\in \R^d\}.
\end{equation}
We call the elements of $\partial f(x_0)$ \emph{subgradients}. By \emph{Fermat's rule} we have that $x\in \R^d$ is a global minimizer of $f$ if and only if $0\in\partial f(x)$.\\
For $\lambda>0$ the \emph{proximal map} is defined by
\begin{equation}
\prox_{\lambda f} (x) = \argmin_{y\in \R^d} \left\{\frac{1}{2 \lambda} \Norm{x-y}{2}^2 +f(y) \right\}.
\end{equation}
For more details see \cite{BSS_prox_op}.
\end{definition}

\begin{theorem}[Chain rule for subdifferentials]\label{thm:chain_subdiff}
Let H be a Hilbert space and $f\colon H\to \R$ be continuous and convex. Let $\phi\colon \R \to \R\cup\{+\infty\}$ be lower semicontinuous, convex and increasing on the image $\mathrm{Im}(f)$. Suppose that $\mathrm{ri}\{\mathrm{Im}(f)+\R_{>0}\}\cap \mathrm{dom}\phi \neq \emptyset$. Let $x\in H$ such that $f(x)\in\mathrm{dom}\phi$. Then it holds
\begin{equation}
\partial (\phi \circ f)(x)=\{\alpha y:\alpha\in \partial \phi(f(x)),y\in\partial f(x)\}
\end{equation}
\end{theorem}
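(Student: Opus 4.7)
The plan is to split the claimed equality into two set inclusions and prove each separately; the inclusion $\supseteq$ is elementary, while $\subseteq$ reduces to a sum-rule computation in a product space.

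For $\supseteq$, take $\alpha\in\partial\phi(f(x))$ and $y\in\partial f(x)$. Because $\phi$ is convex, lsc and non-decreasing on $\mathrm{Im}(f)$, every subgradient of $\phi$ at a point of $\mathrm{Im}(f)$ is non-negative, so $\alpha\geq 0$. The subgradient inequality for $f$ gives $f(z)\geq f(x)+\inner{y}{z-x}$ for all $z\in H$; multiplying by $\alpha\geq 0$ and combining with $\phi(f(z))\geq \phi(f(x))+\alpha\bigl(f(z)-f(x)\bigr)$ yields
\begin{equation}
(\phi\circ f)(z)\geq (\phi\circ f)(x)+\inner{\alpha y}{z-x}\qquad\text{for all }z\in H,
\end{equation}
which is $\alpha y\in\partial(\phi\circ f)(x)$.

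For $\subseteq$, the key idea is to rewrite $\phi\circ f$ as a marginal function on $H\times\R$. Since $\phi$ is non-decreasing on $\mathrm{Im}(f)$ and $f$ is continuous, one has
\begin{equation}
(\phi\circ f)(x)=\inf_{t\in\R}G(x,t),\qquad G(x,t)\coloneqq \phi(t)+\iota_{\mathrm{epi}(f)}(x,t),
\end{equation}
with the infimum attained at $t=f(x)$. A standard characterization of subdifferentials of marginal functions then gives that $w\in\partial(\phi\circ f)(x)$ is equivalent to $(w,0)\in\partial G(x,f(x))$. Applying the Moreau--Rockafellar sum rule to $G$, written as the sum of $(x,t)\mapsto\phi(t)$ and the indicator of the convex set $\mathrm{epi}(f)$, gives
\begin{equation}
\partial G(x,f(x))=\bigl(\{0\}\times\partial\phi(f(x))\bigr)+N_{\mathrm{epi}(f)}(x,f(x)).
\end{equation}
A direct computation from the definition of the normal cone shows $N_{\mathrm{epi}(f)}(x,f(x))=\{(\alpha y,-\alpha):\alpha\geq 0,\ y\in\partial f(x)\}$; equating components in $(w,0)=(0,\beta)+(\alpha y,-\alpha)$ forces $\beta=\alpha\in\partial\phi(f(x))$ and $w=\alpha y$ for some $y\in\partial f(x)$, which is exactly the claimed form.

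The main obstacle is the correct invocation of the sum rule in the product space and the verification that the hypothesis $\mathrm{ri}\{\mathrm{Im}(f)+\R_{>0}\}\cap\mathrm{dom}\,\phi\neq \emptyset$ is indeed the relative-interior qualification needed: the effective domain of $\phi(t)$ in $H\times\R$ is $H\times\mathrm{dom}\,\phi$, while that of $\iota_{\mathrm{epi}(f)}$ is $\mathrm{epi}(f)$, whose projection onto the $t$-axis is $\mathrm{Im}(f)+\R_{\geq 0}$; passing to relative interiors gives precisely the stated condition. The other delicate ingredients, both guaranteed by the hypotheses, are the continuity of $f$, which ensures that $\partial f(x)$ is non-empty and that the marginal infimum is attained, and the monotonicity of $\phi$ on $\mathrm{Im}(f)$, which supplies the sign $\alpha\geq 0$ needed for the two inclusions to match.
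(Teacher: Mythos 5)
The paper itself offers no proof of this theorem; it is quoted with a pointer to \cite[Corollary 16.7.2]{bauschke-combettes}, so your argument can only be judged on its own terms. The overall architecture is the standard one for such chain rules and is essentially sound: the easy inclusion by stacking the two subgradient inequalities, and the hard inclusion by writing $\phi\circ f$ as the marginal of $G(x,t)=\phi(t)+\iota_{\mathrm{epi}(f)}(x,t)$, identifying $\partial(\phi\circ f)(x)$ with the slice $\{w:(w,0)\in\partial G(x,f(x))\}$, applying the sum rule, and computing $N_{\mathrm{epi}(f)}(x,f(x))=\{(\alpha y,-\alpha):\alpha\geq0,\ y\in\partial f(x)\}$, which is correct here because $f$ is finite and continuous on all of $H$, so the horizontal part of the normal cone is trivial.

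The genuine gap is the unproved assertion that every $\alpha\in\partial\phi(f(x))$ is non-negative. Both halves of your argument lean on it: in ``$\supseteq$'' you multiply the subgradient inequality of $f$ by $\alpha$, which reverses for $\alpha<0$; in ``$\subseteq$'' the normal-cone computation only ever produces products $\alpha y$ with $\alpha\geq0$, so you need $\partial\phi(f(x))\subseteq\R_{\geq0}$ to recover the full right-hand side. Monotonicity of $\phi$ on $\mathrm{Im}(f)$ yields $\alpha\geq0$ only when $\mathrm{Im}(f)$ contains a point strictly below $f(x)$; it fails when $x$ minimizes $f$. Concretely, take $H=\R$, $f(x)=\max(x,0)$, $\phi=\abs{\cdot}$: all hypotheses of the theorem hold at $x=0$, yet $\partial\phi(0)=[-1,1]$, $\partial f(0)=[0,1]$, and $\partial(\phi\circ f)(0)=[0,1]$, which is strictly smaller than the product set $[-1,1]$. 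What your argument actually establishes (and what is true) is the identity with $\partial\phi(f(x))$ replaced by $\partial\phi(f(x))\cap\R_{\geq0}$; the ``$\supseteq$'' inclusion for negative $\alpha$ cannot be rescued at minimizers of $f$, which also indicates that the statement as transcribed needs this restriction. Two smaller points to tighten: the marginal representation requires $\phi$ to be non-decreasing on all of $[f(x),\infty)\cap\mathrm{dom}\phi$, which follows from convexity plus monotonicity on $\mathrm{Im}(f)$ only when $\mathrm{Im}(f)$ is not a singleton; and in an infinite-dimensional $H$ the Moreau--Rockafellar sum rule needs more than intersecting relative interiors (e.g.\ a continuity point or the Attouch--Br\'ezis condition), although here the qualification genuinely lives in the one-dimensional $t$-variable, where relative interior suffices once that reduction is justified.
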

\begin{proof}
See \cite[Corollary 16.7.2]{bauschke-combettes}.
\end{proof}

\begin{theorem}[Moreau decomposition]\label{thm:moreau}
Let $f\colon \R^d\to \R\cup\{+\infty\}$ be proper, convex and lower semicontinuous and $\lambda>0$. Then we have
\begin{equation}
\prox_{\lambda f}(x)+\prox_{\lambda f^*(\lambda^{-1} \cdot)}(x)=x
\end{equation}
\end{theorem}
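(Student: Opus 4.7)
The plan is to characterize both proximal maps via Fermat's rule and the Fenchel--Young duality $p\in\partial f(u)\iff u\in\partial f^{*}(p)$ (valid because $f$ is proper, convex and lower semicontinuous, hence $f^{**}=f$), and then to conclude by uniqueness of the proximal map. Throughout, I set $h(y):=f^{*}(\lambda^{-1}y)$ so that the second proximal map is $\prox_{\lambda h}(x)$.

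First I would apply Fermat's rule to $u=\prox_{\lambda f}(x)$: differentiating the quadratic part of the strictly convex objective $\frac{1}{2\lambda}\Norm{x-y}{2}^{2}+f(y)$ yields $0\in\frac{1}{\lambda}(u-x)+\partial f(u)$, i.e.\ $\frac{x-u}{\lambda}\in\partial f(u)$, which by Fenchel--Young is equivalent to $u\in\partial f^{*}((x-u)/\lambda)$. Next I would treat $v=\prox_{\lambda h}(x)$ analogously: Fermat gives $x-v\in\lambda\,\partial h(v)$, and the scaling identity $\partial h(v)=\frac{1}{\lambda}\partial f^{*}(v/\lambda)$ (valid since $\lambda>0$) turns this into $x-v\in\partial f^{*}(v/\lambda)$.

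Setting $v:=x-u$, one has $x-v=u$ and $v/\lambda=(x-u)/\lambda$, so the first characterization reads precisely $x-v\in\partial f^{*}(v/\lambda)$. Hence $x-u$ satisfies the defining condition of $\prox_{\lambda h}(x)$, and uniqueness of the proximal map (the quadratic term makes the objective strictly convex, and lower semicontinuity together with coercivity ensure existence) gives $\prox_{\lambda h}(x)=x-u$, which is the claim.

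The main obstacle is the scaling chain rule $\partial h(v)=\frac{1}{\lambda}\partial f^{*}(v/\lambda)$: the cited Theorem~\ref{thm:chain_subdiff} covers compositions $\phi\circ f$ with $\phi$ increasing on the image, which does not directly apply to composition with a positive scalar multiple. The identity nevertheless follows in one line from the definition, since $p\in\partial h(v)$ iff $f^{*}(y/\lambda)-f^{*}(v/\lambda)\geq\inner{p}{y-v}$ for all $y\in\R^{d}$, and the substitution $z=y/\lambda$ turns this into the condition $\lambda p\in\partial f^{*}(v/\lambda)$.
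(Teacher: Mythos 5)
Your proof is correct. Note that the paper does not actually prove this theorem itself --- its ``proof'' is only a citation to the reference on proximal operators --- so there is no in-paper argument to compare against; your write-up supplies the standard self-contained proof that the paper omits. The chain of equivalences is sound: Fermat's rule plus the sum rule (legitimate here because the quadratic term is everywhere differentiable) gives $\frac{x-u}{\lambda}\in\partial f(u)$, the inversion $p\in\partial f(u)\iff u\in\partial f^*(p)$ is exactly where properness, convexity and lower semicontinuity enter via $f^{**}=f$, and you correctly identify and patch the one point that the paper's toolbox does not cover, namely the scaling identity $\partial\bigl(f^*(\lambda^{-1}\cdot)\bigr)(v)=\lambda^{-1}\partial f^*(v/\lambda)$, which indeed follows in one line from the definition of the subdifferential rather than from Theorem~\ref{thm:chain_subdiff}. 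The concluding appeal to strict convexity of $y\mapsto\frac{1}{2\lambda}\Norm{x-y}{2}^2+f^*(y/\lambda)$ (with $f^*$ proper, convex and lower semicontinuous because $f$ is) to get existence and uniqueness of the second proximal point is also exactly what is needed to turn ``$x-u$ satisfies the optimality condition'' into ``$x-u$ \emph{is} the proximal point.''
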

\begin{proof}
See \cite{BSS_prox_op}.
\end{proof}

\begin{theorem}\label{thm:subdiff_norm}
Let $\Norm{\cdot}{}$ be an arbitrary norm on $\R^d$ and let $\Norm{\cdot}{*}$ be its dual norm. Then the subdifferential of $\Norm{\cdot}{}$ is given by
\begin{equation}
\partial \Norm{\cdot}{}(x)=
\begin{cases}
\{p:\Norm{p}{*}=1, \inner{p}{x}=\Norm{x}{}\},&$if $x\neq 0,\\
B_{\Norm{\cdot}{*}}(0,1),&$if $x=0.
\end{cases}
\end{equation}
\end{theorem}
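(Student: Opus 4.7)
The plan is to handle the two cases $x = 0$ and $x \neq 0$ separately, working directly from the subdifferential definition and the standard duality inequality $\inner{p}{y} \leq \Norm{p}{*}\Norm{y}{}$.

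For $x = 0$, the subdifferential inequality reduces to $\Norm{y}{} \geq \inner{p}{y}$ for all $y \in \R^d$. By positive homogeneity this is equivalent to $\inner{p}{w} \leq 1$ for every $w$ with $\Norm{w}{} \leq 1$, i.e. to $\Norm{p}{*} \leq 1$. This gives $\partial \Norm{\cdot}{}(0) = B_{\Norm{\cdot}{*}}(0,1)$.

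For $x \neq 0$, I would prove the two inclusions. For ``$\supseteq$'', assume $\Norm{p}{*} = 1$ and $\inner{p}{x} = \Norm{x}{}$. Then for arbitrary $y$, $\Norm{y}{} \geq \inner{p}{y} = \inner{p}{x} + \inner{p}{y-x} = \Norm{x}{} + \inner{p}{y-x}$, so $p \in \partial\Norm{\cdot}{}(x)$. For ``$\subseteq$'', let $p \in \partial\Norm{\cdot}{}(x)$. Setting $y = 0$ yields $-\Norm{x}{} \geq -\inner{p}{x}$, hence $\inner{p}{x} \geq \Norm{x}{}$. Setting $y = 2x$ and using $\Norm{2x}{} = 2\Norm{x}{}$ yields $\Norm{x}{} \geq \inner{p}{x}$. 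Combining these gives $\inner{p}{x} = \Norm{x}{}$.

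It remains to show $\Norm{p}{*} = 1$. The lower bound $\Norm{p}{*} \geq 1$ is immediate from $\inner{p}{x/\Norm{x}{}} = 1$. For the upper bound, pick any $w$ with $\Norm{w}{} \leq 1$ and $t>0$, and take $y = x + tw$ in the subgradient inequality:
\begin{equation}
t\inner{p}{w} = \inner{p}{y-x} \leq \Norm{x+tw}{} - \Norm{x}{} \leq t\Norm{w}{} \leq t,
\end{equation}
where the second inequality uses the triangle inequality. Dividing by $t$ gives $\inner{p}{w} \leq 1$, hence $\Norm{p}{*} \leq 1$. I do not expect a real obstacle here; the only subtle point is to pick suitable test vectors ($y = 0$, $y = 2x$, and $y = x + tw$) so that both $\inner{p}{x} = \Norm{x}{}$ and $\Norm{p}{*} = 1$ drop out cleanly, and to keep track of which direction of the duality inequality is being used at each step.
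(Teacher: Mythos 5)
Your argument is correct and complete. Note that the paper itself does not prove this statement at all --- it simply cites an external reference --- so there is no in-paper proof to compare against; your direct verification from the definition of the subdifferential is a perfectly good self-contained substitute. All three test choices work as you claim: $y=0$ and $y=2x$ together force $\inner{p}{x}=\Norm{x}{}$, the normalized vector $x/\Norm{x}{}$ gives $\Norm{p}{*}\geq 1$, and the perturbation $y=x+tw$ with the triangle inequality gives $\Norm{p}{*}\leq 1$; the case $x=0$ is the standard reduction of the subgradient inequality to $\Norm{p}{*}\leq 1$ via homogeneity. One small remark: in the $x\neq 0$ case the upper bound $\Norm{p}{*}\leq 1$ could also be obtained directly from the $x=0$ computation (the subgradient inequality at any $x$ with $y$ ranging over all of $\R^d$ already implies $\inner{p}{z}\leq\Norm{z}{}+\bigl(\Norm{x}{}-\inner{p}{x}\bigr)$, and homogeneity in $z$ kills the constant), but your choice of $y=x+tw$ is equally clean and arguably more transparent.
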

\begin{proof}
See \cite{theory_ext}.
\end{proof}


\section{The variational $p$-Laplacian and $\infty$-Laplacian}\label{sec:plap_inflap}
This section is based on \cite{lindqvist}.

In the sequel let $\Omega \subset \R^d$ be open, connected and bounded, $g\in C(\overline{\Omega})\cap W^{1,p}(\Omega)$ and 
\begin{equation}
{U_p=\{u\in C(\overline{\Omega})\cap W^{1,p}(\Omega): u\mid_{\partial \Omega}=g\mid_{\partial \Omega}\}}={g+\{u\in C(\overline{\Omega})\cap W^{1,p}(\Omega): u\mid_{\partial \Omega}=0\}}
\end{equation}
for $1< p\leq \infty$. Obviously, $U_p$ is convex.

\subsection{The variational $p$-Laplacian}
Let $1<p<\infty$. In the following we minimize the energy functional
\begin{equation}
J_p(u)=\frac{1}{p}\int_\Omega \abs{\nabla u(x)}^p dx
\end{equation}
for $u\in U_p$. The proofs of the following statements (Lemma \ref{lem:p-harmun}, Theorem \ref{thm:varsol_charac}, Theorem \ref{thm:p-harmex}) follow the lines of \cite[Theorem 4]{lindqvist}, but we split the proof and add some details.
\begin{lemma}\label{lem:p-harmun}
The functional $J_p$ is strictly convex on $U_p$.
In particular, minimizers of $J_p$ are unique in $U_p$.
\end{lemma}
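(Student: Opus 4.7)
My plan is to establish strict convexity directly from the pointwise strict convexity of $t\mapsto t^p$ on $[0,\infty)$ for $p>1$, combined with strict convexity of the Euclidean norm on $\R^d$ (in the sense that equality in the triangle inequality forces vectors to be positively collinear). Uniqueness of minimizers then follows by the standard midpoint argument on the convex set $U_p$.

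Concretely, for $u,v\in U_p$ and $\lambda\in(0,1)$, I would first apply the triangle inequality pointwise to get $\abs{\lambda\nabla u+(1-\lambda)\nabla v}\leq \lambda\abs{\nabla u}+(1-\lambda)\abs{\nabla v}$, then use monotonicity and convexity of $t\mapsto t^p$ to obtain
\begin{equation}
\abs{\lambda\nabla u+(1-\lambda)\nabla v}^p \leq \lambda\abs{\nabla u}^p+(1-\lambda)\abs{\nabla v}^p \quad \text{a.e. on }\Omega.
\end{equation}
Integrating yields convexity of $J_p$. For strict convexity, I would analyze the equality case: if the integrated inequality is an equality, then (since the integrands satisfy the pointwise inequality) we have equality almost everywhere. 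Pointwise equality forces both equality in the triangle inequality, i.e.\ $\nabla u(x)$ and $\nabla v(x)$ are nonnegative multiples of each other, and equality in the strict convexity of $t\mapsto t^p$ (valid because $p>1$), i.e.\ $\abs{\nabla u(x)}=\abs{\nabla v(x)}$. Together these force $\nabla u=\nabla v$ almost everywhere on $\Omega$.

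From $\nabla(u-v)=0$ a.e.\ on the connected domain $\Omega$, I would conclude that $u-v$ is constant almost everywhere, hence constant everywhere by continuity on $\overline{\Omega}$. Since $u-v=0$ on $\partial\Omega$, this constant vanishes, giving $u=v$ on $\overline{\Omega}$ and proving strict convexity.

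Finally, for the uniqueness statement, suppose $u,v\in U_p$ are both minimizers with $u\neq v$. Then $\tfrac{u+v}{2}\in U_p$ (by convexity of $U_p$) and strict convexity of $J_p$ yields $J_p(\tfrac{u+v}{2})<\tfrac{1}{2}J_p(u)+\tfrac{1}{2}J_p(v)=\min_{U_p} J_p$, a contradiction. The only potentially delicate step is verifying that the equality case in the pointwise inequality really does force $\nabla u=\nabla v$ pointwise a.e.\ (including the borderline case where one of the gradients vanishes), but this is handled by treating the two possibilities -- both norms zero, or positive proportionality with equal magnitudes -- separately.
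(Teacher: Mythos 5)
Your proposal is correct and follows essentially the same route as the paper: pointwise convexity of the integrand gives convexity of $J_p$, the equality case is ruled out via strict convexity of $x\mapsto\abs{x}^p$ (which you simply unpack into the triangle inequality plus strict convexity of $t\mapsto t^p$), and then $\nabla u=\nabla v$ a.e.\ on the connected domain forces $u-v$ to be a constant that vanishes by the shared boundary values. The midpoint argument for uniqueness is the standard conclusion the paper also intends.
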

\begin{proof}
From the linearity of $\nabla$ and the convexity of $\int_\Omega \abs{\cdot}^p dx$ we get for $u,v\in U_p$ and $\lambda \in (0,1)$:
\begin{equation}\label{eq:Jpconv}
\begin{aligned}
J_p(\lambda u + (1-\lambda v))=\frac{1}{p}\int_\Omega \abs{\lambda \nabla u(x)+ (1-\lambda) \nabla v(x)}^p dx\\\leq\frac{1}{p}\int_\Omega \lambda \abs{\nabla u(x)}^p+ (1-\lambda)\abs{ \nabla v(x)}^p dx=\lambda J_p(u) +(1-\lambda) J_p(v)
\end{aligned}
\end{equation}
If we have $\nabla u \neq \nabla v$ in a set of positive measure we get by the strict convexity of $\abs{\cdot}^p$ that 
\begin{equation}
\abs{\lambda \nabla u+ (1-\lambda) \nabla v}^p<\lambda\abs{\nabla u}^p+ (1-\lambda)\abs{ \nabla v}^p
\end{equation}
in a set of positive measure. Hence we get a strict inequality in \eqref{eq:Jpconv}. Therefore, if we have no strict inequality in \eqref{eq:Jpconv}, we get that $u$ and $v$ differ only by a constant almost everywhere. This means $u=v$ by the definition of $U_p$. Thus the convexity of $J_p$ is strict. 
\end{proof}

We call the minimizers of $J_p$ \emph{$p$-harmonic functions}. Our goal is to find some criteria for being a $p$-harmonic function.

\begin{remark}
For $1<p<\infty$ and $u\in U_p$ it holds that
\begin{equation}
J_p(u)=\frac{1}{p}\Norm{\nabla u}{L^p(\Omega)}^p.
\end{equation}
Hence minimizing $J_p$ is equivalent to minimizing $\Norm{\nabla u}{p}$. A naive way to take the limit of $J_p$ as $p\to \infty$ would be minimizing the functional
\begin{equation}
J_\infty(u)=\Norm{\nabla u}{L^\infty(\Omega)}=\esssup_{x\in \Omega} \nabla u(x).
\end{equation}
We show in Lemma \ref{lem:forts} that the minimizers of $J_\infty$ are not unique. In this section we work out a stronger formulation of the limit of the minimizers of $J_p$ as $p\to \infty$.
\end{remark}

\begin{definition}
For $2\leq p<\infty$ the functional $\Delta_p\colon C^2(\Omega) \to C(\Omega)$ defined by
\begin{equation}
\Delta_p u= \diverg{\frac{\nabla u}{\abs{\nabla u}^{2-p}}}
\end{equation}
is called the \emph{variational $p$-Laplacian}.
\end{definition}
In particular it holds, that $\Delta_p=\Delta$ for $p=2$. Now we have the following theorem.
\begin{theorem}\label{thm:varsol_charac}
A function $u\in U_p$ is a minimizer of $J_p$ if and only if 
\begin{equation}
\int_\Omega \inner{\nabla h(x)}{\frac{\nabla u(x)}{\abs{\nabla u(x)}^{2-p}}}dx=0
\end{equation}
for all $h\in C_c^\infty(\Omega)$.
In particular, $u\in U_p \cap C^2(\Omega)$ minimizes $J_p$ if and only if $\Delta_p u=0$ for $2\leq p<\infty$.
\end{theorem}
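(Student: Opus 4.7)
The plan is to recognize the integral identity as the weak Euler--Lagrange equation for $J_p$ and use the convexity of $J_p$ (Lemma~\ref{lem:p-harmun}) to promote this first-order criterion into a genuine minimality statement. The $C^2$ statement will then drop out by integration by parts.

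For the \emph{only if} direction, I fix $h \in C_c^\infty(\Omega)$ and note that $u + th \in U_p$ for every $t \in \R$ because $h$ vanishes near $\partial\Omega$. Setting $\phi(t) := J_p(u + th)$, the minimality of $u$ yields $\phi'(0) = 0$. Differentiating under the integral via dominated convergence (the elementary bound $\abs{\nabla u + t\nabla h}^p \leq 2^p(\abs{\nabla u}^p + \abs{\nabla h}^p)$ for $\abs{t}\leq 1$ provides an $L^1$ majorant for both the integrand and its $t$-derivative) gives
$$\phi'(0) = \int_\Omega \abs{\nabla u(x)}^{p-2}\inner{\nabla u(x)}{\nabla h(x)}\,dx = \int_\Omega \inner{\nabla h(x)}{\frac{\nabla u(x)}{\abs{\nabla u(x)}^{2-p}}}\,dx,$$
which is precisely the claimed identity.

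For the \emph{if} direction, I invoke the pointwise subgradient inequality $\tfrac{1}{p}\abs{b}^p \geq \tfrac{1}{p}\abs{a}^p + \abs{a}^{p-2}\inner{a}{b-a}$ for $a,b\in\R^d$, which is just convexity of $\tfrac{1}{p}\abs{\cdot}^p$ at $a$ (with the convention $\abs{a}^{p-2}a := 0$ at $a=0$, continuous for $p>1$). Applied pointwise with $a = \nabla u(x)$, $b = \nabla v(x)$ for an arbitrary $v\in U_p$ and then integrated over $\Omega$, this yields
$$J_p(v) - J_p(u) \geq \int_\Omega \inner{\nabla(v-u)(x)}{\frac{\nabla u(x)}{\abs{\nabla u(x)}^{2-p}}}\,dx.$$
By the definition of $U_p$ the difference $v - u$ is continuous up to $\partial\Omega$, vanishes there and lies in $W^{1,p}(\Omega)$, hence in $W^{1,p}_0(\Omega)$. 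Approximating $v-u$ in $W^{1,p}$-norm by functions in $C_c^\infty(\Omega)$ and combining the hypothesis with H\"older's inequality (noting $\abs{\nabla u}^{p-1}\in L^{p/(p-1)}(\Omega)$) forces the right-hand side to zero, so $J_p(v)\geq J_p(u)$.

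The $C^2$ claim is immediate from partial integration: for $u \in U_p \cap C^2(\Omega)$ and $h \in C_c^\infty(\Omega)$, the vector field $\abs{\nabla u}^{p-2}\nabla u$ is $C^1$ on $\Omega$ thanks to $p\geq 2$, so
$$\int_\Omega \inner{\nabla h(x)}{\frac{\nabla u(x)}{\abs{\nabla u(x)}^{2-p}}}\,dx = -\int_\Omega h(x)\,\Delta_p u(x)\,dx,$$
and vanishing of the left-hand side for every $h$ is equivalent to $\Delta_p u \equiv 0$ by the fundamental lemma of the calculus of variations. The main technical nuisance in the whole argument is the density step in the \emph{if} direction, i.e. confirming that a continuous $W^{1,p}$-function vanishing on $\partial\Omega$ does belong to $W^{1,p}_0(\Omega)$; this is standard under mild regularity of $\partial\Omega$ via cutoff and mollification near the boundary, and can be absorbed into the setup of $U_p$. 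A secondary subtlety for $1<p<2$ is the interpretation of $\abs{\nabla u}^{p-2}\nabla u$ on $\{\nabla u = 0\}$, where the convention $0$ keeps the integrand continuous and does not affect any computation.
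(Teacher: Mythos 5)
Your proposal is correct and follows essentially the same route as the paper: compute the first variation $\frac{d}{dt}J_p(u+th)\mid_{t=0}$ via dominated convergence to get necessity, use convexity of $J_p$ for sufficiency, and integrate by parts for the $C^2$ statement. The one substantive difference is in the sufficiency direction: the paper simply asserts that for the convex functional $J_p$ the vanishing of $DJ_p(u)[h]$ for all $h\in C_c^\infty(\Omega)$ is equivalent to minimality over $U_p$, whereas you make this rigorous via the pointwise subgradient inequality $\tfrac1p\abs{b}^p\geq\tfrac1p\abs{a}^p+\abs{a}^{p-2}\inner{a}{b-a}$ together with the density of $C_c^\infty(\Omega)$ in $W^{1,p}_0(\Omega)$ and the fact that $v-u\in W^{1,p}_0(\Omega)$ for $v\in U_p$; this closes a genuine gap the paper glosses over, since admissible competitors $v\in U_p$ are not of the form $u+h$ with $h\in C_c^\infty(\Omega)$. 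A minor point on your necessity step: the bound $\abs{\nabla u+t\nabla h}^p\leq 2^p(\abs{\nabla u}^p+\abs{\nabla h}^p)$ majorizes the integrand but not directly the difference quotient; you need one more line (mean value theorem plus Young's inequality applied to $p\abs{\nabla u+t\nabla h}^{p-1}\abs{\nabla h}$) to produce the $L^1$ majorant — this is easily supplied and is in fact cleaner than the paper's binomial expansion, which as written only makes sense for integer $p$.
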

\begin{proof}
Since $J_p$ is convex $u\in U_p$ is a minimizer of $J_p$ if and only if for all $h\in C_c^\infty(\Omega)$ the following holds true:
\begin{equation}\label{eq:ableiten}
\begin{aligned}
0 = D J_p (u)[h]&=\frac{d}{dt} J_p(u+th) = \lim\limits_{t \to 0} \frac{J_p(u+th)-J_p(u)}{t}\\
&=\frac{1}{p} \lim\limits_{t \to 0} \int_\Omega \frac{\abs{\nabla u(x)+t\nabla h(x)}^p-\abs{\nabla u(x)}^p}{t}dx.
\end{aligned}
\end{equation}
Now the integrand for $0< t \leq 1$ is bounded from above by
\begin{equation}
\begin{aligned}
\frac{\abs{\nabla u(x)+t\nabla h(x)}^p-\abs{\nabla u(x)}^p}{t}&\leq \frac{\left(\abs{\nabla u(x)}+\abs{t}\abs{\nabla h(x)}\right)^p-\abs{\nabla u(x)}^p}{\abs{t}}\\
&=\sum_{i=1}^p {p \choose i} \abs{t}^{i-1}\abs{\nabla h(x)}^i \abs{\nabla u(x)}^{p-i}\\
&\leq\sum_{i=1}^p {p \choose i}\abs{\nabla h(x)}^i \abs{\nabla u(x)}^{p-i}
\end{aligned}
\end{equation}
which is integrable since $\abs{\nabla h},\abs{\nabla u} \in L^p(\Omega)$ by the assumptions on $h$ and $u$. Similarly the integrand is bounded from below by an integrable functional. Thus we get from \eqref{eq:ableiten} using Lebesgues limit theorem and the chain rule:
\begin{equation}
\begin{aligned}
0&=\frac{1}{p} \int_\Omega \lim\limits_{t \to 0} \frac{\abs{\nabla u(x)+t\nabla h(x)}^p-\abs{\nabla u(x)}^p}{t}dx\\
&=\frac{1}{p} \int_\Omega \frac{d}{dt} \abs{\nabla u(x)+t\nabla h(x)}^p \mid_{t=0} dx\\
&=\frac{1}{p} \int_\Omega \frac{d}{dt} \left(\abs{\nabla u(x)+t\nabla h(x)}^2\right)^{\frac{p}{2}} \mid_{t=0} dx\\
&=\frac{1}{p} \int_\Omega \frac{p}{2}\left(\abs{\nabla u(x)}^2\right)^{\frac{p}{2}-1}\cdot 2 \inner{\nabla h(x)}{\nabla u(x)}dx\\
&=\int_\Omega \inner{\nabla h(x)}{\frac{\nabla u(x)}{\abs{\nabla u(x)}^{2-p}}}dx.
\end{aligned}
\end{equation}
This shows the first part of the theorem. If we have $2\leq p <\infty$ and $u\in U_p \cap C^2(\Omega)$ we get through integration by parts:
\begin{equation}
\int_\Omega \inner{\nabla h(x)}{\frac{\nabla u(x)}{\abs{\nabla u(x)}^{2-p}}}dx=-\int_\Omega h(x)\diverg{\frac{\nabla u(x)}{\abs{\nabla u(x)}^{2-p}}}dx=-\int_\Omega h(x)\Delta_p u(x)dx.
\end{equation} 
Since this holds true for every $h\in C_c^\infty(\Omega)$ we get that this is equivalent to $\Delta_p u=0$.
\end{proof}

\begin{theorem}\label{thm:p-harmex}
Let $p>d$ and $g\in C(\overline{\Omega})\cap W^{1,p}(\Omega)$. Then there exists a minimizer of $J_p$ in $U_p$.
\end{theorem}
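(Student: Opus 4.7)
The plan is to apply the direct method of the calculus of variations, reformulating the problem over the closed subspace $W^{1,p}_0(\Omega)$ so that the boundary condition is easy to preserve in the limit.

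First I would set $v = u-g$ and rewrite the problem as minimizing $\tilde J_p(v) = \frac{1}{p}\int_\Omega |\nabla(v+g)|^p\,dx$ over the affine set $\{v \in W^{1,p}_0(\Omega) : v+g \in C(\overline{\Omega})\}$. The minimum is finite because $v=0$ is admissible (giving $\tilde J_p(0) = \frac{1}{p}\|\nabla g\|_{L^p}^p<\infty$), so I can pick a minimizing sequence $(v_k) \subset W^{1,p}_0(\Omega)$. By the triangle inequality $\|\nabla v_k\|_{L^p} \le \|\nabla(v_k+g)\|_{L^p} + \|\nabla g\|_{L^p}$, the sequence $(\|\nabla v_k\|_{L^p})$ is bounded, and then Poincaré's inequality (Lemma \ref{lem:poincare}) bounds $\|v_k\|_{L^p}$ as well. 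Hence $(v_k)$ is bounded in the reflexive Banach space $W^{1,p}(\Omega)$, so a subsequence satisfies $v_k \rightharpoonup v$ in $W^{1,p}(\Omega)$; since $W^{1,p}_0(\Omega)$ is a closed (hence weakly closed) subspace, $v \in W^{1,p}_0(\Omega)$.

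Next I would verify weak lower semicontinuity of $\tilde J_p$. Writing $u_k = v_k + g$, we have $u_k \rightharpoonup u := v+g$ in $W^{1,p}(\Omega)$, which in particular gives $\nabla u_k \rightharpoonup \nabla u$ in $L^p(\Omega, \R^d)$. Since the norm $\|\cdot\|_{L^p(\Omega,\R^d)}$ is weakly lower semicontinuous (being convex and continuous on a reflexive Banach space), we obtain
\begin{equation}
J_p(u) = \tfrac{1}{p}\|\nabla u\|_{L^p}^p \le \liminf_{k\to\infty} \tfrac{1}{p}\|\nabla u_k\|_{L^p}^p = \inf_{U_p} J_p.
\end{equation}

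The last and most delicate step is showing $u \in U_p$, i.e.\ that $u$ is continuous up to $\partial\Omega$ and equals $g$ there. This is exactly where the hypothesis $p>d$ enters: by Morrey's inequality (Lemma \ref{lem:morrey}), $v \in W^{1,p}_0(\Omega)$ admits a representative in $C^{0,1-d/p}(\overline\Omega)$ with $v|_{\partial\Omega}=0$. Since $g \in C(\overline\Omega)$, the representative $u = v + g$ lies in $C(\overline\Omega)$ and satisfies $u|_{\partial\Omega} = g|_{\partial\Omega}$, so $u \in U_p$ and the infimum is attained. The main obstacle is precisely this last point: without $p>d$ the Sobolev trace does not correspond to an honest pointwise boundary value, and the continuity requirement built into $U_p$ could fail for the weak limit; Morrey's inequality is what makes the argument go through.
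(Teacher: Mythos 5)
Your proposal is correct and follows essentially the same route as the paper's proof: a minimizing sequence, the Poincaré inequality to get boundedness in $W^{1,p}(\Omega)$, reflexivity and weak closedness of $W^{1,p}_0(\Omega)$ to extract a weak limit, Morrey's inequality (using $p>d$) to recover continuity up to the boundary and the boundary condition, and weak lower semicontinuity of the norm to conclude. The only cosmetic difference is that you translate by $g$ at the outset and work in $W^{1,p}_0(\Omega)$, whereas the paper keeps the sequence in $U_p$ and subtracts $g$ where needed.
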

\begin{proof}
Define $I_0= \inf_{v\in U_p} J_p (v)$. Let $(v_j)_{j\in \Nat} \subset U_p$ with $J_p v_j\leq I_0+\frac{1}{j}$. Then we have $\frac{1}{p}\Norm{\nabla v_j}{L^p(\Omega)}^p=J_p v_j\leq I_0+1$. Hence $(\Norm{\nabla v_j}{L^p(\Omega)})_j$ is bounded. Due to the Poincaré inequality (Lemma \ref{lem:poincare}) we get that 
\begin{equation}
\begin{aligned}
\Norm{v_j}{L^p(\Omega)}&\leq\Norm{v_j-g}{L^p(\Omega)}+\Norm{g}{L^p(\Omega)}\\
&\leq C\Norm{\nabla (v_j-g)}{L^p(\Omega)}+\Norm{g}{L^p(\Omega)}\\
&\leq C\Norm{\nabla v_j}{L^p(\Omega)}+C\Norm{\nabla g}{L^p(\Omega)}+\Norm{g}{L^p(\Omega)} \\
&\leq C\sup_{j\in\Nat} \Norm{\nabla v_j}{L^p(\Omega)} +C\Norm{\nabla g}{L^p(\Omega)}+\Norm{g}{L^p(\Omega)}
\end{aligned}
\end{equation}
for some $C>0$ for all $j$.
Thus $(\Norm{v_j}{W^{1,p}(\Omega)})_j$ is bounded. Since $W^{1,p}(\Omega)$ is reflexive for $1<p<\infty$, we have some $u\in W^{1,p}(\Omega)$ and a subsequence $(v_{j_k})_k$ of $(v_j)_j$ with $v_{j_k} \rightharpoonup u$  as $k\to \infty$. Because $W_0^{1,p}(\Omega)$ is closed under weak convergence and it holds $v_{j_k} - g \in W_0^{1,p}(\Omega)$ we have that $u-g\in W_0^{1,p}(\Omega)$. Since $p>d$ we get by Lemma \ref{lem:morrey} that $u-g\in C^0(\overline{\Omega})$ and $(u-g)\mid_{\partial \Omega}=0$. Thus we have $u\in W^{1,p}(\Omega)\cap C^0(\overline{\Omega})$ and $u\mid_{\partial \Omega}=g\mid_{\partial \Omega}$.\\
Further we get by the weak lower semicontinuity of the norm
\begin{equation}
J_p u=\frac{1}{p} \Norm{\nabla u}{L^p(\Omega)}^p\leq \liminf_{k\to \infty} \frac{1}{p} \Norm{\nabla v_{j_k}}{L^p(\Omega)}^p=\liminf_{k\to \infty} J_p v_{j_k}=I_0.
\end{equation}
Hence $u$ is a minimizer of  $J_p$.
\end{proof}

\subsection{The variational $\infty$-Laplacian}

We start with the definition of the $\infty$-Laplacian.
\begin{definition}
The functional $\Delta_\infty\colon C^2(\Omega) \to C(\Omega)$ defined by
\begin{equation}
\Delta_\infty u=\sum_{i=1}^d \sum_{j=1}^d \partial_i u \partial_j u \partial_i\partial_j u 
\end{equation}
is called the \emph{variational $\infty$-Laplacian}.
\end{definition}
By the chain rule we get directly 
\begin{equation}
\Delta_\infty u=\frac{1}{2} \inner{\nabla \left( \abs{\nabla u}^2 \right)}{\nabla u}.
\end{equation}
Now we can express the $p$-Laplacians by the $2$-Laplacian and the $\infty$-Laplacian. The statement is mentioned in \cite{lindqvist}. Since we did not found a proof, we give one below.
\begin{lemma}\label{lem:p-lap_darstellung}
For $u\in C^2(\Omega)$ and $2\leq p<\infty$ it holds that 
\begin{equation}
\Delta_p u=\abs{\nabla u}^{p-2} \Delta u +(p-2) \abs{\nabla u}^{p-4}\Delta_\infty u.
\end{equation}
\end{lemma}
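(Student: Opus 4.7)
The plan is to compute $\Delta_p u$ directly from its definition via the product rule and chain rule, and then recognize the resulting expressions as $|\nabla u|^{p-2}\Delta u$ and $(p-2)|\nabla u|^{p-4}\Delta_\infty u$.

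First, I would rewrite $\Delta_p u$ in the more convenient form
\begin{equation}
\Delta_p u = \diverg{|\nabla u|^{p-2}\nabla u},
\end{equation}
since $|\nabla u|^{p-2} = 1/|\nabla u|^{2-p}$. Applying the product rule for divergence yields
\begin{equation}
\Delta_p u = \inner{\nabla\bigl(|\nabla u|^{p-2}\bigr)}{\nabla u} + |\nabla u|^{p-2}\diverg{\nabla u} = \inner{\nabla\bigl(|\nabla u|^{p-2}\bigr)}{\nabla u} + |\nabla u|^{p-2}\Delta u.
\end{equation}

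Next I would expand the first summand. Writing $|\nabla u|^{p-2} = \bigl(|\nabla u|^2\bigr)^{(p-2)/2}$ and applying the chain rule for gradients gives
\begin{equation}
\nabla\bigl(|\nabla u|^{p-2}\bigr) = \tfrac{p-2}{2}\bigl(|\nabla u|^2\bigr)^{(p-2)/2-1}\nabla\bigl(|\nabla u|^2\bigr) = \tfrac{p-2}{2}|\nabla u|^{p-4}\nabla\bigl(|\nabla u|^2\bigr).
\end{equation}
Taking the inner product with $\nabla u$ and invoking the identity $\Delta_\infty u = \tfrac{1}{2}\inner{\nabla(|\nabla u|^2)}{\nabla u}$ from the text, I obtain
\begin{equation}
\inner{\nabla\bigl(|\nabla u|^{p-2}\bigr)}{\nabla u} = (p-2)|\nabla u|^{p-4}\,\Delta_\infty u,
\end{equation}
which combined with the previous display yields the claim.

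The main subtlety is that the factor $|\nabla u|^{p-4}$ can be singular at critical points of $u$ when $p<4$, and the product rule computation implicitly assumes $\nabla u \neq 0$ so that the chain rule applied to $t \mapsto t^{(p-2)/2}$ is justified. I would therefore note that the identity holds pointwise on $\{x\in\Omega : \nabla u(x)\neq 0\}$; on the critical set one checks directly that both sides are continuous extensions whenever $p\geq 4$, and that for $p=2$ the right-hand side reduces to $\Delta u$ since the coefficient $(p-2)$ vanishes, matching $\Delta_2 u=\Delta u$. Beyond this book-keeping at critical points, the derivation is a routine application of the product and chain rules.
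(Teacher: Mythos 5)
Your proposal is correct and follows essentially the same route as the paper: the paper also expands $\Delta_p u=\sum_i\partial_i(\partial_i u\,|\nabla u|^{p-2})$ via the product rule and applies the chain rule to $(|\nabla u|^2)^{(p-2)/2}$, merely writing the divergence product rule out componentwise instead of in vector notation. Your additional remark about the behaviour at critical points where $\nabla u=0$ is a sensible piece of book-keeping that the paper's proof silently omits.
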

\begin{proof}
Using the product and chain rule we get:
\begin{equation}
\begin{aligned}
\Delta_p u&= \sum_{i=1}^d \partial_i \left( \partial_i u \abs{\nabla u}^{p-2} \right) \\
&= \sum_{i=1}^d \partial_i^2 u \abs{\nabla u}^{p-2} + \sum_{i=1}^d \partial_i u \partial_i \left( \abs{\nabla u}^2 \right)^{\frac{p-2}{2}}\\
&=\abs{\nabla u}^{p-2} \Delta u + \sum_{i=1}^d \partial_i u \frac{p-2}{2} \left(\abs{\nabla u}^{p-4}\sum_{j=1}^d \partial_i (\partial_j u)^2 \right)\\
&=\abs{\nabla u}^{p-2} \Delta u + \sum_{i=1}^d \partial_i u \frac{p-2}{2} \left(\abs{\nabla u}^{p-4}\sum_{j=1}^d 2 \partial_j u \partial_i\partial_j u \right)\\
&=\abs{\nabla u}^{p-2} \Delta u +(p-2) \abs{\nabla u}^{p-4}\sum_{i=1}^d \sum_{j=1}^d \partial_i u  \partial_j u  \partial_i \partial_j u\\
&=\abs{\nabla u}^{p-2} \Delta u +(p-2) \abs{\nabla u}^{p-4}\Delta_\infty u.
\end{aligned}
\end{equation}
\end{proof}
In particular we get for $x\in \Omega$ with $\nabla u(x)\neq 0$ that
\begin{equation}
\Delta_p u=0 \Leftrightarrow (\abs{\nabla u}^{p-2} \Delta u +(p-2) \abs{\nabla u}^{p-4}\Delta_\infty u)=0 \Leftrightarrow \Delta_\infty u=  -\frac{\abs{\nabla u}^2 \Delta u}{p-2}.
\end{equation}
Hence we get the $\infty$-Laplace equation $\Delta_\infty u=0$ as the limit of the $p$-Laplace equations $\Delta_p u=0$ for $p \to \infty$. We formalize this by the following theorem:

\begin{theorem}
Let $u\in C(\overline{\Omega})\cap C^2(\Omega)$ with pointwise limit $\lim\limits_{p \to \infty} \Delta_p u(x)=0$ for all $x\in \Omega$. Then it holds $\Delta_\infty u=0$.
\end{theorem}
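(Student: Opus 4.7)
The plan is to combine the algebraic identity from Lemma~\ref{lem:p-lap_darstellung} with the hypothesized pointwise limit, separating the argument at each fixed $x\in\Omega$ according to whether $\nabla u(x)$ vanishes. If $\nabla u(x)=0$, the defining sum $\Delta_\infty u(x)=\sum_{i,j}\partial_i u(x)\,\partial_j u(x)\,\partial_i\partial_j u(x)$ is identically zero because every summand carries a factor $\partial_i u(x)=0$, so there is nothing to prove in this case.

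If $\nabla u(x)\neq 0$, the factor $(p-2)\,|\nabla u(x)|^{p-4}$ is strictly positive for $p>2$ and I can rearrange Lemma~\ref{lem:p-lap_darstellung} into
\[
\Delta_\infty u(x)=\frac{\Delta_p u(x)}{(p-2)\,|\nabla u(x)|^{p-4}}\;-\;\frac{|\nabla u(x)|^2\,\Delta u(x)}{p-2}.
\]
The right-hand side is a valid identity for every $p\in(2,\infty)$, and since its left-hand side does not depend on $p$, the strategy is simply to take $p\to\infty$. The second term vanishes in the limit, because $u\in C^2(\Omega)$ keeps the numerator $|\nabla u(x)|^2\,\Delta u(x)$ a fixed real number while the denominator $p-2$ diverges. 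Provided one can also show that the first term tends to $0$, one concludes $\Delta_\infty u(x)=0$.

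The main obstacle is controlling this first quotient. When $|\nabla u(x)|\geq 1$, the denominator $(p-2)|\nabla u(x)|^{p-4}$ diverges and the hypothesis $\Delta_p u(x)\to 0$ forces the quotient to zero at once, closing the argument cleanly. When $0<|\nabla u(x)|<1$, however, the factor $|\nabla u(x)|^{p-4}$ decays exponentially in $p$, so the denominator tends to $0$ and the quotient becomes a $0/0$ expression; here the bare pointwise hypothesis $\Delta_p u(x)\to 0$ is not, on its own, strong enough to force the limit of the quotient to vanish. In a careful write-up one would either need a refined statement of the hypothesis (say, the stronger convergence $\Delta_p u(x)/[(p-2)|\nabla u(x)|^{p-4}]\to 0$ that actually corresponds to the $\infty$-Laplace scaling) or an \emph{a priori} rescaling of $u$ that normalizes $|\nabla u|$ to be bounded below by $1$ on the relevant set. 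Modulo this technical point, the proof reduces to the algebraic manipulation described above.
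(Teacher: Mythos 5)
Your proposal follows exactly the same route as the paper's own proof: the case split on whether $\nabla u(x)$ vanishes, followed by rearranging Lemma~\ref{lem:p-lap_darstellung} into
\begin{equation}
\Delta_\infty u(x)=\frac{\Delta_p u(x)}{(p-2)\abs{\nabla u(x)}^{p-4}}-\frac{\abs{\nabla u(x)}^2\,\Delta u(x)}{p-2}
\end{equation}
and letting $p\to\infty$. The paper's proof simply writes this identity and asserts that the right-hand side tends to zero; it never addresses the quotient in the first term.

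The obstacle you flag is therefore genuine, and it is not a defect of your write-up alone — it is a gap in the theorem itself. When $0<\abs{\nabla u(x)}<1$ the denominator $(p-2)\abs{\nabla u(x)}^{p-4}$ tends to $0$, and the hypothesis $\Delta_p u(x)\to 0$ then carries no information: both terms of $\Delta_p u=\abs{\nabla u}^{p-2}\Delta u+(p-2)\abs{\nabla u}^{p-4}\Delta_\infty u$ tend to zero automatically, whatever the value of $\Delta_\infty u(x)$. The statement is in fact false as it stands. Take $d=1$, $\Omega=(0,\tfrac14)$ and $u(x)=x^2$; then $\Delta_p u(x)=\frac{d}{dx}\bigl((2x)^{p-1}\bigr)=2(p-1)(2x)^{p-2}\to 0$ for every fixed $x\in\Omega$ because $2x<\tfrac12$, yet $\Delta_\infty u(x)=(u'(x))^2u''(x)=8x^2\neq 0$. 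So the missing step cannot be supplied by a cleverer estimate; one genuinely needs a stronger hypothesis, for instance the normalized convergence $\Delta_p u(x)/\bigl((p-2)\abs{\nabla u(x)}^{p-4}\bigr)\to 0$ that you suggest, or an a priori lower bound $\abs{\nabla u}\geq 1$. Your diagnosis is exactly right, and it applies verbatim to the paper's own argument.
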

\begin{proof}
Let $x\in \Omega$. In the case $\nabla u(x)=0$ we get 
\begin{equation}
\Delta_\infty u(x)=\frac{1}{2} \inner{\nabla \left( \abs{\nabla u}^2 \right) (x)}{\nabla u (x)}=\frac{1}{2} \inner{\nabla \left( \abs{\nabla u}^2 \right)(x)}{0}=0.
\end{equation}
In the following, let $\nabla u (x) \neq 0$.\\
Then we get
\begin{equation}
\begin{aligned}
\Delta_p u(x)=\abs{\nabla u(x)}^{p-2} \Delta u(x) +(p-2) \abs{\nabla u (x)}^{p-4}\Delta_\infty u (x) \\
\Leftrightarrow \Delta_\infty u(x)=\frac{\Delta_p u(x)}{(p-2) \abs{\nabla u (x)}^{p-4}}-\frac{\abs{\nabla u (x)}^2 \Delta u (x)}{p-2} \to 0 \text{ as } p\to \infty.
\end{aligned}
\end{equation}
\end{proof}

It can be shown that for some fixed boundary conditions the equation $\Delta_\infty u=0$ has no solution $u\in C(\overline{\Omega})\cap C^2(\Omega)$. To give an example for such boundary values we first cite a theorem proven by Aronsson.

\begin{theorem}\label{thm:critical_point_constant}
Let $\Omega\subset \R^2$ be an open bounded domain and $u\in C^2(\Omega)$ such that $\Delta_\infty u=0$ in $\Omega$. Then, either $\nabla u (x)\neq 0$ for all $x\in \Omega$ or $u$ reduces to a constant. 
\end{theorem}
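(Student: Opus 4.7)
The plan is to exploit the identity derived just before the theorem, $\Delta_\infty u = \tfrac12 \inner{\nabla v}{\nabla u}$ with $v := \abs{\nabla u}^2$, which turns the hypothesis $\Delta_\infty u = 0$ into the first-integral statement $\inner{\nabla v}{\nabla u} \equiv 0$ on $\Omega$: the $C^1$ function $v$ is constant along every integral curve of the vector field $\nabla u$ inside the open set where this field is nonzero.

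Reading ``domain'' as connected (the standard convention), the claim reduces to showing that the critical set $Z := \{x\in\Omega : \nabla u(x)=0\} = \{v=0\}$ is either empty or the whole of $\Omega$. Since $Z$ is automatically closed in $\Omega$ by continuity of $\nabla u$, by connectedness it suffices to rule out a point $x_0 \in (\partial Z)\cap \Omega$. Assuming such an $x_0$, I would pick a sequence $y_n \to x_0$ with $\nabla u(y_n)\neq 0$, and through each $y_n$ run the maximal integral curve $\gamma_n$ of $\nabla u$ inside $\Omega\setminus Z$. Along $\gamma_n$ the quantity $v$ is constant and equal to $v(y_n)>0$; hence $\gamma_n$ can neither accumulate at any point of $Z$ (where $v=0$) nor be cut off inside $\Omega$ by the vector field vanishing along it.

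From here one needs a genuinely two-dimensional argument to produce a contradiction. Since $v$ is continuous and $v(x_0)=0$, the sublevel sets $\{v<\varepsilon\}$ form a shrinking neighborhood basis of $x_0$, and the level curves $\{v=c\}$ for small $c>0$ are compactly contained in prescribed small neighborhoods of $x_0$. The trajectories $\gamma_n$ are forced by the first-integral property to lie inside such level curves; in the plane, a Jordan-curve / exit-time analysis of the flow of $\nabla u$ (or equivalently Aronsson's original observation that $u_x - i u_y$ behaves like a generalized analytic function in two variables) shows that each such $\gamma_n$ must either close up into a periodic orbit enclosing a critical point of $u$ or accumulate on $Z$, and either outcome contradicts $v\equiv v(y_n)>0$ on $\gamma_n$ in combination with $v(x_0)=0$.

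The main obstacle is precisely this last planar step: controlling the global behaviour of the gradient flow lines of a $C^2$ $\infty$-harmonic function near an isolated critical point. This is the genuinely hard part of Aronsson's theorem, and it is also the reason the statement is restricted to $d=2$ (there is no analogue in higher dimensions, where critical points of $\infty$-harmonic functions really can coexist with nonconstant behaviour). My plan would therefore be to give a clean statement and proof of the first-integral reduction above, and then invoke this planar topological fact rather than attempt to reprove it from first principles.
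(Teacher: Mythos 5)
The paper does not actually prove this theorem: it cites \cite[Theorem 7]{lindqvist} (which in turn attributes the result to Aronsson), so there is no in-paper argument to compare yours against in detail. Your first-integral reduction is correct and is genuinely more than the paper offers: from $\Delta_\infty u=\tfrac12\inner{\nabla(\abs{\nabla u}^2)}{\nabla u}$ the function $v=\abs{\nabla u}^2$ is constant along trajectories of $\dot\gamma=\nabla u(\gamma)$, the critical set $Z=\{v=0\}$ is closed in $\Omega$, and by connectedness it suffices to show $Z$ is open or empty.

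The gap is the planar step, and it is not merely left unproved: the route you sketch toward the contradiction contains assertions that are false or unjustified. A trajectory of $\dot\gamma=\nabla u(\gamma)$ can never close up into a periodic orbit, because $\frac{d}{dt}u(\gamma(t))=\abs{\nabla u(\gamma(t))}^2=v(y_n)>0$ is constant, so $u$ increases strictly and linearly along $\gamma_n$; the proposed dichotomy ``periodic orbit or accumulation on $Z$'' is therefore not the correct alternative. What actually happens is that $\gamma_n$ leaves every compact subset of $\Omega$ in finite time (otherwise $u$ would be unbounded on a compact set), so the generic behaviour is that the trajectory runs off to $\partial\Omega$, and nothing in your setup forces it to stay near $x_0$ or to meet $Z$. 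Likewise, the claim that the sublevel sets $\{v<\varepsilon\}$ form a neighborhood basis of $x_0$ and that the level sets $\{v=c\}$ for small $c$ are compactly contained in small neighborhoods of $x_0$ presupposes that $x_0$ is an isolated, strict minimum of $v$, which is not given: $Z$ could a priori be a large closed set with $x_0$ on its boundary. The genuinely two-dimensional content of Aronsson's theorem requires a different mechanism (Aronsson's analysis of $u_x-iu_y$ as a mapping of bounded distortion and the resulting discreteness and index argument, or an equivalent), and invoking it as a black box leaves the proof incomplete. Since the paper itself only cites the result, deferring that step to the literature is no less rigorous than what the paper does; but the sketched exit-time/Jordan-curve dichotomy should not be presented as the reason the theorem holds, because as stated it does not yield the contradiction.
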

\begin{proof}
See \cite[Theorem 7]{lindqvist}. 
\end{proof} 

Now we can give an example from \cite{lindqvist} for a domain $\Omega$ and Lipschitz boundary values $g\colon \partial \Omega \to \R$ such there exists no $u\in C(\overline{\Omega})\cap C^2(\Omega)$ with $u=g$ on $\partial \Omega$ and $\Delta_\infty u=0$ on $\Omega$.

\begin{example}
Let $\Omega= \{(x,y)\in \R^2: x^2+y^2<1\}$ be the unit disc and let $g\colon \partial \Omega \to \R$ be defined by $g(x,y)=x y$. Obviously $g$ is Lipschitz continuous.

Assume that there exists $u\in C(\overline{\Omega})\cap C^2(\Omega)$ with $u=g$ on $\partial \Omega$ and $\Delta_\infty u=0$ on $\Omega$. We will show in Remark \ref{rem:uniqueness} that such an $u$ is unique. Since also $v \in C(\overline{\Omega})\cap C^2(\Omega)$ defined by $v(x,y)=u(-x,-y)$ fulfills $u=g$ on $\partial \Omega$ and $\Delta_\infty u=0$ on $\Omega$ we can conclude $u=v$. This yields $u(x,y)=u(-x,-y)$. Therefore, we have $\nabla u(0,0)=0$. By Theorem \ref{thm:critical_point_constant} we have that $u$ is constant on $\Omega$. This contradicts $u=g$ on $\partial \Omega$. Thus such a $u$ does not exists.
\end{example}

To ensure existence we are looking for some formulation of $\Delta_p u=0$ for $p \to \infty$ with less assumptions in the next section.

\subsection{Lipschitz extensions}

As introduced in \cite{aronsson} there is a close connection between the $\infty$-Laplacian and the problem of extending Lipschitz continuous functions.\\

Let $g\colon \partial \Omega \to \R$ be Lipschitz continuous and let $L$ be the smallest possible Lipschitz constant. 
In the following, we extend $g$ to $\overline{\Omega}$ preserving the Lipschitz continuity. First we show the existence of such an extension, later we will discuss the connection to the $\infty$-Laplacian.
\begin{lemma}\label{lem:forts}
Let $g\in C^{0,1}_L(\partial \Omega)$ and let $h_1(x)=\max_{z \in \partial \Omega} \{g(z)-L \abs{x - z}\}$ and $h_2(x)=\min_{z \in \partial \Omega} \{g(z) + L \abs{x - z}\}$. Then $h_i\in C^{0,1}_L(\overline{\Omega})$, $i=1,2$ are Lipschitz continuous extensions of $g$. Further, every Lipschitz continuous extension $h\in C^{0,1}_L(\overline{\Omega})$ of $g$ fulfills
\begin{equation}
h_1(x)\leq h(x) \leq h_2(x) \text{ for all } x\in \overline{\Omega}\label{eq:lipbound}
\end{equation}
\end{lemma}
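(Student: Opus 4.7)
The plan is to verify the claim in three steps: (1) $h_1$ and $h_2$ agree with $g$ on $\partial\Omega$, (2) both functions are $L$-Lipschitz on $\overline\Omega$, (3) every $L$-Lipschitz extension $h$ satisfies the sandwich bound. A preliminary observation is that $\partial\Omega = \overline\Omega \setminus \Omega$ is closed and bounded in $\R^d$, hence compact, so the max in the definition of $h_1$ and the min in the definition of $h_2$ are actually attained.

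For step (1), I would work with $h_2$; the argument for $h_1$ is symmetric. For $x\in\partial\Omega$, plugging in $z=x$ into the minimand gives $g(x)+L\cdot 0=g(x)$, so $h_2(x)\le g(x)$. Conversely, for any $z\in\partial\Omega$ the Lipschitz property of $g$ gives $g(x)-g(z)\le L\abs{x-z}$, i.e.\ $g(z)+L\abs{x-z}\ge g(x)$; taking the min over $z$ yields $h_2(x)\ge g(x)$.

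For step (2), I would fix $x,y\in\overline\Omega$ and choose $z^*\in\partial\Omega$ attaining the min in $h_2(y)$. Then
\begin{equation}
h_2(x)\le g(z^*)+L\abs{x-z^*}\le g(z^*)+L\abs{y-z^*}+L\abs{x-y}=h_2(y)+L\abs{x-y},
\end{equation}
and swapping the roles of $x$ and $y$ gives $\abs{h_2(x)-h_2(y)}\le L\abs{x-y}$. The same argument with max in place of min handles $h_1$.

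For step (3), let $h\in C^{0,1}_L(\overline\Omega)$ with $h\mid_{\partial\Omega}=g$. For every $x\in\overline\Omega$ and $z\in\partial\Omega$, the Lipschitz bound on $h$ yields $h(x)-g(z)=h(x)-h(z)\le L\abs{x-z}$, so $h(x)\le g(z)+L\abs{x-z}$; taking the min over $z$ gives $h(x)\le h_2(x)$. Symmetrically, $g(z)-h(x)\le L\abs{x-z}$ gives $h(x)\ge g(z)-L\abs{x-z}$ for every $z$, and maximizing yields $h(x)\ge h_1(x)$. There is no real obstacle here — the only subtlety is justifying that the extrema defining $h_1,h_2$ exist, which as noted follows from compactness of $\partial\Omega$; everything else is a direct consequence of the triangle inequality and the Lipschitz hypothesis on $g$.
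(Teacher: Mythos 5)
Your proposal is correct and follows essentially the same route as the paper: compactness of $\partial\Omega$ to guarantee the extrema are attained, then choosing an optimizer in one of the two expressions and applying the triangle inequality to get the Lipschitz bound, with the sandwich bound falling out of the Lipschitz hypothesis on $h$. You work with $h_2$ and a minimizer where the paper works with $h_1$ and a maximizer, and you spell out the boundary-agreement and sandwich steps in more detail than the paper does, but the underlying argument is identical.
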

The statement is mentioned in \cite{aronsson}. We check it by a short computation.
\begin{proof}
Since $\partial \Omega$ is compact the minimum and maximum in the definition of $h_i$, $i=1,2$ exists. For $x\in \partial \Omega$ we have $h_i(x)=g(x)$. For $x,y\in \overline{\Omega}$ with $h(x)\geq h(y)$ we have 
\begin{equation}
\begin{aligned}\label{eq:lipext}
\abs{h_1(x)-h_1(y)}&=h_1(x)-h_1(y)\\
&=\max_{z\in \partial \Omega} \{g(z)-L\abs{x-z}\}-\max_{w\in \partial \Omega} \{g(w)-L\abs{y-w}\}\\
&=\max_{z\in \partial \Omega}\{g(z)-L\abs{x-z}\}+\min_{w\in \partial \Omega} \{L\abs{y-w}-g(w)\}.
\end{aligned}
\end{equation}
Let $z_0 \in \argmax_{z\in \partial \Omega } \{g(z)-L\abs{x-z}\}$. Then \eqref{eq:lipext} becomes
\begin{equation}
\begin{aligned}
g(z_0)-L\abs{x-z_0}+\min_{w\in \partial \Omega} \{L\abs{y-w}-g(w)\}\leq g(z_0)-L\abs{x-z_0}+L\abs{y-z_0}-g(z_0)\\
=-L\abs{z_0-x}+L\abs{y-z_0}\leq L\abs{y-z_0-z_0-x}=L\abs{x-y}.
\end{aligned}
\end{equation} 
Hence $h_1$ is a Lipschitz continuous extension of $g$. Similarly we get the result for $h_2$. Property \eqref{eq:lipbound} follows directly by the Lipschitz continuity of $h$.
\end{proof}

In general we have $h_1 \neq h_2$. Thus the extension of $g$ to a Lipschitz continuous function on $\overline{\Omega}$ is not unique and we may search for a kind of the best extension. To formalize this we introduce for $f\in C^{0,1}(D)$ the notation 
\begin{equation}
\mu(f,D)=\sup_{x\neq y\in D} \frac{\abs{f(x)-f(y)}}{\abs{x-y}} \in [0,\infty).
\end{equation}
In other words, $\mu(f,D)$ is the smallest number, such that $f$ is Lipschitz continuous with constant $\mu(f,D)$. For  an open and bounded domain $\Omega\subset \R^d$ and Lipschitz continuous $f\colon \Omega \to \R$ we get a connection between $\mu(f,\Omega)$ and $\Norm{\nabla f}{L^\infty(\Omega)}$.

\begin{lemma}\label{lem:infNorm-lipschitz}
Let $g\in C^{0,1}_L(\partial \Omega)$ and let $f\colon \overline{\Omega} \to \R$ be Lipschitz continuous with $\Norm{\nabla f}{L^\infty(\Omega)}\leq L$ and $f\mid_{\partial \Omega}=g$. Then $\mu(f,\Omega)\leq L$ i.e. $L$ is a Lipschitz constant of $f$.
\end{lemma}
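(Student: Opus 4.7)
The strategy is to bound $\abs{f(x)-f(y)}$ for $x,y \in \overline\Omega$ by integrating the gradient along the straight segment $[x,y]$, using the boundary values $g$ to treat portions of the segment that leave $\overline\Omega$.

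First, for any closed segment $[p,q] \subset \Omega$, I would mollify $f$: setting $f_\varepsilon := f * \rho_\varepsilon$ on $\Omega_\varepsilon := \{z \in \Omega: d(z,\partial\Omega) > \varepsilon\}$, one has $\nabla f_\varepsilon = (\nabla f) * \rho_\varepsilon$ and thus $\abs{\nabla f_\varepsilon} \leq L$ pointwise on $\Omega_\varepsilon$. Applying the fundamental theorem of calculus to $t \mapsto f_\varepsilon(p + t(q-p))$ yields $\abs{f_\varepsilon(p) - f_\varepsilon(q)} \leq L\abs{p-q}$ as soon as $\varepsilon$ is small enough that $[p,q] \subset \Omega_\varepsilon$ (possible since $[p,q]$ is a compact subset of the open set $\Omega$). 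Since $f_\varepsilon \to f$ uniformly on compact subsets of $\Omega$, the same bound holds for $f$, and by continuity of $f$ on $\overline\Omega$ it extends to segments whose relative interior lies in $\Omega$ but whose endpoints may lie in $\partial\Omega$.

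Second, for general $x,y \in \overline\Omega$, I would parametrize the segment as $\gamma(t) = x + t(y-x)$ for $t \in [0,1]$. The preimage $U := \gamma^{-1}(\R^d \setminus \overline\Omega)$ is open in $[0,1]$ and decomposes into countably many disjoint maximal open intervals $(a_j, b_j)$; by continuity and maximality, $\gamma(a_j), \gamma(b_j) \in \partial\Omega$ (and analogously for the endpoints $0, 1$ if the segment leaves $\overline\Omega$ immediately). On each such interval the Lipschitz bound on $g$ yields
\begin{equation}
\abs{f(\gamma(a_j)) - f(\gamma(b_j))} = \abs{g(\gamma(a_j)) - g(\gamma(b_j))} \leq L(b_j - a_j)\abs{y-x}.
\end{equation}
On the complementary closed set $[0,1]\setminus U$, where $\gamma$ maps into $\overline\Omega$, I would further split into the maximal sub-intervals whose relative interior lies in $\Omega$ (apply the first step) and the remaining pieces where $\gamma$ stays in $\partial\Omega$ (apply the Lipschitz constant of $g=f|_{\partial\Omega}$ directly). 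Each piece then contributes an $L$-Lipschitz bound, and summing along the collinear segment $[x,y]$ — so that the total parametric length equals $1$ and the total spatial length equals exactly $\abs{y-x}$ — yields $\abs{f(x)-f(y)} \leq L\abs{x-y}$.

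\textbf{Main obstacle.} The principal technical difficulty is controlling the potentially complicated set-theoretic structure of $\gamma^{-1}(\partial\Omega)$, which can prevent a finite decomposition of $[x,y]$ into pieces alternating between $\Omega$, $\R^d\setminus\overline\Omega$, and $\partial\Omega$. This is overcome by observing that the bound $L$ applies uniformly to each piece and that the piece lengths sum to exactly $\abs{y-x}$ by collinearity, so the countable sum still delivers the desired inequality. The interior mollification step and the FTC computation are routine once the decomposition is in place.
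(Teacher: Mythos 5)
Your overall strategy --- fundamental theorem of calculus along the chord for the portions inside $\Omega$, and the Lipschitz bound on $g$ for the portions that meet or leave the boundary --- is the same as the paper's, and your mollification argument for the interior step is a sound (in fact more self-contained) substitute for the paper's appeal to the fundamental theorem of calculus for Lipschitz functions. The gap is in the final summation. You decompose $[0,1]$ into the countably many maximal open intervals of $\gamma^{-1}(\R^d\setminus\overline\Omega)$ and of $\gamma^{-1}(\Omega)$ plus ``the remaining pieces where $\gamma$ stays in $\partial\Omega$'', and then claim that ``summing'' the per-piece bounds yields the conclusion because the piece lengths add up to $\abs{y-x}$. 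But the remaining set $\gamma^{-1}(\partial\Omega)$ is merely closed --- it can be a Cantor-type set with empty interior, so it is not a union of ``pieces'' in any interval sense --- and a countable family of intervals whose endpoints may be order-dense in $[0,1]$ does not form a finite chain, so the triangle inequality cannot simply be telescoped through it to bound $\abs{f(x)-f(y)}$. To complete your route you would need an extra argument: for instance, extend $f\circ\gamma$ affinely over the excursion intervals, verify that the resulting function $h$ is continuous on $[0,1]$ and $L\abs{y-x}$-Lipschitz on each open piece as well as between any two points of $\gamma^{-1}(\partial\Omega)$, and then run a connectedness argument on the set $\{t:\abs{h(t)-h(0)}\leq (L\abs{y-x}+\epsilon)\,t\}$. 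As written, the key step is asserted rather than proved.

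The paper avoids all of this with one observation: let $t_1$ and $t_2$ be the \emph{first} and \emph{last} parameters at which $\gamma$ meets $\partial\Omega$. Then $\gamma((0,t_1))$ and $\gamma((t_2,1))$ lie in $\Omega$, so the interior estimate applies to those two sub-segments, while $\gamma(t_1),\gamma(t_2)\in\partial\Omega$, so the single estimate $\abs{g(\gamma(t_1))-g(\gamma(t_2))}\leq L\abs{\gamma(t_1)-\gamma(t_2)}$ covers everything in between in one stroke, no matter how wildly the segment alternates between $\Omega$, $\partial\Omega$ and the exterior of $\overline{\Omega}$ there --- the Lipschitz bound on $g$ is with respect to the ambient Euclidean distance, so only the two extreme boundary points matter. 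That gives three collinear pieces, one triangle inequality, and chord lengths summing to $\abs{x-y}$. I would recommend replacing your countable decomposition with this two-hitting-time reduction.
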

The statement is used in \cite{lindqvist}. For completeness we add a proof.
\begin{proof}
Let $x,y \in \overline{\Omega}$. Let $\gamma\colon [0,1] \to \R^d$ with $\gamma(t)=ty+(1-t)x$. Since $\gamma$ and $f$ are Lipschitz continuous also $f\circ \gamma$ is Lipschitz continuous. Hence we have by \cite[Theorem 3.3]{lipLectures} that 
\begin{equation}
(f\circ \gamma)(x)-(f\circ\gamma)(y)=\int_x^y (f\circ\gamma)'(t) dt.
\end{equation}

Case 1: $\gamma((0,1))\subset \Omega$. We have $f(y)=f(x)+\int_{[0,1]} \inner{\nabla f(\gamma (t))}{y-x}dt$. This yields $\abs{f(y)-f(x)}\leq \int_{[0,1]}\abs{\inner{\nabla f(\gamma (t))}{y-x}}dt\leq \int_{[0,1]}\Norm{\nabla f}{L^\infty(\Omega)} \abs{x-y} dt\leq L\abs{x-y}$.\\

Case 2: $\gamma((0,1))\not\subset \Omega$. Since $\partial \Omega$ is closed we have that $t_1=\min\{t\in [0,1]: \gamma(t)\in \partial \Omega \}$ and $t_2=\max\{t\in [0,1]: \gamma(t)\in \partial \Omega \}$ exist and $t_2\geq t_1$. Because $f\mid_{\partial \Omega}=g$ and $g$ is Lipschitz, we get $\abs{f(\gamma(t_1))-f(\gamma(t_2))}\leq L\abs{\gamma(t_1)-\gamma(t_2)}$. Due to case 1 we have $\abs{f(x)-f(\gamma(t_1))}\leq L\abs{x-\gamma(t_1)}$ and $\abs{f(\gamma(t_2))-f(y)}\leq L\abs{\gamma(t_2)-y}$. Thus
\begin{equation}
\begin{aligned}
\abs{f(x)-f(y)}&\leq \abs{f(x)-f(\gamma(t_1))} + \abs{f(\gamma(t_1))-f(\gamma(t_2))}+\abs{f(\gamma(t_2))-f(y)} \\
&\leq L \left(\abs{x-\gamma(t_1)}+\abs{\gamma(t_1)-\gamma(t_2)}+\abs{\gamma(t_2)-y}\right)=L\abs{x-y}.
\end{aligned}
\end{equation}
\end{proof} 

\begin{definition}\label{def:abs_min}
A Lipschitz continuous extension $f\colon \overline{\Omega} \to \R$ of $g\in C^{0,1}(\partial \Omega)$ is called \emph{absolute minimal} if we have for all open $D\subset \R^d$ that $\mu(f,\overline{D})=\mu(f,\partial D)<\infty$.
\end{definition}
Informally this means that $f$ is absolute minimal if and only if there is no subregion, where we can improve the Lipschitz constant.\\

The following theorem proven by Aronsson states a one-to-one relation of absolute minimal functions to solutions of $\Delta_\infty u=0$. We only cite the result, because as mentioned above such an $u\in C(\overline{\Omega})\cap C^2(\Omega)$ does not exist for all boundary values $g$.

\begin{theorem}
Let $u\in C^2(\Omega)$. Then $u$ is absolute minimal if and only if $\Delta_\infty u=0$ on $\Omega$.
\end{theorem}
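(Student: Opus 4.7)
My plan is to reconstruct the classical Aronsson argument, which the author defers to a citation because the $C^2$ formulation itself is inadequate for general boundary data. Both implications rest on the elementary observation that for $u\in C^2$ the Lipschitz seminorm on a convex region $R\subseteq\Omega$ equals $\sup_R|\nabla u|$ (by integrating $\nabla u$ along line segments, exactly as in the proof of Lemma~\ref{lem:infNorm-lipschitz}). Consequently, on a convex subdomain $D$ the defining equality $\mu(u,\overline D)=\mu(u,\partial D)$ reduces to the maximum-principle statement $\sup_{\overline D}|\nabla u|=\sup_{\partial D}|\nabla u|$, and the non-convex case reduces to this one by testing on balls.

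For the implication $\Delta_\infty u=0\Rightarrow u$ absolute minimal I would exploit the identity $\Delta_\infty u=\tfrac12\inner{\nabla(|\nabla u|^2)}{\nabla u}$ derived in the text: it says that $|\nabla u|^2$ is conserved along integral curves of the gradient flow $\dot\gamma=\nabla u(\gamma)$. Given $\overline D\subset\Omega$, if $|\nabla u|$ attains its maximum at an interior point $x_0$ with $\nabla u(x_0)\neq 0$, the flow line through $x_0$ preserves the maximum value of $|\nabla u|^2$ and, by compactness of $\overline D$, must leave through $\partial D$, yielding a boundary point attaining the same maximum. If $\nabla u(x_0)=0$, the maximum is zero and the claim is immediate.

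For the converse I would argue by contraposition. Suppose $\Delta_\infty u(x_0)>0$ (the other sign being symmetric); then in a small neighbourhood of $x_0$ the quantity $|\nabla u|^2$ is strictly increasing along the gradient flow, and the task is to turn this local information into a subdomain on which the interior supremum of $|\nabla u|$ strictly exceeds its boundary supremum. The cleanest route is through the classical \emph{comparison with cones}: absolute minimality is equivalent to the property that for every ball $B\subset\subset\Omega$ and every affine cone $V(x)=a+b|x-z|$ one has $u\leq V$ on $\partial B$ implying $u\leq V$ on $B$, and symmetrically from below. For a $C^2$ function at $x_0$ with $\nabla u(x_0)\neq 0$, comparison with cones translates via a Taylor expansion --- placing the apex $z$ on the ray $x_0-\R_{>0}\nabla u(x_0)$ and taking slope $b=|\nabla u(x_0)|$ --- into the two-sided condition $\Delta_\infty u(x_0)\leq 0$ and $\Delta_\infty u(x_0)\geq 0$, hence $\Delta_\infty u(x_0)=0$.

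The main obstacle is this second step: establishing the equivalence between absolute minimality and comparison with cones, and then carrying out the Hessian-level computation that relates cone comparison to the sign of $\Delta_\infty u$. Arranging a cone that touches $u$ from above at $x_0$ so that $u-V$ has a strict interior maximum is the geometrically delicate part; once this is set up, the second-order necessary conditions at $x_0$, combined with the explicit Hessian of $V$ (which contributes only transversally to the ray $x_0-z$), isolate exactly the $\Delta_\infty$ quantity with the correct sign. This cone-comparison framework is also what the viscosity-solution theory extends beyond the $C^2$ setting, which explains why the author cites the classical result rather than reproving it here.
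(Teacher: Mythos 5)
The reduction you make at the outset is false, and it carries the forward implication. For convex $D$ you correctly have $\mu(u,\overline D)=\sup_{\overline D}|\nabla u|$, but $\mu(u,\partial D)$ is the Lipschitz constant of the \emph{restriction} of $u$ to $\partial D$ with respect to chordal distance, and this is not $\sup_{\partial D}|\nabla u|$: locally it only sees the tangential component of $\nabla u$. Concretely, take $u(x,y)=x^2-y^2$ and $D=B_r((1,0))$ with $r$ small. Then $\sup_{\overline D}|\nabla u|=\sup_{\partial D}|\nabla u|=2(1+r)$, attained at the boundary point $(1+r,0)$, so your maximum-principle statement holds; yet parametrizing the circle and computing difference quotients gives $\mu(u,\partial D)=2+O(r^2)<2+2r=\mu(u,\overline D)$, because $\nabla u$ is normal to $\partial D$ at the maximizing point. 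So the gradient maximum principle does not imply the defining equality $\mu(u,\overline D)=\mu(u,\partial D)$, and your first step does not close the argument: after showing that $|\nabla u|^2$ is conserved along gradient trajectories you must still exhibit a \emph{pair of boundary points} whose difference quotient is at least $\sup_{\overline D}|\nabla u|$. Aronsson's actual argument follows the trajectory through a point realizing $M=\sup_{\overline D}|\nabla u|$ until it exits $D$ at two points $a,b\in\partial D$ and uses that $u$ increases at constant speed $M$ per unit arclength, so $|u(a)-u(b)|=M\cdot\mathrm{length}\geq M|a-b|$; the justification that trajectories reach $\partial D$ in finite length, together with the inequality $\mu(u,\overline D)\leq\sup_{\overline D}|\nabla u|$ for non-convex $D$ (which can fail and forces one to reduce to balls), is the real content, and it is absent from your sketch.

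The converse is not proved either: it rests entirely on the equivalence between absolute minimality and comparison with cones, which you invoke but do not establish. That equivalence is itself a nontrivial theorem (due to Crandall, Evans and Gariepy) whose proof is comparable in difficulty to the statement you are trying to prove, and the Taylor-expansion computation extracting the sign of $\Delta_\infty u(x_0)$ from cone comparison is only described, not carried out; you identify this yourself as the main obstacle. A self-contained converse would instead show directly that $\Delta_\infty u(x_0)\neq 0$ forces $|\nabla u|$ to be strictly monotone along the gradient trajectory near $x_0$ and then build a small subdomain on which $\mu(u,\overline D)>\mu(u,\partial D)$. As it stands, neither implication is complete; note that the paper does not prove this theorem either, but defers entirely to Aronsson's original article.
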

\begin{proof} 
See \cite[Theorem 8]{aronsson}.
\end{proof}

We will show that a sequence $(u_p)_{p\in\Nat} \subset U_p$ of $p$-harmonic functions has a subsequence converging to an absolute minimal function.

\begin{theorem}\label{thm:varsolex}
Let $g\in C^{0,1}(\partial \Omega)$ with $\mu(g,\partial \Omega)=L$ and let $(u_p)_{p\in \Nat}$ a sequence of $p$-harmonic $u_p\in U_p$. Then there exists a subsequence $(u_{p_j})_j$ converging uniformly to some $u_\infty \in U_\infty$ such that  $\nabla u_{p_j}\rightharpoonup \nabla u_\infty$ weakly as $j \to \infty$ in $L^s(\Omega)$ for all $s>1$.\\
Further, we have that $\mu(u_\infty,\overline{\Omega})=L$.
\end{theorem}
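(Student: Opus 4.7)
The plan is to extract a subsequential limit of $(u_p)$ via reflexivity, upgrade it to uniform convergence using Morrey's inequality together with Arzel\`a--Ascoli, and then read off the Lipschitz bound from weak lower semicontinuity. First I would establish a global gradient bound. By Lemma \ref{lem:forts} there exists $h \in C^{0,1}_L(\overline{\Omega})$ with $h\mid_{\partial\Omega}=g$, and Theorem \ref{thm:rademacher} gives $h\in W^{1,\infty}(\Omega)$ with $\Norm{\nabla h}{L^\infty(\Omega)}\leq L$. Since $h\in U_p$ for every $p$, the minimality of $u_p$ yields $\Norm{\nabla u_p}{L^p(\Omega)}^p \leq \int_\Omega \abs{\nabla h}^p dx \leq L^p \abs{\Omega}$. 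Reinterpreting this on the probability space $(\Omega, \abs{\Omega}^{-1}dx)$ and applying Lemma \ref{lem:Lpschachtel} one obtains for every fixed $s\in(1,\infty)$ and every $p\geq s$
\begin{equation}
\Norm{\nabla u_p}{L^s(\Omega)} \leq L \abs{\Omega}^{1/s}. \label{eq:unif_grad_bd}
\end{equation}

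Next I would diagonalize. Fix an increasing sequence $s_k\to\infty$ with $s_1>d$. By \eqref{eq:unif_grad_bd} and the Poincar\'e inequality (Lemma \ref{lem:poincare}) applied to $u_p-h\in W^{1,s_k}_0(\Omega)$, the family $(u_p-h)_p$ is bounded in each reflexive Banach space $W^{1,s_k}_0(\Omega)$. A standard diagonal extraction then produces a single subsequence $(u_{p_j})_j$ and a function $u_\infty$ such that $u_{p_j}-h \rightharpoonup u_\infty - h$ weakly in every $W^{1,s_k}_0(\Omega)$; since the $L^{s_k}$ embeddings are consistent (Lemma \ref{lem:Lpschachtel}), this upgrades to $\nabla u_{p_j} \rightharpoonup \nabla u_\infty$ weakly in $L^s(\Omega)$ for every $s>1$.

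To obtain uniform convergence, I would apply Morrey's inequality (Lemma \ref{lem:morrey}) to $u_{p_j}-h\in W^{1,s_1}_0(\Omega)$: combined with \eqref{eq:unif_grad_bd}, this yields a uniform H\"older estimate on $u_{p_j}-h$ of exponent $1-d/s_1$, and the Lipschitz continuity of $h$ then makes $(u_{p_j})$ equicontinuous on $\overline{\Omega}$; vanishing boundary values of $u_{p_j}-h$ together with boundedness of $h$ provide a uniform sup bound. Arzel\`a--Ascoli delivers a further subsequence converging uniformly on $\overline{\Omega}$ to a continuous function, which by uniqueness of distributional limits must agree with $u_\infty$. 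In particular $u_\infty\in C(\overline{\Omega})$ with $u_\infty\mid_{\partial\Omega}=g$.

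Finally, weak lower semicontinuity of the $L^s$-norm combined with \eqref{eq:unif_grad_bd} gives $(\abs{\Omega}^{-1}\int_\Omega \abs{\nabla u_\infty}^s dx)^{1/s}\leq L$ for every $s>1$; letting $s\to\infty$ (using that $L^s$-norms on a probability space tend to the $L^\infty$-norm) yields $\Norm{\nabla u_\infty}{L^\infty(\Omega)}\leq L$, so $u_\infty\in U_\infty$. Lemma \ref{lem:infNorm-lipschitz} then gives $\mu(u_\infty,\overline{\Omega})\leq L$, while the converse inequality $\mu(u_\infty,\overline{\Omega})\geq \mu(u_\infty,\partial\Omega)=\mu(g,\partial\Omega)=L$ is immediate from $u_\infty\mid_{\partial\Omega}=g$. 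The main obstacle is the bookkeeping in the diagonal step, where one must arrange a single subsequence whose limit $u_\infty$ simultaneously acts as the weak $L^s$-gradient-limit for every $s>1$ and as the uniform Arzel\`a--Ascoli limit; this is standard but requires care because the natural norm bounds live in different Sobolev spaces as $p$ varies.
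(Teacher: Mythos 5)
Your proposal is correct and follows essentially the same route as the paper's proof: extend $g$ by Lemma \ref{lem:forts}, use minimality of $u_p$ plus Morrey's inequality for equicontinuity and Arzel\`a--Ascoli for uniform convergence, the probability-space nesting of $L^p$-norms (Lemma \ref{lem:Lpschachtel}) plus reflexivity and diagonalization for the weak gradient limits, and weak lower semicontinuity followed by Lemma \ref{lem:infNorm-lipschitz} for $\mu(u_\infty,\overline{\Omega})=L$. The only difference is cosmetic ordering (you extract the weak limits before applying Arzel\`a--Ascoli, the paper does the reverse), which does not change the argument.
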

\begin{definition}
A function $u_\infty\in U_\infty$ constructed as in Theorem \ref{thm:varsolex} is called a \emph{variational solution} of the $\infty$-Laplace equation $\Delta_\infty u=0$.
\end{definition}
The proof of Theorem \ref{thm:varsolex} comes from \cite[Section 3]{lindqvist}. We added some details.
\begin{proof}[Proof of Theorem \ref{thm:varsolex}]
Due to Lemma \ref{lem:forts} we can extend $g$ to some Lipschitz continuous function on $\overline{\Omega}$. We denote this extension again with $g$. By Theorem \ref{thm:rademacher} the Lipschitz continuity ensures $g\in C(\overline{\Omega})\cap W^{1,p}(\Omega)$. First we show equicontinuity and equiboundedness of $\{u_p\}$ for $p>d$. Since $u_p\in U_p$ we have that $v_p=u_p-g \in W_0^{1,p}(\Omega)$. Thus Lemma \ref{lem:morrey} yields for $p>d$
\begin{equation}
\abs{v_p(x)-v_p(y)}\leq \frac{2pd}{p-d} \abs{x-y}^{1-\frac{d}{p}}\Norm{\nabla v_p}{L^p(\Omega)}
\end{equation} 
Hence,
\begin{equation}
\begin{aligned}
\abs{u_p(x)-u_p(y)}&\leq\abs{g(x)-g(y)}+\abs{v_p(x)-v_p(y)}\\
&\leq L\abs{x-y}+\frac{2pd}{p-d} \abs{x-y}^{1-\frac{d}{p}}\Norm{\nabla v_p}{L^p(\Omega)}\\
&\leq L\abs{x-y}+\frac{2pd}{p-d} \abs{x-y}^{1-\frac{d}{p}}(\Norm{\nabla u_p}{L^p(\Omega)}+\Norm{\nabla g}{L^p(\Omega)})\\
&\leq L\abs{x-y}+\frac{2pd}{p-d} \abs{x-y}^{1-\frac{d}{p}}(2\Norm{\nabla g}{L^p(\Omega)})\\
&= L\abs{x-y}+\frac{4pd}{p-d} \abs{x-y}^{1-\frac{d}{p}}\Norm{\nabla g}{L^p(\Omega)}.
\end{aligned}
\end{equation}
Since $\left(\frac{4pd}{p-d}\right)_p$ converges to $4d$ for $p\to \infty$ the sequence is bounded and there exists some constant $C>0$ depending on $\Omega$ such that 
\begin{equation}\label{eq:absch_varsolex}
\abs{u_p(x)-u_p(y)}\leq L\abs{x-y}+C \abs{x-y}^{1-\frac{d}{p}}
\end{equation}
for all $p>d$. Therefore, $\{u_p:p\in \Nat\}\subset C(\overline{\Omega})$ is equicontinuous. Because $u_p\mid_{\partial \Omega}=g\mid_{\partial \Omega}$ we get for $x\in\Omega$ some fixed $x_0\in \partial \Omega$ 
\begin{equation}
\begin{aligned}
\abs{u_p(x)}&\leq\abs{u_p(x)-u_p(x_0)}+\abs{u_p(x_0)}\\
&\leq L\abs{x-x_0}+C \abs{x-x_0}^{1-\frac{d}{p}}+\abs{g(x_0)}\\
&\leq L\diam{\Omega}+C\diam{\Omega}^{1-\frac{d}{p}}+\abs{g(x_0)}.
\end{aligned}
\end{equation}
This yields for $\diam{\Omega}\leq 1$ that $\abs{u_p(x)}\leq L+C+\abs{g(x_0)}$. For $\diam{\Omega}>1$ we get 
\begin{equation}
\abs{u_p(x)}\leq L\diam{\Omega}+C\diam{\Omega}^{1-\frac{d}{p}}+\abs{g(x_0)}\leq L\diam{\Omega}+C\diam{\Omega}+\abs{g(x_0)}.
\end{equation}
Thus $\{u_p:p\in \Nat\}\subset C(\overline{\Omega})$ is equibounded. Now the theorem of Arzela-Ascoli yields that there is a subsequence $(u_{p_j})_j$, which converges uniformly to some $u_\infty \in C(\overline{\Omega})$ with $u_\infty \mid_{\partial \Omega}=g\mid_{\partial \Omega}$.\\
Furthermore, we have 
\begin{equation}
\begin{aligned}
\abs{u_\infty (x) -u_\infty (y)}&=\lim\limits_{j\to \infty} \abs{u_{p_j} (x) - u_{p_j} (y)}\\
&\leq \lim\limits_{j\to \infty}  L\abs{x-y}+C \abs{x-y}^{1-\frac{d}{p_j}}\\
&=L\abs{x-y}+C\abs{x-y}.
\end{aligned}
\end{equation}
Therefore, $u_\infty$ is Lipschitz continuous. In particular, it is almost everywhere differentiable by Theorem \ref{thm:rademacher}. Since $\frac{1}{\lambda(\Omega)}dx$ is a probability measure we get for $p_j>s$ using Lemma \ref{lem:Lpschachtel}
\begin{equation}
\begin{aligned}
\left(\int_\Omega \abs{\nabla u_{p_j}}^s\frac{1}{\lambda (\Omega)} dx\right)^\frac{1}{s}&\leq\left(\int_\Omega \abs{\nabla u_{p_j}}^{p_j}\frac{1}{\lambda (\Omega)} dx\right)^\frac{1}{p_j}\\
&\leq \left(\int_\Omega \abs{\nabla g}^{p_j}\frac{1}{\lambda (\Omega)} dx\right)^\frac{1}{p_j}\\
&\leq \Norm{\abs{\nabla g}}{L^\infty (\Omega)}=L,
\end{aligned}
\end{equation}
where $\lambda$ denotes the Lebesgue measure. Hence $\Norm{u_{p_j}}{L^s(\Omega)}$ and $\Norm{\abs{\nabla u_{p_j}}}{L^s(\Omega)}$ are bounded. Thus also the Sobolev norm $\Norm{u_{p_j}}{W^{1,s}(\Omega)}$ is bounded for all $j$. Since the Sobolev spaces $W^{1,p}(\Omega)$ for $1<p<\infty$ are reflexive we know that $\overline{B_r(0)}$ is weak sequentially compact for all $r>0$. Hence each subsequence of $(u_{p_j})_j$ has a subsequence $(u_{p_{j_k}})_k$ which converges weakly to some $v\in W^{1,s}(\Omega)$. The weak convergence in $W^{1,s}(\Omega)$ yields weak convergence of $(u_{p_{j_k}})_k$ to $v$ and of $(\nabla u_{p_{j_k}})_k$ to $\nabla v$ in $L^s(\Omega)$. Since weak limits are unique and the uniform convergence of $(u_{p_{j_k}})_k$ to $u_\infty$ implies weak convergence we have $v=u_\infty$ and $\nabla u_{p_{j_k}}\rightharpoonup \nabla u_\infty$ weakly in $L^s(\Omega)$.\\
Note that we have such a subsequence $(u_{p_{j_k}})_k$ for all $s>1$, which depends on $s$. Since for $p>s$ weak convergence in $L^p(\Omega)$ implies weak convergence in $L^s(\Omega)$ we can create by diagonalization a subsequence  $(u_{p_{j_k}})_k$ of $(u_{p_j})_j$ with $\nabla u_{p_{j_k}}\rightharpoonup \nabla u_\infty$ weakly in $L^s(\Omega)$ for all $1<s\leq\infty$. Thus we get by the lower semi-continuity of the norm with regard to weak convergence
\begin{equation}
\int_\Omega \abs{\nabla u_\infty}^s dx \leq \limsup\limits_{k\to \infty} \int_\Omega \abs{\nabla u_{j_k}}^s dx \leq L^s
\end{equation}
i.e. $\Norm{\nabla u_\infty}{L^s(\Omega)}\leq L$. Hence for $s \to \infty$ we get
\begin{equation}
\Norm{\nabla u_\infty}{L^\infty(\Omega)} \leq L.
\end{equation}
Thus $u_\infty$ is Lipschitz continuous with constant $L$ by Lemma \ref{lem:infNorm-lipschitz} and we have 
\begin{equation}
L=\mu(g,\partial \Omega)=\mu(u_\infty, \partial \Omega) \leq \mu(u_\infty,\overline{\Omega}) \leq L.
\end{equation}
\end{proof}

\begin{theorem}\label{thm:varsolabsmin}
Let $g\in C^{0,1}(\partial \Omega)$ with $\mu(g,\partial \Omega)=L$ and $u_\infty \in U_\infty$ be a variational solution of $\Delta_\infty u =0$. Then $u_\infty$ is absolute minimal.
\end{theorem}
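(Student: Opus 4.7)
The plan is to reduce the claim to an application of Theorem \ref{thm:varsolex} on the subdomain $D$ itself, and then identify the resulting local variational solution with $u_\infty|_{\overline{D}}$ via a comparison argument. Fix an open $D\subset \Omega$ and write $L_D := \mu(u_\infty, \partial D)$. Since $\mu(u_\infty, \overline{D}) \geq L_D$ is immediate and finiteness is already provided by Theorem \ref{thm:varsolex}, the task is to show $\mu(u_\infty, \overline{D}) \leq L_D$.

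For every $p > d$, let $w_p \in C(\overline{D}) \cap W^{1,p}(D)$ denote the unique $p$-harmonic function on $D$ with boundary values $w_p|_{\partial D} = u_\infty|_{\partial D}$; existence follows from Theorem \ref{thm:p-harmex} applied to the $L_D$-Lipschitz extension of $u_\infty|_{\partial D}$ furnished by Lemma \ref{lem:forts}, uniqueness from Lemma \ref{lem:p-harmun}. The proof of Theorem \ref{thm:varsolex}, transported verbatim from $(\Omega, g)$ to $(D, u_\infty|_{\partial D})$, produces a subsequence $w_{p_k}$ converging uniformly on $\overline{D}$ to some $w_\infty \in C(\overline{D})$ with $w_\infty|_{\partial D} = u_\infty|_{\partial D}$ and $\mu(w_\infty, \overline{D}) = L_D$. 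It thus suffices to show $w_\infty = u_\infty|_{\overline{D}}$.

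The link between $u_{p_k}$ and $w_{p_k}$ is a comparison principle for the $p$-Laplace equation: if $u, v \in C(\overline{D}) \cap W^{1,p}(D)$ are both $p$-harmonic on $D$ and $u \leq v$ on $\partial D$, then $u \leq v$ on $\overline{D}$. I would derive this by inserting $(u - v)^+ \in W^{1,p}_0(D)$ into the weak formulation of Theorem \ref{thm:varsol_charac} for both $u$ and $v$, subtracting, and exploiting the monotonicity inequality $\inner{|a|^{p-2}a - |b|^{p-2}b}{a - b} \geq 0$ on $\R^d$, itself a short consequence of the strict convexity used in Lemma \ref{lem:p-harmun}. Since $\Delta_p$ annihilates constants, applying comparison to $u_{p_k}$ against the two shifts $w_{p_k} \pm \Norm{u_{p_k} - u_\infty}{L^\infty(\partial D)}$ yields the uniform bound $|u_{p_k} - w_{p_k}| \leq \Norm{u_{p_k} - u_\infty}{L^\infty(\partial D)}$ throughout $\overline{D}$. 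The right-hand side vanishes as $u_{p_k} \to u_\infty$ uniformly on $\overline{\Omega}$, so $w_\infty = u_\infty|_{\overline{D}}$ and hence $\mu(u_\infty, \overline{D}) = L_D$.

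The principal obstacle is the comparison principle, which is not made explicit in the preceding material, although its derivation is short. If one wants to avoid it, an alternative is the direct competitor $v_{p_k} := \phi + (u_{p_k} - u_\infty) \in U_{p_k}$, with $\phi$ the $L_D$-Lipschitz extension of $u_\infty|_{\partial D}$ from Lemma \ref{lem:forts} extended by $u_\infty$ outside $D$, giving the localized minimality inequality $\int_D |\nabla u_{p_k}|^{p_k}\,dx \leq \int_D |\nabla \phi + \nabla(u_{p_k} - u_\infty)|^{p_k}\,dx$. Passing to the limit is however delicate here because $\nabla(u_{p_k} - u_\infty)$ is only weakly, not $L^{p_k}$-strongly, null, and one has to replace $v_{p_k}$ by a cutoff hybrid that interpolates between $\phi$ and $u_{p_k}$ in a thin boundary layer of width $\delta$ and let $\delta \to 0$ on a diagonal with $p_k \to \infty$; the comparison-principle route is cleaner and is the one I would follow.
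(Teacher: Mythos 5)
Your proposal is correct and follows the same overall skeleton as the paper's proof: replace $u_\infty$ on $D$ by the $p$-harmonic functions $w_p$ with boundary data $u_\infty|_{\partial D}$, apply Theorem \ref{thm:varsolex} on $D$ to extract a uniform limit $w_\infty$ with $\mu(w_\infty,\overline{D})=\mu(w_\infty,\partial D)$, and then show $w_\infty=u_\infty|_{\overline D}$ by controlling $u_{p}-w_{p}$ through its boundary values. Where you diverge is in the mechanism for that control. The paper establishes $\max_{\overline D}(w_p-u_p)=\max_{\partial D}(w_p-u_p)$ by contradiction: it looks at the open set $G$ where $w_p-u_p$ exceeds the average of its interior and boundary maxima, notes that $u_p+\tfrac{\alpha+\beta}{2}$ and $w_p$ are both $p$-harmonic on $G$ with identical boundary values, and invokes uniqueness (Lemma \ref{lem:p-harmun}) to force $G=\emptyset$. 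You instead prove a genuine comparison principle by testing the weak formulation with $(u-v)^+\in W^{1,p}_0(D)$ and using the monotonicity $\inner{\abs{a}^{p-2}a-\abs{b}^{p-2}b}{a-b}\geq 0$, then apply it to $u_{p_k}$ against the constant shifts of $w_{p_k}$. Both routes are valid and of comparable length; yours is the more standard PDE argument and yields the quantitative bound $\abs{u_{p_k}-w_{p_k}}\leq \Norm{u_{p_k}-u_\infty}{L^\infty(\partial D)}$ directly, while the paper's avoids the monotonicity computation at the cost of the slightly ad hoc sublevel-set construction. The only point you should make explicit is that Theorem \ref{thm:varsol_charac} states the Euler--Lagrange identity for $h\in C_c^\infty$ only, so admitting $(u-v)^+\in W^{1,p}_0(D)$ as a test function requires the routine density argument (the functional $h\mapsto\int_D\inner{\abs{\nabla u}^{p-2}\nabla u}{\nabla h}\,dx$ is continuous on $W^{1,p}_0(D)$ because $\abs{\nabla u}^{p-1}\in L^{p/(p-1)}(D)$); your fallback via boundary-layer competitors is, as you say, considerably more delicate and unnecessary.
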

\begin{proof}
See \cite[Theorem 5]{lindqvist}. Let $D$ be an open set with $\overline{D}\subset \Omega$ and $(u_{p_j})$ a subsequence of $p$-harmonic functions $u_p$ converging uniformly to $u_\infty$. Denote by $v_p$ the unique solutions of
\begin{equation}
\int_D \inner{\nabla h(x)}{\frac{\nabla v(x)}{\abs{\nabla v(x)}^{2-p}}}dx=0 \text{ for all } h\in C_c^\infty (D) \text{ s.t. } v\mid_{\partial D}=u_\infty\mid_{\partial D}.
\end{equation}
Now $(v_{p_j})_j$ has by Theorem \ref{thm:varsolex} a subsequence $(v_{p_{j_k}})_k$ converging uniformly to some $v_\infty \in U_\infty$ with $\mu (v_\infty , \partial D)=\mu (v_\infty ,\overline{D})$. We claim that it holds $v_\infty=u_\infty$ on $\overline{D}$ i.e. that $v_{p_{j_k}} \to u_\infty$ uniformly for $k\to \infty$.

First we show by contradiction that $v_p-u_p$ attains its maximum in $\overline{D}$ on $\partial D$:
Assume $\alpha = \max_{\overline{D}}(v_p - u_p)>\max_{\partial D} (v_p-u_p) = \beta$ and define the open set $G=\{ x\in D: v_p(x)-u_p(x)>\frac{\alpha+\beta}{2}\}$. By assumption $G$ is not empty and due to the continuity of $v_p-u_p$ we have for all $x\in \partial G$ that $v_p(x)-u_p(x)=\frac{\alpha+\beta}{2}$.
Since $u_p$ minimizes $J_p$ and
\begin{equation}
J_p u_p=\frac{1}{p} \left(\int_{\Omega \backslash G} \abs{\nabla u_p}^p dx + \int_{G} \abs{\nabla u_p}^p dx\right)
\end{equation}
we get that $u_p$ is a solution of
\begin{equation}
\int_G \inner{\nabla h(x)}{\frac{\nabla u(x)}{\abs{\nabla u(x)}^{2-p}}}dx=0 \text{ for all } h\in C_c^\infty (G) \text{ s.t. } u\mid_{\partial G}=u_p\mid_{\partial G}
\end{equation}
Analogously $v_p$ is also $p$-harmonic on $G$. Since $\left(u_p+\frac{\alpha+\beta}{2}\right)\mid_{\partial G}=v_p\mid_{\partial G}$ we get by the uniqueness of $p$-harmonic functions of $G$ (Lemma \ref{lem:p-harmun}) that $\left(u_p+\frac{\alpha+\beta}{2}\right)\mid_G=v_p\mid_G$. By the definition of $G$ this yields that $G$ is empty and contradicts the assumption. Thus we get by uniform convergence of the $u_p$
\begin{equation}
\max_{\overline{D}}(v_{p_{j_k}}-u_{p_{j_k}})=\max_{\partial D}(v_{p_{j_k}}-u_{p_{j_k}}) =\max_{\partial D}(u_\infty-u_{p_{j_k}})\to 0
\end{equation}
Analogously we get $\min_{\overline{D}}(v_{p_{j_k}}-u_{p_{j_k}})\to 0$. Therefore, $v_{p_{j_k}}-u_{p_{j_k}}$ converges uniformly to $0$. This yields
\begin{equation}
v_\infty-u_\infty=(v_\infty-v_{p_{j_k}})+(v_{p_{j_k}}-u_{p_{j_k}})+(u_{p_{j_k}}-u_\infty)
\end{equation}
Since all summands converge uniformly to $0$ we have that $v_\infty=u_\infty$ and $\mu (u_\infty , \partial D)=\mu (u_\infty ,\overline{D})$. Thus $u_\infty$ is absolute minimal.
\end{proof}

In particular Theorem \ref{thm:varsolex} ensures the existence of a variational solution of the $\infty$-Laplace equation, since Theorem \ref{thm:p-harmex} yields the existence of $p$-harmonic functions for $p>d$. Theorem \ref{thm:varsolabsmin} yields that this solution is absolute minimal.

\begin{theorem}\label{thm:classical_solution_is_variational_solution}
Every $u\in C(\overline{\Omega})\cap C^2(\Omega)$ with $\Delta_\infty u=0$ is a variational solution of the $\infty$-Laplace equation.
\end{theorem}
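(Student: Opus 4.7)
The plan is to prove the result via the chain: any $C^2$ solution of $\Delta_\infty u = 0$ is absolute minimal (by the cited Aronsson theorem), a variational solution with the same boundary data is also absolute minimal (Theorem \ref{thm:varsolabsmin}), and absolute minimal extensions are unique, so the two must coincide and $u$ itself is a variational solution.

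First I would restrict to the boundary and set $g \coloneqq u\mid_{\partial\Omega}$. Since Aronsson's theorem characterizes absolute minimality (hence Lipschitz continuity of $u$ on $\overline{\Omega}$ with $\mu(u,\overline{\Omega}) = \mu(g,\partial\Omega) \eqqcolon L < \infty$) by $\Delta_\infty u = 0$, I obtain that $g \in C^{0,1}(\partial\Omega)$ and that $u \in C^{0,1}_L(\overline{\Omega})$ is an absolute minimal extension of $g$. In particular, by Theorem \ref{thm:rademacher} we have $u \in W^{1,\infty}(\Omega) \subset W^{1,p}(\Omega)$ for every $p$, so $u \in U_p$ for all $1 < p \leq \infty$.

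Next, I would produce a variational solution with the same boundary data. For every integer $p > d$, Theorem \ref{thm:p-harmex} gives a $p$-harmonic function $u_p \in U_p$ (sharing boundary values $g$ with $u$). By Theorem \ref{thm:varsolex}, a subsequence $(u_{p_j})_j$ converges uniformly on $\overline{\Omega}$ to some $u_\infty \in U_\infty$ with $\mu(u_\infty,\overline{\Omega}) = L$, and this $u_\infty$ is by definition a variational solution of $\Delta_\infty v = 0$. Theorem \ref{thm:varsolabsmin} then tells us that $u_\infty$ is an absolute minimal extension of $g$.

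Finally I would invoke uniqueness of absolute minimal extensions (the statement alluded to in Remark \ref{rem:uniqueness}): since both $u$ and $u_\infty$ are absolute minimal extensions of the same boundary function $g$, we conclude $u = u_\infty$ on $\overline{\Omega}$. Hence $u_{p_j} \to u$ uniformly as $j \to \infty$, which by definition makes $u$ a variational solution of $\Delta_\infty u = 0$.

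The main obstacle is conceptual rather than computational: one must first ensure that $u$ really is Lipschitz up to the boundary so that $g$ has a finite Lipschitz constant and the machinery of Section \ref{sec:plap_inflap} applies; this is handled cleanly by invoking the Aronsson iff-theorem relating $C^2$ solutions of $\Delta_\infty u = 0$ to absolute minimality, and then the proof reduces to a uniqueness argument for absolute minimal extensions.
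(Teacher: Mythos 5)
Your overall strategy (build a variational solution $u_\infty$ with the same boundary data and then identify it with $u$ via a uniqueness theorem) is the right shape, but the uniqueness statement you invoke is not one the paper has. You appeal to ``uniqueness of absolute minimal extensions'' and attribute it to Remark \ref{rem:uniqueness}; that remark asserts uniqueness of \emph{viscosity} solutions of $\Delta_\infty u=0$ (and hence of variational and classical solutions), not uniqueness of absolute minimal Lipschitz extensions. Nothing in the paper shows that a general absolute minimal extension is a viscosity solution --- only the implications ``variational $\Rightarrow$ absolute minimal'' (Theorem \ref{thm:varsolabsmin}) and ``variational $\Rightarrow$ viscosity'' (Theorem \ref{thm:variational_solution_is_viscosity_solution}) are established --- so uniqueness of absolute minimal extensions cannot be extracted from the available results. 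It is in fact a deep theorem (Jensen's), whose standard proof goes precisely through the viscosity framework, so invoking it here is circular relative to the paper's toolkit; the paper even exhibits non-uniqueness of absolute minimal extensions in the vector-valued case, underlining that the scalar statement is not free.

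The repair is short and is exactly the paper's route: since $u\in C^2(\Omega)$ satisfies $\Delta_\infty u=0$ pointwise, it is a viscosity solution by the equivalence theorem for $C^2$ functions; the variational solution $u_\infty$ with boundary data $g=u\mid_{\partial\Omega}$ is a viscosity solution by Theorem \ref{thm:variational_solution_is_viscosity_solution}; Theorem \ref{thm:uniqueness_of_viscosity} then forces $u=u_\infty$, so $u$ is a variational solution. Your use of Aronsson's theorem to first secure Lipschitz continuity of $u$ up to $\overline{\Omega}$ (so that $g$ is Lipschitz and the machinery of Section \ref{sec:plap_inflap} applies) is a reasonable preliminary step that the paper glosses over, but it does not substitute for the missing uniqueness argument.
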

\begin{proof}
See Remark \ref{rem:uniqueness} below.
\end{proof}

In this setting it is hard to prove the uniqueness of the $u_\infty$. Therefore, we will introduce the concept of viscosity solutions in the next section.

\subsection{Viscosity solutions}
\begin{definition}
Let $d<p\leq \infty$. 
\begin{enumerate}[(i)]\label{def:viscos}
\item We call $v\in C(\Omega)$ a \emph{viscosity supersolution} of $\Delta_p v=0$ in $\Omega$ if for all $\phi \in C^2(\Omega)$ with $\phi(x_0)=v(x_0)$ in some point $x_0\in \Omega$ and $\phi(x) < v(x)$ for $x\in \Omega \backslash \{x_0\}$ it holds $\Delta_p \phi (x_0)\leq 0$.
\item Analogously we call $u\in C(\Omega)$ a \emph{viscosity subsolution} of $\Delta_p u=0$ in $\Omega$ if if for all $\psi \in C^2(\Omega)$ with $\psi(x_0)=u(x_0)$ in some point $x_0\in \Omega$ and $\psi(x) > u(x)$ for $x\in \Omega \backslash \{x_0\}$ it holds $\Delta_p \psi (x_0)\geq 0$.
\item We call $h\in C(\Omega)$ a \emph{viscosity solution} of $\Delta_p h=0$ in $\Omega$, if $h$ is a viscosity supersolution and a viscosity subsolution.
\end{enumerate}
\end{definition}

In the $C^2$-case we directly get that the definitions of classical solutions and viscosity solutions of $\Delta_p u=0$ coincide.
\begin{theorem}
For $u\in C^2(\Omega)$ and $4\leq p\leq\infty$ holds: u is a viscosity solution of $\Delta_p u=0$ if and only if $\Delta_p u=0$ in $\Omega$.
\end{theorem}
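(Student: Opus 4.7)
The plan is to prove the two implications separately, using throughout the pointwise decomposition $\Delta_p v=\abs{\nabla v}^{p-2}\Delta v+(p-2)\abs{\nabla v}^{p-4}\Delta_\infty v$ from Lemma~\ref{lem:p-lap_darstellung} for finite $p$, together with $\Delta_\infty v=\inner{D^2 v\,\nabla v}{\nabla v}$ for $p=\infty$.

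For the direction \emph{classical implies viscosity}, assume $\Delta_p u=0$ on $\Omega$ and fix $\phi\in C^2(\Omega)$ touching $u$ from below at $x_0$ in the sense of Definition~\ref{def:viscos}(i). Since $u-\phi$ has a strict interior minimum at $x_0$, the standard first and second order conditions yield $\nabla\phi(x_0)=\nabla u(x_0)$ and $D^2\phi(x_0)\leq D^2 u(x_0)$ in the sense of symmetric matrices. Taking the trace gives $\Delta\phi(x_0)\leq \Delta u(x_0)$, and evaluating the Hessian inequality on the common gradient $\nabla u(x_0)$ gives $\Delta_\infty\phi(x_0)\leq\Delta_\infty u(x_0)$. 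Plugging these into the decomposition and using that the coefficients $\abs{\nabla u(x_0)}^{p-2}$ and $(p-2)\abs{\nabla u(x_0)}^{p-4}$ are non-negative for $p\geq 4$, one obtains $\Delta_p\phi(x_0)\leq\Delta_p u(x_0)=0$, so $u$ is a viscosity supersolution. The subsolution argument is completely symmetric, and the case $p=\infty$ follows from exactly the same Hessian comparison applied to $\inner{D^2\phi\,\nabla\phi}{\nabla\phi}$.

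For \emph{viscosity implies classical}, fix $x_0\in\Omega$ and use the explicit test function $\phi(x)=u(x)-\abs{x-x_0}^4$. A short computation shows that the gradient $4\abs{x-x_0}^2(x-x_0)$ and the Hessian of $x\mapsto\abs{x-x_0}^4$ both vanish at $x_0$, so $\nabla\phi(x_0)=\nabla u(x_0)$ and $D^2\phi(x_0)=D^2 u(x_0)$, while $\phi(x)<u(x)$ for $x\neq x_0$. Since $\Delta_p$ depends only pointwise on $\nabla$ and $D^2$, we have $\Delta_p\phi(x_0)=\Delta_p u(x_0)$, and the supersolution inequality forces $\Delta_p u(x_0)\leq 0$. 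Repeating the argument with $\psi(x)=u(x)+\abs{x-x_0}^4$ and the subsolution inequality forces $\Delta_p u(x_0)\geq 0$, so $\Delta_p u(x_0)=0$.

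The main delicate point is the behaviour at critical points $\nabla u(x_0)=0$ for finite $p$: one needs the weight $\abs{\nabla u}^{p-4}$ to remain non-negative and finite so the termwise comparison yields a monotone inequality, and this is precisely the role of the hypothesis $p\geq 4$. At such points, both $\Delta_p u(x_0)$ and $\Delta_p\phi(x_0)$ in fact vanish because the entire decomposition does, and for $p=\infty$ no decomposition is needed at all. For $p\in[2,4)$ one would have to argue via the divergence form $\diverg{\abs{\nabla v}^{p-2}\nabla v}$ and handle a $0\cdot\infty$ ambiguity, which is avoided by the stated restriction.
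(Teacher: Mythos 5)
Your proposal is correct and follows essentially the same route as the paper: the "classical implies viscosity" direction via the decomposition of Lemma \ref{lem:p-lap_darstellung} together with the first- and second-order conditions at the touching point ($\nabla\phi(x_0)=\nabla u(x_0)$, $D^2\phi(x_0)\leq D^2u(x_0)$, hence comparison of $\Delta$ and $\Delta_\infty$), and the converse via the quartic test functions $u\mp\abs{x-x_0}^4$ whose gradient and Hessian vanish at $x_0$. Your explicit remark on why $p\geq 4$ is needed at critical points is a welcome clarification that the paper leaves implicit.
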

The proof comes from \cite[Proposition 9]{lindqvist}. We added the infinitesimal calculations.
\begin{proof}
``$\Rightarrow$'': Let $u\in C^2(\Omega)$ with $\Delta_p u=0$ in viscosity sense and $x_0\in \Omega$ arbitrary fixed. It is easy to check that for $h(x)=(x-x_0)^4$, holds $h(x)\geq 0$ with equality if and only if $x=x_0$ and $\Delta_p h(x_0) =0$. \\
Hence $\phi (x)=u(x)-h(x)$ touches $u$ from below exactly in $x_0$. Since $u$ is a viscosity solution we get $\Delta_p u(x_0)=\Delta_p \phi (x_0) \leq 0$. On the other hand we have that $\psi (x) = u(x) + h(x)$ touches $u$ from above in $x_0$. Thus we get $\Delta_p u(x_0)=\Delta_p \psi(x_0) \geq 0$ and can conclude that $\Delta_p u(x_0)=0$.\\
``$\Leftarrow$'' Let $u \in C^2(\Omega)$ fulfill $\Delta_p u=0$ pointwise and $\phi\in C^2(\Omega)$ with $\phi(x_0)=u(x_0)$ for some $x_0\in \Omega$ and $\phi(x)<u(x)$ for $x\in \Omega\backslash \{x_0\}$.\\
Case 1: $p=\infty$. Since $(\phi-u)$ has a local maximum at $x_0$ we have that $\nabla (\phi-u)(x_0)=0$ and that $H(\phi-u)(x_0)$ is negative semidefinite, where $H(\phi-u)$ denotes the Hessian matrix of $\phi-u$. This yields 
\begin{equation}
\begin{aligned}
\Delta_\infty \phi (x_0)&= \sum_{i,j=1}^d \partial_i \phi (x_0) \partial_j \phi (x_0) \partial_i\partial_j \phi (x_0)\\
&=\nabla \phi (x_0) H\phi (x_0) \nabla \phi(x_0)\\
&=(\nabla \phi (x_0)+ \nabla (u-\phi)(x_0)) H(\phi-u+u)(x_0) (\nabla \phi (x_0)+ \nabla (u-\phi)(x_0))\\
&=\nabla u(x_0) (H(\phi-u)(x_0)+ H u (x_0)) \nabla u(x_0)\\
&=\nabla u(x_0) H(\phi-u)(x_0) \nabla u(x_0)+ \nabla u(x_0) H u (x_0) \nabla u(x_0)\\
&=\nabla u(x_0) H(\phi-u)(x_0) \nabla u(x_0)+\Delta_\infty u (x_0)\\
&\leq \Delta_\infty u (x_0) =0.
\end{aligned}
\end{equation}
Thus $u$ is a viscosity supersolution of $\Delta_\infty u=0$.\\
Case 2: $p<\infty$. Then we have due to Lemma \ref{lem:p-lap_darstellung}
\begin{equation}\label{eq:vissuppkleinerinf}
\Delta_p \phi (x_0)=\abs{\nabla \phi (x_0)}^{p-2} \Delta \phi (x_0) + (p-2) \abs{\nabla \phi(x_0)}^{p-4} \Delta_\infty \phi(x_0).
\end{equation}
Since $(\phi-u)$ has a local maximum at $x_0$ we have that $H(\phi-u)(x_0)$ is negative semidefinite. In particular, $H(\phi-u)(x_0)$ has non positive diagonal elements. Hence $\Delta (\phi-u) (x_0)\leq 0$. Further we have that $\nabla (\phi-u)(x_0)=0$ and  therefore $\nabla \phi(x_0)=\nabla u (x_0)$. As seen above we have $\Delta_\infty \phi \leq \Delta_\infty u$ Thus \eqref{eq:vissuppkleinerinf} becomes
\begin{equation}
\begin{aligned}
\Delta_p \phi (x_0)&=\abs{\nabla u (x_0)}^{p-2} (\Delta u (x_0) + \Delta (\phi-u) (x_0)) + (p-2) \abs{\nabla u(x_0)}^{p-4} \Delta_\infty \phi(x_0)\\
&\leq \abs{\nabla u (x_0)}^{p-2} \Delta u (x_0) + (p-2) \abs{\nabla u(x_0)}^{p-4} \Delta_\infty u(x_0)\\
&= \Delta_p u(x_0)=0.
\end{aligned}
\end{equation}
Hence $u$ is a viscosity supersolution of $\Delta_p u=0$.\\
Analogously we get that $u$ is a viscosity subsolution of $\Delta_p u=0$. Therefore, $u$ is a viscosity solution.
\end{proof}

Our goal is to show that every variational solution of $\Delta_\infty u=0$ is also a viscosity solution.

\begin{lemma}\label{lem:p-harm_viscos}
Let $1<p<\infty$ and $v\in C(\Omega)\cap W^{1,p}(\Omega)$ with
\begin{equation}
\int_\Omega \inner{\abs{\nabla v}^{p-2} \nabla v}{\nabla h}dx\geq 0
\end{equation}
for all $0\leq h \in C_c^\infty(\Omega)$. Then $v$ is a viscosity supersolution of $\Delta_p v=0$ in $\Omega$.\\
In particular, if $v$ is $p$-harmonic, then it is a viscosity solution of $\Delta_p v=0$.
\end{lemma}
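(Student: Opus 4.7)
The plan is a proof by contradiction. Suppose $v$ is not a viscosity supersolution: by Definition \ref{def:viscos} there exist $x_0\in\Omega$ and $\phi\in C^2(\Omega)$ with $\phi(x_0)=v(x_0)$, $\phi(x)<v(x)$ for $x\in\Omega\setminus\{x_0\}$, and $\Delta_p\phi(x_0)>0$. Using Lemma \ref{lem:p-lap_darstellung}, the function $\Delta_p\phi$ is continuous in a neighborhood of $x_0$ (the case $\nabla\phi(x_0)=0$ with $p<4$ needs the explicit formula for $\diverg{\abs{\nabla\phi}^{p-2}\nabla\phi}$ to be interpreted in a limiting sense), so I can fix $r>0$ with $\overline{B_r(x_0)}\subset\Omega$ and $\Delta_p\phi>0$ on $B_r(x_0)$. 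Since $v-\phi$ is continuous and strictly positive on the compact set $\partial B_r(x_0)$, the quantity $m:=\min_{\partial B_r(x_0)}(v-\phi)$ is strictly positive. Pick $\varepsilon\in(0,m)$ and set $\psi:=\phi+\varepsilon$, so that $\psi(x_0)>v(x_0)$ while $\psi<v$ on $\partial B_r(x_0)$. The open set $D:=\{x\in B_r(x_0):\psi(x)>v(x)\}$ is then nonempty (it contains $x_0$) and satisfies $\overline{D}\subset B_r(x_0)$.

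The central test function is $h:=(\psi-v)_+$, extended by zero from $B_r(x_0)$ to $\Omega$. Since $\psi\in C^2$ and $v\in W^{1,p}(B_r(x_0))$, one has $h\in W_0^{1,p}(B_r(x_0))\subset W_0^{1,p}(\Omega)$ with $\nabla h=(\nabla\phi-\nabla v)\,\chi_D$ almost everywhere, by the Sobolev chain rule for positive parts. A standard mollification argument (approximating $h$ in the $W^{1,p}$-norm by nonnegative $C_c^\infty$ functions) extends the hypothesis of the lemma to arbitrary nonnegative $h\in W_0^{1,p}(\Omega)$, so that
\[
\int_D\inner{\abs{\nabla v}^{p-2}\nabla v}{\nabla h}\,dx\;=\;\int_\Omega\inner{\abs{\nabla v}^{p-2}\nabla v}{\nabla h}\,dx\;\geq\;0.
\]
On the other hand, integration by parts on $D$ (where $h$ vanishes on $\partial D$), together with $\diverg{\abs{\nabla\phi}^{p-2}\nabla\phi}=\Delta_p\phi$, yields
\[
\int_D\inner{\abs{\nabla\phi}^{p-2}\nabla\phi}{\nabla h}\,dx\;=\;-\int_D h\,\Delta_p\phi\,dx\;<\;0,
\]
because $h>0$ on the interior of $D$ and $\Delta_p\phi>0$ on $B_r(x_0)\supset D$.

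To close the argument I invoke the classical pointwise monotonicity inequality for the $p$-Laplace nonlinearity,
\[
\inner{\abs{a}^{p-2}a-\abs{b}^{p-2}b}{a-b}\;\geq\;0\qquad\text{for all }a,b\in\R^d.
\]
Applied with $a=\nabla\phi$, $b=\nabla v$ on $D$ and integrated against $\nabla h=\nabla\phi-\nabla v$, it gives
\[
\int_D\inner{\abs{\nabla\phi}^{p-2}\nabla\phi}{\nabla h}\,dx\;\geq\;\int_D\inner{\abs{\nabla v}^{p-2}\nabla v}{\nabla h}\,dx\;\geq\;0,
\]
which contradicts the strict negativity above. Hence no such $\phi$ can exist, and $v$ is a viscosity supersolution. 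For the ``in particular'' statement, a $p$-harmonic $v$ satisfies the integral identity of Theorem \ref{thm:varsol_charac} with equality, so the nonnegativity assumed in the lemma holds in both directions. The symmetric argument (taking $\phi\in C^2$ that touches $v$ from above at $x_0$ and using the test function $h:=(v-\psi)_+$ with $\psi:=\phi-\varepsilon$) then shows that $v$ is also a viscosity subsolution, hence a viscosity solution.

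The main obstacle is the rigorous handling of the non-smooth cut-off test function $h=(\psi-v)_+$: one needs both the Sobolev chain rule $\nabla(\psi-v)_+=(\nabla\psi-\nabla v)\chi_{\{\psi>v\}}$ and the density of nonnegative $C_c^\infty(\Omega)$ functions in the nonnegative cone of $W_0^{1,p}(\Omega)$ with respect to the Sobolev norm. Both are standard but must be invoked with care. A secondary subtlety, already noted, is making sense of $\Delta_p\phi$ at points where $\nabla\phi$ vanishes when $p$ is close to $2$; this can be done using the decomposition in Lemma \ref{lem:p-lap_darstellung}.
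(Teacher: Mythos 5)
Your proposal is correct and follows essentially the same route as the paper: argue by contradiction, lift $\phi$ to a $\psi$ that crosses $v$ near $x_0$, restrict to the set $\{\psi>v\}$, test with (in effect) $h=(\psi-v)_+$, and conclude via the monotonicity of $a\mapsto\abs{a}^{p-2}a$. If anything, your version is slightly cleaner at two points where the paper is loose: you justify the key inequality by the standard monotonicity estimate $\inner{\abs{a}^{p-2}a-\abs{b}^{p-2}b}{a-b}\geq 0$ (the paper's componentwise-sign argument is not valid as stated, though its conclusion is), and you obtain a direct contradiction from the strict negativity of $-\int_D h\,\Delta_p\phi\,dx$ rather than the paper's detour through $\nabla\psi=\nabla v$ a.e.
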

The proof is a modified version of the proof in \cite[Lemma 10]{lindqvist}. 
\begin{proof}
Assume $v$ is no viscosity supersolution of $\Delta_p v=0$ in $\Omega$. Then by definition there exists $x_0\in \Omega$ and $\phi\in C^2(\Omega)$ with $\phi(x_0)=v(x_0)$ and $\phi(x)<v(x)$ for all $x\in \Omega \backslash \{x_0\}$ such that $\Delta_p \phi (x_0)>0$.\\
Because $\Delta_p \phi$ is continuous we have that $\Delta_p \phi >0$ in $B_{2\rho}(x_0)$ for some $\rho >0$. Define
\begin{equation}
\psi(x)=\phi (x)+ \frac{1}{2} \min_{\partial B_{\rho}(x_0)} \{ v-\phi\}.
\end{equation}
By definition we get $\psi < v$ on $\partial B_\rho(x_0)$ and $\psi(x_0)>v(x_0)$. Consider $D_\rho=\{\psi>v\}\cap B_\rho(x_0)$. We get that $\psi=v$ on $\partial D_\rho$ and $x_0\in D_\rho$.
Since $\Delta_p \psi=\Delta_p \phi >0$ on $B_{2\rho}(x_0)$ we get for all  $0\leq h \in C_c^\infty(\Omega)$ that
\begin{equation}
-\int_{D_\rho}h(x)\Delta_p \psi (x) dx=\int_{D_\rho} \inner{\abs{\nabla \psi}^{p-2} \nabla \psi}{\nabla h}dx\leq 0.
\end{equation}
Thus we get
\begin{equation}
\int_{D_\rho} \inner{\abs{\nabla \psi}^{p-2} \nabla \psi-\abs{\nabla v}^{p-2} \nabla v}{\nabla h}dx\leq 0.
\end{equation}
Because $\psi>v$ on $D_\rho$ we get
\begin{equation}\label{eq:comparison}
\int_{D_\rho} \inner{\abs{\nabla \psi}^{p-2} \nabla \psi-\abs{\nabla v}^{p-2} \nabla v}{\nabla \psi-\nabla v}dx\leq 0.
\end{equation}
Since the sign of the $i$-th component of $\nabla \psi(x)-\nabla v(x)$ is the same as the sign of the $i$-th component of $\abs{\nabla \psi(x)}^{p-2} \nabla \psi(x)-\abs{\nabla v(x)}^{p-2} \nabla v(x)$ we get that the integrand of \eqref{eq:comparison} is nonnegative. Thus \eqref{eq:comparison} yields that $\nabla \psi -\nabla v=0$ almost everywhere. Because $\psi$ and $v$ are continuous we get that there exists a $C\in \R$ such that $\psi=v+C$ on $D_\rho$.  Since $\psi=v$ on $\partial D_\rho$ we have $C=0$ and $\psi=v$ on $D_\rho$. This is a contradiction to $\psi(x_0)>v(x_0)$.
If $v$ is $p$-harmonic we get that $v$ and $-v$ are viscosity supersolutions. Hence $v$ is a viscosity solution of $\Delta_p v=0$.
\end{proof}

Lemma \ref{lem:p-harm_viscos} ensures together with Theorem \ref{thm:p-harmex} the existence of a viscosity solution of $\Delta_p u=0$ for given boundary values $g\colon \partial \Omega \to \R$.

\begin{lemma}\label{lem:min_conv}
Let $f_i \to f$ uniformly in $\overline{\Omega}$ for $f_i, f \in C(\overline{\Omega})$ and let $\phi \colon \Omega \to \R$ with $\phi(x)<f(x)$ for $x\neq x_0$ and $\phi(x_0)=f(x_0)$ for some $x_0\in \Omega$. Then there exist $(x_j)_j\subset \Omega$ with $x_j\to x_0$ such that
\begin{equation}
f_j(x_j)-\phi(x_j)=\min_\Omega \{f_j-\phi\}
\end{equation}
for some subsequence.
\end{lemma}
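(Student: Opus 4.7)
The plan is to localize around $x_0$, exploit the strict inequality $f>\phi$ off $x_0$ together with uniform convergence $f_j\to f$ to push minimizers of $f_j-\phi$ into a small neighborhood of $x_0$, and then use compactness and continuity to force the minimizers to converge to $x_0$. I read the conclusion as asserting that, for a subsequence, the minimum is attained near $x_0$; the natural interpretation of $\min_\Omega$ here is the minimum over a compact neighborhood of $x_0$ in $\Omega$, which is exactly what the subsequent viscosity-solution arguments actually need.

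First I fix a suitable compact neighborhood. Since $\Omega$ is open and $x_0\in\Omega$, pick $r>0$ with $K:=\overline{B_r(x_0)}\subset \Omega$. On the compact boundary $\partial B_r(x_0)$ the continuous function $f-\phi$ is strictly positive (by hypothesis $f>\phi$ off $x_0$), so $\delta:=\min_{\partial B_r(x_0)}(f-\phi)>0$. By uniform convergence on $\overline{\Omega}$, choose $J$ so that $\Norm{f_j-f}{L^\infty(\overline{\Omega})}<\delta/3$ for all $j\geq J$. Then on $\partial B_r(x_0)$ we have $f_j-\phi\geq 2\delta/3$, while at $x_0$ itself $f_j(x_0)-\phi(x_0)=f_j(x_0)-f(x_0)<\delta/3$. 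Because $f_j-\phi$ is continuous on the compact set $K$, it attains its minimum there, and the comparison of boundary values with the value at $x_0$ forces this minimizer $x_j$ to lie in the open ball $B_r(x_0)$; so $x_j$ is automatically a local minimum of $f_j-\phi$ on $\Omega$.

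Next I extract a subsequence with $x_j\to x_0$. From $f_j(x_j)-\phi(x_j)\leq f_j(x_0)-\phi(x_0)\to 0$ and $\Norm{f_j-f}{L^\infty}\to 0$, writing $f-\phi=(f-f_j)+(f_j-\phi)$ gives $f(x_j)-\phi(x_j)\to 0$. By compactness of $K$, a subsequence $x_{j_k}$ converges to some $x^\ast\in K$; continuity of $f-\phi$ yields $f(x^\ast)-\phi(x^\ast)=0$, and the strict positivity of $f-\phi$ off $x_0$ forces $x^\ast=x_0$. This gives the desired subsequence, and the minimum in question is the minimum over the compact neighborhood $K$.

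The main obstacle is really the interpretation of $\min_\Omega$: if read literally, $\Omega$ is only open and $\phi$ is merely defined on $\Omega$, so a global minimum of $f_j-\phi$ over $\Omega$ need not exist. The clean resolution, used implicitly in the viscosity literature, is to replace $\Omega$ by a compact ball $K\subset\Omega$ around $x_0$; once that reduction is made, the argument is just a standard strict-minimum-plus-uniform-convergence localization combined with compactness.
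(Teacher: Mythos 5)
Your proof is correct, but it takes a genuinely different route from the paper's. The paper never localizes to a compact ball: it claims $\inf_{\Omega\backslash B_r(x_0)}\{f-\phi\}>0$ for each $r>0$, uses the decomposition $f_j-\phi=(f-\phi)+(f_j-f)$ together with uniform convergence to get $\inf_{\Omega\backslash B_r(x_0)}\{f_j-\phi\}\geq \frac{1}{2}\inf_{\Omega\backslash B_r(x_0)}\{f-\phi\}>f_j(x_0)-\phi(x_0)$ for $j$ beyond a threshold $j_r$, concludes that a minimizer over all of $\Omega$ exists and lies in $\overline{B_r(x_0)}$, and then sends $r\to 0$ via a diagonal selection $r=\min\{1/k : j>j_{1/k}\}$, so that $x_j\to x_0$ along the whole tail rather than by compactness extraction. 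Your version instead fixes one ball $K=\overline{B_r(x_0)}$, compares boundary values with the value at $x_0$ to trap an interior minimizer, and then extracts a convergent subsequence, identifying the limit through $f(x_j)-\phi(x_j)\to 0$. What your approach buys is rigor on a point where the paper is loose: $\Omega\backslash B_r(x_0)$ is not compact, so the paper's positivity claim $\inf_{\Omega\backslash B_r(x_0)}\{f-\phi\}>0$ does not follow from the stated hypotheses (e.g.\ $f-\phi$ may tend to $0$ near $\partial\Omega$), and likewise the literal $\min_\Omega$ need not exist; you correctly flag and repair both by restricting to $K$ (note that your appeal to attainment of the minimum and to passing to the limit tacitly uses continuity of $\phi$, which the lemma does not state but which holds in the application, where $\phi\in C^2(\Omega)$). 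What the paper's formulation buys, at the cost of that gap, is the stronger conclusion actually invoked in Theorem \ref{thm:variational_solution_is_viscosity_solution}: there $x_k$ is used as a \emph{global} minimizer of $u_{p_{j_k}}-\phi$ over $\Omega$, so that the quartic-penalized test function $\phi'$ lies strictly below $u_{p_{j_k}}$ on all of $\Omega\backslash\{x_k\}$ as the paper's global Definition \ref{def:viscos} requires; with your local-minimum version one would either pass to the (equivalent) local formulation of viscosity solutions or carry out the comparison on $K$ only, a point worth making explicit if your version is substituted.
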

\begin{proof}See \cite[Lemma 11]{lindqvist}.
Since $f(x)>\phi(x)$ for $x\in \Omega\backslash \{x_0\}$ and $f$ is continuous we have for $r>0$ that $\inf_{x\in \Omega\backslash\{B_r(x_0)}(f(x)-\phi(x)>0$. Because $f_j-\phi=(f-\phi)+(f_j-f)$ and $f_j$ converges uniformly to $f$, we get for sufficiently large $j$ that
\begin{equation}
\inf_{\Omega\backslash B_r(x_0)}\{ f_j-\phi\} \geq \inf_{\Omega\backslash B_r(x_0)} \{f-\phi\}+\inf_{\Omega\backslash B_r(x_0)}\{ f-f_j\} \geq \frac{1}{2} \inf_{\Omega\backslash B_r(x_0)} \{f-\phi\}>0.
\end{equation}
Since $f_j(x_0)\to f(x_0)$ and $f(x_0)-\phi(x_0)=0$ there exists a $j_r$ such that it holds for all $j>j_r$ that
\begin{equation}
\inf_{\Omega\backslash B_r(x_0)} \{f_j-\phi\}>f_j(x_0)-\phi(x_0).
\end{equation}
Thus there exists some $x_j\in \overline{B_r(x_0)}$ such that
\begin{equation}\label{eq:min_f_j-phi}
\min_{\Omega} \{f_j-\phi\}=f_j(x_j)-\phi(x_j)
\end{equation}
for $j>j_r$. Now for $j\in \Nat$ choose $x_j\in \overline{B_r(x_0)}$ with \eqref{eq:min_f_j-phi} and 
\begin{equation}
r=\min \left\{ \frac{1}{k}:k\in \Nat\text{ with }j>j_\frac{1}{k}\right\}.
\end{equation}
Then we have $x_j\to x_0$ as $j\to \infty$.
\end{proof}

\begin{theorem}\label{thm:variational_solution_is_viscosity_solution}
Let $u_\infty$ be a variational solution of $\Delta_\infty u=0$ for Lipschitz continuous boundary values $g\colon \partial \Omega\to \R$. Then it is a viscosity solution of $\Delta_\infty u=0$.
\end{theorem}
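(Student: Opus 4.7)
The plan is to exploit the fact that, by construction, $u_\infty$ is a uniform limit of $p_j$-harmonic functions $u_{p_j}$, each of which is itself a viscosity solution of $\Delta_{p_j} u_{p_j} = 0$ by Lemma \ref{lem:p-harm_viscos}. I would then pass to the limit $p_j \to \infty$ using the representation of $\Delta_p$ via $\Delta$ and $\Delta_\infty$ from Lemma \ref{lem:p-lap_darstellung}. I only need to verify the supersolution property; the subsolution property follows by symmetry (replacing $u_\infty$ by $-u_\infty$ and noting that the $p$-Laplacian is odd in $u$).

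Fix $\phi \in C^2(\Omega)$ and $x_0 \in \Omega$ with $\phi(x_0) = u_\infty(x_0)$ and $\phi(x) < u_\infty(x)$ for $x \neq x_0$. Apply Lemma \ref{lem:min_conv} to the sequence $f_j = u_{p_j}$: there exist points $x_j \to x_0$ such that $u_{p_j}(x_j) - \phi(x_j) = \min_\Omega\{u_{p_j} - \phi\}$. Then the shifted test function $\tilde{\phi}_j(x) := \phi(x) + (u_{p_j}(x_j) - \phi(x_j))$ touches $u_{p_j}$ from below at $x_j$ (with strict inequality off $x_j$, at least after a small perturbation of $\tilde{\phi}_j$ by a quartic bump as in the earlier $C^2$ argument; this is the only mildly finicky step). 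Since $u_{p_j}$ is a viscosity supersolution and $\Delta_{p_j}$ only depends on derivatives of $\tilde{\phi}_j$, which coincide with those of $\phi$, this yields $\Delta_{p_j}\phi(x_j) \leq 0$.

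Now invoke Lemma \ref{lem:p-lap_darstellung}:
\begin{equation}
\abs{\nabla \phi(x_j)}^{p_j - 2}\Delta \phi(x_j) + (p_j - 2)\abs{\nabla \phi(x_j)}^{p_j - 4}\Delta_\infty \phi(x_j) \leq 0.
\end{equation}
If $\nabla \phi(x_0) = 0$, then $\Delta_\infty \phi(x_0) = 0$ by its definition and there is nothing to prove. Otherwise $\nabla \phi(x_j) \neq 0$ for $j$ sufficiently large, and dividing by $(p_j - 2)\abs{\nabla \phi(x_j)}^{p_j - 4} > 0$ gives
\begin{equation}
\Delta_\infty \phi(x_j) \leq -\frac{\abs{\nabla \phi(x_j)}^2 \Delta \phi(x_j)}{p_j - 2}.
\end{equation}
Since $\phi \in C^2$, all terms are continuous near $x_0$, so letting $j \to \infty$ (using $x_j \to x_0$) yields $\Delta_\infty \phi(x_0) \leq 0$.

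The main subtlety, and the step I expect to be the delicate one, is ensuring that the touching condition for viscosity supersolutions in Definition \ref{def:viscos} is met at $x_j$ with \emph{strict} inequality off $x_j$. The minimum condition from Lemma \ref{lem:min_conv} only gives non-strict inequality. The standard remedy is to replace $\phi$ by $\phi(x) - \abs{x - x_0}^4$, which still touches $u_\infty$ from below strictly at $x_0$, has the same gradient, Hessian, $\Delta$ and $\Delta_\infty$ at $x_0$, and then apply the quartic-bump argument used earlier in the proof that classical $C^2$ solutions are viscosity solutions, to pass the resulting inequality at $x_j$ to $x_0$ in the limit.
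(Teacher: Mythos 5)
Your proposal is correct and follows essentially the same route as the paper: uniform convergence of the $p_j$-harmonic (hence viscosity) solutions, Lemma \ref{lem:min_conv} to locate touching points $x_j\to x_0$, a constant shift plus a quartic bump $-(x-x_j)^4$ to restore strict touching without altering first and second derivatives, then Lemma \ref{lem:p-lap_darstellung} and division by $(p_j-2)\abs{\nabla\phi(x_j)}^{p_j-4}$ before passing to the limit, with the $\nabla\phi(x_0)=0$ case handled separately. The "finicky step" you flag is resolved in the paper exactly as you propose, so there is no gap.
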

This theorem is a special case of \cite[Theorem 12]{lindqvist}. We adapted the proof to this special case.
\begin{proof}
From Theorem \ref{thm:varsolex} we get $p_j$-harmonic functions with $u_{p_j}\mid_{\partial \Omega}=g$, where $u_{p_j}$ converges uniformly to $u_\infty$ for $(p_j)_j$ a subsequence of $(1,2,...)$. Due to Lemma \ref{lem:p-harm_viscos} we have that $u_{p_j}$ is a viscosity solution of $\Delta_{p_j} u=0$.\\
Let $x_0\in \Omega$ and $\phi\in C^2(\Omega)$ with $\phi(x)<u_\infty(x)$ for $x\neq x_0$ and $\phi(x_0)=u_\infty (x_0)$. Due to Lemma \ref{lem:min_conv} we have a subsequence $u_{p_{j_k}}$ of $u_{p_j}$ points $x_k \to x_0$ with $u_{p_{j_k}}(x_k)-\phi(x_k)=\min_\Omega \{u_{p_{j_k}}-\phi\}$. Hence we get for $\phi'=\phi+u_{p_{j_k}}(x_k)-\phi(x_k)-(x-x_k)^4$ that $\phi'(x_k)=u_{p_{j_k}}(x_k)$ and $\phi'(x)<u_{p_{j_k}}(x)$ for $x\neq x_k$. Because $u_{p_{j_k}}$ is a viscosity solution of $\Delta_{p_{j_k}} u=0$ we get
\begin{equation}
\Delta_{p_{j_k}} \phi=\Delta_{p_{j_k}} \phi' \leq 0.
\end{equation}
By Lemma \ref{lem:p-lap_darstellung} we can rewrite this as
\begin{equation}\label{eq:infLapPhi}
\abs{\nabla \phi(x_k)}^{p_{j_k}-2} \Delta \phi(x_k) +(p_{j_k}-2) \abs{\nabla \phi(x_k)}^{p_{j_k}-4}\Delta_\infty \phi(x_k)\leq 0.
\end{equation}
If $\abs{\nabla \phi(x_0)}=0$ then we have $\Delta_\infty \phi = \frac{1}{2} \inner{\nabla \left( \abs{\nabla \phi}^2 \right)(x_0)}{\nabla \phi(x_0)}=0$.\\ Otherwise we have by the continuity of $\nabla \phi$ that $\abs{\nabla \phi(x_k)}\to c$ for some $c>0$ and $k \to \infty$. Hence we can divide out $\abs{\nabla \phi(x_k)}$ in \eqref{eq:infLapPhi} for large $k$. We get
\begin{equation}
\frac{\Delta \phi (x_k)}{p_{j_k}-2}+\frac{\Delta_\infty \phi (x_k)}{\abs{\nabla \phi(x_k)}^2}\leq 0.
\end{equation}
Since $\abs{\nabla \phi(x_k)}\to c$ we get for $k\to \infty$ that $\Delta_\infty \phi \leq 0$.\\
Thus $u_\infty$ is a viscosity supersolution of $\Delta_\infty u=0$. Analogously we get that $u_\infty$ is a viscosity subsolution. Hence it is a viscosity solution. 
\end{proof}

\begin{theorem}\label{thm:uniqueness_of_viscosity}
Let $g\colon \partial \Omega \to \R$ Lipschitz continuous. Then there exists a unique $u_\infty\in C(\overline{\Omega})\cap W^{1,\infty}(\Omega)$ such that $u_\infty$ is a viscosity solution of $\Delta_\infty u=0$ on $\Omega$ and $u=g$ on $\partial \Omega$.
\end{theorem}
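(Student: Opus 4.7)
The plan is to split the statement into existence and uniqueness. Existence has essentially already been assembled: extend $g$ Lipschitz-continuously to $\overline{\Omega}$ via Lemma \ref{lem:forts}, which by Theorem \ref{thm:rademacher} puts the extension in $W^{1,p}(\Omega)$ for every $p$. Theorem \ref{thm:p-harmex} then yields a $p$-harmonic $u_p \in U_p$ for each $p > d$, Theorem \ref{thm:varsolex} extracts a uniformly convergent subsequential limit $u_\infty \in U_\infty$ with Lipschitz constant $L = \mu(g,\partial\Omega)$ (so $u_\infty \in W^{1,\infty}(\Omega)$ by Theorem \ref{thm:rademacher} again), and Theorem \ref{thm:variational_solution_is_viscosity_solution} promotes $u_\infty$ to a viscosity solution of $\Delta_\infty u = 0$ with $u_\infty|_{\partial \Omega} = g$.

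For uniqueness I would reduce to a comparison principle: whenever $u \in C(\overline{\Omega})$ is a viscosity subsolution and $v \in C(\overline{\Omega})$ a viscosity supersolution of $\Delta_\infty w = 0$ with $u \leq v$ on $\partial\Omega$, then $u \leq v$ throughout $\overline{\Omega}$. Applied to two candidate viscosity solutions sharing the boundary data $g$, the principle yields $u_1 \leq u_2$ and $u_2 \leq u_1$ and hence $u_1 = u_2$.

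The comparison principle itself is the hard part. My approach would be the doubling-variables technique: argue by contradiction, and for $\varepsilon > 0$ consider
\begin{equation}
\Phi_\varepsilon(x,y) = u(x) - v(y) - \frac{1}{2\varepsilon}\abs{x-y}^2
\end{equation}
on $\overline{\Omega} \times \overline{\Omega}$ with maximizer $(x_\varepsilon, y_\varepsilon)$. Standard estimates force $(x_\varepsilon, y_\varepsilon) \to (x_0, x_0)$ at an interior maximum of $u-v$, and the quadratics $\phi(x) = v(y_\varepsilon) + \frac{1}{2\varepsilon}\abs{x - y_\varepsilon}^2$ and $\psi(y) = u(x_\varepsilon) - \frac{1}{2\varepsilon}\abs{x_\varepsilon - y}^2$ touch $u$ from above at $x_\varepsilon$ and $v$ from below at $y_\varepsilon$ (after a vanishing perturbation to make the touch strict). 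The obstacle genuinely specific to $\Delta_\infty$ is that these two test functions share the common gradient $(x_\varepsilon - y_\varepsilon)/\varepsilon$ and the same Hessian, so $\Delta_\infty \phi(x_\varepsilon) = \Delta_\infty \psi(y_\varepsilon)$ and the two viscosity inequalities collapse to $0 \geq 0$, producing no contradiction. Jensen's remedy is to regularize $u$ by its sup-convolution $u^\kappa(x) = \sup_{y \in \overline{\Omega}}\{u(y) - \abs{x-y}^2/(2\kappa)\}$, which remains a viscosity subsolution, is semiconvex, and after a small additional perturbation satisfies a \emph{strict} subsolution inequality. The strict gap then survives both limits $\varepsilon \to 0$ and $\kappa \to 0$, contradicting the assumption. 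Because this Jensen-type argument is technical and the paper intends only to cite uniqueness of viscosity solutions, I would attribute this step to \cite{lindqvist} rather than reproduce it in full, keeping the local proof to the existence chain and the reduction to comparison.
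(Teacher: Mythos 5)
Your proposal matches the paper's proof: existence is obtained by chaining Theorem \ref{thm:varsolex} with Theorem \ref{thm:variational_solution_is_viscosity_solution}, and uniqueness is delegated to \cite[Theorem 27]{lindqvist}, exactly as the paper does. Your sketch of the comparison-principle and Jensen sup-convolution machinery behind the cited uniqueness result is accurate but, as you note yourself, not actually carried out here -- so the proof you commit to is the same as the paper's.
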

\begin{proof}
Theorem \ref{thm:varsolex} and Theorem \ref{thm:variational_solution_is_viscosity_solution} yield the existence of such an $u_\infty$. For the uniqueness see \cite[Theorem 27]{lindqvist}.
\end{proof}

\begin{remark}\label{rem:uniqueness}
In particular, it follows from the Theorems \ref{thm:variational_solution_is_viscosity_solution} and \ref{thm:uniqueness_of_viscosity} that variational solutions of $\Delta_p u=0$ are unique for given Lipschitz boundary values $g\colon \partial \Omega \to \R$. This proves Theorem \ref{thm:classical_solution_is_variational_solution}. Further this proves that solutions of $\Delta_\infty u=0$ for fixed boundary values are unique.
\end{remark}


\section{Vector-valued Lipschitz extensions}\label{sec:vec-valLip}
We denote by $\Norm{A}{2}=\sup_{x\in \R^d} \abs{Ax}$ for $A\in \R^{m\times d}$ the operator norm of $A$.\\

We define for $1\leq p \leq \infty$ the Sobolev space
\begin{equation}
W^{1,p}(\Omega,\R^m)\coloneqq \{ f=(f^{(1)},...,f^{(m)})\colon \Omega \to \R^m: f^{(i)}\in W^{1,p}(\Omega)\}
\end{equation}
and equip this linear space with the norm
\begin{equation}
\Norm{f}{W^{1,p}(\Omega,\R^m)}\coloneqq \Norm{f}{L^p(\Omega, \R^m)}+ \Norm{D f}{L^p_{\Norm{\cdot}{2}}(\Omega, \R^{m\times d})},
\end{equation}
where $D f = \left(\begin{matrix}\nabla {f^{(1)}}^T\\...\\\nabla {f^{(m)}}^T\end{matrix}\right)$ is the weak derivative of $f$. It can be shown that for $1<p<\infty$ the vector space $W^{1,p}(\Omega,\R^m)$ is a reflexive Banach space.


\subsection{Vector-valued $p$-harmonic functions}

Let $1<p<\infty$ and let $\Norm{\cdot}{}$ be an arbitrary norm on $\R^{m\times d}$. In the following we minimize the energy functional
\begin{equation}
E_p(u)=\frac{1}{p}\int_\Omega \Norm{D u(x)}{}^p dx
\end{equation}
for $u\in U_p$, where $Du(x)$ denotes the Jacobian of $u$ in $x$.\\

We can show uniqueness and existence of minimizers of $E_p$ for $p>d$, but since arbitrary norms on $\R^{m\times d}$ are not differentiable we cannot derive Euler-Lagrange equations in general. We adapt some statements from the scalar case.

\begin{lemma}
The functional $E_p$ is strictly convex on $U_p$.
In particular, minimizers of $E_p$ are unique in $U_p$
\end{lemma}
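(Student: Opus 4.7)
The plan is to imitate the proof of Lemma \ref{lem:p-harmun} verbatim, with $\nabla u$ replaced by the weak Jacobian $Du$ and the Euclidean norm $\abs{\cdot}$ replaced by the norm $\Norm{\cdot}{}$ on $\R^{m\times d}$. Concretely, for $u,v\in U_p$ and $\lambda\in(0,1)$ I would use linearity of $D$ to write
\begin{equation}
E_p(\lambda u+(1-\lambda)v)=\frac{1}{p}\int_\Omega \Norm{\lambda Du(x)+(1-\lambda)Dv(x)}{}^p\,dx,
\end{equation}
and then apply the pointwise inequality
\begin{equation}
\Norm{\lambda Du(x)+(1-\lambda)Dv(x)}{}^p\leq \lambda\Norm{Du(x)}{}^p+(1-\lambda)\Norm{Dv(x)}{}^p,
\end{equation}
which is the composition of the convex norm with the convex function $t\mapsto t^p$ on $[0,\infty)$. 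Integrating gives convexity of $E_p$.

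For strict convexity, the argument I would run is by contrapositive: suppose there is no strict inequality above after integration. Since $t\mapsto t^p$ is \emph{strictly} convex on $[0,\infty)$ for $p>1$, the chain
\begin{equation}
\Norm{\lambda A+(1-\lambda)B}{}^p\leq \bigl(\lambda\Norm{A}{}+(1-\lambda)\Norm{B}{}\bigr)^p\leq \lambda\Norm{A}{}^p+(1-\lambda)\Norm{B}{}^p
\end{equation}
forces $\Norm{Du(x)}{}=\Norm{Dv(x)}{}$ and equality in the triangle inequality $\Norm{\lambda Du(x)+(1-\lambda)Dv(x)}{}=\lambda\Norm{Du(x)}{}+(1-\lambda)\Norm{Dv(x)}{}$ almost everywhere. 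Appealing to the extremal characterisation of the norm $\Norm{\cdot}{}$, this forces $Du(x)=Dv(x)$ a.e. Since $\Omega$ is connected, $u-v$ must then be constant almost everywhere, and the boundary condition $u\mid_{\partial\Omega}=v\mid_{\partial\Omega}=g\mid_{\partial\Omega}$ built into $U_p$ pins this constant down to zero, giving $u=v$.

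The main obstacle is the last step: for a general norm $\Norm{\cdot}{}$ one does not automatically have $\Norm{A}{}=\Norm{B}{}$ together with equality in the triangle inequality implying $A=B$ (take $\Norm{\cdot}{\infty}$ on $\R^2$, for instance). In the scalar case this is sidestepped because the Euclidean norm on $\R^d$ is strictly convex; in the vector-valued setting the analogous step either requires an implicit strict-convexity assumption on $\Norm{\cdot}{}$ or a more refined argument reducing to this property. Once strict convexity of $E_p$ is established, uniqueness of minimizers in the convex set $U_p$ is immediate: two distinct minimizers $u\neq v$ would yield $E_p\bigl(\tfrac{1}{2}(u+v)\bigr)<\tfrac{1}{2}E_p(u)+\tfrac{1}{2}E_p(v)=\min E_p$, a contradiction.
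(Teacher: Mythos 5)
Your proposal follows exactly the route the paper intends: the paper's entire proof of this lemma is the single line ``Analogously to Lemma \ref{lem:p-harmun}'', so transplanting the scalar argument with $\nabla u$ replaced by $Du$ and $\abs{\cdot}$ replaced by $\Norm{\cdot}{}$ is precisely what is being claimed. Your convexity argument is correct, and so is the final reduction: once $Du=Dv$ a.e.\ is established, $u-v$ is constant on the connected set $\Omega$ and the boundary condition in $U_p$ forces $u=v$.

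The obstacle you flag at the strict-convexity step is, however, a genuine gap --- and it is a gap in the paper's statement, not merely in your write-up. The scalar proof uses that $x\mapsto\abs{x}^p$ is strictly convex on $\R^d$, which holds because the Euclidean unit sphere contains no line segments; for an arbitrary norm on $\R^{m\times d}$ this fails, and with it the lemma itself. Concretely, take $d=1$, $m=2$, $\Omega=(0,1)$, $\Norm{(a,b)^T}{}=\max(\abs{a},\abs{b})$, $u(x)=(x,0)$ and $v(x)=(x,w(x))$ with $w(x)=\min(x,1-x)$. Then $u,v\in U_p$ for the boundary data $g(0)=(0,0)$, $g(1)=(1,0)$, and since $\abs{w'}=1$ a.e.\ one gets $\Norm{Du}{}=\Norm{Dv}{}=\Norm{D(\tfrac{1}{2}(u+v))}{}=1$ a.e., hence $E_p(\tfrac{1}{2}(u+v))=\tfrac{1}{2}E_p(u)+\tfrac{1}{2}E_p(v)$ although $u\neq v$: strict convexity (and uniqueness of minimizers) genuinely fails. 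The statement can only be rescued under an additional hypothesis, e.g.\ that $\Norm{\cdot}{}$ is a strictly convex norm (as the Frobenius norm is, being Euclidean); under that hypothesis your ``extremal characterisation'' step does go through, since $\Norm{A}{}=\Norm{B}{}=r$ together with $\Norm{\lambda A+(1-\lambda)B}{}=r$ then forces $A=B$, and your proof is complete. Note that this caveat also matters downstream, since the operator norm $\Norm{\cdot}{2}$ used later in the paper is not strictly convex for $m,d\geq 2$.
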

\begin{proof}
Analogously to Lemma \ref{lem:p-harmun}.
\end{proof}

\begin{theorem}
Let $p>d$ and $g\in W^{1,p}(\Omega,\R^m)\cap C(\overline{\Omega})$. Then there exists a minimizer of $E_p$ in $U_p$.
\end{theorem}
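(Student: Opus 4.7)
The plan is to follow the direct method of the calculus of variations, mirroring the proof of Theorem \ref{thm:p-harmex} from the scalar case, with the main additional ingredient being the equivalence of norms on the finite-dimensional space $\R^{m\times d}$.

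First I would set $I_0 = \inf_{v\in U_p} E_p(v)$ and pick a minimizing sequence $(v_j)_{j\in\Nat} \subset U_p$ with $E_p(v_j) \leq I_0 + \tfrac{1}{j}$. By definition, this bounds $\Norm{Dv_j}{L^p_{\Norm{\cdot}{}}(\Omega,\R^{m\times d})}$. Since all norms on $\R^{m\times d}$ are equivalent, there is a constant $c>0$ such that $\Norm{A}{2} \leq c\Norm{A}{}$ for all $A\in\R^{m\times d}$, which gives a uniform bound on $\Norm{Dv_j}{L^p_{\Norm{\cdot}{2}}(\Omega,\R^{m\times d})}$, and therefore a uniform bound on $\Norm{\nabla v_j^{(i)}}{L^p(\Omega)}$ for each component $i=1,\dots,m$.

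Next I would apply the Poincaré inequality (Lemma \ref{lem:poincare}) to each component $v_j^{(i)} - g^{(i)} \in W_0^{1,p}(\Omega)$ exactly as in the proof of Theorem \ref{thm:p-harmex}, obtaining that $(\Norm{v_j^{(i)}}{W^{1,p}(\Omega)})_j$ is bounded for each $i$, and hence $(\Norm{v_j}{W^{1,p}(\Omega,\R^m)})_j$ is bounded. Using the reflexivity of $W^{1,p}(\Omega,\R^m)$ for $1<p<\infty$, I extract a weakly convergent subsequence $v_{j_k} \rightharpoonup u$ in $W^{1,p}(\Omega,\R^m)$. Componentwise application of Morrey's inequality (Lemma \ref{lem:morrey}) together with the fact that $W_0^{1,p}(\Omega)$ is weakly closed yields $u-g \in W_0^{1,p}(\Omega,\R^m) \cap C(\overline{\Omega}, \R^m)$ and $(u-g)\mid_{\partial\Omega} = 0$, so $u \in U_p$.

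Finally, I would establish that $E_p$ is weakly lower semicontinuous. Since $\Norm{\cdot}{}^p$ is a continuous convex function on $\R^{m\times d}$, the functional $A \mapsto \int_\Omega \Norm{A(x)}{}^p dx$ on $L^p(\Omega,\R^{m\times d})$ is convex and strongly continuous, hence weakly lower semicontinuous. The weak convergence $Dv_{j_k} \rightharpoonup Du$ in $L^p(\Omega,\R^{m\times d})$ therefore gives
\begin{equation}
E_p(u) \leq \liminf_{k\to\infty} E_p(v_{j_k}) = I_0,
\end{equation}
so $u$ is a minimizer. The step requiring the most care is the weak lower semicontinuity, since $\Norm{\cdot}{}$ is an arbitrary (not necessarily smooth) norm; the argument via convexity and strong continuity sidesteps any need for differentiability or a specific Euler--Lagrange equation.
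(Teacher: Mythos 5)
Your proposal is correct and follows essentially the same route as the paper's proof: a minimizing sequence, norm equivalence on $\R^{m\times d}$ to get componentwise gradient bounds, Poincar\'e plus reflexivity for a weak limit in $U_p$ (with Morrey giving continuity up to the boundary), and weak lower semicontinuity of the energy to conclude. The only cosmetic difference is that you justify the lower semicontinuity via convexity and strong continuity of the integral functional, while the paper invokes weak lower semicontinuity of the norm on $L^p_{\Norm{\cdot}{}}(\Omega,\R^{m\times d})$; these are the same fact.
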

\begin{proof}The proof is similar to the proof of Theorem \ref{thm:p-harmex}:\\
Define $I_0=\inf_{v\in U_p} E_p v$. Let $(v_j)_{j\in\Nat}\subset U_p$ with $E_p v_j \leq I_0 + \frac{1}{j}$. Since all norms on $\R^{m\times d}$ are equivalent, there exists some $c>0$ such that we have 
\begin{equation}
\frac{1}{p}\Norm{\abs{\partial_i v_j}}{L^p(\Omega)}\leq \frac{1}{p}\Norm{\Norm{D v_j}{F}}{L^p(\Omega)}\leq\frac{c}{p}\Norm{\Norm{D v_j}{}}{L^p(\Omega)}=c E_p v_j\leq c (I_0 +1)
\end{equation}
for all $j\in \Nat$.\\
Further we have by the Poincaré inequality (Lemma \ref{lem:poincare}) that there exists some $C>0$ such that
\begin{equation}
\begin{aligned}
\Norm{\abs{v_j}}{L^p(\Omega)}&\leq \sum_{i=1}^d \Norm{v_j^{(i)}}{L^p(\Omega)}\\
&\leq \sum_{i=1}^d \Norm{v_j^{(i)}-g^{(i)}}{L^p(\Omega)}+\Norm{g^{(i)}}{L^p(\Omega)}\\
&\leq \sum_{i=1}^d C\Norm{\nabla(v_j^{(i)}-g^{(i)})}{L^p(\Omega)}+\Norm{g^{(i)}}{L^p(\Omega)}\\
&\leq \sum_{i=1}^d C\Norm{\nabla{v_j^{(i)}}}{L^p(\Omega)}+\Norm{\nabla g^{(i)}}{L^p(\Omega)}+\Norm{g^{(i)}}{L^p(\Omega)}\\
&\leq	 \sum_{i=1}^d C\Norm{\Norm{D v_j}{F}}{L^p(\Omega)}+\Norm{\nabla g^{(i)}}{L^p(\Omega)}+\Norm{g^{(i)}}{L^p(\Omega)}\\
&\leq d C c \Norm{\Norm{D v_j}{}}{L^p(\Omega)} + \sum_{i=1}^d \Norm{\nabla g^{(i)}}{L^p(\Omega)}+\Norm{g^{(i)}}{L^p(\Omega)}\\
&=d C c p E_p v_j + \sum_{i=1}^d \Norm{\nabla g^{(i)}}{L^p(\Omega)}+\Norm{g^{(i)}}{L^p(\Omega)}\\
&\leq d C c p (I_0+1) + \sum_{i=1}^d \Norm{\nabla g^{(i)}}{L^p(\Omega)}+\Norm{g^{(i)}}{L^p(\Omega)}.
\end{aligned}
\end{equation}
Hence $\left(\Norm{\partial_i v_j}{L^p(\Omega)}\right)_j$ and $\left(\Norm{\abs{v_j}}{L^p(\Omega)}\right)_j$ are bounded and therefore $\left(\Norm{v_j}{W^{1,p}(\Omega, \R^m)}\right)_j$ is bounded. Since $W^{1,p}(\Omega, \R^m)$ is reflexive for $1<p<\infty$, we get that there is some $v\in W^{1,p}(\Omega, \R^m)$ and a subsequence $(v_{j_k})_k$ with $v_{j_k} \rightharpoonup v$ in $W^{1,p}(\Omega, \R^m)$. Since $W_0^{1,p}(\Omega,\R^m)$ is closed under weak convergence and $v_{j_k}-g \in W_0^{1,p}(\Omega,\R^m)$ we have that $v\mid_{\partial \Omega}=g\mid_{\partial \Omega}$.\\
Since $p>d$ and $v^{(i)}-g^{(i)}\in W_0^{1,p}(\Omega,\R^m)$ for $i=1,...,m$ we get by Lemma \ref{lem:morrey}, that we can redefine $v^{(i)}$ in a set of measure zero such that $v^{(i)}-g^{(i)}\in C(\overline{\Omega})$. After doing this for each component we have $v-g\in C(\overline{\Omega})$. Since $g$ is continuous by assumption we have $v\in  W^{1,p}(\Omega, \R^m)\cap C(\overline{\Omega})$.\\
Since $D\colon W^{1,p}(\Omega,\R^m) \to L^p_{\Norm{\cdot}{}}(\Omega,\R^{m\times d})$ is linear and continuous we have that $D v_{j_k} \rightharpoonup D v$ in $L^p_{\Norm{\cdot}{}}(\Omega,\R^{m\times d})$ and because of the lower semicontinuity of the norm we get 
\begin{equation}
E_p v=\frac{1}{p} \Norm{\Norm{D v}{}}{L^p(\Omega)}^p \leq \liminf_{k\to \infty} \frac{1}{p} \Norm{\Norm{D v_{j_k}}{}}{L^p(\Omega)}^p = \liminf_{k\to \infty} E_p v_{j_k}=I_0.
\end{equation}
Therefore, $v$ is a minimizer of $E_p$.
\end{proof}

\subsection{Vector-valued limiting process $p\to \infty$}

\begin{theorem}[Kirszbraun]\label{thm:kirszbraun}
Let $A\subset \R^d$ and $f\colon A\to \R^m \in C^{0,1}_L(A)$. Then there exists $F\colon \R^d\to \R^m \in C^{0,1}_L(\R^d)$ with $F\mid_A=f$.
\end{theorem}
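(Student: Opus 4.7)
The plan is to extend $f$ one point at a time via Zorn's lemma, reducing the problem to a finite one-point extension lemma. After rescaling by $1/L$ we may assume $L=1$. Consider the partially ordered set $\mathcal{P}$ of pairs $(B,F)$ where $A \subseteq B \subseteq \R^d$ and $F\colon B \to \R^m$ is a 1-Lipschitz extension of $f$, ordered by inclusion of graphs. Every chain has an upper bound (the union of graphs is again 1-Lipschitz, since the Lipschitz condition involves only two points at a time), so Zorn's lemma yields a maximal element $(B^*,F^*)$. It suffices to show $B^* = \R^d$: otherwise, picking any $x_0 \in \R^d \setminus B^*$ and extending $F^*$ to $B^* \cup \{x_0\}$ would contradict maximality.

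The core step is thus the \emph{one-point extension problem}: given a 1-Lipschitz map $F\colon B \to \R^m$ and $x_0 \notin B$, find $y_0 \in \R^m$ with $|y_0 - F(b)| \leq |x_0 - b|$ for every $b \in B$. Equivalently, the closed balls $\overline{B}(F(b), |x_0-b|) \subset \R^m$ must have a common point. Since each such ball is compact, the finite intersection property reduces this to showing that every finite subfamily meets. This is the content of the finite \emph{Kirszbraun intersection lemma}: given $x_1,\dots,x_n \in \R^d$, $y_1,\dots,y_n \in \R^m$ with $|y_i-y_j| \leq |x_i-x_j|$ and a point $x_0 \in \bigcap_i \overline{B}(x_i,r_i)$ where $r_i = |x_0-x_i|$, we have $\bigcap_i \overline{B}(y_i,r_i) \neq \emptyset$.

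To prove the finite lemma I would argue by contradiction. The function $\phi(y) = \max_i |y-y_i|/r_i$ is continuous, convex and coercive, so it attains its minimum at some $y^* \in \R^m$ with value $t^* \geq 0$; if the sought intersection were empty, then $t^* > 1$. First-order optimality $0 \in \partial \phi(y^*)$, combined with the usual formula for the subdifferential of a pointwise maximum of smooth functions, yields weights $\lambda_i \geq 0$ with $\sum_i \lambda_i = 1$, supported on the active set $I = \{i : |y^*-y_i|/r_i = t^*\}$, satisfying $\sum_i \lambda_i (y^* - y_i) = 0$. Expanding $0 = |\sum_i \lambda_i (y^*-y_i)|^2$ by polarization gives
\begin{equation}
t^{*2}\sum_{i\in I} \lambda_i r_i^2 \;=\; \sum_{i\in I} \lambda_i |y^*-y_i|^2 \;=\; \tfrac{1}{2}\sum_{i,j\in I} \lambda_i \lambda_j |y_i-y_j|^2.
\end{equation}
Applying $|y_i-y_j| \leq |x_i-x_j|$, the identity $\tfrac{1}{2}\sum_{i,j}\lambda_i\lambda_j|x_i-x_j|^2 = \sum_i \lambda_i |x_i - \bar x|^2$ with centroid $\bar x = \sum_i \lambda_i x_i$, the variational property that $\bar x$ minimizes $z \mapsto \sum_i \lambda_i |x_i-z|^2$, and finally $|x_0-x_i| \leq r_i$, I would obtain
\begin{equation}
t^{*2}\sum_{i\in I} \lambda_i r_i^2 \;\leq\; \sum_{i\in I} \lambda_i |x_i - \bar x|^2 \;\leq\; \sum_{i\in I} \lambda_i |x_i - x_0|^2 \;\leq\; \sum_{i\in I} \lambda_i r_i^2,
\end{equation}
forcing $t^* \leq 1$, a contradiction.

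The main obstacle is the finite intersection lemma: the polarization/centroid computation uses the Euclidean (Hilbert) structure of both source and target in an essential way, and this is precisely why the naive scalar construction from Lemma \ref{lem:forts} does not admit a componentwise generalization for $m>1$. The Zorn and compactness reductions, by contrast, are routine.
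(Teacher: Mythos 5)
The paper does not prove this statement at all; it simply cites \cite[Theorem 3.5]{lipLectures}, so there is nothing to match your argument against. Taken on its own merits, your proof is correct and is the classical Kirszbraun argument: Zorn's lemma reduces the problem to a one-point extension, compactness of the target balls reduces that to a finite intersection lemma, and the finite lemma is settled by minimizing $\phi(y)=\max_i |y-y_i|/r_i$ and exploiting the Hilbert-space polarization identity. All the steps check out. Two small points you should make explicit in a written version: (i) the max-rule for subdifferentials gives $0=\sum_{i\in I}\lambda_i\,\nabla f_i(y^*)$ with $\nabla f_i(y^*)=(y^*-y_i)/(r_i\,|y^*-y_i|)$, so the relation $\sum_i \mu_i (y^*-y_i)=0$ with $\mu_i\ge 0$, $\sum_i\mu_i=1$ only holds after renormalizing $\mu_i\propto \lambda_i/r_i^2$ — this is harmless because the subsequent computation never uses which particular convex weights you have, but as stated the weights from the max-rule are not the ones in your identity; (ii) the differentiability of the active $f_i$ at $y^*$ requires $t^*>0$, which is guaranteed in the contradiction branch since you assume $t^*>1$, and note that all $r_i>0$ because $x_0\notin B$, so $\phi$ is well defined and $\sum_i\mu_i r_i^2>0$ when you divide at the end. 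Your closing remark is also apt: the polarization/centroid step is exactly where both the domain and target being Euclidean is used, which is why the scalar $\min/\max$ construction of Lemma \ref{lem:forts} has no componentwise analogue for $m>1$.
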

\begin{proof}
See \cite[Theorem 3.5]{lipLectures}.
\end{proof}

As in the one-dimensional case we use the notation
\begin{equation}
\mu(f,D)=\sup_{x\neq y\in D} \frac{\abs{f(x)-f(y)}}{\abs{x-y}}
\end{equation}
for $f\colon D\to \R^m \in C^{0,1}(D)$.

We call an Lipschitz continuous extension $f\colon \overline{\Omega} \to \R^m$ of a Lipschitz continuous function $g\colon \partial \Omega \to \R^m$ \emph{absolute minimal} if we have $\mu(f,D)=\mu(f,\partial D)$ for all open $D\subset \Omega$.\\

The following lemma shows a connection between Lipschitz continuity and the matrix norm $\Norm{A}{2}=\sup_{x\in \R^d} \abs{Ax}$ for $A\in \R^{m\times d}$.

\begin{lemma}\label{lem:lip_opnorm}
Let $f\colon \overline{\Omega}\to \R^m$ be a Lipschitz continuous extension of $g\colon \partial \Omega \to \R^m$ with $\mu(g,\partial \Omega)=L$ and $\Norm{\Norm{D f}{2}}{L^\infty(\Omega)}\leq L$. Then we have $\mu(f,\overline{\Omega})=L$. 
\end{lemma}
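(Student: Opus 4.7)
The plan is to mimic the proof of Lemma \ref{lem:infNorm-lipschitz}, replacing the gradient estimate by an operator-norm estimate on the Jacobian. Fix $x,y\in\overline{\Omega}$ and consider the affine segment $\gamma\colon[0,1]\to\R^d$, $\gamma(t)=ty+(1-t)x$. Since each component $f^{(i)}$ is Lipschitz, so is $f^{(i)}\circ\gamma$, hence absolutely continuous on $[0,1]$, and the fundamental theorem of calculus (as cited via \cite[Theorem 3.3]{lipLectures}) yields
\begin{equation}
f^{(i)}(y)-f^{(i)}(x)=\int_0^1 (f^{(i)}\circ\gamma)'(t)\,dt.
\end{equation}
Stacking components gives $f(y)-f(x)=\int_0^1 Df(\gamma(t))(y-x)\,dt$ whenever the derivative along $\gamma$ exists a.e.

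First I would treat the case $\gamma((0,1))\subset\Omega$. By Theorem \ref{thm:rademacher} (applied componentwise) the weak derivative agrees a.e.\ with the classical one and its operator norm is bounded by $L$. Then
\begin{equation}
|f(y)-f(x)|\leq\int_0^1 |Df(\gamma(t))(y-x)|\,dt\leq\int_0^1 \Norm{Df(\gamma(t))}{2}\,|y-x|\,dt\leq L|x-y|.
\end{equation}
In the general case $\gamma((0,1))\not\subset\Omega$, set $t_1=\min\{t\in[0,1]:\gamma(t)\in\partial\Omega\}$ and $t_2=\max\{t\in[0,1]:\gamma(t)\in\partial\Omega\}$, which exist since $\partial\Omega$ is closed. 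The segment $\gamma([0,t_1])$ (respectively $\gamma([t_2,1])$) lies in $\overline{\Omega}$ with endpoints reachable from $\Omega$-interior segments, so the first case and continuity of $f$ give $|f(x)-f(\gamma(t_1))|\leq L|x-\gamma(t_1)|$ and $|f(\gamma(t_2))-f(y)|\leq L|\gamma(t_2)-y|$. Since $f|_{\partial\Omega}=g$ and $\mu(g,\partial\Omega)=L$, we also have $|f(\gamma(t_1))-f(\gamma(t_2))|\leq L|\gamma(t_1)-\gamma(t_2)|$. Combining via the triangle inequality, and using that the three points $x,\gamma(t_1),\gamma(t_2),y$ are collinear in this order, gives $|f(x)-f(y)|\leq L|x-y|$. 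This shows $\mu(f,\overline{\Omega})\leq L$, and the reverse inequality is immediate from $f|_{\partial\Omega}=g$:
\begin{equation}
L=\mu(g,\partial\Omega)=\mu(f,\partial\Omega)\leq\mu(f,\overline{\Omega})\leq L.
\end{equation}

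The main subtlety is justifying the chain rule along the specific segment $\gamma$: although $Df\in L^\infty_{\Norm{\cdot}{2}}(\Omega,\R^{m\times d})$ is only defined almost everywhere on $\Omega$, classical differentiability of $f$ (hence of each $f^{(i)}\circ\gamma$) along this particular line segment must be secured. This is handled exactly as in the scalar case: the Lipschitz continuity of $f^{(i)}\circ\gamma$ already guarantees a.e.\ differentiability on $[0,1]$, and Theorem \ref{thm:rademacher} identifies the a.e.\ derivative of $f^{(i)}$ with its weak gradient, so the operator-norm bound on $Df$ transfers to the pointwise estimate used in the integral above.
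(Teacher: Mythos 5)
Your argument is correct and reaches the right conclusion, but it takes a genuinely different route from the paper. The paper does not redo the segment argument in the vector-valued setting; instead it argues by contradiction and reduces to the already-proven scalar Lemma \ref{lem:infNorm-lipschitz}: assuming $\frac{\abs{f(x)-f(y)}}{\abs{x-y}}>L$ for some pair, it translates so that $f(y)=0$ and considers the scalar function $\abs{f}$, whose gradient satisfies $\abs{\nabla\abs{f}}\leq\Norm{Df}{2}\leq L$ a.e.\ by the chain rule, and whose boundary trace $\abs{g}$ is $L$-Lipschitz; the scalar lemma then forces $\mu(\abs{f},\Omega)\leq L$, contradicting $\frac{\abs{\abs{f(x)}-\abs{f(y)}}}{\abs{x-y}}=\frac{\abs{f(x)-f(y)}}{\abs{x-y}}>L$. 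Your direct approach buys a cleaner, non-contradictory proof and avoids the slightly delicate chain rule for $\nabla\abs{f}$ at zeros of $f$; the paper's reduction buys the advantage that the measure-theoretic subtlety you flag at the end -- that $\Norm{Df}{2}\leq L$ holds only Lebesgue-a.e.\ in $\Omega$, while a single line segment is a null set for $d\geq 2$, so the bound does not automatically transfer to a.e.\ $t\in[0,1]$ on your specific $\gamma$ -- is confined to the scalar lemma rather than being re-incurred. Be aware that your closing remark does not actually resolve this point: a.e.\ differentiability of $f^{(i)}\circ\gamma$ on $[0,1]$ gives you an integrable derivative, but identifying it with $\inner{\nabla f^{(i)}(\gamma(t))}{y-x}$ and bounding it by $L\abs{x-y}$ requires either a Fubini argument over parallel segments combined with continuity of $f$, or mollification (using convexity of $\Norm{\cdot}{2}$ to preserve the bound $\Norm{Df_\epsilon}{2}\leq L$). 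Since the paper's own proof of Lemma \ref{lem:infNorm-lipschitz} glosses over exactly the same point, your proof is at the same level of rigor as the source it imitates, but if you want a fully watertight direct proof you should add one of these two repairs.
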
 
\begin{proof}
Assume that $\mu(f,\overline{\Omega})>L$. Then there exists $x,y\in \Omega$ such that $\frac{\abs{f(x)-f(y)}}{\abs{x-y}}>L\geq\Norm{\Norm{Df}{2}}{L^\infty(\Omega)}$. With out loss of generality we can assume that $f(y)=0$. Consider the function $\abs{f}$. Since $\abs{\cdot}$ is Lipschitz continuous with constant $1$, we have that $\abs{f}$ is Lipschitz continuous and by Theorem \ref{thm:rademacher} that $\nabla \abs{\cdot}\leq 1$. Further Theorem \ref{thm:rademacher} implies that $\abs{f}\in W^{1,\infty}(\Omega)$. This yields that $\nabla \abs{f} = D f \nabla (\abs{\cdot})$. This yields by the definition of $\Norm{\cdot}{2}$ that $\abs{\nabla \abs{f}}=\Norm{D f}{2}\abs{\nabla (\abs{\cdot})}\leq\Norm{D f}{2}$. Hence we have 
\begin{equation}\label{eq:lip_opnorm_rechnerei}
\Norm{\nabla \abs{f}}{L^\infty(\Omega)}\leq \Norm{\Norm{Df}{2}}{L^\infty(\Omega)}\leq L<\frac{\abs{f(x)-f(y)}}{\abs{x-y}}=\frac{\abs{\abs{f(x)}-\abs{f(y)}}}{\abs{x-y}}.
\end{equation}
Now $\abs{f}$ is a Lipschitz continuous extension of $\abs{g}$ and since $\abs{\cdot}\in C^{0,1}_1(\Omega)$ we have that $\abs{g}\in C^{0,1}_L(\Omega)$. Hence Lemma \ref{lem:infNorm-lipschitz} and \eqref{eq:lip_opnorm_rechnerei} yield that $\mu(\abs{f},\Omega)=L$. This contradicts that $\frac{\abs{\abs{f(x)}-\abs{f(y)}}}{\abs{x-y}}>L$.
\end{proof}

We again adapt a theorem from the scalar case.

\begin{theorem}\label{thm:mehrdim_varsolex}
Let $g \colon \partial \Omega \to \R^d \in C^{0,1}(\partial \Omega)$ with $\mu(g,\partial \Omega)=L$ and let $(u_p)_{p\in \Nat}$ a sequence with $p$-harmonic $u_p\in U_p$. Then there exists a subsequence $(u_{p_j})_j$ converging uniformly to some $u_\infty \in U_\infty$ such that  $\partial_i u_{p_j}\rightharpoonup \partial_i u_\infty$ weakly as $j \to \infty$ in $L^s(\Omega)$ for all $s>1$ and $i=1,...,m$.
Further, we have that $\mu(u_\infty,\overline{\Omega})=L$, if we use the norm $\Norm{\cdot}{2}$ on $\R^{m\times d}$ as defined above.
\end{theorem}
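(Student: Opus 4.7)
The plan is to mimic the proof of Theorem~\ref{thm:varsolex} component-wise, using the Kirszbraun extension (Theorem~\ref{thm:kirszbraun}) in place of Lemma~\ref{lem:forts} and using Lemma~\ref{lem:lip_opnorm} to translate the final $L^\infty$ bound on $\Norm{Du_\infty}{2}$ into the Lipschitz constant of $u_\infty$.

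First I would use Theorem~\ref{thm:kirszbraun} to extend $g$ to a Lipschitz map $g\colon\overline{\Omega}\to\R^m$ with $\mu(g,\overline{\Omega})=L$, and note that Theorem~\ref{thm:rademacher} applied component-wise gives $g\in W^{1,p}(\Omega,\R^m)$ with $\Norm{\nabla g^{(i)}}{L^\infty(\Omega)}\leq L$ for each $i$, while differentiability a.e.\ combined with the definition of $\Norm{\cdot}{2}$ yields $\Norm{D g(x)}{2}\leq L$ a.e. Hence $E_p(g)\leq \tfrac{1}{p} L^p\lambda(\Omega)$, and since $u_p$ minimizes $E_p$ we obtain
\begin{equation}
\Norm{\Norm{D u_p}{2}}{L^p(\Omega)}\leq L\,\lambda(\Omega)^{1/p}.
\end{equation}
Using the equivalence of norms on $\R^{m\times d}$, this bounds each $\Norm{\nabla u_p^{(i)}}{L^p(\Omega)}$ by a constant multiple of the same expression.

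Next I would apply Morrey's inequality (Lemma~\ref{lem:morrey}) component-wise to $u_p^{(i)}-g^{(i)}\in W^{1,p}_0(\Omega)$. Exactly as in Theorem~\ref{thm:varsolex}, this produces an estimate of the form
\begin{equation}
\abs{u_p^{(i)}(x)-u_p^{(i)}(y)}\leq L\abs{x-y}+C\abs{x-y}^{1-d/p}
\end{equation}
with $C$ independent of $p$ for $p>d$, whence equicontinuity of $\{u_p\}_{p>d}\subset C(\overline{\Omega},\R^m)$ follows. Equiboundedness follows from $u_p\mid_{\partial\Omega}=g\mid_{\partial\Omega}$ together with the diameter of $\Omega$, again in full analogy to the scalar case. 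The Arzel\`a--Ascoli theorem then gives a subsequence $(u_{p_j})_j$ converging uniformly to some $u_\infty\in C(\overline{\Omega},\R^m)$ with $u_\infty\mid_{\partial\Omega}=g\mid_{\partial\Omega}$, and the pointwise limit in the displayed inequality shows $u_\infty\in C^{0,1}(\overline{\Omega},\R^m)$.

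For the weak convergence of the derivatives, I would fix $s>1$ and use Lemma~\ref{lem:Lpschachtel} (with the probability measure $\lambda/\lambda(\Omega)$) to pass from the $L^p$ bound above to a bound on $\Norm{\Norm{D u_{p_j}}{2}}{L^s(\Omega)}$ for $p_j>s$. Combined with the $L^s$-bounds on $u_{p_j}$ itself, reflexivity of $W^{1,s}(\Omega,\R^m)$ yields a further subsequence with $\partial_i u_{p_j}\rightharpoonup \partial_i u_\infty$ in $L^s(\Omega)$ for each $i$; uniqueness of weak limits (using that uniform convergence implies weak convergence) identifies the limit with $u_\infty$. A diagonal argument over $s=2,3,\dots$ produces a single subsequence that works for all $s>1$. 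Finally, the weak lower semicontinuity of $\Norm{\cdot}{L^s_{\Norm{\cdot}{2}}(\Omega,\R^{m\times d})}$ gives $\Norm{\Norm{D u_\infty}{2}}{L^s(\Omega)}\leq L\,\lambda(\Omega)^{1/s}$, and letting $s\to\infty$ yields $\Norm{\Norm{D u_\infty}{2}}{L^\infty(\Omega)}\leq L$. Lemma~\ref{lem:lip_opnorm} then concludes $\mu(u_\infty,\overline{\Omega})=L$.

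The main technical point where the vector-valued case really differs from the scalar one is the last step: in the scalar case Lemma~\ref{lem:infNorm-lipschitz} directly converted an $L^\infty$ bound on the gradient into a Lipschitz bound via a line integral, but here such a naive integration of $Du_\infty$ along segments only controls $\Norm{Du_\infty v}{}$ for fixed directions $v$. The ``trick'' is that the $L^\infty$ bound on the \emph{operator} norm $\Norm{Du_\infty}{2}$ is precisely what Lemma~\ref{lem:lip_opnorm} needs, and it is obtained here because the $L^p$ energy controlling $u_p$ was defined using that same operator norm. Any other choice of norm $\Norm{\cdot}{}$ in $E_p$ would, via the equivalence of norms, still yield a Lipschitz extension, but only with a possibly worse constant; recovering $\mu(u_\infty,\overline{\Omega})=L$ exactly is what forces the particular norm $\Norm{\cdot}{2}$ in the statement.
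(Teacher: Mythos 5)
Your proposal is correct and follows essentially the same route as the paper's proof: Kirszbraun extension of $g$, component-wise Morrey estimates for equicontinuity and equiboundedness, Arzel\`a--Ascoli, the $L^s$ gradient bounds via Lemma \ref{lem:Lpschachtel} and minimality of $u_p$, a diagonal argument for the weak convergence, and finally weak lower semicontinuity plus Lemma \ref{lem:lip_opnorm} to recover $\mu(u_\infty,\overline{\Omega})=L$. Your closing observation about why the operator norm $\Norm{\cdot}{2}$ is the right choice matches the paper's handling of the norm-equivalence constants (where $C=1$ exactly when $\Norm{\cdot}{}=\Norm{\cdot}{2}$).
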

\begin{definition}
We call the $u_\infty$ constructed in Theorem \ref{thm:mehrdim_varsolex} a \emph{variational $\infty$-harmonic} function. 
\end{definition}
\begin{proof}[Proof of Theorem \ref{thm:mehrdim_varsolex}]
The proof is similar to the proof of Theorem \ref{thm:varsolex}.\\
Due to Kirszbrauns theorem (Theorem \ref{thm:kirszbraun}) we can extend $g$ to some Lipschitz continuous function on $\overline{\Omega}$ with Lipschitz constant $L$. We denote this extension again by $g$. Rademachers theorem (Theorem \ref{thm:rademacher}) ensures that $g\in C^{0,1}(\overline{\Omega})\cap W^{1,p}(\Omega,\R^m)$. We show equicontinuity and equiboundedness of $\{u_p\}$ for $p>d$. Since $g$ is Lipschitz continuous with constant $L$,  we have that also the component functions $g^{(i)}$ are in $C^{0,1}_L(\overline{\Omega})$. Hence we get analogously to \eqref{eq:absch_varsolex} for all $p>d$ that
\begin{equation}
\abs{u_p^{(i)}(x)-u_p^{(i)}(y)}\leq L\abs{x-y}+C\abs{x-y}^{1-\frac{d}{p}}
\end{equation}
for some $C>0$ depending on $\Omega$. Therefore, we have
\begin{equation}
\abs{u_p(x)-u_p(y)}\leq\sum_{i=1}^m\abs{u_p^{(i)}(x)-u_p^{(i)}(y)}\leq m L\abs{x-y}+m C\abs{x-y}^{1-\frac{d}{p}}.
\end{equation}
Thus $\{u_p\}_p$ is equicontinuous. Since $u_p\mid_{\partial\Omega}=g\mid_{\partial\Omega}$ we get for some fixed $x_0\in \partial\Omega$ that for all $x\in \overline{\Omega}$ it holds
\begin{equation}
\begin{aligned}
\abs{u_p(x)}&\leq\abs{u_p(x)-u_p(x_0)}+\abs{u_p(x_0)}\\
&\leq m L\abs{x-x_0}+ m C \abs{x-x_0}^{1-\frac{d}{p}}+\abs{g(x_0)}\\
&\leq m L \diam{\Omega}+ m C \diam{\Omega}^{1-\frac{d}{p}}+\abs{g(x_0)}
\end{aligned}
\end{equation}
Hence we have $\abs{u_p(x)}\leq m L + m C +\abs{g(x_0)}$ for $\diam{\Omega}\leq 1$ . For $\diam{\Omega}>1$ we get thtat
\begin{equation}
\abs{u_p(x)}\leq m L \diam{\Omega}+ m C \diam{\Omega}^{1-\frac{d}{p}}+\abs{g(x_0)} \leq m L \diam{\Omega}+ m C \diam{\Omega}+\abs{g(x_0)}.
\end{equation}
Therefore, $\{u_p\}$ is equibounded and we can use the theorem of Arzela-Ascoli: There exists a subsequence $(u_{p_j})_j$ converging uniformly to some $u_\infty\in C(\overline{\Omega})$ with $u_\infty\mid_{\partial\Omega}=g\mid_{\partial\Omega}$.\\
Furthermore, we have
\begin{equation}
\begin{aligned}
\abs{u_\infty(x)-u_\infty(y)}&=\lim\limits_{j\to \infty}\abs{u_{p_j}(x)-u_{p_j}(y)}\\
&\leq \lim\limits_{j\to \infty} m L \abs{x-y}+ m C \abs{x-y}^{1-\frac{d}{p_j}}\\
&= m L \abs{x-y} + m C \abs{x-y}
\end{aligned}
\end{equation}
Thus $u_\infty$ is Lipschitz continuous and therefore almost everywhere differentiable by Theorem \ref{thm:rademacher}. Since $\frac{1}{\lambda (\Omega)}dx$ is a probability measure we get for $p_j>s$ using Lemma \ref{lem:Lpschachtel} and the equivalence of the norms on $\R^{m\times d}$
\begin{equation}\label{eq:mehrdim_varsolex_absch}
\begin{aligned}
\left( \int_\Omega \Norm{\partial_i u_{p_j}}{2}^s \frac{1}{\lambda(\Omega)}dx\right)^\frac{1}{s}&\leq\left( \int_\Omega \Norm{D u_{p_j}}{F}^s \frac{1}{\lambda(\Omega)}dx\right)^\frac{1}{s}\\
&\leq c \left( \int_\Omega \Norm{D u_{p_j}}{}^s \frac{1}{\lambda(\Omega)}dx\right)^\frac{1}{s}\\
&\leq c \left( \int_\Omega \Norm{D u_{p_j}}{}^{p_j} \frac{1}{\lambda(\Omega)}dx\right)^\frac{1}{p_j}\\
&\leq c \left( \int_\Omega \Norm{D g}{}^{p_j} \frac{1}{\lambda(\Omega)}dx\right)^\frac{1}{p_j}\\
&\leq c \Norm{\Norm{D g}{}}{L^\infty(\Omega)}\leq c C \Norm{\Norm{D g}{2}}{L^\infty(\Omega)}\leq c C L,
\end{aligned}
\end{equation}
for some $C, c>0$ and $\lambda$ the Lebesgue measure. If $\Norm{\cdot}{}=\Norm{\cdot}{2}$ we have $C=1$ in \eqref{eq:mehrdim_varsolex_absch}. Hence $\Norm{\partial_i u_{p_j}}{L^s(\Omega,\R^m)}$ is bounded. Thus also the Sobolev norm $\Norm{u_{p_j}}{W^{1,s}(\Omega,\R^m)}$ is bounded. Since the Sobolev spaces $W^{1,p}(\Omega,\R^m)$ for $1<p<\infty$ are reflexive we know that $\overline{B_r(0)}$ is weak sequentially compact for all $r>0$. Hence each subsequence of $(u_{p_j})_j$ has a subsequence $(u_{p_{j_k}})_k$, which converges weakly to some $v\in W^{1,s}(\Omega,\R^m)$. The weak convergence in $W^{1,s}(\Omega,\R^m)$ yields weak convergence of $(u_{p_{j_k}})_k$ to $v$ and of $(\partial_i u_{p_{j_k}})_k$ to $\partial_i v$ in $L^s(\Omega)$. Since weak limits are unique in $W^{1,s}(\Omega,\R^m)$ we have that $v=u_\infty$ and $\partial_i u_{p_{j_k}}\rightharpoonup \partial_i u_\infty$ weakly in $L^s(\Omega)$.
Similar to the one-dimensional case we find by diagonalization a subsequence $(u_{p_{j_k}})_k$ of $(u_{p_j})_j$ such that $\partial_i u_{p_{j_k}} \rightharpoonup \partial_i u_\infty$ for all $s>0$.\\
Now let $\Norm{\cdot}{}=\Norm{\cdot}{2}$. Since $D\colon W^{1,p}(\Omega,\R^m) \to L^p(\Omega,\R^{m\times d})$ is linear and continuous we have that $D u_{p_{j_k}} \rightharpoonup D u_\infty$ in $L^p(\Omega,\R^{m\times d})$ and by the weak semi-continuity of the norm we have using \eqref{eq:mehrdim_varsolex_absch} with $C=1$
\begin{equation}
\int_\Omega \Norm{D u_\infty}{2}^s dx \leq \limsup_{k\to \infty} \int_\Omega \Norm{D u_{p_{j_k}}}{2}^s dx \leq L^s.
\end{equation}
i.e. $\Norm{\Norm{D u_\infty}{2}}{L^s(\Omega)}\leq L$. Hence for $s\to\infty$ we get
\begin{equation}
\Norm{\Norm{D u_\infty}{2}}{L^\infty(\Omega)}\leq L.
\end{equation}
Thus we get by Lemma \ref{lem:lip_opnorm}
\begin{equation}
L=\mu(g,\partial \Omega)=\mu(u_\infty,\partial \Omega)=\mu(u_\infty,\overline{\Omega})\leq L.
\end{equation}
\end{proof}


\begin{definition}
For $x\in \Omega$ and Lipschitz continuous $u\colon \Omega\to \R^m$ we denote the \emph{local Lipschitz constant} of $u$ in $x$ by 
\begin{equation}
L u(x) = \inf_{r>0} \mu(u, \Omega \cap B_r(x)).
\end{equation}
\end{definition}

\begin{example}
This example from \cite{sheffield2012vector} shows, that absolute minimal Lipschitz extensions are in general not unique.
We identify $\R^2$ with $\C$ using the canonical isometric isomorphism $(x,y)\mapsto x+i y$.\\

For $t\in [0,1]$ define $u_t\colon B_1(0) \subset \C \to \C$ by
\begin{equation}
u_t(z)=
\begin{cases} 
t z^2 + (1-t) \frac{z^2}{\abs{z}}, &$if $z\neq 0,\\
0, &$if $z=0.
\end{cases}
\end{equation}
Since the function $z\mapsto z^2$ is differentiable with derivative $z\mapsto 2 z$ and the function $z\mapsto \frac{z^2}{\abs{z}}$ has Lipschitz constant $2$, we get that the local Lipschitz constant of $u_t$ is bounded by 
\begin{equation}\label{eq:local_Lipschitz_ex_abs_min_not_unique}
Lu_t(x)\leq2+2t(\abs{x}-1).
\end{equation}
Now let $D\subset B_1(0)$ be open and $m=\max_{z\in \overline{D}} \abs{z}>0$. Then the we get because of \eqref{eq:local_Lipschitz_ex_abs_min_not_unique} that 
\begin{equation}
\mu (u_t,\overline{D})\leq 2+2t(m-1).
\end{equation}
Further the function $\overline{D}\to \R$ defined by $z\mapsto \abs{z}$ reaches its maximum $m$ at some $z_0\in \partial D$. For all $r>0$ there exist some $x_r\neq y_r\in \partial D \cap B_r(z_0)$ with $\abs{x_r}=\abs{y_r}$. Hence $y_r=e^{i \varphi}x_r$ for some $\varphi\in(-\pi,\pi)$. Now we have
\begin{equation}
\begin{aligned}
\mu(u_t,\partial D)&\geq\frac{\abs{u_t(x_r)-u_t(y_r)}}{\abs{x_r-y_r}}\\
&=\frac{\abs{u_t(x_r)-u_t(e^{i \varphi}x_r)}}{\abs{x_r(1-e^{i \varphi})}}\\
&=\frac{\abs{t x_r^2 (1-e^{2 i \varphi}) +\frac{1-t}{\abs{x_r}} x_r^2 (1-e^{2 i \varphi})}}{\abs{x_r(1-e^{i \varphi})}}\\
&=\frac{\abs{(1-e^{2 i \varphi})} (1 + t (\abs{x_r}-1)) \abs{x_r}}{\abs{(1-e^{i \varphi})} \abs{x_r}}\\
&=\frac{(1-e^{2 i \varphi})}{(1-e^{i \varphi})} (1 + t (\abs{x_r}-1)).
\end{aligned}
\end{equation}
Since $\phi \to 0$ and $\abs{x_r}\to \abs{z_0}=m$ as $r\to 0$ and $\frac{(1-e^{2 i \varphi})}{(1-e^{i \varphi})}\to 2$ as $\phi \to 0$ we get for $r\to 0$ that
\begin{equation}
\mu(u_t,\partial D)\geq 2+2t(m-1) \geq \mu (u_t,\overline{D}).
\end{equation}
Thus $u_t$ is absolute minimal for all $t\in [0,1]$. Since we have for $t_1,t_2\in [0,1]$ $u_{t_1}(z)=z^2=u_{t_2}(z)$ on $\partial B_1(0)$, the absolute minimal extension of the boundary values $g\colon \partial B_1(0) \to \C$ defined by $z\mapsto z^2$ is not unique.
\end{example}

\subsection{Tightness}

Since absolute minimal Lipschitz extensions of $g\colon \partial \Omega \to \R^m \in C^{0,1}_L$ to $f\colon \overline{\Omega} \to \R^m$ are in general not unique, \cite{sheffield2012vector} introduces another definition to characterize good Lipschitz extensions.

\begin{definition}
Let $u,v\in C^{0,1}(\overline{\Omega})$ with $u\mid_{\partial \Omega}=v\mid_{\partial \Omega}$. We say $v$ is \emph{tighter} than $u$ if $u$ and $v$ satisfy
\begin{equation}
\sup \{L u(x):x\in\Omega\text{ and }L v(x)<L u(x) \}> \sup \{L v(x): x\in\Omega \text{ and }L u(x)<L v(x)\}.
\end{equation}
We say $u$ is \emph{tight} if there is no tighter $v\in C^{0,1}(\overline{\Omega})$ with $u\mid_{\partial \Omega}=v\mid_{\partial \Omega}$.
\end{definition}

We observe that tightness is stronger than absolute minimality. This statement is mentioned several times in \cite{sheffield2012vector}. Since we did not found a proof, we did it ourself.

\begin{theorem}
Let $u\colon \overline{\Omega} \to \R^m$ be tight. Then $u$ is absolute minimal on $\Omega$.
\end{theorem}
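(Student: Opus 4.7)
I prove the contrapositive: if $u$ is not absolute minimal then I construct $v\in C^{0,1}(\overline\Omega)$ with $v|_{\partial\Omega}=u|_{\partial\Omega}$ which is tighter than $u$, contradicting tightness of $u$.

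By hypothesis there is an open $D\subset\Omega$ with $\mu(u,\partial D)<\mu(u,\overline D)$. After refining $D$ if necessary (see the obstacle below) I may assume $D$ is convex and $\overline D\subset\Omega$. Set $L_\partial:=\mu(u,\partial D)$, so $L_\partial<\mu(u,\overline D)$. Apply Kirszbraun's theorem (Theorem \ref{thm:kirszbraun}) to extend $u|_{\partial D}$ to $w\colon\overline D\to\R^m$ with $\mu(w,\overline D)=L_\partial$, and define
\begin{equation}
v(x)=\begin{cases}w(x),&x\in\overline D,\\u(x),&x\in\overline\Omega\setminus D.\end{cases}
\end{equation}
The two branches agree on $\partial D$, so $v$ is continuous with $v|_{\partial\Omega}=u|_{\partial\Omega}$, and a standard path argument across $\partial D$ yields $v\in C^{0,1}(\overline\Omega)$.

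Next I estimate local Lipschitz constants. For $x\in\Omega\setminus\overline D$ a small ball lies outside $\overline D$, so $Lv(x)=Lu(x)$. For $x\in D$ a small ball lies in $D$, so $Lv(x)\leq L_\partial$. For $x\in\partial D\cap\Omega$, splitting a small ball into its $D$-part and complement gives $Lv(x)\leq\max(L_\partial,Lu(x))$. Therefore any $x\in\Omega$ with $Lu(x)<Lv(x)$ must lie in $\overline D$ with $Lv(x)\leq L_\partial$, so the right-hand side of the tightness inequality is bounded by $L_\partial$. To strictly exceed this from the left, I produce $x_0\in D$ with $Lu(x_0)>L_\partial$: were $Lu(x)\leq L_\partial$ throughout $D$, covering a chord $[a,b]\subset D$ with $|u(a)-u(b)|/|a-b|$ close to $\mu(u,\overline D)$ by a finite collection of small balls on which $\mu(u,\cdot)\leq L_\partial+\varepsilon$ would force $|u(a)-u(b)|\leq(L_\partial+\varepsilon)|a-b|$ for every $\varepsilon>0$, contradicting $\mu(u,\overline D)>L_\partial$. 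For such $x_0$ we have $Lv(x_0)\leq L_\partial<Lu(x_0)$, so the left-hand side of the tightness inequality is at least $Lu(x_0)>L_\partial$, making $v$ tighter than $u$.

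The main obstacle is the reduction to a convex witness $D$ with $\overline D\subset\Omega$: for a general open witness the excess $\mu(u,\overline D)>L_\partial$ may come solely from chord quotients of points in $\overline D$ whose connecting Euclidean segments exit $D$ (as in a U-shaped channel), so no interior $x_0\in D$ need satisfy $Lu(x_0)>L_\partial$. To carry out the reduction one selects a bad pair $(a,b)\in\overline D$ whose Euclidean segment stays in $\Omega$, encloses it in a thin convex open tube $D'\subset\Omega$ containing $\{a,b\}$, and verifies that $\mu(u,\partial D')<\mu(u,\overline{D'})$ persists: the upper bound on $\mu(u,\partial D')$ is controlled by the global Lipschitz constant of $u$ times the small transverse extent of the tube, while $\mu(u,\overline{D'})$ is bounded below by the fixed chord quotient $|u(a)-u(b)|/|a-b|$.
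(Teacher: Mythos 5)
Your overall route is the paper's: argue by contraposition, re-extend $u\mid_{\partial D}$ over the witness region $D$ by Kirszbraun with the smaller constant $L_\partial\coloneqq\mu(u,\partial D)$, glue to get $v$, bound $Lv$ by $L_\partial$ inside $D$ and by $\max\{L_\partial,Lu\}$ on $\partial D$, and then beat $L_\partial$ from the left by exhibiting a point of $D$ where $Lu>L_\partial$. Your covering/chaining argument for that last step is a legitimate substitute for the paper's bisection argument, and it works whenever you actually have a chord $[a,b]\subset D$ with $\abs{u(a)-u(b)}/\abs{a-b}>L_\partial$.

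The genuine gap is exactly the step you flag, and your proposed repair does not close it. For a thin convex tube $D'$ around the segment $[a,b]$, the boundary $\partial D'$ contains points $a_\delta,b_\delta$ within distance $\delta$ of $a$ and $b$, and $\abs{u(a_\delta)-u(b_\delta)}/\abs{a_\delta-b_\delta}\to\abs{u(a)-u(b)}/\abs{a-b}$ as $\delta\to0$; more generally $\mu(u,\partial D')\to\mu(u,[a,b])$. So $\mu(u,\partial D')$ is \emph{not} controlled by ``global Lipschitz constant times transverse extent'' --- it is a difference quotient, not an oscillation, and it is bounded below by essentially the same chord quotient you use to bound $\mu(u,\overline{D'})$ from below. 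Hence neither the strict inequality $\mu(u,\partial D')<\mu(u,\overline{D'})$ nor the inequality you actually need, $Lu(x_0)>\mu(u,\partial D')$ for the $x_0$ your covering argument produces, follows. The correct repair --- and what the paper does --- is to stay with the original $D$ and show that a suitable chord inside $D$ always exists: take $x_0,y_0\in\overline{D}$ with $\abs{u(x_0)-u(y_0)}/\abs{x_0-y_0}>L_\partial$; if the segment $\gamma$ from $x_0$ to $y_0$ leaves $D$, cut it at the first and last parameters $t_1\leq t_2$ at which it meets $\partial D$. The middle piece has both endpoints on $\partial D$, hence quotient at most $L_\partial$, while by the triangle inequality over the three collinear pieces at least one piece has quotient at least the total quotient, which exceeds $L_\partial$; so one of the two outer pieces does, and those lie in $D$ (up to renaming after a slight shrinking to move the endpoints off $\partial D$). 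This removes any need for convexity of the witness or a change of region, after which your covering argument finishes the proof.
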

\begin{proof}
Let $u$ be not absolute minimal. We show that this implies that $u$ is not tight.\\
Since $u$ is not absolute minimal, there exists some open $D\subset \Omega$ such that $\mu(u,\partial D)<\mu(u,\overline{D})$. Due to Theorem \ref{thm:kirszbraun} there exists some $f\colon \overline{D}\to \R^m$ with $f\mid_{\partial D}=u\mid_{\partial D}$ and $\mu(f,\overline{D})=\mu(u,\partial D)$. Define $v\colon \overline{\Omega}\to \R^m$ by
\begin{equation}
v(x)=
\begin{cases}
f(x),&$if $ x\in D,\\
u(x),&$if $ x\not\in D.
\end{cases}
\end{equation}
We show that $v$ is tighter than $u$.\\
For $x \not\in \overline{D}$ there exists some $r>0$ such that $u=v$ on $B_r(x)$. Therefore, we have $Lu(x)=Lv(x)$.\\
Since $u=v$ on $\Omega\backslash D$ we have for $x \in \partial D$ that 
\begin{equation}
L v(x)\leq \max \{Lu(x), \mu(v,\overline{D})\}=\max \{Lu(x), \mu(u,\partial D)\}.
\end{equation}
Because $\mu(v,\overline{D})=\mu(u,\partial D)$ we have for $x\in D$ that $L v(x)\leq \mu(u,\partial D)$.
We can conclude that $\sup \{L v(x): x\in\Omega \text{ and }L u(x)<L v(x)\}\leq \mu(u,\partial D)$.\\
Since $\mu(u,\partial D)<\mu(u,\overline{D})$ there exists $x_0, y_0\in \overline{D}$ such that $\frac{\abs{u(x_0)-u(y_0)}}{\abs{x_0-y_0}}>\mu(u,\partial D)$. \\
We show that we can assume $\gamma((0,1))\subset D$. Let $\gamma\colon [0,1]\to\R^d$ defined by $\gamma (t)=t y_0 + (1-t) x_0$ and $a=\min\{ t \in [0,1]: \gamma(t)\in \partial D\}$ and $b=\max\{ t \in [0,1]: \gamma(t)\in \partial D\}$. If $\gamma((0,1))\not\subset D$ the minimum and maximum exist. Since $x_0, a, b, y_0$ are on a line we have
\begin{equation}
\begin{aligned}
\mu(u,\partial D)&<\frac{\abs{u(x_0) - u(y_0)}}{\abs{x_0-y_0}} \\
&\leq \frac{\abs{u(x_0) - u(\gamma(a))}+\abs{u(\gamma(a))-u(\gamma(b))}+\abs{u(\gamma(b)) - u(y_0)}}{\abs{x_0-y_0}}\\
&=\frac{\abs{u(x_0) - u(\gamma(a))}+\abs{u(\gamma(a))-u(\gamma(b))}+\abs{u(\gamma(b)) - u(y_0)}}{\abs{x_0-\gamma(a)}+\abs{\gamma(a)-\gamma(b)}+\abs{\gamma(b)-y_0}}.
\end{aligned}
\end{equation} 
Since $\gamma(a), \gamma(b)\in \partial D$ we have that $\frac{\abs{u(\gamma(a))-u(\gamma(b))}}{\abs{a-b}}\leq \mu(u,\partial D)$. This yields that 
\begin{equation}
\frac{f(x_0)-f(\gamma(a))}{\abs{x_0-\gamma(a)}}>\mu(u,\partial D) \text{ or } \frac{f(\gamma(b))-f(y_0)}{\abs{\gamma(b)-y_0}}>\mu(u,\partial D).
\end{equation}
Since $(x_0,\gamma(a))\subset D$ and $(\gamma(b),y_0)\subset D$ we can rename the points such that we have $x_0,y_0\in \overline{D}$ with $\gamma(0,1)\subset D$ and $\frac{\abs{u(x_0)-u(y_0)}}{\abs{x_0-y_0}}>\mu(u,\partial D)$.\\

Because $u$ and $\gamma$ are continuous we have that $\lim\limits_{\epsilon \to 0} \frac{\abs{u(\gamma(\epsilon)) - u(\gamma(1-\epsilon))}}{\gamma(\epsilon)-\gamma(1-\epsilon)}=\frac{\abs{u(x_0)-u(y_0)}}{\abs{x_0-y_0}}>\mu(u,\partial D)$. Hence there exists some $\frac{1}{2}>\epsilon>0$ such that $\frac{\abs{u(\gamma(\epsilon)) - u(\gamma(1-\epsilon))}}{\gamma(\epsilon)-\gamma(1-\epsilon)}>\mu(u,\partial D)$. Thus we can assume by renaming the points that $x_0,y_0\in D$.\\

Now we construct recursively $x_n,y_n\in D$ with $\frac{\abs{u(x_n)-u(y_n)}}{\abs{x_n-y_n}}\geq\frac{\abs{u(x_0)-u(y_0)}}{\abs{x_0-y_0}}$. Let $n\in \Nat$ and $x_n, y_n$ be already constructed. Then consider $z=\frac{x_n+y_n}{2}$. Since 
\begin{equation}
\frac{\abs{u(x_0)-u(y_0)}}{\abs{x_0-y_0}}\leq\frac{\abs{u(x_n)-u(y_n)}}{\abs{x_n-y_n}}\leq \frac{\abs{u(x_n)-u(z)}+\abs{u(z)-u(y_n)}}{\abs{x_n-z}+\abs{z-y_n}}
\end{equation}
we have that $\frac{\abs{u(x_n)-u(z)}}{\abs{x_n-z}}\geq \frac{\abs{u(x_0)-u(y_0)}}{\abs{x_0-y_0}}$ or $\frac{\abs{u(z)-u(y_n)}}{\abs{z-y_n}}\geq \frac{\abs{u(x_0)-u(y_0)}}{\abs{x_0-y_0}}$. Therefore, either $x_{n+1}\coloneqq x_n$ and $y_{n+1}\coloneqq z$ or $x_{n+1}\coloneqq z$ and $y_{n+1}\coloneqq y_n$ fulfill the conditions from above.\\

By construction we have that $\abs{x_n-y_n}=\frac{1}{2^n} \abs{x_0-y_0}$ and $\abs{x_{n+1}-x_n}\leq \frac{\abs{x_n-y_n}}{2}$. Hence $(x_n)_n$ is a Cauchy sequence and converges to some $x\in \R^d$. It holds $x\in \gamma([0,1])\subset D$ since $\gamma([0,1])$ is closed. Now for $r>0$ there exists some $N\in \Nat$ such that for all $n>N$ we have that $x_n,y_n\in B_r(x)$. Thus $\mu(u,\Omega\cap B_r(x))\geq \frac{\abs{u(x_0)-u(y_0)}}{\abs{x_0-y_0}}$ for all $r>0$. This yields $L u(x)\geq\frac{\abs{u(x_0)-u(y_0)}}{\abs{x_0-y_0}}>\mu(u,\partial D)$. Since $x\in D$ we know that $L v(x)\leq \mu(u,\partial D)$. This yields that
\begin{equation}
\begin{aligned}
&\sup \{L u(x): x\in\Omega \text{ and }L v(x)<L u(x)\}>\mu(u,\partial D)\\
\geq&\sup \{L v(x): x\in\Omega \text{ and }L u(x)<L v(x)\}
\end{aligned}
\end{equation}
Hence $v$ is tighter than $u$. This contradicts that $u$ is tight.
\end{proof}

We are not able to show existence or uniqueness of tight extensions of Lipschitz boundary values. But we get a similar characterization of tightness as in the real-valued case for absolute minimal extensions stated in \cite{sheffield2012vector}. To formulate this characterization we first one more definition from \cite{sheffield2012vector}.

\begin{definition}
Let $U\subset \R^d$ be open. We define a \emph{principal direction field} for a function $u\in C^1(U,\R^m)$ as a unit vector field $a\in C(U,\R^d)$ such that $a(x)$ spans the principal eigenspace of $Du(x)^T Du(x)$ (i.e. the eigenspace to the biggest eigenvalue of $Du(x)^T Du(x)$). 
\end{definition}

\begin{theorem}\label{thm:tightpde}
Let $U\subset \R^d$ be open and bounded. Suppose that $u\in C^3(U,\R^m)$ has a principal direction field $a\in C^2(U,\R^d)$. Then $u$ is tight if and only if
\begin{equation}
-\sum_{j=1}^d a^j\frac{\partial}{\partial x_j}\left(\sum_{i=1}^d a^i \frac{\partial u}{\partial x_i}\right)=0
\end{equation}
\end{theorem}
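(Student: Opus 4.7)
The plan is to begin by interpreting the PDE geometrically. Let $\gamma\colon I\to U$ be an integral curve of $a$, i.e., $\dot\gamma(t)=a(\gamma(t))$, and set $\phi(t)\coloneqq u(\gamma(t))$. By the chain rule, $\phi'(t)=Du(\gamma(t))\,a(\gamma(t))=\sum_i a^i\partial_i u$ evaluated at $\gamma(t)$, and differentiating once more gives $\phi''(t)=\sum_j a^j \partial_j\bigl(\sum_i a^i \partial_i u\bigr)$ evaluated at $\gamma(t)$. Hence the PDE is equivalent to $\phi''\equiv 0$ on every integral curve of $a$, so $u$ is affine along every integral curve of $a$. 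Since $a(x)$ is a unit vector spanning the principal eigenspace of $Du(x)^T Du(x)$, one also has $\abs{\phi'(t)}=\abs{Du(\gamma(t))a(\gamma(t))}=\sigma_{\max}(Du(\gamma(t)))=Lu(\gamma(t))$, so under the PDE the local Lipschitz constant $Lu$ is constant along each integral curve.

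For the backward direction, assume the PDE holds and suppose for contradiction that some $v\in C^{0,1}(\overline{U},\R^m)$ with $v\mid_{\partial U}=u\mid_{\partial U}$ is tighter than $u$ with thresholds $M>M'$ as in the definition. Pick $x_0\in U$ with $Lu(x_0)>M'$ and $Lv(x_0)<Lu(x_0)$, and follow the integral curve $\gamma$ of $a$ through $x_0$ up to its exit times from $U$. Along $\gamma$, $Lu$ is constant equal to $c\coloneqq Lu(x_0)>M'$. By the definition of tightness we then have $Lv\leq M'<c$ on the whole portion of $\gamma$ lying inside the set $\{Lu>Lv\}$. Using that $u\circ\gamma$ is affine, the increment of $u$ between two points along $\gamma$ equals $c$ times the elapsed arc length, while the increment of $v$ between the same points is bounded by $M'$ times that length. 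Tracking what happens on the complementary sub-arcs where $Lv\geq Lu$ and pushing to the endpoints of $\gamma$ on $\partial U$ yields $u\neq v$ at two points where, by hypothesis, $u=v$. This contradiction establishes tightness.

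For the forward direction, suppose the PDE fails at some $x_0$. Then $\phi\coloneqq u\circ\gamma$ is not affine on any neighborhood of the parameter value $t_0$ with $\gamma(t_0)=x_0$. Choose a small interval $[t_1,t_2]$ on which $\phi$ is not affine, and let $L(t)$ be the affine function with $L(t_i)=\phi(t_i)$. By the triangle inequality for integrals,
\begin{equation}
\abs{L'}=\Bigl|\tfrac{1}{t_2-t_1}\int_{t_1}^{t_2}\phi'(s)\,ds\Bigr|\leq \max_{[t_1,t_2]}\abs{\phi'},
\end{equation}
and equality forces $\phi'$ to have both constant direction and constant magnitude, which is incompatible with $\phi$ being non-affine. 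So $\abs{L'}<\max_{[t_1,t_2]}Lu\circ\gamma$. The perturbation $v$ is then built in a thin tube $T_\varepsilon$ of radius $\varepsilon$ around $\gamma([t_1,t_2])$: using the $C^2$-regularity of $a$ to straighten the flow by tubular coordinates, replace $u$ along each flow line inside $T_\varepsilon$ by the affine interpolation between its endpoint values, glued smoothly to $u$ via a cutoff in the transverse directions. For $\varepsilon$ small, the longitudinal derivative of $v$ (in direction $a$) drops strictly below $Lu$ on the core of the tube, while the transverse derivatives of $v$ can be made arbitrarily close to those of $u$. Hence $Lv<Lu$ strictly on a nonempty open subset of the tube, and elsewhere $Lv$ exceeds $Lu$ by at most a quantity tending to zero with $\varepsilon$. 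This gives a $v$ that is tighter than $u$, contradicting tightness.

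The main obstacle is the construction of the perturbation in the forward direction: one must ensure that the transverse variation induced by the cutoff does not create new singular values of $Dv$ exceeding the strict decrease gained in the principal direction. I expect this to require rescaling the tube thickness and a careful Taylor expansion of $Dv$ in tubular coordinates, leaning on the $C^3$-regularity of $u$ and the $C^2$-regularity of $a$ to control the size of all error terms uniformly.
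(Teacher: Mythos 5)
First, note that the paper does not actually prove this theorem itself --- its ``proof'' is only the citation to \cite[Theorem 1.5]{sheffield2012vector} --- so your proposal is being measured against the cited source rather than an in-paper argument. Your starting point is the correct one: writing $\phi=u\circ\gamma$ for an integral curve $\gamma$ of $a$, the PDE says exactly $\phi''\equiv 0$, and since $a(x)$ spans the principal eigenspace of $Du(x)^TDu(x)$ one has $Lu(\gamma(t))=\sigma_{\max}(Du(\gamma(t)))=\abs{\phi'(t)}$. Your backward direction is essentially sound: along the maximal integral curve through a point $x_0$ with $Lu(x_0)=c>M'$ and $Lv(x_0)<c$, no point of the curve can satisfy $Lv>Lu$ (such a point would give $Lv\leq M'<c=Lu<Lv$), so $Lv\leq c$ on the curve and $Lv<c$ on an arc near $x_0$ by upper semicontinuity of $Lv$; integrating $\abs{(v\circ\gamma)'}\leq Lv\circ\gamma$ against the exact identity $\abs{u(\gamma(T_+))-u(\gamma(T_-))}=c\,(T_+-T_-)$ yields the contradiction at the two boundary endpoints. (Your intermediate claim that $Lv\leq M'$ on the set $\{Lu>Lv\}$ is false but also unnecessary, and you should justify that the curve reaches $\partial U$ in finite time at both ends, which follows from $\abs{\phi'}\equiv c>0$ together with the boundedness of $u$.)

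The forward direction, however, has a genuine gap that goes beyond the technical obstacle you flag. Producing a competitor $v$ with $Lv<Lu$ on an open set and $Lv\leq Lu+o(1)$ elsewhere does \emph{not} make $v$ tighter than $u$: the definition requires $\sup\{Lu:Lv<Lu\}>\sup\{Lv:Lu<Lv\}$, and if the (arbitrarily small) worsening occurs at points where $Lu$ is itself within $o(1)$ of the left-hand supremum, this strict inequality can fail. This is not hypothetical in your construction: the affine replacement necessarily \emph{increases} the longitudinal derivative at parameters where $\abs{\phi'(t)}<\abs{L'}$, and the cutoff at the ends of the tube $T$ forces $Lv\approx Lu$ there; if $\abs{\phi'}$ attains its maximum at or near an endpoint of the chosen segment --- and in the case where $\abs{\phi'}$ is constant while $\phi'$ rotates, it does so everywhere --- the worsened values of $Lv$ sit exactly at the level of the improved values of $Lu$, and the comparison of suprema is lost. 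To close the argument one must (i) invoke the uniform spectral gap $\sigma_1-\sigma_2\geq\delta_0>0$ on the compact closure $\overline{T}$ (available because $a(x)$ spans the principal eigenspace, which is therefore simple), so that a definite decrease of $\abs{Dv\,a}$ really decreases $\sigma_{\max}(Dv)$, and (ii) choose the segment, the tube and the cutoff so that $\max_{\overline{T}}Lv$ drops strictly below $\max_{\overline{T}}Lu$ while that maximum of $Lu$ is attained at a point where $Lv$ strictly decreased; the rotating-direction case requires a separate treatment. None of this bookkeeping appears in your sketch, and it is precisely where the substance of the cited proof lies.
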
 
\begin{proof}
See \cite[Theorem 1.5]{sheffield2012vector}.
\end{proof}

\begin{remark}
We can derive the Euler-Lagrange equations for the more general energy functional
\begin{equation}
v\mapsto \int_\Omega H(D v (x)) dx,
\end{equation}
where $H\in C^2(\R^{d\times m})$. The derived system of partial differential equations and its connection to tight functions is considered in \cite{katzourakis, katzourakis2}. If we choose $H=\Norm{\cdot}{F}^p$ a formulation of a vector-valued $\infty$-Laplace operator is derived. For more details see \cite{sheffield2012vector, katzourakis, katzourakis2}.
\end{remark}

In Section \ref{sec:discrete_tight} we consider a discrete formulation of tightness. In this case we can show existence and uniqueness of some boundary values.


\section{Optimal Lipschitz extensions on graphs}\label{sec:opt_Lip_ext_graphs}

In the following let $G=(V,E,\omega)$ be an undirected connected weighted graph with weighting function $\omega\colon E\to [0,1]$, where $\omega(x,y)=0$ if and only if $(x,y)\not\in E$. Let $\emptyset \neq U \subseteq V$. We denote the set of functions $u\colon V\to \R^m$ by $\Knoten(V)$. For $x,y\in V$ we write $x\sim y$ if and only if $(x,y)\in E$.\\

The following definitions without weighting functions are stated in \cite{sheffield2012vector}.

\begin{definition}\label{def:local_Lipschitz}
The \emph{local Lipschitz constant} of a function $u\colon V\to \R^m$ at $x\in V$ is defined by
\begin{equation}
S u(x)=\max_{y\sim x} \sqrt{\omega(x,y)}\abs{u(y)-u(x)}
\end{equation}
\end{definition}

\begin{definition}
We call a function $u\colon V \to \R^m$ \emph{discrete $\infty$-harmonic} if it holds
\begin{equation}
u(x)\in \argmin_{a\in \R^m} \left\{ \max_{y\sim x} \sqrt{\omega(x,y)}\abs{u(y)-a} \right\}
\end{equation}
for all $x\in V\backslash U$.
\end{definition}

\subsection{Graph-$\infty$-Laplace operator}

In this subsection we deal with the case $m=1$, i.e. with real-valued functions. The graph-$\infty$-Laplace operator is considered in connection with image processing and machine learning tasks in \cite{ETT, ABDERRAHIM2014153}.

As in \cite{ETT} we define a discrete formulation of the $\infty$-Laplacian to obtain these discrete $\infty$-harmonic functions.
Let $a^+ := \max(a,0)$ and $a^- := -\min(a,0)$.

\begin{definition}
The \emph{graph $\infty$-Laplace operator} for for $u\colon V\to \Knoten(V)$ is defined by
\begin{equation}
\Delta_{\omega,\infty}u (x)=\frac{1}{2} \left(\max_{y \sim x} \left(\sqrt{\omega(x,y)}(u(y)-u(x))^+\right) - \left(\max_{y \sim x} \sqrt{\omega(x,y)}(u(y)-u(x))^-\right)\right). 
\end{equation}
\end{definition}

The following theorem connects the two definitions.

\begin{theorem}\label{thm:disc_infhar_gleich_dirichlet}
Let $u\colon V\to \R$. Then $u$ is discrete $\infty$-harmonic if and only if ${\Delta_{\omega, \infty}u(x)=0}$ for all $x\in V\backslash U$.
\end{theorem}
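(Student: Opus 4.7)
The plan is to reformulate both sides in terms of the two quantities
\begin{equation}
M^+(x):=\max_{y\sim x}\sqrt{\omega(x,y)}(u(y)-u(x))^+,\quad M^-(x):=\max_{y\sim x}\sqrt{\omega(x,y)}(u(y)-u(x))^-,
\end{equation}
so that $\Delta_{\omega,\infty}u(x)=0$ is exactly the equality $M^+(x)=M^-(x)$. Abbreviating $g_x(a):=\max_{y\sim x}\sqrt{\omega(x,y)}\abs{u(y)-a}$, discrete $\infty$-harmonicity at $x$ says $u(x)\in\argmin_{a\in\R}g_x(a)$, and since for every neighbour $y$ exactly one of $(u(y)-u(x))^+$ and $(u(y)-u(x))^-$ is non-zero, I get the key identity $g_x(u(x))=\max(M^+(x),M^-(x))$. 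The strategy is to use this identity to compare $g_x(u(x))$ with $g_x(a)$ for $a$ to the left or right of $u(x)$.

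For ``$\Leftarrow$'', assume $M^+(x)=M^-(x)=:M$. For arbitrary $a\geq u(x)$, pick a neighbour $y_0$ attaining $M^-(x)$, so $u(y_0)\leq u(x)\leq a$ and
\begin{equation}
g_x(a)\geq\sqrt{\omega(x,y_0)}(a-u(y_0))\geq\sqrt{\omega(x,y_0)}(u(x)-u(y_0))=M=g_x(u(x)).
\end{equation}
The case $a\leq u(x)$ is symmetric using a neighbour attaining $M^+(x)$. This shows $u(x)\in\argmin g_x$.

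For ``$\Rightarrow$'', I argue contrapositively and assume without loss of generality $M^+(x)>M^-(x)$, so $g_x(u(x))=M^+(x)$. Necessarily some neighbour $y$ has $u(y)>u(x)$, so both
\begin{equation}
w_{\min}:=\min\{\sqrt{\omega(x,y)}:y\sim x,\,u(y)>u(x)\},\quad\delta:=\min\{u(y)-u(x):y\sim x,\,u(y)>u(x)\}
\end{equation}
are strictly positive. I will show $g_x(u(x)+\epsilon)<M^+(x)$ for $0<\epsilon<\min(M^+(x)-M^-(x),\delta)$, contradicting minimality. Splitting on the sign of $u(y)-u(x)$, a direct computation gives $\sqrt{\omega(x,y)}\abs{u(y)-u(x)-\epsilon}\leq M^+(x)-w_{\min}\epsilon<M^+(x)$ for $y$ with $u(y)>u(x)$, and $\leq M^-(x)+\epsilon<M^+(x)$ for $y$ with $u(y)\leq u(x)$. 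Taking the maximum over all neighbours yields the desired strict inequality. The main obstacle is this last bookkeeping step: one has to separate the neighbours into three sign classes (strictly above, equal to, strictly below $u(x)$) and control the maximum uniformly in $\epsilon$, which is possible only because the neighbour set is finite so that $w_{\min}$ and $\delta$ are attained and strictly positive.
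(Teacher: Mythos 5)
Your proof is correct and follows essentially the same route as the paper's: both directions rest on the identity $\max_{y\sim x}\sqrt{\omega(x,y)}\abs{u(y)-a}=\max\bigl(M^+(x),M^-(x)\bigr)$ at $a=u(x)$, the ``$\Leftarrow$'' direction by the same one-sided monotonicity argument and the ``$\Rightarrow$'' direction by perturbing $a$ a small step toward the side with the larger one-sided maximum. Your bookkeeping with $w_{\min}$ and $\delta$ in the forward direction is in fact slightly more explicit than the paper's shift by $\frac{\epsilon}{2}$ (which works with $(\cdot)^+$ and $(\cdot)^-$ directly and so avoids the sign-splitting), but the underlying idea is identical.
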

Since we are not aware of a proof in literature, we provide it in the following.
\begin{proof}
Let $x\in V\backslash U$ be arbitrary fixed. We show that 
\begin{equation}\label{eq:discinfharminx}
u(x)\in \argmin_{a\in \R} \left\{ \max_{y\sim x} \sqrt{\omega(x,y)}\abs{u(y)-a} \right\}
\end{equation}
if and only if $\Delta_{\omega,\infty} u(x)=0$. Note that for $a\in \R$ it holds
\begin{equation}
\max_{y\sim x} \sqrt{\omega(x,y)}\abs{u(y)-a}=\max \left\{ \max_{y\sim x} \sqrt{\omega(x,y)}\left(u(y)-a\right)^+,\max_{y\sim x} \sqrt{\omega(x,y)}\left(u(y)-a\right)^-\right\}.
\end{equation}

Let $u(x)\in \argmin_{a\in \R} \left\{ \max_{y\sim x} \sqrt{\omega(x,y)}\abs{u(y)-a} \right\}$. Assume that
\begin{equation}
\epsilon \coloneqq \max_{y \sim x} \sqrt{\omega(x,y)}(u(y)-u(x))^+ -  \max_{y \sim x} \sqrt{\omega(x,y)}(u(y)-u(x))^->0.
\end{equation}
Then we have
\begin{equation}
\begin{aligned}
&\max_{y \sim x} \sqrt{\omega(x,y)}(u(y)-u(x))^+ -  \max_{y \sim x} \sqrt{\omega(x,y)}(u(y)-u(x))^->\\
&\max_{y \sim x} \sqrt{\omega(x,y)}(u(y)-u(x)-\frac{\epsilon}{2})^+ -  \max_{y \sim x} \sqrt{\omega(x,y)}(u(y)-u(x)-\frac{\epsilon}{2})^-\geq 0.
\end{aligned}
\end{equation}
Since $\max_{y \sim x} \sqrt{\omega(x,y)}(u(y)-u(x))^+>0$ we get
\begin{equation}
\begin{aligned}
\max_{y \sim x} \sqrt{\omega(x,y)}(u(y)-u(x)-\frac{\epsilon}{2})^-\leq &\max_{y \sim x} \sqrt{\omega(x,y)}(u(y)-u(x)-\frac{\epsilon}{2})^+\\
<&\max_{y \sim x} \sqrt{\omega(x,y)}(u(y)-u(x))^+.
\end{aligned}
\end{equation}
Thus we have
\begin{equation}
\max_{y\sim x} \sqrt{\omega(x,y)}\abs{u(y)-u(x)}>\max_{y\sim x} \sqrt{\omega(x,y)}\abs{u(y)-\left(u(x)+\frac{\epsilon}{2}\right)}.
\end{equation}
This is a contradiction to \eqref{eq:discinfharminx}. Therefore, we have ${\Delta_{\omega,\infty}u (x)\leq 0}$. Analogously we get that $\Delta_{\omega,\infty}u (x)\geq 0$.\\

Now let $\Delta_{\omega,\infty}u(x)=0$. Then we have that
\begin{equation}
 \max_{y \sim x} \sqrt{\omega(x,y)}(u(y)-u(x))^+=\max_{y \sim x} \sqrt{\omega(x,y)}(u(y)-u(x))^-=\max_{y \sim x}\sqrt{\omega(x,y)}\abs{u(y)-u(x)}
\end{equation}
Now we get for $a\geq0$ that
\begin{equation}
\begin{aligned}
\max_{y \sim x}\sqrt{\omega(x,y)}\abs{u(y)-(u(x)+a)}\geq&\max_{y \sim x} \sqrt{\omega(x,y)}(u(y)-u(x)-a)^-\\
\geq& \max_{y \sim x} \sqrt{\omega(x,y)}(u(y)-u(x))^-\\
=&\max_{y \sim x}\sqrt{\omega(x,y)}\abs{u(y)-u(x)}.
\end{aligned}
\end{equation}
Analogously we obtain for $a\leq0$ that
\begin{equation}
\max_{y \sim x}\sqrt{\omega(x,y)}\abs{u(y)-(u(x)+a)}\geq\max_{y \sim x}\sqrt{\omega(x,y)}\abs{u(y)-u(x)}.
\end{equation}
Hence it follows \eqref{eq:discinfharminx}.
\end{proof}

In the following we show existence and uniqueness of discrete $\infty$-harmonic extensions for some $g\colon U\to \R$.

\begin{lemma}[Minimum-Maximum Principle] \label{lem:MMP}
Let $g\colon U\to \R$, $u\colon V\to \R$ and $0<\tau\leq 2$. Then $u'\colon V\to \R$ defined by
\begin{equation}
u'(x)\coloneqq
\begin{cases}
g(x), &$for $x\in U,\\
u(x)+\tau \Delta_{\omega,\infty} u(x), &$for $x\in V\backslash U,
\end{cases}
\end{equation}
fulfills
\begin{equation}\label{eq:MMP}
\min_{x\in V} u(x)\leq \min_{x\in V} u'(x) \leq \max_{x\in V} u'(x) \leq \max_{x\in V} u(x).
\end{equation}
\end{lemma}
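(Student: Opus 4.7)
The plan is to establish the upper bound $\max_V u' \leq \max_V u$; the lower bound will follow by entirely symmetric reasoning (equivalently, by applying the upper bound to $-u$, since $\Delta_{\omega,\infty}(-u)=-\Delta_{\omega,\infty}u$ by $(-a)^\pm = a^\mp$). I will proceed under the implicit standing assumption $u\mid_U = g$, which ensures that for $x\in U$ one has $u'(x)=g(x)=u(x)\leq \max_V u$ automatically; only interior nodes $x\in V\setminus U$ require a genuine estimate.

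Fix $x\in V\setminus U$, write $M := \max_V u$, and set
\begin{equation}
A := \max_{y\sim x}\sqrt{\omega(x,y)}\,(u(y)-u(x))^+,\qquad B := \max_{y\sim x}\sqrt{\omega(x,y)}\,(u(y)-u(x))^-,
\end{equation}
so that $u'(x)=u(x)+\tfrac{\tau}{2}(A-B)$. If $A\leq B$ the update is nonpositive and $u'(x)\leq u(x)\leq M$ is immediate. Otherwise $A>B$, and the crucial step is to combine $\sqrt{\omega(x,y)}\leq 1$ with $u(y)\leq M$ to obtain
\begin{equation}
A \;\leq\; \max_{y\sim x}(u(y)-u(x))^+ \;\leq\; M - u(x),
\end{equation}
whence $A-B\leq A\leq M-u(x)$. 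Plugging this into the formula for $u'(x)$ gives
\begin{equation}
u'(x) \;\leq\; u(x) + \tfrac{\tau}{2}\bigl(M-u(x)\bigr) \;=\; \bigl(1-\tfrac{\tau}{2}\bigr)u(x) + \tfrac{\tau}{2}M,
\end{equation}
which is a convex combination of $u(x)$ and $M$ precisely because $0<\tau\leq 2$ makes $1-\tau/2\in[0,1)$; hence $u'(x)\leq M$.

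The only real subtlety is recognising the combined role of the two hypotheses $\omega\leq 1$ and $\tau\leq 2$: the first lets one discard the $\sqrt{\omega}$ factor when bounding $A$ by the range of $u$, and the second is exactly what turns the pointwise update into a convex combination of the current value and the global maximum, ruling out any overshoot. The lower bound is obtained by the symmetric estimate $B\leq u(x)-m$ with $m:=\min_V u$, giving $u'(x)\geq(1-\tau/2)u(x)+(\tau/2)m\geq m$ under the same case split.
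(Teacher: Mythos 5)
Your proof is correct and follows essentially the same route as the paper: drop one of the two nonnegative max-terms in $\Delta_{\omega,\infty}u(x)$, bound the other by the global extremum using $\sqrt{\omega}\leq 1$, and observe that $0<\tau\leq 2$ turns the update into a convex combination of $u(x)$ and that extremum (the paper does this for the minimum and invokes symmetry for the maximum; you do the reverse). You also correctly identify the implicit hypothesis $u\mid_U=g$, which the paper uses silently when it writes $u(x_0)=u'(x_0)$ for $x_0\in U$.
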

\begin{proof}
See \cite[Proposition 5.1]{ETT}. For $x_0\in U$ it holds
\begin{equation}
\min_{x\in V} u(x)\leq u(x_0)= u'(x_0)\leq \max_{x\in V} u(x).
\end{equation}
For $x_0\in V\backslash U$, $y\in V$ and $z\in \argmin_{x\in V} u(x)$ it holds
\begin{equation}
\begin{aligned}
u'(x_0)&=u(x_0)+\frac{\tau}{2} \left(\max_{y \sim x_0} \left(\sqrt{\omega(x_0,y)}(u(y)-u(x_0))^+\right) - \left(\max_{y \sim x_0} \sqrt{\omega(x_0,y)}(u(y)-u(x_0))^-\right)\right)\\
&\geq u(x_0)- \frac{\tau}{2}\left(\max_{y \sim x_0} \sqrt{\omega(x_0,y)}(u(y)-u(x_0))^-\right)\\
&\geq u(x_0)- \frac{\tau}{2} (u(z)-u(x_0))^-\\
&=(1-\frac{\tau}{2})u(x_0)+\frac{\tau}{2} u(z)\geq u(z) = \min_{x\in V} u(x).
\end{aligned}
\end{equation}
Analogously we get $u'(x_0)\leq \max_{x\in V} u(x)$. This yields \eqref{eq:MMP}.
\end{proof}

\begin{theorem}\label{thm:dirichlet_unique}
Let $g\colon U\to \R$. Then there exists a unique $v\colon V\to \R$ such that
\begin{equation}\label{eq:dirichlet}
\begin{cases}
\Delta_{\omega,\infty}v(x)=0 &$for $x\in V\backslash U\\
v(x)=g(x) &$for $x\in U
\end{cases}
\end{equation}
\end{theorem}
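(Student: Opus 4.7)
The plan is to split the statement into existence and uniqueness and to exploit the fact that $V$ is finite, so the whole problem reduces to a fixed-point problem on a bounded subset of $\R^{V\setminus U}$. Existence I would get from Brouwer applied to the explicit iteration already appearing in Lemma \ref{lem:MMP}; uniqueness from a discrete strong maximum principle obtained by chasing the balance $\max_{y\sim x}\sqrt{\omega(x,y)}(v(y)-v(x))^+=\max_{y\sim x}\sqrt{\omega(x,y)}(v(y)-v(x))^-$ that $\Delta_{\omega,\infty}v(x)=0$ encodes.

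For existence, fix any $\tau\in(0,2]$ and define $T\colon\Knoten(V)\to\Knoten(V)$ by $Tu(x)=g(x)$ for $x\in U$ and $Tu(x)=u(x)+\tau\,\Delta_{\omega,\infty}u(x)$ for $x\in V\setminus U$. The operator $T$ is continuous, because $\Delta_{\omega,\infty}$ is a difference of maxima of piecewise affine functions of $u$. The set
\begin{equation}
K=\bigl\{u\in\Knoten(V):u|_U=g,\ \min_U g\le u(x)\le\max_U g\text{ for every }x\in V\setminus U\bigr\}
\end{equation}
is a nonempty, compact and convex subset of $\R^{V\setminus U}$, and Lemma \ref{lem:MMP} yields $T(K)\subseteq K$. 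Brouwer's fixed-point theorem then provides $v\in K$ with $Tv=v$, which since $\tau>0$ is exactly \eqref{eq:dirichlet}.

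For uniqueness, suppose $v_1$ and $v_2$ both satisfy \eqref{eq:dirichlet}, set $w=v_1-v_2$ and $M=\max_V w$; by exchanging $v_1$ and $v_2$ it suffices to show $M\le 0$. Assume $M>0$ and let $S=\{w=M\}$, so $S\cap U=\emptyset$ and, because $U\neq\emptyset$, $S\neq V$. For every $x\in S$ and $y\sim x$ the inequality $w(y)\le w(x)=M$ is equivalent to $v_1(y)-v_1(x)\le v_2(y)-v_2(x)$. Applying $(\cdot)^\pm$ termwise, multiplying by $\sqrt{\omega(x,y)}$ and taking the maximum over $y\sim x$ gives $A^+_1(x)\le A^+_2(x)$ and $A^-_1(x)\ge A^-_2(x)$, where $A^\pm_i(x)\coloneqq\max_{y\sim x}\sqrt{\omega(x,y)}(v_i(y)-v_i(x))^\pm$. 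Together with the balance $A^+_i(x)=A^-_i(x)$ enforced by $\Delta_{\omega,\infty}v_i(x)=0$, these four quantities must collapse to a common value $L(x)$.

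Pick $x^*\in\argmax_{x\in S}v_1(x)$. If $L(x^*)>0$, any $y_+$ attaining the $+$-maximum for $v_1$ at $x^*$ satisfies $v_1(y_+)>v_1(x^*)$, and matching the $+$-maxima for $v_1$ and $v_2$ forces $v_2(y_+)-v_2(x^*)=v_1(y_+)-v_1(x^*)$, whence $w(y_+)=M$, so $y_+\in S$; but then $v_1(y_+)>v_1(x^*)$ contradicts the choice of $x^*$. Therefore $L(x^*)=0$, and since all edge weights are strictly positive this forces $v_1(y)=v_1(x^*)$ and $v_2(y)=v_2(x^*)$ for every $y\sim x^*$, so each such $y$ again belongs to $\argmax_S v_1$. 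Propagating this step along edges and using the connectedness of $G$ then shows $V\subseteq S$, contradicting $U\cap S=\emptyset$; hence $M\le 0$ and, by symmetry, $v_1=v_2$. The main obstacle is precisely this last propagation step: the $\infty$-Laplace operator is highly nonlinear, so comparison cannot be done directly but has to be extracted by locating the auxiliary vertex (the $v_1$-maximum inside $S$) at which the balance condition degenerates to local constancy of both solutions, which is the discrete analogue of the strong maximum principle for $\infty$-harmonic functions.
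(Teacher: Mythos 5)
Your proposal is correct and follows essentially the same route as the paper: existence via Brouwer's fixed-point theorem applied to the iteration operator on the compact convex set preserved by the minimum--maximum principle (Lemma \ref{lem:MMP}), and uniqueness via a two-stage extremal-set argument (maximize the difference of the two solutions, then maximize one solution on that set) exploiting the balance $A_i^+=A_i^-$ encoded by $\Delta_{\omega,\infty}v_i=0$. The only cosmetic difference is how the final contradiction is closed: the paper invokes a separately proved ``no strict local maximum'' observation at a vertex of $F=\argmax_H u$, whereas you propagate local constancy of both solutions along edges and use connectedness of $G$; both are valid.
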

\begin{proof}
We use the proof from \cite[Theorem 1]{ABDERRAHIM2014153}:\\

\textbf{Uniqueness:} Let $v$ fulfill \eqref{eq:dirichlet}. First we show that there exists no $x\in V\backslash U$ with 
\begin{equation}\label{eq:dir_uniq0}
v(x)\geq v(y)\text{ for all }y\sim x\text{ and }v(x) > v(y)\text{ for one }y\sim x.
\end{equation}\\
Assume such a $x$ exists. Then we have
\begin{equation}
\max_{y\sim x}\left(\sqrt{\omega(x,y)}(v(y)-v(x))^+\right) =0<\max_{y\sim x}\left(\sqrt{\omega(x,y)}(v(y)-v(x))^-\right).
\end{equation}
This yields $\Delta_{\omega,\infty} v(x)<0$. This contradicts \eqref{eq:dirichlet}.\\
Now let $u$ and $v$ fulfill \eqref{eq:dirichlet} and $u\neq v$. Then we can assume without loss of generality that $M=\max_{x\in V\backslash U} (u(x)-v(x))>0$. We define the sets $H\coloneqq \{x\in V:(u(x)-v(x)))=M\}$ and $F=\{x\in H:u(x)=\max_{y\in H} u(y)\}$. By construction it holds that $H$ and $F$ are nonempty.\\
Let $x_0\in H$. Then $u(x_0)-v(x_0)=\max_{x\in V} u(x)-v(x)$. This yields for $y\sim x$ that $u(x_0)-v(x_0)\geq u(y)-v(y)$. Thus we get $v(y)-v(x_0)\geq u(y)-u(x_0)$. This implies
\begin{equation}\label{eq:dir_uniq1}
\max_{y\sim x_0} \left(\sqrt{\omega(x_0,y)}(u(y)-u(x_0))^-\right) \geq \max_{y\sim x_0} \left(\sqrt{\omega(x_0,y)}(v(y)-v(x_0))^-\right)
\end{equation}
and
\begin{equation}\label{eq:dir_uniq2}
\max_{y\sim x_0} \left(\sqrt{\omega(x_0,y)}(u(y)-u(x_0))^+\right) \leq \max_{y\sim x_0} \left(\sqrt{\omega(x_0,y)}(v(y)-v(x_0))^+\right).
\end{equation}
Since $\Delta_{\omega,\infty} u(x_0)=\Delta_{\omega,\infty} v(x_0)$ we have equality in \eqref{eq:dir_uniq1} and \eqref{eq:dir_uniq2}.\\

Let $x\in F$ and $x\sim y\in V\backslash F$. Such $x$ and $y$ exist, because $G$ is connected and $F\subsetneq V$. Then we have either $y\in H\backslash F$ and $u(y)<u(x)$ or $y\not\in H$. Then we have $u(y)-v(y)<u(x)-v(x)$. This yields $u(y)-u(x)<v(y)-v(x)$. Assume that $u(y)\geq u(x)$. Then we have 
\begin{equation}
0\leq \sqrt{\omega(x,y)}(u(y)-u(x))^+=\sqrt{\omega(x,y)}(u(y)-u(x))<\sqrt{\omega(x,y)}(v(y)-v(x))
\end{equation}
Since this holds for all $y\sim x$ with $y\not\in H$ and since for $z\sim x$ with $z\in H$ holds $u(y)\leq u(x)$ we have that
\begin{equation}
\max_{y\sim x}\left(\sqrt{\omega(x,y)}(u(y)-u(x))^+\right)<\max_{y\sim x}\left(\sqrt{\omega(x,y)}(v(y)-v(x))^+\right).
\end{equation}
This contradicts the equality in \eqref{eq:dir_uniq2}. Therefore, we have for $y\in V\backslash F$ with $y\sim x$ that $u(y)<u(x)$. That contradicts that $x$ cannot fulfill \eqref{eq:dir_uniq0}.\\

\textbf{Existence:} Define the compact set
\begin{equation}
A\coloneqq \{ u \colon V\to \R : u=g \text{ on } U \text{ and } \min_{y\in U} g(y)\leq u(x) \leq \max_{y\in U}g(y)\}.
\end{equation} 
Consider the mapping $\Phi \colon A\to A$ defined by 
\begin{equation}
\Phi (u)(x)=
\begin{cases}
g(x), &$for $x\in U,\\
u(x)+\Delta_{\omega,\infty} u(x), &$for $x\in V\backslash U.
\end{cases}
\end{equation}
Due to Lemma \ref{lem:MMP} we have indeed $\Phi (u)\in A$. Since $\Delta_{\omega,\infty}$ is continuous we have also that $\Phi$ is continuous. Hence Brouwers fixed point theorem implies that there exists some $u\in A$ with $\Phi(u)=u$. This yields \eqref{eq:dirichlet}.
\end{proof}

\begin{remark}
In particular, Theorem \ref{thm:dirichlet_unique} and Theorem \ref{thm:disc_infhar_gleich_dirichlet} imply that for $g\colon U\to \R$ there exists a unique discrete $\infty$-harmonic extension, i.e. there exists a unique discrete $\infty$-harmonic function $u\colon V\to \R$ with $u=g$ on $U$.
\end{remark}

To obtain the unique discrete $\infty$-harmonic extension, we consider for $u\colon V\times [0,T] \to \R$ and $g\colon U\to \R$ the following partial differential equation (see \cite{ETT}):
\begin{equation}\label{eq:diffequ}
\begin{cases}
\frac{\partial u(x,t)}{\partial t}=\Delta_{\omega,\infty} u(x) &$for $ x\in V\backslash U,\\
u(x,0)=u^0(x) &$for all $ x\in V,\\
u(x,t)=g(x) &$for $x\in U.
\end{cases}
\end{equation}
We discretize the derivative of $u$ using an explicit Euler scheme.
\begin{equation}
\frac{\partial u(x,t)}{\partial t}\approx\frac{u^{r+1}(x)-u^r(x)}{\Delta t}.
\end{equation}
With $\Delta t=\tau$, this leads to the following iteration scheme:
\begin{equation}\label{eq:iteration_scheme}
\begin{cases}
u^{r+1}(x)=u^{r}(x)+\tau \Delta_{\omega,\infty}u^r(x) &$for $x\in V\backslash U\\
u^{r+1}(x)=g(x) &$for $x\in U
\end{cases}
\end{equation}
Now we can formulate the following algorithm:
\begin{algorithm}[H]
\caption{Discrete $\infty$-harmonic extension}
\label{alg:disc_infty_harm}
\begin{algorithmic}
\State Given: $G=(V,E,\omega)$, $U\subset V$, $g:U \to \R$, $2>\tau>0$ and initial values $u^0\colon V\to \R$ with $u^0=g$ on $U$.
\For {$r=0,1,...$}
	\State $u^{r+1}(x)\coloneqq \begin{cases}g(x),&$for $x\in U,\\u^{r}(x)+\tau \Delta_{\omega,\infty} u^{r}(x),&$for $x\in V\backslash U.\end{cases}$
\EndFor
\end{algorithmic}
\end{algorithm}

\begin{theorem}\label{thm:alg_lim_disc_inft}
Let $g\colon U\to \R$ and $u^0\colon V \to \R$ with $g=u^0$ on $U$. Let $u^r$ be generated by Algorithm \ref{alg:disc_infty_harm}. If $u^r$ converges to some $u^*\colon V\to \R$, then $u^*$ fulfills \eqref{eq:dirichlet} i.e.
\begin{equation}
\begin{cases}
\Delta_{\omega,\infty}u^*(x)=0 &$for $x\in V\backslash U,\\
u^*(x)=g(x) &$for $x\in U.
\end{cases}
\end{equation}
In particular, $u^*$ is the unique discrete $\infty$-harmonic extension of $g$.
\end{theorem}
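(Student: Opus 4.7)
The plan is to pass to the limit in the iteration scheme \eqref{eq:iteration_scheme} and exploit the continuity of the graph $\infty$-Laplace operator in its argument $u$.

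First I would handle the boundary condition: for every $r \in \Nat$ and every $x \in U$ the algorithm sets $u^{r}(x) = g(x)$, so the pointwise convergence $u^r \to u^*$ immediately gives $u^*(x) = g(x)$ on $U$.

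Next I would address the interior condition. For $x \in V \setminus U$ the update rule reads
\begin{equation}
u^{r+1}(x) - u^{r}(x) = \tau \, \Delta_{\omega,\infty} u^r(x).
\end{equation}
Since $u^r(x) \to u^*(x)$ and $u^r(y) \to u^*(y)$ for every neighbour $y \sim x$ (this is a finite set of vertices), the left-hand side tends to $0$. For the right-hand side I would use that $\Delta_{\omega,\infty} v(x)$ depends only on the finitely many values $\{v(y) : y \sim x\} \cup \{v(x)\}$ and is built from sums, differences, the maps $a \mapsto a^\pm$ and the $\max$ operation over a finite set, all of which are continuous. Hence $\Delta_{\omega,\infty}\colon \Knoten(V) \to \Knoten(V)$ is continuous, and $\Delta_{\omega,\infty} u^r(x) \to \Delta_{\omega,\infty} u^*(x)$. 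Dividing by $\tau > 0$ yields $\Delta_{\omega,\infty} u^*(x) = 0$ for all $x \in V \setminus U$.

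Combining both parts shows that $u^*$ satisfies \eqref{eq:dirichlet}, so by Theorem \ref{thm:dirichlet_unique} together with Theorem \ref{thm:disc_infhar_gleich_dirichlet} it is \emph{the} unique discrete $\infty$-harmonic extension of $g$. I do not expect any genuine obstacle here: the only point worth explicit verification is the continuity of $\Delta_{\omega,\infty}$ as a function of $u$, and that is essentially by inspection of its definition.
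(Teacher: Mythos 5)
Your argument is correct and is essentially the paper's own proof: both pass to the limit in the iteration $u^{r+1}(x)=u^r(x)+\tau\Delta_{\omega,\infty}u^r(x)$, using the continuity of $\Delta_{\omega,\infty}$ (which, as you note, follows by inspection since it is built from finitely many continuous operations) to conclude $\Delta_{\omega,\infty}u^*(x)=0$ on $V\backslash U$, while the boundary condition is immediate. The final appeal to Theorems \ref{thm:dirichlet_unique} and \ref{thm:disc_infhar_gleich_dirichlet} for the "in particular" clause also matches the paper.
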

\begin{proof}
See \cite[Proposition 5.2]{ETT}. For $x\in U$ it holds 
\begin{equation}
u^*(x)=\lim\limits_{r\to \infty} u^r(x)=\lim\limits_{r\to \infty}g(x)=g(x).
\end{equation}
Due to the continuity of $\Delta_{\omega,\infty}$ we have for $x\in V\backslash U$ that
\begin{equation}
\begin{aligned}
u^*(x)=&\lim\limits_{r\to \infty} u^{r+1}(x)\\
=&\lim\limits_{r\to \infty} u^{r}(x)+\tau \Delta_{\omega,\infty}u^r(x)\\
=&u^*(x)+\tau\Delta_{\omega,\infty}\lim\limits_{r\to \infty} u^{r}(x)\\
=&u^*(x)+\tau\Delta_{\omega,\infty}u^*(x).
\end{aligned}
\end{equation}
Thus we have $\Delta_{\omega,\infty}u^*(x)=0$ for $x\in V\backslash U$. 
\end{proof}

To show that Algorithm \ref{alg:disc_infty_harm} converges we need some theory about averaged operators. For more details on averaged operators see \cite{BSS_prox_op}.

\begin{definition}
Consider an operator $T\colon \R^d\to \R^d$. We say that $T$ is \emph{nonexpansive}, if there exists some norm $\Norm{\cdot}{}$ on $\R^d$ such that
\begin{equation}
\Norm{Tx-Ty}{}\leq \Norm{x-y}{} \text{ for all }x,y\in \R^d.
\end{equation}
We say that $T$ is \emph{averaged} with constant $\alpha\in (0,1)$, if there exists a nonexpansive operator $R\colon \R^d\to\R^d$ such that
\begin{equation}
T=\alpha \mathrm{Id} + (1-\alpha)R.
\end{equation}
Further we call $T$ \emph{asymptotically regular}, if we have for all $x\in\R^d$ that
\begin{equation}
\lim\limits_{r\to\infty}T^{r+1} x- T^r x =0.
\end{equation}
\end{definition}

It is easy to check, that every averaged operator is nonexpansive. A proof for this fact is given in \cite[Lemma 4.3]{BSS_prox_op}. We cite some more statements about this definition.

\begin{theorem}[Asymptotic Regularity of Averaged Operators]\label{thm:asym_reg_aver}
Let $T\colon \R^d\to \R^d$ be an averaged operator with constant $\alpha\in (0,1)$. Assume that $Fix(T)\neq\emptyset$. Then, $T$ is asymptotically regular.
\end{theorem}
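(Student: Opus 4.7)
The plan is to fix an arbitrary starting point $x \in \R^d$ and a fixed point $x^* \in \mathrm{Fix}(T)$, set $x_r := T^r x$, and establish summability of $\|x_{r+1}-x_r\|^2$ (in the Euclidean norm, which is the ambient norm on $\R^d$). This immediately forces $x_{r+1}-x_r \to 0$, which is exactly asymptotic regularity. The key tool is the standard identity in an inner-product space
\begin{equation}
\Norm{\alpha a + (1-\alpha) b}{2}^2 = \alpha \Norm{a}{2}^2 + (1-\alpha)\Norm{b}{2}^2 - \alpha(1-\alpha)\Norm{a-b}{2}^2,
\end{equation}
which plays the role of a parallelogram-type formula for convex combinations.

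First I would observe that since $T = \alpha \mathrm{Id} + (1-\alpha) R$ and $T x^* = x^*$, one has $R x^* = x^*$. Applied with $a = x_r - x^*$ and $b = R x_r - x^*$, the identity yields
\begin{equation}
\Norm{x_{r+1} - x^*}{2}^2 = \alpha \Norm{x_r - x^*}{2}^2 + (1-\alpha)\Norm{R x_r - x^*}{2}^2 - \alpha(1-\alpha)\Norm{x_r - R x_r}{2}^2.
\end{equation}
Nonexpansiveness of $R$ combined with $R x^* = x^*$ gives $\Norm{R x_r - x^*}{2} \leq \Norm{x_r - x^*}{2}$, so the first two terms combine to at most $\Norm{x_r - x^*}{2}^2$. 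On the other hand $x_{r+1} - x_r = (1-\alpha)(R x_r - x_r)$, so $\Norm{x_r - R x_r}{2}^2 = (1-\alpha)^{-2} \Norm{x_{r+1}-x_r}{2}^2$. This reduces the inequality to
\begin{equation}
\Norm{x_{r+1} - x^*}{2}^2 \leq \Norm{x_r - x^*}{2}^2 - \frac{\alpha}{1-\alpha}\Norm{x_{r+1}-x_r}{2}^2.
\end{equation}

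Telescoping from $r = 0$ to $r = N$ gives
\begin{equation}
\frac{\alpha}{1-\alpha} \sum_{r=0}^{N} \Norm{x_{r+1}-x_r}{2}^2 \leq \Norm{x_0 - x^*}{2}^2 - \Norm{x_{N+1}-x^*}{2}^2 \leq \Norm{x_0 - x^*}{2}^2.
\end{equation}
Letting $N \to \infty$ shows that $\sum_r \Norm{x_{r+1}-x_r}{2}^2$ converges, so in particular $\Norm{T^{r+1}x - T^r x}{2} \to 0$, as required.

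The main obstacle is a minor mismatch in the definitions: the notion of nonexpansiveness as stated allows an arbitrary norm, while the identity above is specific to the Euclidean inner-product structure. In the finite-dimensional setting this is the natural norm on $\R^d$ used throughout the thesis, and the argument above (which is the standard one from \cite{BSS_prox_op}) is carried out with respect to it; I would simply read $\Norm{\cdot}{}$ as $\abs{\cdot}$ here. Apart from this interpretive point, the argument is essentially the telescoping estimate above and presents no further difficulty.
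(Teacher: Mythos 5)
Your argument is correct and is in fact the standard proof of this statement; the paper itself does not prove it but only defers to \cite[Theorem 4.5]{BSS_prox_op}, and what you have written is essentially the argument given there. The chain $Rx^*=x^*$, the convex-combination identity, the Fej\'er-type estimate $\Norm{x_{r+1}-x^*}{2}^2\leq \Norm{x_r-x^*}{2}^2-\tfrac{\alpha}{1-\alpha}\Norm{x_{r+1}-x_r}{2}^2$, and the telescoping sum are all sound. The one point you rightly flag deserves emphasis: the thesis defines nonexpansiveness with respect to \emph{some} norm on $\R^d$, whereas your identity $\Norm{\alpha a+(1-\alpha)b}{2}^2=\alpha\Norm{a}{2}^2+(1-\alpha)\Norm{b}{2}^2-\alpha(1-\alpha)\Norm{a-b}{2}^2$ is genuinely an inner-product fact and fails for general norms. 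So as literally stated (with the paper's definition of averagedness) the theorem would require the considerably harder Ishikawa/Krasnoselskii--Mann machinery in a general normed space; your proof covers only the Euclidean case. This restriction is harmless in context, since the theorem is only ever paired with Opial's theorem, which already demands nonexpansiveness with respect to $\abs{\cdot}$, and the cited source works in Hilbert space; for general norms the thesis switches to Theorem \ref{thm:krasnoselski-mann} instead. It would be cleaner to state explicitly at the outset that $R$ is assumed nonexpansive with respect to the Euclidean norm, rather than leaving it as a closing remark.
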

\begin{proof}
See \cite[Theorem 4.5]{BSS_prox_op}.
\end{proof}

\begin{theorem}[Opial's Convergence Theorem]\label{thm:opials_conv_thm}
Let $T\colon \R^d\to \R^d$ fulfill the following conditions: $Fix(T)\neq \emptyset$, $T$ is nonexpansive with respect to $\abs{\cdot}$ and asymptotically regular. Then, for every $x^0\in\R^d$ the sequence of Picard iterates $(x^r)_r$ generated by $x^{r+1}=T x^r$ converges to an element of $Fix(T)$.
\end{theorem}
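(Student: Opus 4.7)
The plan is to combine three standard ingredients: Fej\'er-monotonicity of the iterates with respect to any fixed point, a Bolzano--Weierstrass extraction to obtain a subsequential limit, and asymptotic regularity together with continuity of $T$ to identify this limit as an element of $Fix(T)$. A final monotonicity argument will then upgrade subsequential convergence to convergence of the entire sequence. Since the ambient space is $\R^d$ rather than a general Hilbert space, the usual weak-convergence machinery associated with Opial's original formulation is not needed; bounded sequences already admit strongly convergent subsequences.

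First I would fix any $p\in Fix(T)$ and invoke nonexpansiveness to obtain
\begin{equation}
\abs{x^{r+1}-p}=\abs{T x^r- T p}\leq \abs{x^r-p}
\end{equation}
for every $r\in\Nat$. Hence the sequence $(\abs{x^r-p})_r$ is nonincreasing, in particular bounded, and so is $(x^r)_r$. By Bolzano--Weierstrass, some subsequence $(x^{r_k})_k$ converges to a point $x^*\in\R^d$.

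The next step is to show $x^*\in Fix(T)$. Nonexpansiveness immediately implies that $T$ is continuous, hence $T x^{r_k}\to T x^*$ as $k\to\infty$. On the other hand, asymptotic regularity gives
\begin{equation}
T x^{r_k}-x^{r_k}=x^{r_k+1}-x^{r_k}\to 0,
\end{equation}
and combined with $x^{r_k}\to x^*$ this yields $T x^{r_k}\to x^*$. Uniqueness of limits in $\R^d$ forces $T x^*=x^*$.

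Finally I would apply the first step with the specific choice $p=x^*$: the whole sequence $(\abs{x^r-x^*})_r$ is nonincreasing, yet its subsequence $(\abs{x^{r_k}-x^*})_k$ tends to $0$, which forces $\abs{x^r-x^*}\to 0$, i.e.\ $x^r\to x^*\in Fix(T)$. I do not expect any serious obstacle here; the only point requiring a little care is to use asymptotic regularity together with continuity of $T$ at the extracted limit to conclude $T x^*=x^*$, and this step relies crucially on the hypothesis that $Fix(T)\neq\emptyset$ (which, via Theorem \ref{thm:asym_reg_aver}, is already built into the asymptotic-regularity assumption for averaged operators used later).
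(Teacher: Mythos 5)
Your proof is correct. Note that the paper does not actually prove this statement: it only cites Opial's original article, so there is no in-text argument to compare against. Your argument is a clean, self-contained proof of the finite-dimensional version, and it is essentially the standard one: Fej\'er monotonicity with respect to a fixed point gives boundedness, Bolzano--Weierstrass gives a strongly convergent subsequence, asymptotic regularity plus continuity of $T$ (from nonexpansiveness) identifies the subsequential limit as a fixed point, and monotonicity of $\left(\abs{x^r-x^*}\right)_r$ upgrades subsequential to full convergence. What finite-dimensionality buys you is exactly what you say: in $\R^d$ bounded sequences have norm-convergent subsequences, so you can bypass the weak-convergence and demiclosedness machinery that Opial's Hilbert-space argument needs (there, one only gets a weakly convergent subsequence, and identifying its weak limit as a fixed point requires Opial's lemma or the demiclosedness principle for $I-T$). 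One small quibble with your closing parenthetical: the hypothesis $Fix(T)\neq\emptyset$ is not what makes the identification step $Tx^*=x^*$ work --- that step uses only asymptotic regularity and continuity; rather, $Fix(T)\neq\emptyset$ is what you need in the very first step to get boundedness of the orbit (and hence the existence of a subsequential limit at all). This does not affect the validity of the proof.
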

\begin{proof}
See \cite[Theorem 1]{opial}.
\end{proof}

For general norms we get the following result:

\begin{theorem}[Krasnoselskii-Mann Iteration]\label{thm:krasnoselski-mann}
Let $T\colon \R^d\to \R^d$ and $(\tau_r)_{r\in \Nat}$ fulfill the following conditions: $Fix(T)\neq \empty$, $T$ is nonexpansive with respect to an arbitrary norm, $\sum_{r=1}^\infty \tau_r=\infty$ and $\limsup_{r\to\infty}\tau_r<1$. Then, for every $x^0\in \R^d$ the sequence $(x^r)_r$ generated by
\begin{equation}
x^{r+1}=\left((1-\tau_r)\mathrm{Id}+\tau_r T\right)x^r
\end{equation}
converges to an element of $Fix(T)$.
\end{theorem}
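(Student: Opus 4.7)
The plan is to use the standard three-step argument for Krasnoselskii-Mann iterations, adapted to the finite-dimensional setting which makes the last step particularly transparent. Throughout, $\Norm{\cdot}{}$ denotes the fixed norm with respect to which $T$ is nonexpansive.

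First I would establish \emph{Fejér monotonicity} of $(x^r)_r$ with respect to $Fix(T)$. For any $x^*\in Fix(T)$, convexity of the norm and nonexpansiveness of $T$ give
\begin{equation}
\Norm{x^{r+1}-x^*}{} \leq (1-\tau_r)\Norm{x^r-x^*}{} + \tau_r \Norm{Tx^r-Tx^*}{} \leq \Norm{x^r-x^*}{},
\end{equation}
so $(x^r)_r$ is bounded. Next I would show \emph{asymptotic regularity}, namely $d_r \coloneqq \Norm{Tx^r-x^r}{} \to 0$. A direct calculation using $x^{r+1}-Tx^r = (1-\tau_r)(x^r-Tx^r)$ together with the bound $\Norm{Tx^{r+1}-Tx^r}{} \leq \Norm{x^{r+1}-x^r}{} = \tau_r d_r$ first yields the monotonicity $d_{r+1}\leq d_r$. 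To upgrade this to $d_r \to 0$, I would invoke Ishikawa's lemma, whose hypothesis $\sum_r \tau_r(1-\tau_r)=\infty$ is satisfied here: $\limsup_r \tau_r<1$ forces $(1-\tau_r)$ to be bounded below by some $\delta>0$ from some index onwards, and combined with $\sum_r \tau_r = \infty$ this yields the required divergence.

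Having obtained a bounded sequence $(x^r)_r$ with $\Norm{Tx^r-x^r}{}\to 0$, the finite-dimensionality of $\R^d$ enters: by Bolzano-Weierstrass there exists a subsequence $x^{r_k}\to y$ for some $y\in\R^d$. Since $T$ is nonexpansive it is continuous, hence $Tx^{r_k}\to Ty$, and combining with $\Norm{Tx^{r_k}-x^{r_k}}{}\to 0$ we obtain $Ty=y$, i.e.\ $y\in Fix(T)$. Finally, applying the Fejér monotonicity once more with $x^*=y$, the non-negative sequence $(\Norm{x^r-y}{})_r$ is non-increasing and admits a subsequence converging to $0$; it must therefore converge to $0$ in its entirety. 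This yields $x^r\to y\in Fix(T)$ as desired.

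The main obstacle is the asymptotic regularity step: in Hilbert space it follows almost immediately from the parallelogram identity and a telescoping sum of $\tau_r(1-\tau_r)\Norm{Tx^r-x^r}{}^2$, but for a general norm on $\R^d$ one must rely on Ishikawa's more delicate Banach-space argument, which combines the Fejér inequality with a refined telescoping estimate across arbitrarily many iterations. The two assumptions $\sum_r \tau_r = \infty$ and $\limsup_r \tau_r < 1$ are precisely what this lemma requires, and they cannot be meaningfully weakened.
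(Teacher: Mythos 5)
Your proposal is correct, but it is worth noting that the paper does not actually prove this theorem at all: it simply defers to \cite[Corollaries 10, 11]{BRS92}. You instead supply a genuine argument, and in the finite-dimensional setting it works: Fej\'er monotonicity gives boundedness, the identity $x^{r+1}-Tx^r=(1-\tau_r)(x^r-Tx^r)$ combined with nonexpansiveness gives monotonicity of the residuals $\Norm{Tx^r-x^r}{}$, Ishikawa's lemma upgrades this to asymptotic regularity, and Bolzano--Weierstrass plus continuity of $T$ plus one more application of Fej\'er monotonicity forces convergence of the whole sequence to a fixed point. The only blemish is your identification of the hypothesis of Ishikawa's lemma: the condition $\sum_r \tau_r(1-\tau_r)=\infty$ is what Groetsch's Hilbert-space theorem needs, whereas Ishikawa's Banach-space result requires $\tau_r\leq b<1$ eventually together with $\sum_r\tau_r=\infty$ (and boundedness of the iterates) --- the weaker summability condition alone would not suffice for an arbitrary norm. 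Since you derive everything from $\limsup_r\tau_r<1$ and $\sum_r\tau_r=\infty$, which are exactly the hypotheses of the theorem and exactly what Ishikawa's lemma demands, this is an imprecision of attribution rather than a gap. Compared with the paper's bare citation, your route makes transparent which ingredient is elementary (everything except asymptotic regularity) and which genuinely requires the deeper Banach-space lemma; what you give up is that \cite{BRS92} covers infinite-dimensional spaces and more general iteration schemes, whereas your Bolzano--Weierstrass step is irreducibly finite-dimensional.
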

\begin{proof}
See \cite[Corollaries 10,11]{BRS92}.
\end{proof}

\begin{remark}[Convergence of the Picard iteration for averaged operators]\label{rem:picard-iteration}
Let $T\colon \R^d \to \R^d$ be averaged, i.e. $T=\alpha \mathrm{Id}+(1-\alpha) R$ for some nonexpansive operator $R\colon \R^d \to \R^d$. Then for $\tau\coloneqq 1-\alpha$, we can rewrite the Picard iteration $x^{r+1}=T x^r$ by
\begin{equation}
x^{r+1}=\left(\alpha \mathrm{Id}+(1-\alpha) R\right)x^r=\left((1-\tau)\mathrm{Id}+\tau T\right)x^r.
\end{equation}
Hence, the Picard iteration converges by Theorem \ref{thm:krasnoselski-mann} to an element of $Fix(R)=Fix(T)$.
\end{remark}

Now we can show convergence of Algorithm \ref{alg:disc_infty_harm}. Since \cite{ETT} only states stability of Algorithm \ref{alg:disc_infty_harm}, we did the proof ourself.

\begin{theorem}
Let $g\colon U\to \R$, $f^0\colon V\to \R$ with $f^0=g$ on $U$ and $0<\tau<1$. Let $f^n$ be generated by Algorithm \ref{alg:disc_infty_harm}. Then $f^n$ converges to the unique discrete $\infty$-harmonic extension $f^*\colon V\to \R$ of $g$ as $n\to \infty$.
\end{theorem}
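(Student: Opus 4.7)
The plan is to recognise the iteration as a Krasnoselski--Mann scheme for a nonexpansive operator. Define $T_1\colon \Knoten(V)\to \Knoten(V)$ by $T_1 u(x) = g(x)$ for $x\in U$ and $T_1 u(x) = u(x) + \Delta_{\omega,\infty}u(x)$ for $x\in V\backslash U$, and set $T_\tau \coloneqq (1-\tau)\mathrm{Id} + \tau T_1$. Then Algorithm \ref{alg:disc_infty_harm} is exactly the Picard iteration $f^{n+1} = T_\tau f^n$, and the affine set $A \coloneqq \{u\colon V\to \R : u|_U = g\}$ is invariant under both $T_1$ and $T_\tau$ and contains every iterate by construction. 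By Theorem \ref{thm:dirichlet_unique} together with Theorem \ref{thm:disc_infhar_gleich_dirichlet}, the unique discrete $\infty$-harmonic extension $f^*$ of $g$ is precisely the unique element of $A\cap \mathrm{Fix}(T_1) = \mathrm{Fix}(T_\tau)$.

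The heart of the argument is to show that $T_1$ is nonexpansive with respect to $\Norm{\cdot}{\infty}$. Set $M_x(u) \coloneqq \max_{y\sim x}\sqrt{\omega(x,y)}(u(y)-u(x))$ and $m_x(u) \coloneqq \min_{y\sim x}\sqrt{\omega(x,y)}(u(y)-u(x))$. I would introduce the auxiliary set
\begin{equation}
S_x(u) \coloneqq \{u(x)\}\cup \bigl\{\sqrt{\omega(x,y)}\,u(y)+(1-\sqrt{\omega(x,y)})\,u(x) : y\sim x\bigr\}
\end{equation}
and verify, by a case distinction on the signs of $M_x(u)$ and $m_x(u)$ (using that $\max_y\sqrt{\omega(x,y)}(u(y)-u(x))^{\pm}$ equals $\max(M_x(u),0)$ respectively $-\min(m_x(u),0)$), the unified formula
\begin{equation}
T_1 u(x) \,=\, \tfrac{1}{2}\bigl(\max S_x(u) + \min S_x(u)\bigr) \qquad \text{for all } x\in V\backslash U.
\end{equation}
Each element of $S_x(u)$ is a convex combination of finitely many values $u(z)$ with coefficients that depend only on $x$ and on the graph weights, hence the map $u\mapsto s$ is $1$-Lipschitz in $\Norm{\cdot}{\infty}$ for every $s\in S_x(u)$. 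Because $\max$, $\min$, and the arithmetic mean are also $1$-Lipschitz in $\Norm{\cdot}{\infty}$, composing them yields $|T_1 u(x) - T_1 v(x)|\leq \Norm{u-v}{\infty}$ for $x\in V\backslash U$, while for $x\in U$ the difference vanishes. Hence $T_1$ is nonexpansive in $\Norm{\cdot}{\infty}$.

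With nonexpansiveness of $T_1$ and non-emptiness of $\mathrm{Fix}(T_1)$ in hand, I would invoke Theorem \ref{thm:krasnoselski-mann} with the constant step sizes $\tau_r = \tau$: the assumptions $\sum_r \tau_r = \infty$ and $\limsup_r \tau_r = \tau < 1$ are satisfied, so the iterates $f^{n+1} = ((1-\tau)\mathrm{Id}+\tau T_1)f^n = T_\tau f^n$ converge to some element of $\mathrm{Fix}(T_1)$. By the uniqueness part of Theorem \ref{thm:dirichlet_unique}, this limit must equal $f^*$. The main obstacle is the case analysis needed to establish the unified formula for $T_1 u(x)$; once that identity is available, the remaining Lipschitz-composition argument and the appeal to Krasnoselski--Mann are essentially mechanical.
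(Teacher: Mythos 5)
Your proposal is correct and follows essentially the same route as the paper: write the iteration as $(1-\tau)\mathrm{Id}+\tau\Phi$ for the unrelaxed update operator, prove that operator is nonexpansive in $\Norm{\cdot}{\infty}$ by rewriting $\Delta_{\omega,\infty}u(x)$ over the augmented neighbourhood $N(x)=\{x\}\cup\{y:y\sim x\}$, and then invoke the Krasnoselskii--Mann theorem together with the uniqueness result of Theorem \ref{thm:dirichlet_unique} to identify the limit. The only (cosmetic) difference is in how nonexpansiveness is verified: you package $T_1u(x)$ as the midpoint of the max and min of a set of convex combinations and compose $1$-Lipschitz maps, whereas the paper bounds $\Phi(f_1)(x)-\Phi(f_2)(x)$ directly by evaluating at the argmax neighbours $y_1$ and $z_2$ --- both arguments rest on the same convex-combination structure.
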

\begin{proof}
For $A\coloneqq \{ f \colon V\to \R:f=g \text{ on }U\}$ we consider the operator $\Phi \colon A\to A$ defined by 
\begin{equation}
\Phi (f)(x)=
\begin{cases}
g(x), &$for $x\in U,\\
f(x)+\Delta_{\omega,\infty} f(x), &$for $x\in V\backslash U.
\end{cases}
\end{equation}
First we show, that $\Phi$ is nonexpansive. For $v\in V$ we use the notation $N(v)=\{v\}\cup\{u\in V:u\sim v\}$. For $x\in V$ we can rewrite $\Delta_{\omega,\infty} f(x)$ by
\begin{equation}
\Delta_{\omega,\infty} f(x)=\frac{1}{2} \left(\max_{v \in N(x)} \left(\sqrt{\omega(x,v)}(f(v)-f(x))\right) - \left(\max_{v \in N(x)} \sqrt{\omega(x,v)}(f(x)-f(v))\right)\right).
\end{equation}
For $f_1,f_2\in A$ define
\begin{equation}
y_i\in \argmax_{v \in N(x)} \left(\sqrt{\omega(x,v)}(f_i(v)-f_i(x))\right)
\end{equation}
and
\begin{equation}
z_i\in \argmax_{v \in N(x)} \left(\sqrt{\omega(x,v)}(f_i(x)-f_i(v))\right).
\end{equation}
Therefore we have
\begin{equation}
\begin{aligned}
\Phi(f_1)(x)-\Phi(f_2)(x)=&f_1(x)-f_2(x)+\Delta_{\omega,\infty} f_1(x)-\Delta_{\omega,\infty} f_2(x)\\
\leq& f_1(x)-f_2(x) + \frac{\sqrt{\omega(x,y_1)}}{2} (f_1(y_1)-f_1(x)-f_2(y_1)+f_2(x))-\\
&\frac{\sqrt{\omega(x,z_2)}}{2}(f_1(x)-f_1(z_2)-f_2(x)+f_2(z_2))\\
=& \left(1-\frac{\sqrt{\omega(x,y_1)}+\sqrt{\omega(x,z_2)}}{2}\right)(f_1(x)-f_2(x)) + \\
&\frac{\sqrt{\omega(x,y_1)}}{2} (f_1(y_1)-f_2(y_1))+\frac{\sqrt{\omega(x,z_2)}}{2} (f_1(z_2)-f_2(z_2))\\
\leq&\left(1-\frac{\sqrt{\omega(x,y_1)}+\sqrt{\omega(x,z_2)}}{2}\right)\Norm{f_1-f_2}{\infty} + \\
&\frac{\sqrt{\omega(x,y_1)}}{2} \Norm{f_1-f_2}{\infty}+\frac{\sqrt{\omega(x,z_2)}}{2} \Norm{f_1-f_2}{\infty}\\
=&\Norm{f_1-f_2}{\infty}.
\end{aligned}
\end{equation}
Analogously we get that 
\begin{equation}
\Phi(f_1)(x)-\Phi(f_2)(x)\geq -\Norm{f_1-f_2}{\infty}
\end{equation}
Hence we have that
\begin{equation}
\Norm{\Phi(f_1)-\Phi(f_2)}{\infty}\leq \Norm{f_1-f_2}{\infty}
\end{equation}
and that $\Phi$ is nonexpansive.
The $f^n$ generated by Algorithm \ref{alg:disc_infty_harm} fulfill by definition
\begin{equation}
f^{n+1}=T f^n\text{, where }T=(1-\tau)Id+\tau \Phi
\end{equation}
The operator $T$ is averaged for the constant $1-\tau\in(0,1)$ and therefore nonexpansive. Due to Theorem \ref{thm:dirichlet_unique} we have $Fix(T)\neq \emptyset$. Thus Remark \ref{rem:picard-iteration} yields that the sequence $(f^n)_n$ converges. Due to Theorem \ref{thm:alg_lim_disc_inft} the limit is discrete $\infty$-harmonic.
\end{proof}

\begin{example}
For $\tau=2$ the $u^r$ generated by Algorithm \ref{alg:disc_infty_harm} may not converge: Let $G=(V,E,\omega)$ be given by $V=\{x_1,...,x_5\}$, $E=\{(x_i,x_{i+1}:i\in\{1,...,4\}\}$ and $\omega(x,y)=1$ for $(x,y)\in E$. Further let $U=\{x_1,x_5\}$ and let $g\colon U \to \R$ be given by $g(x)=0$ for $x\in U$ . Then the $u^r$ generated by Algorithm \ref{alg:disc_infty_harm} with initial values $u^0\colon V\to \R$ given by $u^0(x_3)=1$ and $f^0(x)=0$ for $x\in V\backslash \{x_3\}$ fulfill $u^r=u^0$ if $r$ is even and $u^r(x)=1$ for $x\in \{x_2,x_4\}$ and $u^r(x)=0$ for $x\in \{x_1,x_3,x_5\}$ if $r$ is odd. Hence the $u^r$ diverges for $r\to \infty$.
\end{example}

For $1\leq \tau <2$ we can neither show convergence of Algorithm \ref{alg:disc_infty_harm} nor we know an counterexample for the convergence.

\subsection{Tight functions on graphs}\label{sec:discrete_tight}

\begin{example}\label{ex:disc_inf_harm_nicht_eind}
In general, discrete $\infty$-harmonic extensions of a function $g\colon U\to \R^m$ are not unique for $m>1$. The idea of a counterexample comes from \cite[Section 2.1]{sheffield2012vector}. We choose $V=\{x_1, ..., x_6\}$, $U=\{x_1,x_2,x_3\}$ and 
\begin{equation}
E=\{ (x_1,x_4), (x_2,x_5), (x_3,x_6), (x_4,x_5), (x_5,x_6), (x_4,x_6)\}
\end{equation}
and use the weighting function $\omega(x,y)=1$ for $(x,y)\in E$. Then the function $g\colon U \to \R^2$ defined by $x_1\mapsto (0,0)$, $x_2\mapsto (0,1)$ and $x_3\mapsto \left(\frac{\sqrt{3}}{2},\frac{1}{2}\right)$ has more than one discrete $\infty$-harmonic extension. For example, consider $u_1, u_2\colon V\to \R^2$ defined by $u_i(x)=g(x)$ on $U$ and $u_1(x_4)=\left(\frac{\sqrt{3}}{7},\frac{2}{7}\right)$, $u_1(x_5)=\left(\frac{\sqrt{3}}{14},\frac{9}{14}\right)$ and $u_1(x_6)=\left(\frac{2 \sqrt{3}}{7},\frac{4}{7}\right)$ and $u_2(x_4)=\left(\frac{1}{2+2 \sqrt{3}},\frac{\sqrt{3}}{2+2 \sqrt{3}}\right)$, $u_2(x_5)=\left(\frac{1}{2+2 \sqrt{3}},\frac{1}{4} + \frac{\sqrt{3}}{4}\right)$ and $u_2(x_6)=\left(\frac{1}{2},\frac{1}{2}\right)$. See Figure \ref{fig:disc_inf-harm_not_unique}. It is easy to verify that $u_1$ and $u_2$ are discrete $\infty$-harmonic.
\begin{figure}
\centering
\begin{subfigure}[t]{0.45\textwidth}
\centering
\begin{tikzpicture}[scale=3]
\filldraw (0,0) circle (1pt) node[align=center, below] {$x_1$} -- (0.2474358,0.2857143) circle (1pt) node[align=center, below] {$x_4$} -- (0.1237179,9/14) circle (1pt) node[align=center, below] {$x_5$} -- (0,1) circle (1pt) node[align=center, below] {$x_2$};
\filldraw (0.2474358,0.2857143) -- (0.4948716,0.5714286) circle (1pt) node[align=center, below] {$x_6$} -- (0.1237179,9/14);
\filldraw (0.4948716,0.5714286) -- (0.8660254,0.5) circle (1pt) node[align=center, below] {$x_3$};
\end{tikzpicture}
\caption{$u_1\colon V \to \R^2$}
\end{subfigure}\hfill
\begin{subfigure}[t]{0.45\textwidth}
\centering
\begin{tikzpicture}[scale=3]
\filldraw (0,0) circle (1pt)  node[align=center, below] {$x_1$} -- (0.1830127,0.3169873) circle (1pt) node[align=center, below] {$x_4$} -- (0.1830127,0.6830127) circle (1pt) node[align=center, below] {$x_5$} -- (0,1) circle (1pt) node[align=center, below] {$x_2$};
\filldraw (0.1830127,0.3169873) -- (0.5,0.5) circle (1pt) node[align=center, below] {$x_6$} -- (0.1830127,0.6830127);
\filldraw (0.5,0.5) -- (0.8660254,0.5) circle (1pt) node[align=center, below] {$x_3$};
\end{tikzpicture}
\caption{$u_2\colon V \to \R^2$}
\label{fig:disc_inf-harm_not_unique_b}
\end{subfigure}\hfill
\caption{Two discrete $\infty$-harmonic extensions of $g\colon \{x_1,x_2,x_3\}\to \R^2$.}
\label{fig:disc_inf-harm_not_unique}
\end{figure}
\end{example}

Definition \ref{def:local_Lipschitz} yields a discrete formulation of a tight functions on graphs. This approach is considered in \cite{sheffield2012vector}.

\begin{definition}
Let $u,v\colon V\to \R^m$ with $u\mid_U=v\mid_U$. We say $u$ is \emph{tighter} than $v$ if $u$ and $v$ satisfy
\begin{equation}
\max \{S v(x): x\in V\backslash U \text{ and } S u(x) < S v(x)\} > \max \{S u(x): x\in V\backslash U \text{ and } S v(x) < S u(x)\}
\end{equation}
We say $u$ is \emph{tight} if there is no tighter $v\colon V\to \R^m$ with $u\mid_U=v\mid_U$.
\end{definition}

\begin{example}
For the choice of the graph $G=(V,E,\omega)$, the subset $U\subset V$ and $g\colon U\to \R^2$ as in Example \ref{ex:disc_inf_harm_nicht_eind} the tight extension is given by the function $u_2\colon V\to \R^2$. See Figure \ref{fig:disc_inf-harm_not_unique_b}. In particular, the tight extension of $g$ is discrete $\infty$-harmonic. Theorem \ref{thm:tight_implies_disc_infharm} shows this result in general.
\end{example}

\begin{definition}
Let $u\colon V\to \R^m$. We denote the values of $S u$ in nonincreasing order by $l(u)=(l_i(u))_{i=1}^{\abs{V\backslash U}}\in \R^{\abs{V\backslash U}}$. 
\end{definition}

\begin{lemma}\label{lem:lexmin_tight}
Let $g\colon U\to \R^m$ and let $u,v\colon V\to \R^m$ be tight extensions of $g$ such that $u$ is tighter than $v$. Then, $l(u)<l(v)$ in lexicographically order. If $l(u)$ is lexicographically minimal, then $u$ is tight.
\end{lemma}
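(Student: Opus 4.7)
The plan is to compare $l(u)$ and $l(v)$ through the level-set counts $m_u(t):=|\{x\in V\setminus U : Su(x)\geq t\}|$ and $m_v(t):=|\{x\in V\setminus U : Sv(x)\geq t\}|$, which determine the sorted sequences uniquely. Set
\[
M := \max\{Sv(x) : x\in V\setminus U,\ Su(x) < Sv(x)\},\quad M' := \max\{Su(x) : x\in V\setminus U,\ Sv(x) < Su(x)\},
\]
so that the hypothesis that $u$ is tighter than $v$ reads exactly $M>M'$. The first key step is a pointwise dichotomy: for every $x\in V\setminus U$ either $Su(x)=Sv(x)$ (in which case $x$ contributes equally to both counts), or $Su(x)<Sv(x)\leq M$, or $Sv(x)<Su(x)\leq M'<M$. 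Above the threshold $M$ only the first alternative can contribute to either count, so $m_u(t)=m_v(t)$ for all $t>M$; at the threshold $M$ the same trichotomy gives $1_{\{Su(x)\geq M\}}\leq 1_{\{Sv(x)\geq M\}}$ pointwise, and any vertex $x_0$ attaining the maximum in the definition of $M$ satisfies $Sv(x_0)=M>Su(x_0)$ and so witnesses strict inequality. Thus $m_u(M) < m_v(M)$.

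Translating these count statements back to the sorted sequences, let $N := m_u(M^+) = m_v(M^+)$ denote the common number of entries strictly exceeding $M$. Then the first $N$ entries of $l(u)$ and $l(v)$ coincide (same multiset above $M$, same sorted order). Moreover, $l_{N+1}(v)=M$ because $m_v(M)>N$ forces at least one entry equal to $M$, whereas in $l(u)$ either there is no entry equal to $M$, giving $l_{N+1}(u)<M=l_{N+1}(v)$ immediately, or $l(u)$ still contains some entries equal to $M$ but strictly fewer than $l(v)$ does (by $m_u(M)<m_v(M)$), so $l(u)$ drops strictly below $M$ at some later index where $l(v)$ still equals $M$. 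Either way $l(u)<_{\text{lex}}l(v)$, proving the first claim.

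The second claim is immediate by contrapositive: if $u$ is not tight, then some extension $v$ of $g$ with $v|_U=g$ is tighter than $u$, and applying the first claim with the roles of $u$ and $v$ swapped yields $l(v)<_{\text{lex}}l(u)$, contradicting lex-minimality of $l(u)$. The main obstacle I expect is not analytic but combinatorial: translating the strict count inequality $m_u(M)<m_v(M)$ into the right strict drop in the sorted sequence requires distinguishing the subcases according to whether $l(u)$ itself has any entries equal to $M$. Once the pointwise trichotomy is established and $m_u, m_v$ are in hand, the rest is elementary.
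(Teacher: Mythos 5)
Your proposal is correct and follows essentially the same route as the paper: identify the threshold $M=Sv(x_0)$, show the two $S$-profiles agree strictly above $M$, that $u$ has strictly fewer values $\geq M$, and conclude lexicographic strictness, with the second claim by contrapositive. The only difference is that you make explicit, via the level-set counts $m_u(t),m_v(t)$, the combinatorial translation into the sorted sequences that the paper compresses into a single ``this yields by definition'' sentence.
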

The statement is used in \cite[Theorem 1.2]{sheffield2012vector}. For convenience we provide a proof.
\begin{proof}
Choose $x_0\in V\backslash U$ such that 
\begin{equation}
\begin{aligned}
S v(x_0)&=\max \{S v(x): x\in V\backslash U \text{ and } S u(x) < S v(x)\} \\
&> \max \{S u(x): x\in V\backslash U \text{ and } S v(x) < S u(x)\}
\end{aligned}
\end{equation}
Then we have for all $x\in V\backslash U$ with $S v(x)>S v(x_0)$ that $S v(x)= S u(x)$ and $S v(x_0)> S u(x_0)$. Further it holds that for all $x\in V\backslash U$ with $S v(x)<S v(x_0)$ that $S u(x)< S v(x_0)$. This yields by definition that $l(u)<l(v)$ in lexicographically order.\\

Now let $l(u)$ be lexicographically minimal. Assume that $u$ is not tight. Then there exists some $w\colon V\to \R^m$ with $w=g$ on $U$ such that $w$ is tighter than $u$. This yields by the first part of the proof that $l(w)<l(u)$ in lexicographically order. This contradicts the fact, that $l(u)$ is lexicographically minimal. Hence $u$ is tight.
\end{proof}

\begin{example}
Let $u,v\colon V\to \R^m$ with $l(u)<l(v)$ in lexicographically order. Then it does not follow that $u$ is tighter than $v$. We give a counterexample:\\
Let $G=(V,E,\omega)$ with $V=\{x_1,x_2,x_3\}$, $U=\{x_2\}$ and $E=\{(x_1,x_2),(x_2,x_3)\}$. Let $\omega(x,y)=1$ for $(x,y)\in E$. We define $u,v\colon V\to \R$ by $u(x_1)=0$, $u(x_2)=2$, $u(x_3)=5$, $v(x_1)=5$, $v(x_2)=2$ and $v(x_3)=1$. Then it holds
\begin{equation}
\max \{S v(x): x\in V\backslash U \text{ and } S u(x) < S v(x)\} = \max \{S u(x): x\in V\backslash U \text{ and } S v(x) < S u(x)\}.
\end{equation}
Hence neither $u$ is tighter than $v$ nor $v$ is tighter than $u$, although $l(u)< l(v)$ in lexicographically order.
\end{example}

\begin{theorem}\label{thm:tight_ex}
Let $g\colon U\to \R^m$. Then there exists an unique tight extension $u\colon V\to \R^m$ of $g$.\\
Further $u$ is tighter than every other extension $v\colon V\to \R^m$ of $g$. 
\end{theorem}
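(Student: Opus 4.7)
The approach is to construct $u$ as a lexicographic minimizer of the sorted Lipschitz profile $l(v)$ over all extensions $v$ of $g$, apply Lemma~\ref{lem:lexmin_tight} to deduce tightness, and then upgrade this to the stronger ``tighter than every other extension'' property via an averaging argument. Uniqueness then follows immediately.

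For the construction, let $n=|V\setminus U|$ and, for $k=1,\ldots,n$, define the top-$k$ sum
\begin{equation}
T_k(v)=\max_{A\subseteq V\setminus U,\;|A|=k}\sum_{x\in A}Sv(x).
\end{equation}
Since each $Sv(x)=\max_{y\sim x}\sqrt{\omega(x,y)}\,|v(y)-v(x)|$ is a pointwise maximum of norms of affine functions of $v$, it is convex and continuous in $v$; hence each $T_k$ is convex and continuous. Set $M_0=\{v:v|_U=g\}$, and iteratively
\begin{equation}
c_k=\min_{v\in M_{k-1}}T_k(v),\qquad M_k=\{v\in M_{k-1}:T_k(v)=c_k\}.
\end{equation}
Because $G$ is connected and $U\neq\emptyset$, the sublevel sets of $T_1$ on $M_0$ are bounded (any bound on $Sv$ along a spanning tree rooted in $U$ bounds $v$ pointwise), so $M_1$ is nonempty, compact, and convex; by induction the same holds for every $M_k$. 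Choosing $u\in M_n$, one checks $l_k(u)=c_k-c_{k-1}$ (with $c_0=0$), so $l(u)$ is lexicographically minimal. Lemma~\ref{lem:lexmin_tight} then yields that $u$ is tight.

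The decisive step is to show that $u$ is tighter than every other extension $v\neq u$. The plan is a case analysis on the smallest index $k$ at which the sorted sequences differ, i.e.\ $l_j(u)=l_j(v)$ for $j<k$ and $l_k(u)<l_k(v)$ (such a $k$ exists because otherwise the multisets agree and then an averaging argument with $w=(u+v)/2$ produces a vector with strictly smaller lex profile, contradicting lex-minimality of $u$). One tracks the level sets $A_u^t=\{x:Su(x)\geq t\}$ and $A_v^t=\{x:Sv(x)\geq t\}$; lex-minimality forces $|A_u^t|\leq|A_v^t|$ for every threshold $t>l_k(u)$, and equality at all thresholds above $l_k(v)$. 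From this one concludes that the maximum value on the set $\{Sv>Su\}$ is $l_k(v)$, while every $x$ with $Su(x)>Sv(x)$ satisfies $Su(x)<l_k(v)$, giving the required strict inequality
\begin{equation}
\max\{Sv(x):Su(x)<Sv(x)\}>\max\{Su(x):Sv(x)<Su(x)\}.
\end{equation}
The averaging estimate $Sw(x)\leq\tfrac12(Su(x)+Sv(x))$ (with strict inequality unless the argmax increments $u(y)-u(x)$ and $v(y)-v(x)$ are positively parallel with equal norms) closes the residual edge case where the top of $l(u)$ and $l(v)$ would otherwise tie. Finally, uniqueness follows: a second tight extension $v$ would be strictly tighter-dominated by $u$, contradicting its own tightness.

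The main obstacle I foresee is the Step~3 bookkeeping. The notion of ``tighter than'' is one-sided and does not directly correspond to lex order on multisets, so even with $l(u)<_\mathrm{lex}l(v)$ and uniqueness of the lex-min, one has to rule out configurations where the largest value of $Su$ on $\{Su>Sv\}$ matches the largest value of $Sv$ on $\{Sv>Su\}$; this is where the averaging plus strict-convexity argument is essential, and carrying it out cleanly in the vector-valued setting, where Euclidean strict convexity fails along rays, is the delicate point.
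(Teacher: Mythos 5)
Your existence argument is sound and essentially the paper's: iterated minimization of the sorted profile (your top-$k$ sums $T_k$ are a convexified repackaging of the paper's minimizer sets $\mathcal{M}_k$, and the boundedness-via-connectedness observation is correct), followed by Lemma~\ref{lem:lexmin_tight}. The gap is in the second half. The claim ``$u$ is tighter than every other extension $v$'' forces you to exclude the configuration $S u\equiv S v$ on $V\setminus U$ with $u\neq v$: in that case both sets in the definition of ``tighter'' are empty, neither function is tighter than the other, and no bookkeeping on the sorted profiles $l(u),l(v)$ or on level sets can distinguish $u$ from $v$. Your proposal disposes of this case with the single clause that the averaging estimate ``closes the residual edge case,'' but the inequality $S w\leq\frac12(S u+S v)$ for $w=\frac12(u+v)$ only yields $S w\equiv S u\equiv S v$ here (any strict drop would contradict tightness of $u$), which is perfectly consistent with $u\neq v$; it produces no contradiction by itself.

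What is actually needed, and what the paper supplies, is a propagation argument from the boundary. Set $W=\{x\in V:u(x)=v(x)\}\supseteq U$ and suppose $W\subsetneq V$. One first shows that some $x\in V\setminus W$ attains $S w(x)$ on an edge into $W$: otherwise one replaces $w$ by $(1-\epsilon)w$ on $V\setminus W$ for small $\epsilon>0$ and strictly decreases $S w$ there, contradicting the tightness of $w$ (which must also be established, via $S w\equiv S u$). At such a critical edge $(x,y)$ the chain $S w(x)=\sqrt{\omega(x,y)}\abs{w(x)-w(y)}\leq\frac12\bigl(S u(x)+S v(x)\bigr)=S w(x)$ forces equality in the triangle inequality, so $u(x)-u(y)$ and $v(x)-v(y)$ are linearly dependent with equal modulus; combined with $u(y)=v(y)$ this gives $u(x)=v(x)$ in either sign case, contradicting $x\notin W$. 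Without this step your argument establishes only that the lexicographic minimizer of $l$ is tight and that its profile is unique, not that the extension itself is unique nor that it dominates every competitor. (Your level-set analysis for the case $l(u)\neq l(v)$ likewise only yields a non-strict inequality when the two maxima in the definition of ``tighter'' tie; the paper's argument setting both maxima equal to a common value $K$ and then averaging is what upgrades this to the required strict inequality, so that part, too, needs to be written out rather than asserted.)
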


The proof of the existence comes from \cite[Theorem 1.2]{sheffield2012vector}. We added some details. Since we did not understand the proof of the uniqueness, we worked it out again using similar arguments.

\begin{proof}
\textbf{Existence:} We consider the set $\mathcal{M}=\{l(u): u\colon V\to \R^m \text{ with } u\mid_{U}=g\}\subset [0,\infty)^{\abs{V\backslash U}}$. $\mathcal{M}$ is closed due to the continuity of $l$. We show inductively that the sets 
\begin{equation}
\mathcal{M}_k =\{u\colon V\to \R^m \text{ with } u\mid_{U}=g\text{ and } (l_i(u))_{i=1}^k \text{ lexicographically minimal}\}
\end{equation}
are nonempty for $k=1,...,\abs{V\backslash U}$ and that $\{l(u):u\in M_k\}$ is closed.\\

$k=1$: Since $\mathcal{M}$ is nonempty and closed we have that also $\{l_1(u): u\colon V\to \R^m \text{ with } u\mid_{U}=g\}\subset [0,\infty)$ is nonempty and closed. Hence it has a minimum $m_1$ and the set of minimizers $\mathcal{M}_1$ is nonempty. Further $\{l(u): u\in M_1\}=\mathcal{M}\cap \{m_1\}\times \R^{\abs{V\backslash U}-1}$ is closed.

$k>1$: Since the set $\{l(u):u\in \mathcal{M}_{k-1}\}$ is nonempty and closed we have that also $\{l_k(u):u\in \mathcal{M}_{k-1}\}$ is nonempty and closed. Hence it has a minimum $m_k$ and the set of minimizers $\mathcal{M}_k$ is nonempty. Further $\{l(u): u\in \mathcal{M}_k\}=\mathcal{M}\cap \{m_1\}\times \dots \times \{m_k\} \times \R^{\abs{V\backslash U}-k}$ is closed.\\

In particular, the set $\mathcal{M}_{\abs{V\backslash U}}$ of lexicographically minimal extensions of $g$ is nonempty. By Lemma \ref{lem:lexmin_tight} these lexicographically minimal extensions of $g$ are tight.\\

\textbf{Uniqueness:} Let $u$ be tight and let $v\colon V\to \R^m$ be an arbitrary extension of $g$ with $u\neq v$. Assume $u$ is not tighter than $v$. Since $u$ is tight, we have that $v$ is also not tighter than $u$. Thus we have
\begin{equation}\label{eq:Kdef_uniquetight}
\begin{aligned}
K\coloneqq& \max \{ S u(x):x\in V\backslash U \text{ and }  S u(x)\neq S v(x)\}\\
=& \max \{ S v(x):x\in V\backslash U \text{ and } S u (x) \neq S v(x)\}.
\end{aligned}
\end{equation} 
Set $K\coloneqq0$ if $S u (x)=S v(x)$ for all $x\in V\backslash U$. Consider $w\coloneqq \frac{u+v}{2}$. For $x\sim y$ it holds
\begin{equation}\label{eq:tight_ex1}
\sqrt{\omega(x,y)}\abs{w(x)-w(y)}\leq \frac{1}{2} \sqrt{\omega(x,y)} \abs{u(x)-u(y)}+\frac{1}{2} \sqrt{\omega(x,y)} \abs{v(x)-v(y)}.
\end{equation}
If $S u(x)\geq K$ for some $x\in V\backslash U$, then we have either $S v(x)=S u(x) > K$ or $S v(x)\leq K = S u(x)$. Together with \eqref{eq:tight_ex1} we get
\begin{equation}\label{eq:wtight}
S w(x) \leq \frac{S u (x) + S v(x)}{2} \leq S u(x).
\end{equation}
If it holds $S w(x)<S u(x)$ for some $x\in V\backslash U$ with $S u(x)\geq K$ we that $S w (x)\leq S u(x)$ for all $x\in V\backslash U$ with strict inequality for one $x$. This yields that $w$ is tighter than $u$ and contradicts that $u$ is tight. Hence we have $S w(x)=S u(x)$ for all $x\in V\backslash U$ with $S u(x)\geq K$. If we insert this in \eqref{eq:wtight} we get
\begin{equation}
S w(x) \leq \frac{S w (x) + S v(x)}{2} \leq S w(x)
\end{equation}
and therefore we have $S u(x)=S w(x)= S v(x)$ for all $x\in V\backslash U$ with $S u(x)\geq K$. Hence $S u(x) \neq S v(x)$ implies $S u(x)<K$. This yields that
\begin{equation}
\max \{S u: S u\neq S v\}<K,
\end{equation}
what contradicts the definition of $K$ in \eqref{eq:Kdef_uniquetight} if $S u \not\equiv S v$. Hence $S u \equiv S v$ and $K=0$. This yields $S w(x)=S v(x)=S u(x)$ for all $x\in V\backslash U$. Thus also $l(w)$ is lexicographically minimal and $w$ is tight.\\

Define the set $W\coloneqq \{x\in V: u(x)=v(x)\}\supseteq U$. Because $u\neq v$, we have $W\subsetneq V$. Since $S u=S v=S w$ on $V\backslash U$ we also have that $u$, $v$ and $w$ are tight extensions of $\tilde{g}\colon W \to \R^m$ defined by $\tilde{g}(x)=u(x)=v(x)$ for all $x\in W$.\\

We show that there exists some $x\in V\backslash W$ and $y\in W$ such that $x\sim y$ and
\begin{equation}
\sqrt{\omega(x,y)}\abs{w(x)-w(y)}=S w(x).
\end{equation}
Assume that such $x$ and $y$ do not exist.Then there exists for all $x\in V\backslash W$ some $0<\epsilon_x<1$ such that $\sqrt{\omega(x,y)}\abs{(1-\epsilon_x)w(x)-w(y)}< S w(x)$ for all $y\in W$ with $y\sim x$. Set $\epsilon=\min_{x\in V\backslash W} \epsilon_x$. Then we have for all $x\in V\backslash W$ that 
\begin{equation}\label{eq:tight_ex2}
\abs{(1-\epsilon)w(x)-w(y)}< S w(x) \text{ for all } y\in W \text{ with } y\sim x
\end{equation}
and $\epsilon$ independent of $x$. Consider the extension $w_\epsilon \colon V \to \R^m$ of $g$ defined by
\begin{equation}
w_\epsilon(x)\coloneqq
\begin{cases}
w(x)=\tilde{g}(x)&$if $x\in W,\\
(1-\epsilon)w(x)&$if $x\in V\backslash W.
\end{cases}
\end{equation}
Now we have by \eqref{eq:tight_ex2} for $x\in V\backslash W$ and $y\sim x$ with $y\in W$ that 
\begin{equation}
\sqrt{\omega(x,y)}\abs{w_\epsilon(x)-w_\epsilon(y)}=\sqrt{\omega(x,y)}\abs{(1-\epsilon)w(x)-w(y)}<S w(x).
\end{equation}
For $y\sim x$ with $y\in V\backslash W$ it holds 
\begin{equation}
\begin{aligned}
\sqrt{\omega(x,y)}\abs{w_\epsilon(x)-w_\epsilon(y)}&=\sqrt{\omega(x,y)}(1-\epsilon)\abs{w(x)-w(y)}\\&<\sqrt{\omega(x,y)}\abs{w(x)-w(y)}\leq S w(x).
\end{aligned}
\end{equation} 
Hence $S w_\epsilon (x)<S w (x)$ for all $x\in V\backslash W$. Thus $w_\epsilon$ is tighter than $w$, what contradicts that $w$ is tight.\\

Therefore, there exists some $x\in V\backslash W$ and $y\in W$ such that $x\sim y$ and $\abs{w(x)-w(y)}=S w(x)$. Now it holds
\begin{equation}\label{eq:tight_uniqueness_ineq}
\begin{aligned}
S w(x)&=\sqrt{\omega(x,y)}\abs{w(x)-w(y)}\\
&\leq \frac{1}{2} \sqrt{\omega(x,y)}\abs{u(x)-u(y)}+\frac{1}{2} \sqrt{\omega(x,y)}\abs{v(x)-v(y)}\\
&\leq \frac{1}{2} S u(x)+\frac{1}{2} S v(x)=S w(x).
\end{aligned}
\end{equation}
Thus we must have equality in \eqref{eq:tight_uniqueness_ineq} and get
\begin{equation}
\sqrt{\omega(x,y)}\abs{u(x)-u(y)}=S u(x)=S v(x) =\sqrt{\omega(x,y)}\abs{v(x)-v(y)}.
\end{equation}
Plugging in the definition of $w$ we get with the equality of \eqref{eq:tight_uniqueness_ineq} that
\begin{equation}
\abs{u(x)-u(y)+v(x)-v(y)}=2\abs{w(x)-w(y)}=\abs{u(x)-u(y)}+\abs{v(x)-v(y)}.
\end{equation}
Therefore, $u(x)-u(y)$ and $v(x)-v(y)$ are linearly depended and have the same modulus. This yields $u(x)-u(y)=-(v(x)-v(y))$ or $u(x)-u(y)=v(x)-v(y)$.\\

If it holds $u(x)-u(y)=-(v(x)-v(y))$, then we have by definition $w(x)-w(y)=0$. Due to \eqref{eq:tight_uniqueness_ineq} we get that $Su(x)=Sv(x)=Sw(x)=0$ and therefore $\abs{u(x)-u(y)}=\abs{v(x)-v(y)}=0$. Thus we have $u(x)-u(y)=0=v(x)-v(y)$. Since it holds $y\in W$, we have $u(y)=v(y)$ and $u(x)=u(y)=v(y)=v(x)$. This contradicts that $x\not\in W$.\\

If it holds $u(x)-u(y)=v(x)-v(y)$, it holds $u(x)-v(x)=u(y)-v(y)=\tilde{g}(y)-\tilde{g}(y)=0$. This yields $u(x)=v(x)$ and contradicts that $x\not\in W$.\\

This yields $W=V$ and $u=v$ on $V$.
\end{proof}

\begin{remark}\label{rem:tight_lexmin}
In the proof of Theorem \ref{thm:tight_ex} is shown that there exists an extension $u\colon V\to \R^m$ of $g\colon U\to \R^m$ with lexicographically minimal $l(u)$. Further this $u$ is tight by Lemma \ref{lem:lexmin_tight}. Further, the tight extension is unique by Theorem \ref{thm:tight_ex}. This yields that an extension $u\colon V\to \R^m$ of $g\colon U\to \R^m$ is tight if and only if $l(u)$ is lexicographically minimal. 
\end{remark}

The proof of Theorem \ref{thm:tight_ex} is not constructive. To approximate the unique tight extension of $v\colon U\to \R^m$ we consider for $p \geq 1$ the energy functionals 
\begin{equation}
I_p\colon \{u\colon V\to \R^m: u=g \text{ on } U\}\to \R\text{ defined by }I_p(u)=\sum_{x\in V\backslash U} (S u(x))^p
\end{equation}
for $1<p<\infty$.

\begin{remark}
It is easy to show, that the functionals $I_p$ are continuous and coercive. Hence for $1\leq p<\infty$ there exists a minimizer $u_p$ of $I_p$.
\end{remark}

\begin{remark}
Let $g\colon U\to \R^m$. For $u\colon V\to \R^m$ with $u=g$ on $U$ it holds $I_p(u)=\Norm{(S u)\mid_{V\backslash U}}{p}^p$. The naive way to consider the minimizers of $I_p$ as $p\to \infty$ would be minimizing the functional
\begin{equation}
I_\infty(u)=\Norm{(Su)\mid_{V\backslash U}}{\infty}.
\end{equation}
But the minimizers of $I_\infty$ are in general not unique as shown in Example \ref{ex:I_inf_not_uni} below. But since $u\colon V\to \R^m$ is tight if and only if $l(u)$ is lexicographically minimal, we get that the unique tight extension of $g$ is one of the minimizers of $I_\infty$.
\end{remark}

\begin{example}\label{ex:I_inf_not_uni}
Minimizers of $I_\infty$ are not unique:\\
Let $G=(V,E,\omega)$ be given by $V=\{x_1,...,x_4\}$, $E=\{(x_1,x_2),(x_2,x_3),(x_2,x_4)\}$ and $\omega(x,y)=1$ for $(x,y)\in E$. See Figure \ref{fig:examplegraphs1}. Further, let $U=\{x_1,x_3\}$ and let $g\colon U \to \R$ be given by $g(x_1)=1$ and $g(x_3)=-1$. For $u\colon V\to \R$ with $u=g$ on $U$ it holds
\begin{equation}
I_\infty (u)\geq S u(x_2) \geq \frac{1}{2} (\abs{u(x_1)-u(x_2)}+\abs{u(x_2)-u(x_3)})\geq \frac{1}{2}\abs{u(x_1)-u(x_3)}=1.
\end{equation}
Now define $u,v\colon V\to\R$ by $u=v=g$ on $U$ and $u(x_2)=v(x_2)=0$, $u(x_4)=0$ and $v(x_4)=1$. Then we have $S u(x_2)=S v(x_2)=1$, $S u(x_4)=0$ and $S v(x_4)=1$. This yields $I_\infty(u)=I_\infty(v)=1$. Hence $u$ and $v$ are minimizers of $I_\infty$. But we have $I_p(u)=1<2=I_p(v)$ for all $1\leq p<\infty$. 
\end{example}

\begin{example}\label{ex:Ip_notstrconv}
As the concatenation of convex functionals, $I_p$ is also convex for $1\leq p<\infty$. But in general, it is not strictly convex. We give a counterexample:
Let $V=\{x_1,...,x_4\}$, $E=\{(x_1,x_2),(x_2,x_3),(x_3,x_4)\}$ and $\omega(x,y)=1$ for $(x,y)\in E$. See Figure \ref{fig:examplegraphs2}. Let $U=\{x_2\}$ and $g\colon U\to \R$ defined by $g(x_2)=0$. Consider the functions $u,v\colon V\to \R$ defined by $u(x_1)=v(x_1)=1$ $u(x_2)=v(x_2)=0$, $u(x_3)=1$, $u(x_4)=3$, $v(x_3)=-1$ and $v(x_4)=1$. Then we get by a short computation that $I_p(u)=I_p(v)=I_p\left(\frac{1}{2}(u+v)\right)=2^{p+1}+1$. Hence $I_p$ is not strictly convex for $1\leq p\leq \infty$.\\
The same counterexample shows that also the energy functional $\overline{I_p}$ defined by
\begin{equation}
\overline{I_p}(u)=\sum_{x\in V} (S u(x))^p
\end{equation}
is not strictly convex.
\end{example}

\begin{figure}
\centering
\begin{subfigure}[t]{0.45\textwidth}
\centering
\begin{tikzpicture}[scale=2]
\filldraw (1,0) circle (1pt) node[align=center, below] {$x_1$} -- (0,0) circle (1pt) node[align=center, below] {$x_2$} -- (-1,0) circle (1pt) node[align=center, below] {$x_3$};
\filldraw (0,0) -- (0,1) circle (1pt) node[align=center, below] {$x_4$};
\end{tikzpicture}
\caption{Graph from Example \ref{ex:I_inf_not_uni}}
\label{fig:examplegraphs1}
\end{subfigure}\hfill
\begin{subfigure}[t]{0.45\textwidth}
\centering
\begin{tikzpicture}[scale=1.5]
\filldraw (0,0) circle (1pt)  node[align=center, below] {$x_1$} -- (1,0) circle (1pt) node[align=center, below] {$x_2$} -- (2,0) circle (1pt) node[align=center, below] {$x_3$} -- (3,0) circle (1pt) node[align=center, below] {$x_4$};
\end{tikzpicture}
\caption{Graph from Example \ref{ex:Ip_notstrconv}}
\label{fig:examplegraphs2}
\end{subfigure}\hfill
\caption{Graphs from the Examples \ref{ex:I_inf_not_uni} and \ref{ex:Ip_notstrconv}.}
\label{fig:examplegraphs}
\end{figure}
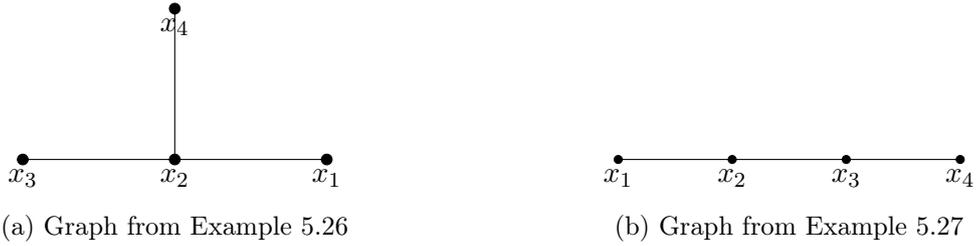

\begin{theorem}
Let $g\colon U\to \R^m$ and $1\leq p<\infty$. Then, the functional $I_p$ has a unique minimizer.
\end{theorem}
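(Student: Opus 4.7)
The plan is to prove existence and uniqueness separately. Existence follows from the coercivity and continuity of $I_p$ (already noted in the preceding remark), together with the fact that the admissible set $\{u\colon V\to\R^m : u|_U=g\}$ is a finite-dimensional affine space. Convexity of $I_p$ is routine, since $Su(x)$ is a maximum of affine functionals of $u$ (hence convex), and $t\mapsto t^p$ is convex and non-decreasing on $[0,\infty)$.

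For uniqueness I would argue by contradiction. Suppose $u_1\neq u_2$ are both minimizers; then $v\coloneqq\tfrac{1}{2}(u_1+u_2)$ is also a minimizer by convexity of $I_p$. Chaining the convexity of $u\mapsto Su(x)$ with the (strict, for $p>1$) convexity of $t\mapsto t^p$ on $[0,\infty)$ gives
\begin{equation*}
I_p(v)\leq\sum_{x\in V\setminus U}\left(\tfrac{Su_1(x)+Su_2(x)}{2}\right)^{p}\leq\tfrac{1}{2}\left(I_p(u_1)+I_p(u_2)\right)=I_p(v),
\end{equation*}
forcing equality throughout. Strict convexity of $t\mapsto t^p$ yields $Su_1(x)=Su_2(x)=Sv(x)\eqqcolon s^{*}(x)$ for every $x\in V\setminus U$, and equality in the convexity of $S$ at each such $x$ forces, at some neighbor $y\sim x$ attaining the outer maximum defining $Sv(x)$, the identities $\sqrt{\omega(x,y)}\,\abs{u_i(y)-u_i(x)}=s^{*}(x)$ for $i=1,2$ together with $u_1(y)-u_1(x)=u_2(y)-u_2(x)$ in $\R^m$. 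Setting $h\coloneqq u_1-u_2$, this gives $h|_U=0$ together with the propagation property that, at every $x\in V\setminus U$, some neighbor $y\sim x$ satisfies $h(y)=h(x)$.

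To close the argument I would extrapolate along the line through $u_1$ and $u_2$: put $u_t\coloneqq u_1+t\,h$, so that $u_t$ is a convex combination of $u_1$ and $u_2$ for $t\in[-1,0]$ (hence a minimizer of $I_p$), while coercivity of $I_p$ forces $u_t$ to fail minimality for some $t>0$. Taking the largest $t_+\geq 0$ with $u_{t_+}$ still minimizing $I_p$ and replacing $u_1$ by $u_{t_+}$, one may assume that the direction $+h$ strictly increases $I_p$ at $u_1$. A first-order expansion of $\abs{u_t(y)-u_t(x)}^{2}$ at $t=0$ shows that neighbors $y$ with $h(y)=h(x)$ keep $\sqrt{\omega(x,y)}\,\abs{u_t(y)-u_t(x)}$ equal to $s^{*}(x)$ for all $t$, while every argmax neighbor with $h(y)\neq h(x)$ contributes a linear-in-$t$ term in $Su_t(x)$; combining the resulting sign conditions in the directions $\pm h$ (the $-h$ side still producing a minimizer by convexity) together with the connectedness of $G$ and $h|_U=0$ forces $h\equiv 0$, contradicting $u_1\neq u_2$. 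The case $p=1$ proceeds along the same lines, with strict convexity of $t\mapsto t^p$ replaced by strict convexity of $\abs{\cdot}$ on $\R^m$ modulo positively parallel vectors. The main obstacle is this last propagation step: the pointwise ``equal-$h$-neighbor'' property alone does not imply $h\equiv 0$ on a general connected graph, so one must use the full optimality of $u_1$ and $u_2$, not merely of their midpoint.
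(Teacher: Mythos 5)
Your first half coincides with the paper's argument and is sound for $p>1$: the midpoint $w=\tfrac12(u_1+u_2)$ is a minimizer, strict convexity of $t\mapsto t^p$ forces $Su_1(x)=Su_2(x)=Sw(x)$ for every $x\in V\backslash U$, and the equality case of the triangle inequality at any neighbour attaining $Sw(x)$ gives some $y\sim x$ with $h(y)=h(x)$, $h\coloneqq u_1-u_2$. You also correctly diagnose that this pointwise property does not force $h\equiv 0$. The gap is in your proposed closing step. First, passing to the extremal minimizer $u_{t_+}$ does not give you usable first-order information: convexity only yields that the one-sided derivative of $t\mapsto I_p(u_1+th)$ at $0^+$ is nonnegative, and it can vanish even though $I_p(u_1+th)>I_p(u_1)$ for all $t>0$, because the increase can be purely second order --- for an argmax neighbour with $a\coloneqq u_1(y)-u_1(x)$ orthogonal to $d\coloneqq h(y)-h(x)\neq 0$ one has $\abs{a+td}=\abs{a}+O(t^2)$, so such a neighbour does \emph{not} ``contribute a linear-in-$t$ term''. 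Second, the decisive assertion --- that the sign conditions in the directions $\pm h$, connectedness of $G$ and $h\mid_U=0$ force $h\equiv 0$ --- is exactly the hard part and is not carried out; the two conditions you actually have (right derivative $\geq 0$, left derivative $=0$) hold for every convex function at a boundary point of its minimizing set and carry no contradiction by themselves.

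The paper closes the argument differently, with a global perturbation rather than a directional derivative. Set $W\coloneqq\{x\in V:u_1(x)=u_2(x)\}\supseteq U$ and suppose $W\subsetneq V$. If no ``saturated'' edge touches $W$ --- i.e.\ there is no $x\in V\backslash W$, $y\in W$ with $\sqrt{\omega(x,y)}\abs{w(x)-w(y)}=Sw(x)$ and no $x\in V\backslash W$, $y\in W\backslash U$ with $\sqrt{\omega(x,y)}\abs{w(x)-w(y)}=Sw(y)$ --- then replacing $w$ by $(1-\epsilon)w$ on $V\backslash W$ strictly decreases $I_p$, contradicting minimality. Hence a saturated crossing edge exists, and the equality case of the triangle inequality on that specific edge, combined with $u_1(y)=u_2(y)$ at its endpoint in $W$, forces $u_1(x)=u_2(x)$, contradicting $x\notin W$. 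Some such mechanism for producing a strictly better competitor from the set $\{h\neq 0\}$ is what your sketch is missing. Finally, for $p=1$ your opening step fails ($t\mapsto t$ is not strictly convex), and the proposed substitute does not recover $Su_1(x)=Su_2(x)$, which is precisely what lets you upgrade ``positively parallel'' to ``equal'' and obtain $h(y)=h(x)$; note the paper's own proof is likewise restricted to $p>1$ at this point.
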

\begin{proof}
Assume that $u\neq v\colon V\to \R^m$ with $u=v=g$ on $U$ are minimizers of $I_p$. Set $w\coloneqq \frac{1}{2} (u+v)$. Obviously $S \cdot (x)$ is convex. If there exists some $x\in V\backslash U$ with $S u(x)\neq S v(x)$, then the strict convexity of $\abs{\cdot}^p$ yields
\begin{equation}
\begin{aligned}
I_p(w)=I_p\left(\frac{1}{2} u + \frac{1}{2}v\right)&=\sum_{x\in V\backslash U} \left(S \left(\frac{1}{2} u + \frac{1}{2} v\right)(x)\right)^p\\
&\leq \sum_{x\in V\backslash U} \left(\frac{1}{2} S u (x) + \frac{1}{2} S v (x)\right)^p\\
&< \sum_{x\in V\backslash U} \frac{1}{2} (S u (x))^p + \frac{1}{2} (S v (x))^p\\
&=\frac{1}{2} I_p (u) + \frac{1}{2} I_p(v).
\end{aligned}
\end{equation}
This contradicts the assumption, that $u$ and $v$ are minimizers of $I_p$. \\

Hence we have $S u(x)=S v(x)=S w(x)$ for all $x\in V\backslash U$. Define the set $W\coloneqq \{x\in V:u(x)=v(x)\}$. Since $u\neq v$, we have $W\subsetneq V$.\\

We show that either there exists some $x\in V\backslash W$ and $y\in W$ such that it holds
\begin{equation}\label{eq:ipmin1}
\sqrt{\omega(x,y)}\abs{w(x)-w(y)}=Sw(x)
\end{equation}
or there exists $x\in V\backslash W$ and $y\in W\backslash U$ such that it holds
\begin{equation}\label{eq:ipmin2}
\sqrt{\omega(x,y)}\abs{w(x)-w(y)}=Sw(y).
\end{equation}
Assume that such $x$ and $y$ do not exist. Then there exists for all $x\in V\backslash W$ some $0<\epsilon_x<1$ such that $\sqrt{\omega(x,y)}\abs{(1-\epsilon_x) w(x)-w(y)}<S w(x)$ for all $y\in W$ with $y\sim x$. Further, there exists for all $y\in W\backslash U$ some $0<\epsilon_y<1$ such that $\sqrt{\omega(x,y)}\abs{(1-\epsilon_y) w(x)-w(y)}<S w(y)$ for all $x\in V\backslash W$ with $x\sim y$. Set $\epsilon=\min \left\{ \min_{x\in V\backslash W} \epsilon_x, \min_{y\in W\backslash U} \epsilon_y\right\}$.\\

Consider the extension $w_\epsilon\colon V\to \R^m$ of $g$ defined by
\begin{equation}
w_\epsilon (x) \coloneqq 
\begin{cases}
w(x),&$if $x\in W,\\
(1-\epsilon)w(x),&$if $x\in V\backslash W.
\end{cases}
\end{equation}

Now we have for $x\in V\backslash W$ and $y\in W$ with $y\sim x$ that
\begin{equation}
\sqrt{\omega(x,y)}\abs{w_\epsilon(x)-w_\epsilon(y)}=\sqrt{\omega(x,y)}\abs{(1-\epsilon)w(x)-w(y)}<S w(x).
\end{equation}
Further, we have for $y\in W\backslash U$ and $x\in V\backslash W$ that
\begin{equation}
\sqrt{\omega(x,y)}\abs{w_\epsilon(x)-w_\epsilon(y)}=\sqrt{\omega(x,y)}\abs{(1-\epsilon)w(x)-w(y)}<S w(y).
\end{equation}
For $y\sim x$ with $y\in V\backslash W$ and $x\in V\backslash W$ it holds
\begin{equation}
\sqrt{\omega(x,y)}\abs{w_\epsilon(x)-w_\epsilon(y)}=\sqrt{\omega(x,y)}(1-\epsilon)\abs{w(x)-w(y)}\leq (1-\epsilon) S w(x)<S w(x).
\end{equation}
Hence $S w_\epsilon (x) < S w(x)=S u(x)$ on $V\backslash W$ and $S w_\epsilon (x)\leq S w (x) =S u(x)$ on $W\backslash U$. Thus it holds $I_p (w_\epsilon)<I_p(u)$. This contradicts the assumption that $u$ minimizes $I_p$.\\

Therefore, we find $x$ and $y$ with \eqref{eq:ipmin1} or \eqref{eq:ipmin2}. If $x$ and $y$ fulfill \eqref{eq:ipmin1}, then it holds
\begin{equation}\label{eq:ipmin3}
\begin{aligned}
Sw(x)&=\sqrt{\omega(x,y)}\abs{w(x)-w(y)}\\
&\leq \frac{1}{2}\left(\sqrt{\omega(x,y)}\abs{u(x)-u(y)}+\sqrt{\omega(x,y)}\abs{v(x)-v(y)}\right)\\
&\leq \frac{1}{2} (Su(x)+Sv(x)).
\end{aligned}
\end{equation}
Since $Su(x)=Sv(x)=Sw(x)$ we have equality in \eqref{eq:ipmin3}. Since $w=\frac{1}{2}(u+v)$ this yields that $u(x)-u(y)$ and $v(x)-v(y)$ are linearly dependent. Consequently, we have either $u(x)-v(x)=u(y)-v(y)=0$ or $u(x)-u(y)=-v(x)+v(y)$. In the case we have $w(x)-w(y)=0$ and therefore $Sw(x)=0$. This yields $Su(x)=Sv(x)=0$ and therefore $u(x)-u(y)=v(x)-v(y)=0$. Thus we have $u(x)=u(y)=v(y)=v(x)$. This contradicts that $x\not\in W$.\\

With an analogous proof we can show that if $x$ and $y$ fulfill \eqref{eq:ipmin2} we also have that $x\in W$. Hence we have $W=V$ and $u=v$. This finishes the proof. 
\end{proof}

\begin{theorem}\label{thm:Ip_conv_tight}
Let $g\colon U\to \R^m$ and let $u_p$ be extensions of $g$ minimizing $I_p$ for $p\in \Nat$. Then $(u_p)_p$ converges to the unique tight extension $u$ of $g$ in $\R^{\abs{V}}$ as $p\to \infty$
\end{theorem}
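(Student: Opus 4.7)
The plan is to establish convergence subsequentially: since the tight extension $u$ is unique by Theorem \ref{thm:tight_ex}, it suffices to show that the bounded sequence $(u_p)_p$ has $u$ as its only cluster point in $\R^{m\abs{V}}$. Write $N = \abs{V\backslash U}$ and $\beta_i = l_i(u)$.

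First I would establish the uniform bound. The minimality inequality $I_p(u_p) \leq I_p(u) \leq N \beta_1^p$ gives $l_1(u_p) \leq N^{1/p}\beta_1$, so every $S u_p(x)$ is bounded by $2\beta_1$ for $p$ large. Combining with $u_p\mid_U = g$ and propagating the bound along paths in the connected graph $G$ (using $\omega_{\min} := \min_{(x,y)\in E} \omega(x,y) > 0$) yields $\abs{u_p(x)} \leq C$ uniformly in $p$ and $x$. Hence each subsequence of $(u_p)$ has a further convergent subsequence. Let $u_{p_j} \to u_\infty$; then $u_\infty$ is an extension of $g$, and by Remark \ref{rem:tight_lexmin} it is enough to prove $l(u_\infty) = l(u)$, since uniqueness then gives $u_\infty = u$.

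I would prove $l_k(u_\infty) = \beta_k$ by induction on $k \in \{1,\dots,N\}$. The base case $k=1$ follows from $\beta_1 \leq l_1(u_p) \leq N^{1/p}\beta_1$, the lower bound being the lex-minimality of $u$. For the inductive step, assume $l_i(u_\infty) = \beta_i$ for $i < k$. The lower bound $l_k(u_\infty) \geq \beta_k$ is immediate: $u_\infty$ lies in $K_{k-1} := \{w \in \R^{m\abs{V}} : w\mid_U = g,\, l_i(w) = \beta_i \text{ for } i<k\}$, on which the lex-minimality of $u$ forces $l_k \geq \beta_k$.

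The matching upper bound $l_k(u_\infty) \leq \beta_k$ is the main obstacle. My approach is to rewrite $I_p(u_p) \leq I_p(u)$ as
\begin{equation}
\sum_{i\geq k} l_i(u_p)^p \leq \sum_{i\geq k}\beta_i^p + \sum_{i<k}\bigl(\beta_i^p - l_i(u_p)^p\bigr)
\end{equation}
and argue that the correction sum is negligible against $\beta_k^p$. For $i=1$ the correction is nonpositive, since $l_1(u_p) \geq \beta_1$ by the lex-minimality of $u$. For $i \geq 2$ I would strengthen the inductive hypothesis to a quantitative rate $l_i(u_p) \geq \beta_i - \varepsilon_p^{(i)}$ with $\varepsilon_p^{(i)} \to 0$ fast enough to control $\beta_i^p - l_i(u_p)^p$; the natural source of such a rate is continuity of the constrained-minimum $\varepsilon \mapsto \min\{l_i(w) : l_j(w) \leq \beta_j+\varepsilon \text{ for } j<i\}$ combined with the explicit rate $l_j(u_p) - \beta_j = O(p^{-1})$ inherited from previous induction steps. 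Granting this, the correction is $o(\beta_k^p)$, so $l_k(u_p)^p \leq (N-k+1)\beta_k^p + o(\beta_k^p)$, giving $\limsup_{p\to\infty} l_k(u_p) \leq \beta_k$. The induction then closes, yielding $l(u_\infty) = l(u)$, hence $u_\infty = u$ by uniqueness, so $u_p \to u$ in $\R^{m\abs{V}}$.
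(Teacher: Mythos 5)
Your overall architecture (boundedness, subsequential limits, reduce to showing $l(u_\infty)=l(u)$ via Remark \ref{rem:tight_lexmin}, induct on $k$) is sensible, and the base case and the lower bounds $l_k(u_\infty)\geq\beta_k$ are fine. The genuine gap is exactly where you flag "the main obstacle": the upper bound $l_k(u_\infty)\leq\beta_k$. Your plan is to show the correction sum $\sum_{i<k}\bigl(\beta_i^p-l_i(u_p)^p\bigr)$ is $o(\beta_k^p)$, sourcing a lower bound $l_i(u_p)\geq\beta_i-\varepsilon_p^{(i)}$ from continuity of the constrained minimum together with the rate $l_j(u_p)-\beta_j=O(p^{-1})$. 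This cannot work whenever $\beta_i>\beta_k$ strictly for some $i<k$ (i.e.\ whenever the ordered vector $l(u)$ has a strict drop before position $k$, which is the only interesting case). Even under the most optimistic assumption that the constrained minimum is Lipschitz in $\varepsilon$ — which is itself unjustified; mere continuity gives no modulus — you would get $\beta_i-l_i(u_p)=O(p^{-1})$ and hence $\beta_i^p-l_i(u_p)^p\approx p\cdot p^{-1}\cdot\beta_i^{p-1}=\beta_i^{p-1}$, and $\beta_i^{p-1}/\beta_k^p\to\infty$ when $\beta_i>\beta_k$. To make the correction term negligible against $\beta_k^p$ you would need the deficit $\beta_i-l_i(u_p)$ to be of size $O\bigl((\beta_k/\beta_i)^p\bigr)$, i.e.\ exponentially small, and nothing in your argument produces that. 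So the induction does not close.

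For comparison, the paper avoids this quantitative trap entirely by arguing by contradiction at the level of the \emph{tighter} relation rather than level-by-level on $l(\cdot)$. Assuming a subsequential limit $\tilde u\neq u$, it introduces the thresholds $\tilde K>K$ (the top level at which $S\tilde u$ and $Su$ disagree), the set $Z=U\cup\{x:Su(x)>K\}$ on which it proves $\tilde u=u$ (via tightness of the restrictions to $G\mid_Z$), and then builds the explicit competitor $v_k$ equal to $u_{p_k}$ on $Z$ and to $u$ off $Z$. The key inequalities are $Sv_k\leq Su_{p_k}$ on $Z\setminus U$ and $Sv_k\leq \tilde K-\delta/2$ off $Z$, while some vertex off $Z$ has $Su_{p_k}\geq\tilde K-\delta/4$; the single dominant exponential $(\tilde K-\delta/4)^{p_k}$ then beats $\abs{V\backslash Z}(\tilde K-\delta/2)^{p_k}$, contradicting minimality of $u_{p_k}$. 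If you want to salvage your route, you essentially need to import this idea: isolate the first level of disagreement and compare against a glued competitor, rather than trying to propagate polynomial rates through all higher levels.
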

The proof comes from \cite[Theorem 1.3]{sheffield2012vector}. We added some details.
\begin{proof}
Since $G$ is connected the sequence $(u_p)_p$ is bounded. Therefore, it has a convergent subsequence $(u_{p_k})_k$ converging to some $\tilde{u}\colon V\to \R^m$ with $\tilde{u}=g$ on $U$. Let $u\colon V\to \R^m$ be the tight extension of $g$. Assume  that $\tilde{u}\neq u$. Then we have by Theorem \ref{thm:tight_ex} that $u$ is tighter than $\tilde{u}$. Define
\begin{equation}
\begin{aligned}
\tilde{K}\coloneqq& \max\{ S \tilde{u} (x) : x\in V\backslash U, S u (x) \neq S \tilde{u}(x) \}\\
>&\max\{ S u(x) : x\in V\backslash U, S u(x) \neq S \tilde{u}(x) \}\eqqcolon K
\end{aligned}
\end{equation}
and
\begin{equation}
Z\coloneqq U\cup \{ x\in V\backslash U: Su(x)>K\}
\end{equation}
In particular, we have $Su(x)=S\tilde{u}(x)$ on $Z\backslash U$. Further we define $G\mid_Z\coloneqq (Z,\{(x,y)\in E:x,y\in Z\},\omega\mid_{Z\times Z})$.\\

We show that $u\mid_Z$ is tight on $G\mid_Z$. For $x\in Z$ we have $S u(x)>K$. Thus there exists a $y\sim x$ with $\sqrt{\omega(x,y)}\abs{u(x)-u(y)}>K$. This yields that $S u(y)>K$ and $y\in Z$. Therefore, we have that $S(u\mid_Z)(x)=Su(x)$ for all $x\in Z$. Assume there is a tighter extension $w\colon Z\to \R^m$ of $g$ on $Z$. Then we define $w_\epsilon \colon V\to \R^m$ for $\epsilon>0$ by
\begin{equation}
w_\epsilon (x)=
\begin{cases}
(1-\epsilon)u(x) + \epsilon w(x) &$if $x\in Z,\\
u(x) &$if $ x\not\in Z.
\end{cases}
\end{equation}
Now we have for $x\in Z$ and $y \sim x$ with $y\not\in Z$ that for a sufficient small $\epsilon>0$ it holds
\begin{equation}\label{eq:wepsilonkleinerSv}
\begin{aligned}
\sqrt{\omega(x,y)}\abs{w_{\epsilon}(x)-w_{\epsilon}(y)}\leq &\sqrt{\omega(x,y)}\left((1-\epsilon)\abs{u(x)-u(y)} + \epsilon \abs{w(x)-u(y)}\right)\\
\leq &(1-\epsilon)K+\epsilon\sqrt{\omega(x,y)} \abs{w(x)-u(y)}<S u(x). 
\end{aligned}
\end{equation}
Because $w$ is tighter than $u\mid_Z$ on $G_Z$ we have that $S w_{\epsilon}(x)<S (u\mid_Z)(x)=S u(x)$ for sufficient small $\epsilon$ and there exists some $x_0\in Z$ such that 
\begin{equation}
\max_{x_0 \sim y\in Z} \sqrt{\omega (x_0,y)}\abs{w(x_0)-w(y)}<\max_{x_0 \sim y\in Z} \sqrt{\omega (x_0,y)}\abs{u(x_0)-u(y)}.
\end{equation}
Using \eqref{eq:wepsilonkleinerSv}, this yields $S w_{\epsilon}(x_0) < S u(x_0)$ for sufficient small $\epsilon$. Similar we get that $S w_\epsilon (x)< S u(x)$ for sufficient small $\epsilon$. For $x\not\in Z$ and $y\sim x$ with $y\in Z$ we have for sufficient small $\epsilon$ that
\begin{equation}
\begin{aligned}
\abs{w_\epsilon (x)- w_\epsilon(y)}\leq &(1-\epsilon) \abs{u(x)-u(y)} +\epsilon \abs{u(x)-w(y)}\\\leq &(1-\epsilon) K +\epsilon \abs{u(x)-w(y)}< Su(x_0).
\end{aligned}
\end{equation}
Altogether we get that $l(w_\epsilon)<l(u)$ for sufficient small $\epsilon$. This contradicts that $u$ is tight. Hence $u\mid_Z$ is tight on $G\mid_Z$. In particular, $l(u\mid_Z)$ is lexicographically minimal on $Z$\\

Since $S\tilde{u}=S u$ on $Z$ and because $S(\tilde{u}\mid_Z)(x)\leq S\tilde{u}(x)$ for all $x\in Z$, we have that $l(\tilde{u}\mid_Z)$ is lexicographically minimal. Hence $\tilde{u}\mid_Z$ is also tight on $G\mid_Z$. Due to the fact, that tight extensions are unique, it holds that $\tilde{u}=u$ on $Z$.\\

Define  
\begin{equation}
v_k(x)\coloneqq 
\begin{cases}
u_{p_k}(x) &$if $ x\in Z,\\
u(x) &$if $x\not\in Z.
\end{cases}
\end{equation}
Since $u=\tilde{u}$ on $Z$ and $u_{p_k}\to \tilde{u}$, it holds that $v_k\to u$ as $k\to \infty$ for all $x\in V$. \\

For $x\in Z\backslash U$ there exists $y\in V$ with $y\sim x$ such that
\begin{equation}\label{eq:Ip_conv_tight1}
\sqrt{\omega(x,y)} \abs{u_{p_k}(x)-u_{p_k}(y)}=\sqrt{\omega(x,y)} \abs{v_k(x)-v_k(y)}\to \sqrt{\omega(x,y)} \abs{u(x)-u(y)}>K.
\end{equation}
For $y\sim x$ with $x\in Z\backslash U$ and $y\not\in Z$ it holds
\begin{equation}
\sqrt{\omega(x,y)} \abs{v_k(x)-v_k(y)}\to \sqrt{\omega(x,y)} \abs{u(x)-u(y)} \leq K
\end{equation}
Hence we have $S v_k(x)\leq S u_{p_k}(x)$ for all $x\in Z\backslash U$.\\

Define $\delta=\tilde{K}-K$. Since $u_{p_k}\to \tilde{u}$ it holds for $x\in V\backslash U$ with $S\tilde{u}(x)=\tilde{K}$ that
\begin{equation}
S u_{p_k}(x)\geq \tilde{K}- \frac{\delta}{4}
\end{equation}
for sufficiently large $k$. Due to the definition of $Z$, it holds for $x\in V\backslash Z$ that $S u(x)\leq K$. Since $v_k\to u$ it holds for $x\in V\backslash Z$ that 
\begin{equation}
S v_k(x)<K+\frac{\delta}{2}=\tilde{K}-\frac{\delta}{2}
\end{equation}
for sufficiently large $k$. Using $S u_{p_k}(x)\geq S v_k(x)$ for $x\in Z\backslash U$ this yields for sufficiently large $k$ that
\begin{equation}\label{eq:tight_limit_Ip}
\begin{aligned}
I_{p_k} (u_{p_k}) - I_{p_k} (v_k)=& \sum_{x\in V\backslash Z} \left( (S u_{p_k})^{p_k} - (S v_k)^{p_k}\right)+ \sum_{x\in Z\backslash U} \left( (S u_{p_k})^{p_k} - (S v_k)^{p_k}\right)\\
\geq &  \sum_{x\in V\backslash Z} \left( (S u_{p_k})^{p_k} - (S v_k)^{p_k}\right)\\
= & \sum_{x\in V\backslash Z} (S u_{p_k})^{p_k} - \sum_{x\in V\backslash Z}(S v_k)^{p_k}\\
\geq &  \sum_{\substack{x\in V\backslash Z\\ S u(x)\neq S\tilde{u}(x)=\tilde{K}}} (S u_{p_k})^{p_k} - \sum_{x\in V\backslash Z}(S v_k)^{p_k}\\
\geq & \sum_{\substack{x\in V\backslash Z\\ S u(x)\neq S\tilde{u}(x)=\tilde{K}}} (S u_{p_k})^{p_k} - \abs{V\backslash Z}\left(\tilde{K}-\frac{\delta}{2}\right)^{p_k}.
\end{aligned}
\end{equation}

Due to the definition of $\tilde{K}$ there exists at least one $x\in V\backslash Z$ such that $S u(x)\neq S \tilde{u} (x)=\tilde{K}$. Thus we have by \eqref{eq:tight_limit_Ip} that for sufficiently large $k$ it holds
\begin{equation}\label{eq:tight_limit_Ip2}
\begin{aligned}
I_{p_k} (u_{p_k}) - I_{p_k} (v_k)\geq & \sum_{\substack{x\in V\backslash Z\\ S u(x)\neq S\tilde{u}(x)=\tilde{K}}} (S u_{p_k})^{p_k} - \abs{V\backslash Z}\left(\tilde{K}-\frac{\delta}{2}\right)^{p_k}\\
\geq & \left(\tilde{K}-\frac{\delta}{4}\right)^{p_k}- \abs{V\backslash Z}\left(\tilde{K}-\frac{\delta}{2}\right)^{p_k}\\
=& \left(\tilde{K}-\frac{\delta}{2}\right)^{p_k}\left(\left(\frac{\tilde{K}-\frac{\delta}{4}}{\tilde{K}-\frac{\delta}{2}}\right)^{p_k}-\abs{V\backslash Z}\right)
\end{aligned}
\end{equation}

Since $\frac{\tilde{K}-\frac{\delta}{4}}{\tilde{K}-\frac{\delta}{2}}>1$ we have $\left(\frac{\tilde{K}-\frac{\delta}{4}}{\tilde{K}-\frac{\delta}{2}}\right)^{p_k}-\abs{V\backslash Z}>0$ for sufficiently large $k$. Then \eqref{eq:tight_limit_Ip2} yields $I_{p_k}(u_{p_k})-I_{p_k}(v_k)>0$ for sufficiently large $k$. This is a contradiction to the assumption, that $u_{p_k}$ minimizes $I_{p_k}$ for all $k$.
\end{proof}

\begin{remark}\label{rem:logexp}
With an analogous proof we can show that the unique tight extension $u\colon V\to \R^m$ of $g\colon U\to \R^m$ is the limit of the minimizers $u_p\colon V\to \R^m$ of the functionals
\begin{equation}
\tilde{I_p} (u)=\frac{1}{p}\log\left(\sum_{x\in V \backslash U} e^{p S u (x)}\right)=\frac{1}{p}\log\exp \left(p (Su(x))_{x\in V\backslash U}\right)
\end{equation}
The asymptotic function $(\log\exp)^\infty$ of the $\log\exp$-function is given by the $\vecmax$-function. This yields for $x\in \R^{\abs{V\backslash U}}$ that
\begin{equation}
\lim\limits_{p\to\infty} \frac{1}{p} \log\exp (p x)=\vecmax (x).
\end{equation} 
In particular, we get that
\begin{equation}
\lim\limits_{p\to\infty} \tilde{I_p} (u)=\vecmax (S u(x))_{x\in V\backslash U}.
\end{equation}
\end{remark}

As mentioned in \cite{sheffield2012vector}, tightness is stronger than being discrete $\infty$-harmonic. Since there is no proof given, we did it ourself.

\begin{theorem}\label{thm:tight_implies_disc_infharm}
Let $g\colon U\to \R^m$ and let $v\colon V\to\R^m$ be its tight extension. Then $v$ is discrete $\infty$-harmonic.
\end{theorem}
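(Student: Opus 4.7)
The plan is to prove the contrapositive by a one-vertex perturbation: if $v$ is not discrete $\infty$-harmonic, I would construct a strictly tighter extension $v'$ differing from $v$ at a single vertex, contradicting tightness of $v$. By definition, failure of the discrete $\infty$-harmonic property at some $x_0 \in V\backslash U$ yields an $a \in \R^m$ with
\[
\max_{y\sim x_0} \sqrt{\omega(x_0,y)}\,\abs{v(y)-a} < S v(x_0).
\]
Define $v'\colon V\to\R^m$ by $v'(x)=v(x)$ for $x\neq x_0$ and $v'(x_0)=a$; then $v'=g$ on $U$ and $S v'(x_0)<S v(x_0)$, so in particular $S v(x_0)>0$.

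To check that $v'$ is tighter than $v$, I would bound the two sets in the definition of tightness. Since $x_0$ lies in $\{x\in V\backslash U:S v'(x)<S v(x)\}$, the left-hand maximum of the tightness inequality is at least $S v(x_0)$. For the right-hand side, $v$ and $v'$ coincide outside $x_0$, so every $x\in V\backslash U$ with $x\neq x_0$ and $x\not\sim x_0$ satisfies $S v'(x)=S v(x)$, while $S v'(x_0)<S v(x_0)$. Only a neighbour $x\neq x_0$ of $x_0$ can therefore satisfy $S v(x)<S v'(x)$, and in that case the increase must come from the sole modified term, so $S v'(x)=\sqrt{\omega(x,x_0)}\,\abs{a-v(x)}$. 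By the defining property of $a$,
\[
\sqrt{\omega(x,x_0)}\,\abs{a-v(x)} \le \max_{y\sim x_0}\sqrt{\omega(x_0,y)}\,\abs{v(y)-a} < S v(x_0).
\]
Combining these bounds gives
\[
\max\{S v'(x):x\in V\backslash U,\;S v(x)<S v'(x)\}<S v(x_0)\le \max\{S v(x):x\in V\backslash U,\;S v'(x)<S v(x)\},
\]
with the convention $\max\emptyset=0$ being harmless because $S v(x_0)>0$. Thus $v'$ is tighter than $v$, contradicting the tightness of $v$.

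I do not anticipate a serious obstacle; the argument is essentially a single-vertex replacement. The only careful bookkeeping is the case split $x=x_0$, $x\sim x_0$ with $x\neq x_0$, and $x\not\sim x_0$, which ensures both that $x_0$ witnesses the left-hand maximum at value $S v(x_0)$ and that no element of the right-hand set can reach that value.
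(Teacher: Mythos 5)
Your proposal is correct and follows essentially the same route as the paper's proof: replace $v(x_0)$ by a strictly better point $a$, observe that only neighbours of $x_0$ can see their local Lipschitz constant increase, and bound any such increase by $\max_{y\sim x_0}\sqrt{\omega(x_0,y)}\abs{v(y)-a}=Sv'(x_0)<Sv(x_0)$. The paper phrases the final bound as $Sv'(x)\leq Sv'(x_0)$ before comparing with $Sv(x_0)$, but this is the same estimate.
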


\begin{proof}
Assume $v$ is not discrete $\infty$-harmonic. Then there exists some $x_0\in V\backslash U$ such that
\begin{equation}
v(x_0)\not\in \argmin_{a\in \R^m} \left\{ \max_{y\sim x_0} \left(\sqrt{\omega(x_0,y)}\abs{v(y)-a}\right)\right\}.
\end{equation}
We aim to construct a tighter function $v'\colon V\to \R^m$. Let 
\begin{equation}
a_0\in \argmin_{a\in \R^m} \left\{ \max_{y\sim x_0} \left(\sqrt{\omega(x_0,y)}\abs{v(y)-a}\right)\right\}.
\end{equation}
Define $v'$ by
\begin{equation}
v'(x)=
\begin{cases}
v(x)&$if $x\neq x_0\\
a_0&$if $x=x_0
\end{cases}.
\end{equation}
Then it holds for $x\in V\backslash \{ x_0\}$ with $x \not\sim x_0$ that $S v(x)=S v'(x)$. Further we have $S v(x_0)>S v'(x_0)$ by definition. 
Suppose we have $S v(x)< S v'(x)$ for $x_0\neq x\in V$ with $x \sim x_0$. Since it holds $\sqrt{\omega(x,y)} \abs{v(y)-v(x)}=\sqrt{\omega(x,y)} \abs{v'(y)-v'(x)}$ for $y\sim x$ with $y\neq x_0$ we have
\begin{equation}
x_0\in \argmax_{y\sim x}\left\{ \sqrt{\omega(x,y)}\abs{v'(y)-v'(x)}\right\}.
\end{equation}
Thus we can conclude $S v'(x)\leq S v'(x_0)$.\\
Now we get
\begin{equation}
\begin{aligned}
\max \left\{ S v(x): x\in V \text{ and } S v(x)> S v'(x) \right\}\geq S v(x_0) > S v'(x_0) \\
\geq \max \left\{ S v'(x): x\in V \text{ and } S v'(x)> S v(x) \right\}.
\end{aligned}
\end{equation}
This yields that $v'$ is tighter than $v$ and contradicts that $v$ is tight.
\end{proof}

\begin{remark}\label{rem:disc_inf_gleich_tight_reellwertig}
For $m=1$ Theorem \ref{thm:disc_infhar_gleich_dirichlet} and Theorem \ref{thm:dirichlet_unique} ensure that discrete $\infty$-harmonic extensions of a function $g\colon U\to \R$ are unique. Theorem \ref{thm:tight_implies_disc_infharm} implies that the unique tight extension of $g$ is this unique discrete $\infty$-harmonic extensions. Therefore, the definitions of tight and discrete $\infty$-harmonic coincide for $m=1$.\\
In particular, Algorithm \ref{alg:disc_infty_harm} converges to the tight extension.
\end{remark}

\subsection{A minimization algorithm for $I_s$}

Let $2\leq s<\infty$ $G=(V,E,\omega)$, $U\subset V$ and $g\colon U\to \R^m$ be defined as above. Since the minimizers of the functionals $I_s$ converge to the tight extension of $g$ we aim to minimize $I_s$. This section leads to a new minimization algorithm for $I_s$.

\begin{definition}
For $1\leq p <\infty$ and $x=(x_1,...,x_n)\in \R^{mn}$ with $x_i\in \R^m$ the \emph{$2$-$p$-Norm} is defined by
\begin{equation}
\Norm{x}{2,p}\coloneqq \left(\sum_{i=1}^n \abs{x_i}^p \right)^{\frac{1}{p}}.
\end{equation}
The \emph{$2$-$\infty$-Norm} is defined by
\begin{equation}
\Norm{x}{2,\infty}\coloneqq \max_{i=1,...,n} \abs{x_i}.
\end{equation}
\end{definition}

Let $V\backslash U=\{v_1,...,v_N\}$. Then we can rewrite an extension $u\colon V\to \R^m$ with $u=g$ on $U$ by $x\coloneqq (u(v_1),..., u(v_N))^T$. Using this representation of $u$ we rewrite the functional $I_p$. To do this we define for $i=1,...,N$ the neighbors $\{u_1,...,u_k\}\coloneqq \{v\in V:v\sim v_i\}$. Then we define the matrix $B_i\in \R^{k\times N}$, where the $j$-th row is defined by
\begin{equation}
\begin{cases}
\sqrt{\omega(v_i,u_j)}(e_i^T-e_l^T), &$if $v_l=u_j\in V\backslash U,\\
\sqrt{\omega(v_i,u_j)}e_i^T,&$if $u_j\in U
\end{cases}
\end{equation}
Then we define $A_i=Id_m \otimes B_i$, where $Id_m$ is the $m\times m$ identity matrix and $\otimes$ the tensor product. Further we define for $j=1,...,k$ the vector $c_j\in \R^m$ by
\begin{equation}
\begin{cases}
0, &$if $v_l=u_j\in V\backslash U,\\
\sqrt{\omega(v_i,u_j)}g(u_j),&$if $u_j\in U.
\end{cases}
\end{equation}
Then we define $b_i=(c_1,...,c_k)^T\in \R^{km}$.\\

Now we can rewrite $I_p$ for $x=(x_1,...,x_N)$ by
\begin{equation}
I_s(x)=\sum_{i=1}^N \Norm{A_ix_i+b_i}{2,\infty}^s.
\end{equation}
Instead of minimizing $I_s$ we minimize for $v=(v_1,...,v_N)$ the equivalent formulation
\begin{equation}
E_s(u,v)\coloneqq\sum_{i=1}^N \Norm{v_i}{2,\infty}^s \text{ subject to } A_i u - b_i - v_i=0\text{, } i=1,...,N.
\end{equation}
We rewrite the constraints by $A u-b-v=0$, where $A=\left(\begin{matrix}A_1\\ ... \\A_N\end{matrix}\right)$ and $b=(b_1,...,b_N)^T$.Then we can apply the alternative direction method of multipliers (ADMM) reads as the following:

\begin{algorithm}[H]
\caption{ADMM for $E_s$}
\label{alg:Es_ADMM}
\begin{algorithmic}
\State Given: $v^{(0)}$, $p^{(0)}$, $A_i$ and $b_i$ for $i=1,...,N$, $\gamma>0$.
\For {$j=0,1,2,...$}
	\State Step 1: $u^{(j+1)}\coloneqq\argmin_{u} \abs{A u - v^{(j)}-b+p^{(r)}}^2$.
	\State Step 2: $v^{(j+1)}\coloneqq\argmin_v \sum_{i=1}^N\Norm{v_i}{2,\infty}^s+\frac{\gamma}{2}\abs{A u^{(j+1)}-v-b+p^{(j)}}^2$.
	\State Step 3: $p^{(j+1)}\coloneqq A u^{j+1}-v^{(j+1)}-b+p^{(j)}$.
\EndFor
\end{algorithmic}
\end{algorithm}

\begin{remark} We consider Algorithm \ref{alg:Es_ADMM}.
\begin{enumerate}[(i)]
\item By \cite[Theorem 6.1]{BSS_prox_op} the $(u^{(j)}, v^{(j)})_j$ generated by Algorithm \ref{alg:Es_ADMM} converges to a minimizer of $E_s$. Therefore, $(u^{(j)})_j$ converges to a minimizer of $I_s$.
\item Since $\abs{A u^{(j+1)}-v-b+p^{(j)}}^2=\sum_{i=1}^N \abs{A_i u_i^{(j+1)}-v_i-b_i+p_i^{(j)}}^2$, we can split Step 2 and replace it by \begin{equation}
v_i^{(j+1)}=\argmin_{v_i} \Norm{v_i}{2,\infty}^s+\frac{\gamma}{2}\abs{A_i u_i^{(j+1)}-v_i-b_i+p_i^{(j)}}^2=\prox_{\frac{1}{\gamma}\psi}(A_i u^{(j+1)} - b_i +p^{(j)}),
\end{equation}
where $\psi(x)=\Norm{x}{2,\infty}^s$.
\item Step 1 and Step 3 can be computed efficiently. It remains to find an efficiently way to compute $\prox_{\frac{1}{\gamma}\psi}$.
\end{enumerate}
\end{remark}

\subsubsection*{Computation of $\prox_{\lambda \psi}$}

\begin{notation}
Let $x=(x_1,...,x_n)\in \R^{m n}$. We use the notations
\begin{equation}
\begin{aligned}
&I_0(x)\coloneqq \{i\in\{1,...,n\}:\abs{x_i}\neq 0\}\\
\text{and }&I_{max}\coloneqq \{i\in\{1,...,n\}:\abs{x_i}=\Norm{x}{2,\infty}\}.
\end{aligned}
\end{equation}
\end{notation}

\begin{lemma}
For $p\in [1,\infty]$he dual norm of $\Norm{\cdot}{2,p}$is given by $\Norm{\cdot}{2,q}$ with $\frac{1}{p}+\frac{1}{q}=1$.
\end{lemma}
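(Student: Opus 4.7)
The plan is to apply the classical duality between $\ell^p$-norms in a component-wise fashion, using that the Euclidean norm $\abs{\cdot}$ on $\R^m$ is self-dual. Write $\Norm{x}{2,p}^*$ for the dual norm of $\Norm{\cdot}{2,p}$; I need to show it equals $\Norm{\cdot}{2,q}$.

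First I would establish the inequality $\Norm{x}{2,p}^* \leq \Norm{x}{2,q}$. For arbitrary $x=(x_1,\dots,x_n)$ and $y=(y_1,\dots,y_n)$ in $\R^{mn}$ with $x_i,y_i\in\R^m$ I would estimate
\begin{equation}
\inner{x}{y}=\sum_{i=1}^n \inner{x_i}{y_i} \leq \sum_{i=1}^n \abs{x_i}\abs{y_i}\leq \Norm{(\abs{x_i})_i}{q}\Norm{(\abs{y_i})_i}{p}=\Norm{x}{2,q}\Norm{y}{2,p},
\end{equation}
using Cauchy--Schwarz in $\R^m$ for the first inequality and the classical Hölder inequality in $\R^n$ for the second. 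Taking the supremum over $y$ with $\Norm{y}{2,p}\leq 1$ gives the bound. The cases $p\in\{1,\infty\}$ are included in this chain by the usual convention.

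For the reverse inequality I would exhibit $y$ achieving equality. Assume first $1<p<\infty$ and $x\neq 0$. Define $y_i\coloneqq \Norm{x}{2,q}^{1-q}\abs{x_i}^{q-2}x_i$ on $I_0(x)$ and $y_i\coloneqq 0$ otherwise. Using $p(q-1)=q$, a direct computation shows
\begin{equation}
\Norm{y}{2,p}^p=\Norm{x}{2,q}^{p(1-q)}\sum_{i\in I_0(x)}\abs{x_i}^{p(q-1)}=\Norm{x}{2,q}^{-q}\Norm{x}{2,q}^q=1,
\end{equation}
and
\begin{equation}
\inner{x}{y}=\Norm{x}{2,q}^{1-q}\sum_{i\in I_0(x)}\abs{x_i}^{q}=\Norm{x}{2,q}^{1-q}\Norm{x}{2,q}^q=\Norm{x}{2,q}.
\end{equation}
For $p=1$, $q=\infty$, pick any $i_0\in I_{\max}(x)$ and set $y_{i_0}=x_{i_0}/\abs{x_{i_0}}$, $y_i=0$ otherwise. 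For $p=\infty$, $q=1$, set $y_i=x_i/\abs{x_i}$ on $I_0(x)$ and $y_i=0$ otherwise. Both choices give $\Norm{y}{2,p}=1$ and $\inner{x}{y}=\Norm{x}{2,q}$. The case $x=0$ is trivial.

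The main obstacle is minor bookkeeping for the boundary exponents $p=1$ and $p=\infty$, where the extremal $y$ must be chosen differently than in the interior case; everything else is a direct application of Hölder's inequality, so no substantial new ideas are required.
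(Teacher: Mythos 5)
Your proof is correct and the first half coincides with the paper's argument: the bound $\inner{x}{y}\leq\Norm{x}{2,q}\Norm{y}{2,p}$ via Cauchy--Schwarz in each $\R^m$-block followed by the classical H\"older inequality on the resulting scalar sequences. Where you diverge is the sharpness part. The paper simply takes $x=\frac{y}{\Norm{y}{2,q}}$ and asserts equality in the chain; that choice in fact attains equality only when $p=q=2$ (for general $p$ it need not even satisfy $\Norm{x}{2,p}\leq 1$). Your construction $y_i=\Norm{x}{2,q}^{1-q}\abs{x_i}^{q-2}x_i$, together with the separate treatment of the endpoint exponents $p\in\{1,\infty\}$, is the standard and correct way to exhibit the extremal element, and your exponent bookkeeping ($p(q-1)=q$, $p(1-q)=-q$) checks out. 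So your route is slightly longer but actually repairs a gap in the paper's own equality argument; nothing further is needed.
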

\begin{proof}
The dual norm $\Norm{\cdot}{*}$ of $\Norm{\cdot}{2,p}$ is given by
\begin{equation}
\Norm{y}{*}=\sup_{\Norm{x}{2,p}\leq 1} \inner{y}{x}.
\end{equation}
Let $x=(x_1,...,x_n),y=(y_1,...,y_n)\in \R^{m n}$. Then we have by the Cauchy Schwarz inequality and Hölders inequality that
\begin{equation}\label{eq:csu_hoelder}
\inner{y}{x}=\sum_{i=1}^n \inner{y_i}{x_i}\leq \sum_{i=1}^n \abs{y_i} \abs{x_i} \leq \Norm{y}{2,p}\Norm{x}{2,q}.
\end{equation}
Hence we have $\Norm{y}{*}\leq \Norm{y}{2,q}$. For $x=\frac{y}{\Norm{y}{2,q}}$ we have equality in \eqref{eq:csu_hoelder}. Thus we have $\Norm{y}{*}\geq \Norm{y}{2,q}$. Since this holds for arbitrary $y\in \R^{m,n}$ we have $\Norm{\cdot}{*}=\Norm{\cdot}{2,q}$.
\end{proof}

\begin{lemma}\label{lem:subdiff_21}
Let $x=(x_1,...,x_n)\in \R^{m n}$. Then the subdifferential of the $2$-$1$-norm at $x$ is given by
\begin{equation}
\partial \Norm{\cdot}{2,1}(x)=
\begin{cases}
\left(\frac{x_i}{\abs{x_i}}\right)_{i=1}^n,&$if $x\neq 0,\\
B_{\Norm{\cdot}{2,\infty}}(0,1),&$if $x=0,
\end{cases}
\end{equation}
where we define
\begin{equation}
\frac{x_i}{\abs{x_i}}\coloneqq B_{\abs{\cdot}}(0,1) \quad \text{if }\abs{x_i}=0
\end{equation}
\end{lemma}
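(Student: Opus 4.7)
The plan is to apply Theorem~\ref{thm:subdiff_norm} using the fact, just established in the preceding lemma, that the dual norm of $\Norm{\cdot}{2,1}$ is $\Norm{\cdot}{2,\infty}$. The case $x=0$ is then immediate: Theorem~\ref{thm:subdiff_norm} gives $\partial \Norm{\cdot}{2,1}(0) = B_{\Norm{\cdot}{2,\infty}}(0,1)$, matching the claimed formula.

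For $x \neq 0$, Theorem~\ref{thm:subdiff_norm} characterizes the subdifferential as the set of $p = (p_1,\dots,p_n) \in \R^{mn}$ with $\Norm{p}{2,\infty} = 1$ and $\inner{p}{x} = \Norm{x}{2,1}$. The core of the proof will be to rewrite this set as the one in the statement. The "$\supseteq$" inclusion is a direct computation: given any $p$ with $p_i = x_i/\abs{x_i}$ whenever $\abs{x_i} \neq 0$ and $p_i \in B_{\abs{\cdot}}(0,1)$ otherwise, one checks that $\abs{p_i} = 1$ on the (nonempty, since $x\neq 0$) index set $I_0(x)$ so that $\Norm{p}{2,\infty} = 1$, and that $\inner{p_i}{x_i} = \abs{x_i}$ for every $i$, yielding $\inner{p}{x} = \Norm{x}{2,1}$.

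For the "$\subseteq$" inclusion, I would use the componentwise Cauchy--Schwarz together with $\abs{p_i} \le \Norm{p}{2,\infty} = 1$ to obtain
\begin{equation}
\sum_{i=1}^n \inner{p_i}{x_i} \le \sum_{i=1}^n \abs{p_i}\,\abs{x_i} \le \sum_{i=1}^n \abs{x_i} = \Norm{x}{2,1}.
\end{equation}
The hypothesis $\inner{p}{x} = \Norm{x}{2,1}$ forces both inequalities to be equalities term by term. On indices $i \in I_0(x)$ (i.e.\ $\abs{x_i} \neq 0$), Cauchy--Schwarz equality forces $p_i = \alpha_i x_i$ for some $\alpha_i \ge 0$, and then $\abs{p_i} = 1$ pins down $\alpha_i = 1/\abs{x_i}$, i.e.\ $p_i = x_i/\abs{x_i}$. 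On indices with $\abs{x_i} = 0$, the conditions $\inner{p_i}{x_i} = \abs{x_i}$ and $\abs{p_i}\,\abs{x_i} = \abs{x_i}$ are automatic, so the only remaining constraint is $\abs{p_i} \le 1$, matching the convention $p_i \in B_{\abs{\cdot}}(0,1)$.

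The argument is essentially bookkeeping; the only mild subtlety is making sure the global normalization $\Norm{p}{2,\infty} = 1$ is consistent with allowing arbitrary $p_i \in B_{\abs{\cdot}}(0,1)$ on the zero components of $x$, which works precisely because $x \neq 0$ guarantees the existence of at least one index $i$ with $\abs{p_i} = 1$.
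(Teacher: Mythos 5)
Your proposal is correct and follows essentially the same route as the paper: both reduce to Theorem~\ref{thm:subdiff_norm}, handle $x=0$ immediately, and for $x\neq 0$ prove the two inclusions via a direct computation for ``$\supseteq$'' and the equality case of the componentwise Cauchy--Schwarz bound $\inner{p_i}{x_i}\leq\abs{p_i}\abs{x_i}\leq\abs{x_i}$ for ``$\subseteq$''. Your explicit remark that $x\neq 0$ guarantees an index with $\abs{p_i}=1$, so that the free components $p_i\in B_{\abs{\cdot}}(0,1)$ do not spoil $\Norm{p}{2,\infty}=1$, is a slightly more careful treatment of a point the paper passes over quickly.
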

\begin{proof}
Due to Theorem \ref{thm:subdiff_norm} it is sufficient to show that for $x\neq 0$ holds
\begin{equation}\label{eq:subdiff_211}
\{p:\Norm{p}{2,\infty}=1, \inner{p}{x}=\Norm{x}{2,1}\}=\left(\frac{x_i}{\abs{x_i}}\right)_{i=1}^n.
\end{equation}
Since $x\neq 0$ we have that $y=(y_1,...,y_n)\in \left(\frac{x_i}{\abs{x_i}}\right)_{i=1}^n$ fulfills $\Norm{y}{2,\infty}=1$ and 
\begin{equation}
\inner{y}{x}=\sum_{i=1}^n\inner{y_i}{x_i}=\sum_{i=1}^n\frac{1}{\abs{x_i}}\inner{x_i}{x_i}=\sum_{i=1}^n \abs{x_i}=\Norm{x}{2,1}.
\end{equation}
This shows the inclusion ``$\supseteq$'' in \eqref{eq:subdiff_211}. Vice versa we have that for $q=(q_1,...,q_n)\in \{p:\Norm{p}{2,\infty}=1, \inner{p}{x}=\Norm{x}{2,1}\}$ it holds $q_i\in B_{\abs{\cdot}}(0,1)$ and
\begin{equation}
\Norm{x}{2,1}=\inner{q}{x}=\sum_{i=1}^n \inner{q_i}{x_i}\leq \sum_{i=1}^n \abs{q_i}\abs{x_i}\leq \sum_{i=1}^n \Norm{q_i}{2,\infty}\abs{x_i}=\Norm{x}{2,1}.
\end{equation}
Thus we have $\inner{q_i}{x_i}=\abs{q_i}\abs{x_i}$. This yields for $i\in\{1,...,n\}$ with $x_i \neq 0$ that $q_i=\alpha_i \frac{x_i}{\abs{x_i}}$ for some $\alpha_i\geq 0$. Further we have that for $i\in\{1,...,n\}$ with $x_i \neq 0$ that $\abs{q_i}=\Norm{q}{2,\infty}=1$ and therefore $\alpha_i=1$. This yields that $q\in \left(\frac{x_i}{\abs{x_i}}\right)_{i=1}^n$ and ``$\subseteq$'' in \eqref{eq:subdiff_211}.
\end{proof}

\begin{lemma}\label{lem:subdiff_2infty}
Let $x=(x_1,...,x_N)\in \R^{m n}$. Then the subdifferential of the $2$-$\infty$-norm at $x$ is given by
\begin{equation}
\partial\Norm{\cdot}{2,\infty}(x)=
\begin{cases}
\left\{\left(\delta_{i,I_{max}} \alpha_i \frac{x_i}{\abs{x_i}}\right)_{i=1}^n:\sum_{i\in I_{max}} \alpha_i=1, \alpha_i\geq0\right\},&$if $x\neq 0,\\
B_{\Norm{\cdot}{2,1}}(0,1),&$if $x=0,
\end{cases}
\end{equation}
where $\delta_{i,I_{max}}=1$ if $i\in I_{max}$ and $\delta_{i,I_{max}}=0$ if $i \not\in I_{max}$. Note that $\delta_{i,I_{max}}=0$ if $\abs{x_i}=0$.
\end{lemma}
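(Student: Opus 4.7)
The plan is to apply Theorem \ref{thm:subdiff_norm} with $\Norm{\cdot}{} = \Norm{\cdot}{2,\infty}$, whose dual norm (by the previous lemma) is $\Norm{\cdot}{2,1}$. For $x=0$ the statement is then immediate, so the only work is to identify the set
\begin{equation}
S(x) \coloneqq \{p\in\R^{mn} : \Norm{p}{2,1}=1,\ \inner{p}{x}=\Norm{x}{2,\infty}\}
\end{equation}
for $x\neq 0$ with the explicit set on the right-hand side.

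The argument proceeds by analyzing when a chain of two inequalities becomes an equality. For $p=(p_1,\dots,p_n)\in S(x)$, first apply Cauchy-Schwarz componentwise and then the weighted $\ell^1$--$\ell^\infty$ estimate:
\begin{equation}
\Norm{x}{2,\infty} = \inner{p}{x} = \sum_{i=1}^n \inner{p_i}{x_i} \leq \sum_{i=1}^n \abs{p_i}\abs{x_i} \leq \Norm{p}{2,1}\Norm{x}{2,\infty} = \Norm{x}{2,\infty}.
\end{equation}
Forcing equality in the first inequality at each index $i$ with $x_i\neq 0$ yields $p_i = \alpha_i \frac{x_i}{\abs{x_i}}$ for some $\alpha_i \geq 0$ (and for $x_i=0$ one necessarily has $p_i=0$, as will follow from the next step combined with the $2$-$1$-norm constraint). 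Equality in the second inequality, written as $\sum_i \abs{p_i}(\Norm{x}{2,\infty}-\abs{x_i}) = 0$, forces $\abs{p_i}=0$ whenever $\abs{x_i}<\Norm{x}{2,\infty}$, i.e.\ whenever $i\notin I_{max}$. Combined with $\Norm{p}{2,1}=1$, this gives $\sum_{i\in I_{max}}\alpha_i = 1$, proving the inclusion ``$\subseteq$''.

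For the reverse inclusion, I would simply verify that any $p$ of the prescribed form lies in $S(x)$: a direct computation gives $\Norm{p}{2,1} = \sum_{i\in I_{max}}\alpha_i = 1$ and
\begin{equation}
\inner{p}{x} = \sum_{i\in I_{max}} \alpha_i \inner{\tfrac{x_i}{\abs{x_i}}}{x_i} = \sum_{i\in I_{max}} \alpha_i \abs{x_i} = \Norm{x}{2,\infty}\sum_{i\in I_{max}}\alpha_i = \Norm{x}{2,\infty}.
\end{equation}
There is no real obstacle; the proof is entirely parallel in structure to that of Lemma \ref{lem:subdiff_21}, with the roles of the coordinate norms $\ell^1$ and $\ell^\infty$ interchanged. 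The only point that needs a moment of care is the bookkeeping for indices $i$ with $x_i=0$: since $x\neq 0$ implies $\Norm{x}{2,\infty}>0$, any such index automatically fails to lie in $I_{max}$, so the factor $\delta_{i,I_{max}}$ annihilates the undefined quotient $\tfrac{x_i}{\abs{x_i}}$, matching the convention in the statement.
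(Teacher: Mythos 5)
Your proposal is correct and follows essentially the same route as the paper's proof: reduce via Theorem \ref{thm:subdiff_norm} to identifying the set $\{p:\Norm{p}{2,1}=1,\ \inner{p}{x}=\Norm{x}{2,\infty}\}$, verify the easy inclusion by direct computation, and obtain the reverse inclusion by forcing equality in the chain $\inner{p}{x}\leq\sum_i\abs{p_i}\abs{x_i}\leq\Norm{p}{2,1}\Norm{x}{2,\infty}$. Your explicit handling of the indices with $x_i=0$ is a small refinement over the paper's write-up but does not change the argument.
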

\begin{proof}
Due to Theorem \ref{thm:subdiff_norm} it is sufficient to show that for $x\neq 0$ holds
\begin{equation}\label{eq:subdiff_2infty1}
\{p:\Norm{p}{2,1}=1, \inner{p}{x}=\Norm{x}{2,\infty}\}=\left\{\left(\delta_{i,I_{max}} \alpha_i \frac{x_i}{\abs{x_i}}\right)_{i=1}^n:\sum_{i\in I_{max}} \alpha_i=1, \alpha_i\geq0\right\}.
\end{equation}
Let $y=(y_1,...,y_n)\in\left\{\left(\delta_{i,I_{max}} \alpha_i \frac{x_i}{\abs{x_i}}\right)_{i=1}^n:\sum_{i\in I_{max}} \alpha_i=1, \alpha_i\geq0\right\}$. Then it holds 
\begin{equation}
\Norm{y}{2,1}=\sum_{i\in I_{max}} \alpha_i \abs{\frac{x_i}{\abs{x_i}}}=\sum_{i\in I_{max}} \alpha_i=1.
\end{equation}
Further it holds
\begin{equation}
\inner{y}{x}=\sum_{i=1}^n \inner{y_i}{x_i}=\sum_{i\in I_{max}} \alpha_i \inner{\frac{x_i}{\abs{x_i}}}{x_i}=\sum_{i\in I_{max}} \alpha_i \Norm{x}{2,\infty}.
\end{equation}
This yields ``$\supseteq$'' in \eqref{eq:subdiff_2infty1}. Vice versa we have that for $q=(q_1,...,q_n)\in\{p:\Norm{p}{2,1}=1, \inner{p}{x}=\Norm{x}{2,\infty}\}$ that
\begin{equation}
\Norm{x}{2,\infty}=\inner{q}{x}=\sum_{i=1}^n\inner{q_i}{x_i}\leq \sum_{i=1}^n\abs{q_i}\abs{x_i}\leq \sum_{i=1}^n\abs{q_i}\Norm{x}{2,\infty}=\Norm{q}{2,1}\Norm{x}{2,\infty}=\Norm{x}{2,\infty}.
\end{equation}
Therefore, we have $q_i=\alpha_i \frac{x_i}{\abs{x_i}}$ for some $\alpha_i\geq 0$. Since $\sum_{i=1}^n\abs{q_i}\abs{x_i}\leq \sum_{i=1}^n\abs{q_i}\Norm{x}{2,\infty}$ we have $\alpha_i=0$ for $i\not\in I_{max}$. Hence we have $q\in \left\{\left(\delta_{i,I_{max}} \alpha_i \frac{x_i}{\abs{x_i}}\right)_{i=1}^n:\sum_{i\in I_{max}} \alpha_i=1, \alpha_i\geq0\right\}$ and ``$\subseteq$'' in \eqref{eq:subdiff_2infty1}.
\end{proof}

Now we compute $\prox_{\lambda \psi}(x)$. Let $\lambda>0$. Due to Theorem \ref{thm:moreau} we have
\begin{equation}
\prox_{\lambda \psi}(x)=x-\prox_{\lambda \psi^* (\lambda^{-1} \cdot)}(x).
\end{equation}
Hence it is enough to compute $\prox_{\lambda \psi^* (\lambda^{-1} \cdot)}$. We have
\begin{equation}\label{eq:psi_stern}
\psi^*(p)=\sup_x\{\inner{p}{x}-\psi(x)\}=-\inf_x\{\Norm{x}{2,\infty}^s-\inner{p}{x}\}.
\end{equation}
It is easy to show that $x\mapsto \Norm{x}{2,\infty}^s-\inner{p}{x}$ is continuous and coercive. Hence the infimum in \eqref{eq:psi_stern} is a minimum. Now $x$ is a minimizer of $\Norm{x}{2,\infty}^s-\inner{p}{x}$ if and only if
\begin{equation}
0\in \partial \Norm{\cdot}{2,\infty}^s(x)-p.
\end{equation}
Due to Theorem \ref{thm:chain_subdiff} this is equivalent to 
\begin{equation}
p\in s\Norm{x}{2,\infty}^{s-1}\partial \Norm{\cdot}{2,\infty}(x).
\end{equation}
Hence Lemma \ref{lem:subdiff_2infty} yields that there exists $\sum_{i\in I_{max}} \alpha_i=1, \alpha_i\geq0$ such that
\begin{equation}\label{eq:psi_stern_rechnerei}
p= s\Norm{x}{2,\infty}^{s-1}\left(\delta_{i,I_max} \alpha_i \frac{x_i}{\abs{x_i}}\right).
\end{equation}
This yields
\begin{equation}
\Norm{p}{2,1}=s\Norm{x}{2,\infty}^{s-1} \quad \Leftrightarrow \quad \Norm{x}{2,\infty}=\left(\frac{\Norm{p}{2,1}}{s}\right)^{\frac{1}{s-1}}
\end{equation}
Therefore, we have for $i\in\{1,...,n\}$ with $p_i\neq 0$ that \eqref{eq:psi_stern_rechnerei} becomes
\begin{equation}
p_i=\Norm{p}{2,1}\alpha_i \frac{x_i}{\abs{x_i}}, \quad \abs{x_i}=\Norm{x}{2,\infty}, \quad \abs{p_i}=\Norm{p}{2,1} \alpha_i.
\end{equation}
Thus we get
\begin{equation}
x_i=\left(\frac{\Norm{p}{2,1}}{s}\right)^{\frac{1}{s-1}}\frac{p_i}{\abs{p_i}}.
\end{equation}
Now we can compute \eqref{eq:psi_stern}:
\begin{equation}
\begin{aligned}
\psi^*(p)&=\sum_{i\in I_0(p)} \inner{p_i}{\left(\frac{\Norm{p}{2,1}}{s}\right)^{\frac{1}{s-1}}\frac{p_i}{\abs{p_i}}}-\left(\frac{\Norm{p}{2,1}}{s}\right)^{\frac{s}{s-1}}\\
&= \left(\frac{\Norm{p}{2,1}}{s}\right)^{\frac{1}{s-1}} \Norm{p}{2,1} - \left(\frac{\Norm{p}{2,1}}{s}\right)^{\frac{s}{s-1}}\\
&=\left(\frac{1}{s}\right)^{\frac{1}{s-1}}\left(1-\frac{1}{s}\right)\Norm{p}{2,1}^{\frac{s}{s-1}}.
\end{aligned}
\end{equation}

Now we compute 
\begin{equation}
\begin{aligned}
\prox_{\lambda \psi^*(\lambda^{-1}\cdot)}(x)&=\argmin_y \left\{\frac{1}{2 \lambda} \abs{y-x}^2+\psi^*\left(\frac1\lambda y\right)\right\}\\
&=\lambda \argmin_y \left\{\frac{1}{2 \lambda} \abs{\lambda y-x}^2+\psi^*(y)\right\}
\end{aligned}
\end{equation}
For $y\neq 0$ we have that it is a minimizer of $\frac{1}{2 \lambda} \abs{\lambda y-x}^2+\psi^*(y)$ if and only if
\begin{equation}\label{eq:prox_psistern_rechnerei}
\begin{aligned}
&0 \in \frac{1}{\lambda} \lambda\left(\lambda y-x\right)+\partial \psi^* (y)\\
\Leftrightarrow \quad &0 \in \left(\lambda y-x\right)+\left(\frac{1}{s}\right)^{\frac{1}{s-1}}\Norm{y}{2,1}^{\frac{1}{s-1}}\partial \Norm{\cdot}{2,1}(y)\\
\Leftrightarrow \quad &x \in \lambda y+\left(\frac{\Norm{y}{2,1}}{s}\right)^{\frac{1}{s-1}} \left(\frac{y_i}{\abs{y_i}}\right)_{i=1}^n, \quad \frac{y_i}{\abs{y_i}}=B_{\abs{\cdot}}(0,1), \text{ if } i\not\in I_0(y).\\
\end{aligned}
\end{equation}
We define
\begin{equation}\label{eq:tau_prox_psistern}
\tau=\tau(y)\coloneqq \left(\frac{\Norm{y}{2,1}}{s}\right)^{\frac{1}{s-1}} \quad \Leftrightarrow \quad \Norm{y}{2,1}=s\tau^{s-1}.
\end{equation}
Thus \eqref{eq:prox_psistern_rechnerei} yields that
\begin{equation}
\begin{cases}
x_i=\lambda y_i + \tau \frac{y_i}{\abs{y_i}}\not\in \tau B_{\abs{\cdot}}(0,1), &$if $i \in I_0(y), \\
x_i\in \tau B_{\abs{\cdot}}(0,1), &$if $i\not\in I_0(y).
\end{cases}
\end{equation}
Therefore, we have $\frac{y_i}{\abs{y_i}}=\frac{y_i}{\abs{y_i}}$ if $y_i\neq0$. Thus $\prox_{\lambda \psi^*(\lambda^{-1}\cdot)}(x)=\lambda y$ fulfills
\begin{equation}\label{eq:prox_psistern_darstellung}
\lambda y_i=
\begin{cases}
x_i- \tau \frac{x_i}{\abs{x_i}}, &$if $\abs{x_i}>\tau \\
0, &$if $\abs{x_i}\leq \tau
\end{cases}
=x_i\left(1-\frac{\tau}{\abs{x_i}}\right)_+.
\end{equation}
Thus we get for $z=(z_1,...,z_n)=\prox_{\lambda \psi}(x)\in \R^{m n}$ that
\begin{equation}\label{eq:prox_psi_darstellung}
z_i=x_i-\lambda y_i=
\begin{cases}
\tau \frac{x_i}{\abs{x_i}}, &$if $\abs{x_i}>\tau \\
x_i, &$if $\abs{x_i}\leq \tau
\end{cases}.
\end{equation}

To find $\tau$ we assume without loss of generality that $x=(x_1,...,x_n)\in \R^{m n}$ fulfills
\begin{equation}
\abs{x_1}\geq...\geq\abs{x_n}.
\end{equation}
Then there exists some $K$ such that $y_1,...,y_K\neq0$and $y_{K+1}=,...,=y_n=0$. We write 
\begin{equation}
x_K\coloneqq (x_1,...,x_K)^T.
\end{equation}
Then we have due to \eqref{eq:prox_psistern_darstellung} that
\begin{equation}
\Norm{x_K}{2,1}=\Norm{\lambda y}{2,1}+K\tau.
\end{equation}
Now \eqref{eq:tau_prox_psistern} yields
\begin{equation}
0=s \lambda\tau^{s-1}+K\tau-\Norm{x_K}{2,1}.
\end{equation}

This leads to the following algorithm to compute $\prox_{\lambda \psi}(x)$.

\begin{algorithm}[H]
\caption{Computation of $\prox_{\lambda\psi}(x)$}
\label{alg:prox_psi}
\begin{algorithmic}
\State Given: $x=(x_1,...,x_n)\in \R^{m n}$, $2\leq s<\infty$ and $\lambda>0$.
\State Define $K=0$, $\tau=0$.
\While {$\abs{\{i\in\{1,...,n\}:\abs{x_i}>\tau\}}>K$}
	\State Step 1: Set $K=K+1$.
	\State Step 2: Find $\tau\geq0$ such that $0=s \lambda\tau^{s-1}+K\tau-\Norm{x_K}{2,1}$.
\EndWhile
\State Define $z=(z_1, ...,z_n)\in \R^{m n}$ by $z_i=
\begin{cases}
\tau \frac{x_i}{\abs{x_i}}, &$if $\abs{x_i}>\tau, \\
x_i, &$if $\abs{x_i}\leq \tau.
\end{cases}$
\State Return $z$.
\end{algorithmic}
\end{algorithm}

\begin{remark} \label{rem:prox_psi_precision}
We consider Algorithm \ref{alg:prox_psi}.
\begin{enumerate}[(i)]
\item The function $g(\tau)=s \lambda\tau^{s-1}+K\tau-\Norm{x_K}{2,1}$ is strictly monotone increasing with $g(0)\leq 0$. Hence Step 2 determines a unique $\tau\geq 0$. For example we can use a Newton scheme to determine $\tau$.
\item For big $s$ we have that the $\tau\geq0$ solving  $0=s \lambda\tau^{s-1}+K\tau-\Norm{x_K}{2,1}$ becomes smaller than $1$. This yields that for $s\to \infty$ the term $s\tau^{s-1}\lambda^{s-2}$ becomes close to $0$ and the solution of $\tau$ becomes up to a numerical error equal to $\frac{\Norm{x_K}{2,1}}{K}$. This means that for $K=1$ we have that $\tau=\frac{\Norm{x_K}{2,1}}{K}=\abs{x_1}$ is an approximative solution in Step 2. Since $\abs{x_1}\geq \abs{x_i}$ for $i=1,...,n$ the while loop stops and the algorithm returns $z=x$.\\
Because of these precision problems Algorithm \ref{alg:prox_psi} is not applicable for big $s$, although it is correct in theory. 
\end{enumerate}
\end{remark}

Now we can combine the Algorithms \ref{alg:Es_ADMM} and \ref{alg:prox_psi} to obtain a algorithm to minimize $I_s$.

\begin{algorithm}[H]
\caption{Minimization algorithm for $I_s$}
\label{alg:Is_min}
\begin{algorithmic}
\State Given: $v^{(0)}$, $p^{(0)}$, $A_i$ and $b_i$ for $i=1,...,N$, $\gamma>0$.
\For {$j=0,1,2,...$ until some suitable stopping criteria is fulfilled}
	\State Compute $u^{(j+1)}\coloneqq\argmin_{u} \abs{A u - v^{(j)}-b+p^{(r)}}^2$.
	\For {i=1,...,N}
		Compute $v_i^{(j+1)}=\prox_{\frac{1}{\gamma}\psi}(A_i u^{(j+1)}-b_i+p^{(j)})$ using Algorithm \ref{alg:prox_psi}.
	\EndFor
	\State Compute $p^{(j+1)}\coloneqq A u^{j+1}-v^{(j+1)}-b+p^{(j)}$.
\EndFor
\State Return $u^{(j)}$.
\end{algorithmic}
\end{algorithm}


\section{Numerical examples}\label{sec:numexamples}

We aim to use the previous methods for image inpainting. We have given a pixel grid $V=\{1,...,m\}\times \{1,...,n\}$ and a damaged image $g\colon U\to \R^m$ on a nonempty subset $U\subset V$. Our goal is to reconstruct the damaged image by extending $g$ to a function $f\colon V\to \R^m$. To preserve the natural structure of the image we generate a graph with vertex set $V$ and applicate the methods from Section \ref{sec:opt_Lip_ext_graphs}.\\

For more details on the implementation see \cite{Fachpraktikum}.\\

\begin{remark}[Approximation of the tight extension]
In the following we will assume that we can compute the tight extension of $g$ with respect to some graph $G=(V,E,\omega)$. To find such an approximation algorithm for $m>1$ is still an open question. Thus we can only compute the tight extension of $g$ in the case of gray-valued images using Algorithm \ref{alg:disc_infty_harm}. For $m>1$ we will use two alternatives instead of computing the tight extension:\\

In the real-valued case we can compute the discrete $\infty$-harmonic extension with Algorithm \ref{alg:disc_infty_harm}, which is tight by Remark \ref{rem:disc_inf_gleich_tight_reellwertig}. One intuitive approach is using this real-valued approximation algorithm componentwise. Although we will see in Example \ref{ex:comp_tight_neq_tight} that this componentwise tight extension is not the tight extension, we will use it later as an approximation of the tight extension.\\ 

The second approximation of the tight extension is minimizing the energy functional $I_s$ using Algorithm \ref{alg:Is_min} for $s$ as large as possible. Due to Remark \ref{rem:prox_psi_precision} the algorithm fails for too large $s$. Nevertheless this should give an good approximation due to Theorem \ref{thm:Ip_conv_tight}. Example \
\end{remark}

\begin{example}\label{ex:comp_tight_neq_tight}
In general, the componentwise tight extension is not tight. For an example we again use the graph $G=(V,E,\omega)$, the subset $U\subset V$ and $g\colon U\to \R^2$ from Example \ref{ex:disc_inf_harm_nicht_eind}. Recall that $V=\{v_1, ..., v_6\}$, $U=\{v_1,v_2,v_3\}$ and 
\begin{equation}
E=\{ (v_1,v_4), (v_2,v_5), (v_3,v_6), (v_4,v_5), (v_5,v_6), (v_4,v_6)\}
\end{equation}
with the weighting function $\omega(x,y)=1$ for $(x,y)\in E$. Then the function $g\colon U \to \R^2$ is defined by $v_1\mapsto (0,0)$, $v_2\mapsto (0,1)$ and $v_3\mapsto \left(\frac{\sqrt{3}}{2},\frac{1}{2}\right)$. The componentwise tight extension is given by $f_1\colon V\to \R^2$ defined by $f_1=g$ on $U$ and $f_1(v_4)=\left(\frac{\sqrt{3}}{6},\frac{1}{3}\right)$, $f_1(v_5)=\left(\frac{\sqrt{3}}{6},\frac{2}{3}\right)$ and $f_1(v_6)=\left(\frac{1}{\sqrt{3}},\frac{1}{2}\right)$. The tight extension is given by $f_2\colon V\to \R^2$ defined by $f_2=g$ on $U$ and $f_2(v_4)=\left(\frac{1}{2+2 \sqrt{3}},\frac{\sqrt{3}}{2+2 \sqrt{3}}\right)$, $f_2(v_5)=\left(\frac{1}{2+2 \sqrt{3}},\frac{1}{4} + \frac{\sqrt{3}}{4}\right)$ and $f_2(v_6)=\left(\frac{1}{2},\frac{1}{2}\right)$. See Figure \ref{fig:tight_comp_tight}.
\end{example}

\begin{figure}
\centering
\begin{subfigure}[t]{0.45\textwidth}
\centering
\begin{tikzpicture}[scale=3]
\filldraw (0,0) circle (1pt) node[align=center, below] {$v_1$} -- (0.2886751,0.3333333) circle (1pt) node[align=center, below] {$v_4$} -- (0.2886751,0.6666667) circle (1pt) node[align=center, below] {$v_5$} -- (0,1) circle (1pt) node[align=center, below] {$v_2$};
\filldraw (0.2886751,0.3333333) -- (0.5773503,0.5) circle (1pt) node[align=center, below] {$v_6$} -- (0.2886751,0.6666667);
\filldraw (0.5773503,0.5) -- (0.8660254,0.5) circle (1pt) node[align=center, below] {$v_3$};
\end{tikzpicture}
\caption{Componentwise tight extension}
\label{fig:tight_comp_tighta}
\end{subfigure}\hfill
\begin{subfigure}[t]{0.45\textwidth}
\centering
\begin{tikzpicture}[scale=3]
\filldraw (0,0) circle (1pt)  node[align=center, below] {$v_1$} -- (0.1830127,0.3169873) circle (1pt) node[align=center, below] {$v_4$} -- (0.1830127,0.6830127) circle (1pt) node[align=center, below] {$v_5$} -- (0,1) circle (1pt) node[align=center, below] {$v_2$};
\filldraw (0.1830127,0.3169873) -- (0.5,0.5) circle (1pt) node[align=center, below] {$v_6$} -- (0.1830127,0.6830127);
\filldraw (0.5,0.5) -- (0.8660254,0.5) circle (1pt) node[align=center, below] {$v_3$};
\end{tikzpicture}
\caption{Tight extension}
\label{fig:tight_comp_tightb}
\end{subfigure}\hfill
\caption{Tight and componentwise tight extension of $g\colon \{v_1,v_2,v_3\}\to \R^2$.}
\label{fig:tight_comp_tight}
\end{figure}

\begin{example}
We use $G$, $U$ and $g$ from Example \ref{ex:comp_tight_neq_tight}. Then the componentwise tight extension and tight extension are given by $f_1$ and $f_2$ from Example \ref{ex:comp_tight_neq_tight}.\\
Denote the output of Algorithm \ref{alg:Is_min} for $s=10$ by $f_3\colon V\to \R^2$. Then $\abs{f_3(v)-f_2(v)}<10^{-10}$ for all $v\in V$. This means that in this example Algorithm \ref{alg:Is_min} already for $s=10$ gives a very good approximation of the tight extension.
\end{example}

\subsection{Graph generation}

There are many methods to generate a graph on $V$. We consider the two methods implemented in \cite{Fachpraktikum}.\\

The \emph{Grid graphs} connect each pixel with its adjacent pixels by edges for local image inpainting. We obtain the 4-adjacency grid graph and the 8-adjacency grid graph with the weighting function $w (u,v)=1/dist(u,v)$, where $dist(u,v)$ is the Euclidean distance of the pixels $u$ and $v$.\\

The second method to generate the graph is the \emph{k-nearest neighborhood graph} with respect to patch similarity as weighting function for non-local image inpainting. Here we connect each vertex with its k-nearest neighbors with respect to the distance measure of patch similarity $s\colon V \times V \to \R\cup \{\infty\}$, which we will discuss below. This leads to a directed graph. Thus we symmetrize this directed graph by adding an edge between two vertices $u$ and $v$ if $u$ is one of the k-nearest neighbors of $v$ or vice versa.\\
As weighting function we use $w(u,v)=\exp(-s(u,v)/\sigma)$ for some $\sigma \in \R_{\geq 0}$, which scales the similarity.\\

\subsubsection*{Patch similarity}

Let $U$, $V$ and $g$ be defined as above and let $\Patch_k=\{-k,...,k\}\times \{-k,...,k\}$. To define a distance measure between two pixels, we consider for all pixels $u=(i,j)$ patches $u+\Patch_p =\{i-p,...,i+p\}\times \{j-p,...,j+p\}\cap V$ which contain all pixels in a $2p+1\times2p+1$ window around $u$ for some patch-size $p\in \Z_{>0}$. \\
Now we define for $u=(i,j)$ and $v=(k,l)\in V$ the similarity measure 
\begin{equation}
s_0(u,v)=
\begin{cases}
\frac{1}{\abs{A}}\sum_{(a,b)\in A}\Norm{g(i+a,j+b)-g(k+a,l+b)}{2}, $if $\abs{A} > \frac{(2p+1)^2}{4}\\
+\infty$, otherwise$
\end{cases}
\end{equation}
where $A=\{(a,b)\in \Patch_p : (i+a,j+b),(k+a,l+b)\in U\}$ i.e. $s_0(u,v)< +\infty$ if we can compare at least one fourth of the patches.\\
To reduce the computing effort we define for some radius $r\in \Z$
\begin{equation}
s(u,v)=\begin{cases} s_0(u,v)$, if $v\in u+\Patch_r \\ +\infty$, otherwise$\end{cases}
\end{equation}
instead of working with $s_0$.

\begin{remark}\label{rem:patch_similarity_probleme}
If for $u\in V\backslash U$ it holds that $u+P_p\subset V\backslash U$ then we have that $s(u,v)=+\infty$ for all $u\neq v\in V$. Thus $u$ is an isolated vertex in our graph and we cannot interpolate the value of $u$ with the methods from the previous sections. We will deal with this problem in the next section. 
\end{remark}

\subsection{An inpainting algorithm}

If we use grid graphs we can directly use the previous methods and approximate the tight extension $f\colon V\to \R^m$ of $g$. Since the tight extension is smooth along the edges the algorithm blurs the image into the unknown area $V\backslash U$. See Figure \ref{fig:grid_rechteck}. For more details on the examples see Section \ref{sec:examples}.\\

Due to Remark \ref{rem:patch_similarity_probleme} we cannot extend $g$ directly on $V$ by computing the tight extension if we use the k-nearest neighborhood graph with respect to patch similarity as weighting function. Therefore, we define
\begin{equation}
U'\coloneqq \{ v\in V:v\sim u \text{ for some } u\in U\}.
\end{equation}
Then we compute the tight extension $\tilde{g}\colon U'\to \R^m$ of $g$. Then we generate the k-nearest neighborhood graph with respect to $\tilde{g}$. This leads to the following algorithm.

\begin{algorithm}[H]
\caption{Image inpainting with k-nearest neighborhood graph}
\label{alg:inp_patch_sim}
\begin{algorithmic}
\State Given: $U\subset V$, $g:U \to \R^m$.
\State Define $U'\coloneqq U$ and $\tilde{g}\coloneqq g$.
\While {$U'\neq V$ or other stopping criteria}
	\State Generate the k-nearest neighborhood graph $G=(V,E,\omega)$ wrt. $\tilde{g}\colon U'\to \R^m$.
	\State Define $\overline{U}\coloneqq \{ v\in V:v\sim u \text{ for some } u\in U'\}$.
	\State Compute the tight extension $\overline{g}\colon \overline{U}\to \R^m$ of $g\colon U\to \R^m$.
\EndWhile
\end{algorithmic}
\end{algorithm}

\begin{remark}
We cannot show that Algorithm \ref{alg:inp_patch_sim} terminates or converges. 
But if the algorithm terminates and converges the result is the tight extension of $g$ with respect to the patch similarity graph of the reconstructed image. The results of our examples are convincing. See Figure \ref{fig:nonlocal_rechteck}. For more details on the examples see Section \ref{sec:examples}. 
\end{remark}

\subsection{Examples}\label{sec:examples}

In Figure \ref{fig:grid_rechteck} we start with the constructed image in Figure \ref{fig:rechteck_orig} with pixels $V=\{1,...,80\}\times \{1,...,80\}$ and destroy the image at the pixels $V\backslash U=\{21,...,60\}\times \{21,...,60\}$ to get the damaged image in Figure \ref{fig:rechteck_dama}. We construct the 4-adjacency grid graph and use Algorithm \ref{alg:disc_infty_harm} with $\tau=0.4$ to compute the componentwise tight extension in Figure \ref{fig:rechteck_grid_comptight}. For Figure \ref{fig:rechteck_grid_Ismin} we used Algorithm \ref{alg:Is_min} to minimize the energy functional $I_s$ with $s=40$.\\

\begin{figure}
\centering
\begin{subfigure}[t]{0.5\textwidth}
\centering
\includegraphics[width=0.95\textwidth]{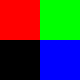}
\caption{original image}
\label{fig:rechteck_orig}
\end{subfigure}\hfill
\begin{subfigure}[t]{0.5\textwidth}
\centering
\includegraphics[width=0.95\textwidth]{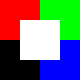}
\caption{Damaged image with $V\backslash U$ colored white}
\label{fig:rechteck_dama}
\end{subfigure}\hfill
\begin{subfigure}[t]{0.5\textwidth}
\centering
\includegraphics[width=0.95\textwidth]{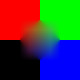}
\caption{Inpainted with componentwise tight extensions}
\label{fig:rechteck_grid_comptight}
\end{subfigure}\hfill
\begin{subfigure}[t]{0.5\textwidth}
\centering
\includegraphics[width=0.95\textwidth]{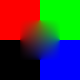}
\caption{Inpainted with minimization of $I_{22}$}
\label{fig:rechteck_grid_Ismin}
\end{subfigure}\hfill
\caption{Example for the approximation of tight extensions using grid graphs.}
\label{fig:grid_rechteck}
\end{figure}

In Figure \ref{fig:nonlocal_rechteck} we again reconstruct the damaged image in Figure \ref{fig:rechteck_dama}. Here we use Algorithm \ref{alg:inp_patch_sim}, where we create the graph with patch size $p=7$ and radius $p=15$. As scaling parameter we use $\sigma=0.1$. In Figure \ref{fig:rechteck_nonlocal_comptight} we use the componentwise tight extension and in Figure \ref{fig:rechteck_nonlocal_Ismin} we use the minimizer of the energy functional $I_s$ with $s=20$ as approximation of the tight extension.\\

\begin{figure}
\centering
\begin{subfigure}[t]{0.5\textwidth}
\centering
\includegraphics[width=0.95\textwidth]{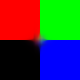}
\caption{damaged image with $V\backslash U$ colored white}
\label{fig:rechteck_nonlocal_comptight}
\end{subfigure}\hfill
\begin{subfigure}[t]{0.5\textwidth}
\centering
\includegraphics[width=0.95\textwidth]{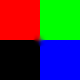}
\caption{Inpainted with minimization of $I_{20}$}
\label{fig:rechteck_nonlocal_Ismin}
\end{subfigure}\hfill
\caption{Example for usage of Algorithm \ref{alg:inp_patch_sim}.}
\label{fig:nonlocal_rechteck}
\end{figure}

In Figure \ref{fig:nonlocal_verschiedene} we use Algorithm \ref{alg:inp_patch_sim} with the same parameters as in Figure \ref{fig:nonlocal_rechteck} but with some other images.\\

\begin{figure}
\centering
\begin{subfigure}[t]{0.33\textwidth}
\centering
\includegraphics[width=0.95\textwidth]{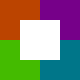}
\caption{Damaged image with $V\backslash U$ colored white}
\label{fig:rechteck2_withmask}
\end{subfigure}\hfill
\begin{subfigure}[t]{0.33\textwidth}
\centering
\includegraphics[width=0.95\textwidth]{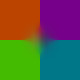}
\caption{Inpainted with componentwise tight extensions}
\label{fig:rechteck2_nonlocal_comptight}
\end{subfigure}\hfill
\begin{subfigure}[t]{0.33\textwidth}
\centering
\includegraphics[width=0.95\textwidth]{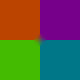}
\caption{Inpainted with minimization of $I_{20}$}
\label{fig:rechteck2_nonlocal_Ismin}
\end{subfigure}\hfill
\begin{subfigure}[t]{0.33\textwidth}
\centering
\includegraphics[width=0.95\textwidth]{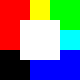}
\caption{Damaged image with $V\backslash U$ colored white}
\label{fig:test_withmask}
\end{subfigure}\hfill
\begin{subfigure}[t]{0.33\textwidth}
\centering
\includegraphics[width=0.95\textwidth]{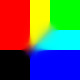}
\caption{Inpainted with componentwise tight extensions}
\label{fig:test_nonlocal_comptight}
\end{subfigure}\hfill
\begin{subfigure}[t]{0.33\textwidth}
\centering
\includegraphics[width=0.95\textwidth]{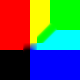}
\caption{Inpainted with minimization of $I_{20}$}
\label{fig:test_nonlocal_Ismin}
\end{subfigure}\hfill
\begin{subfigure}[t]{0.33\textwidth}
\centering
\includegraphics[width=0.95\textwidth]{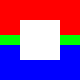}
\caption{Damaged image with $V\backslash U$ colored white}
\label{fig:test_withmask}
\end{subfigure}\hfill
\begin{subfigure}[t]{0.33\textwidth}
\centering
\includegraphics[width=0.95\textwidth]{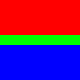}
\caption{Inpainted with componentwise tight extensions}
\label{fig:test_nonlocal_comptight}
\end{subfigure}\hfill
\begin{subfigure}[t]{0.33\textwidth}
\centering
\includegraphics[width=0.95\textwidth]{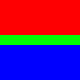}
\caption{Inpainted with minimization of $I_{20}$}
\label{fig:test_nonlocal_Ismin}
\end{subfigure}\hfil
\caption{Example for usage of Algorithm \ref{alg:inp_patch_sim}.}
\label{fig:nonlocal_verschiedene}
\end{figure}

We use the same methods for some natural image. In Figure \ref{fig:grid_ett} and Figure \ref{fig:nonlocal_ett} we use a $391\times 520$ image from \cite[Figure 8]{ETT}. In Figure \ref{fig:nonlocal_ett} we generate the graph with the parameters $\sigma=0.2$ and use $15\times 15$ patches in a $31\times 31$ neighborhood.\\

\begin{figure}
\centering
\begin{subfigure}[t]{0.5\textwidth}
\centering
\includegraphics[width=0.95\textwidth]{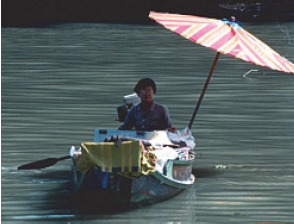}
\caption{original image}
\label{fig:ett_orig}
\end{subfigure}\hfill
\begin{subfigure}[t]{0.5\textwidth}
\centering
\includegraphics[width=0.95\textwidth]{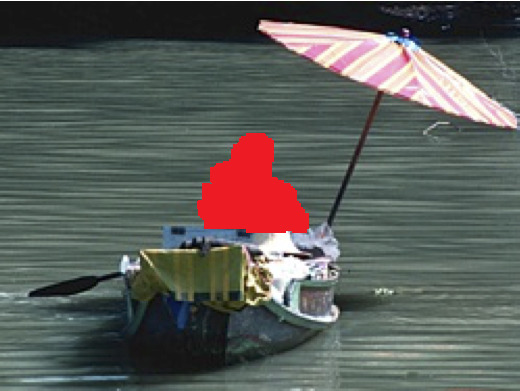}
\caption{Damaged image with $V\backslash U$ colored red}
\label{fig:ett_dama}
\end{subfigure}\hfill
\begin{subfigure}[t]{0.5\textwidth}
\centering
\includegraphics[width=0.95\textwidth]{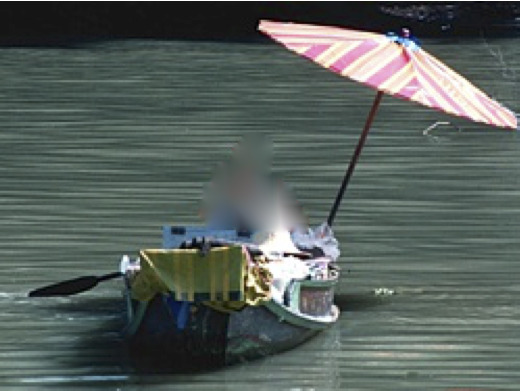}
\caption{Inpainted with componentwise tight extensions}
\label{fig:ett_grid_comptight}
\end{subfigure}\hfill
\begin{subfigure}[t]{0.5\textwidth}
\centering
\includegraphics[width=0.95\textwidth]{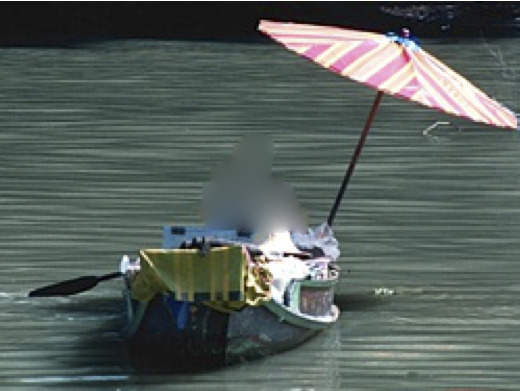}
\caption{Inpainted with minimization of $I_{20}$}
\label{fig:ett_grid_Ismin}
\end{subfigure}\hfill
\caption{Example for the approximation of tight extensions using grid graphs.}
\label{fig:grid_ett}
\end{figure}

\begin{figure}
\centering
\begin{subfigure}[t]{0.5\textwidth}
\centering
\includegraphics[width=0.95\textwidth]{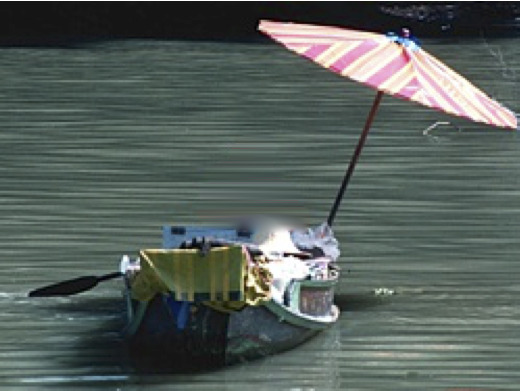}
\caption{Inpainted with componentwise tight extensions}
\label{fig:ett_nonlocal_comptight}
\end{subfigure}\hfill
\begin{subfigure}[t]{0.5\textwidth}
\centering
\includegraphics[width=0.95\textwidth]{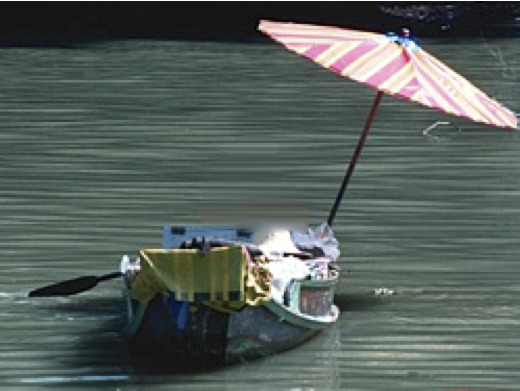}
\caption{Inpainted with minimization of $I_{20}$}
\label{fig:ett_nonlocal_Ismin}
\end{subfigure}\hfill
\caption{Example for usage of Algorithm \ref{alg:inp_patch_sim}.}
\label{fig:nonlocal_ett}
\end{figure}

In Figure \ref{fig:nonlocal_natural} we use Algorithm \ref{alg:inp_patch_sim} for some other natural images with minimization of the energy functional $I_{20}$ as approximation for the tight extension. We observe that inpainting of straight structures works very well, but other structures cannot be reconstructed. This is caused by the patch similarity as similarity measure. A patch around a pixel and the rotated patch have a small patch similarity measure although they are similar on a natural way.\\

\begin{figure}
\centering
\begin{subfigure}[t]{0.33\textwidth}
\centering
\includegraphics[width=0.95\textwidth]{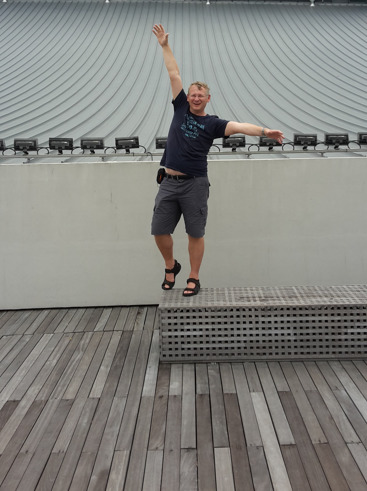}
\end{subfigure}\hfill
\begin{subfigure}[t]{0.33\textwidth}
\centering
\includegraphics[width=0.95\textwidth]{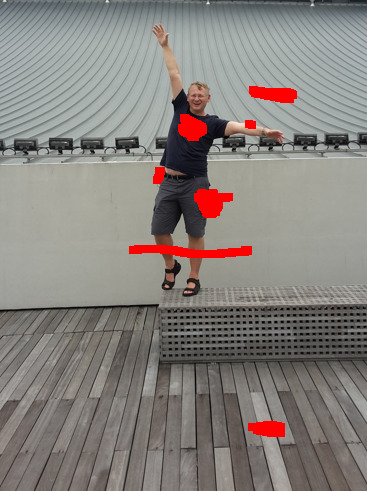}
\end{subfigure}\hfill
\begin{subfigure}[t]{0.33\textwidth}
\centering
\includegraphics[width=0.95\textwidth]{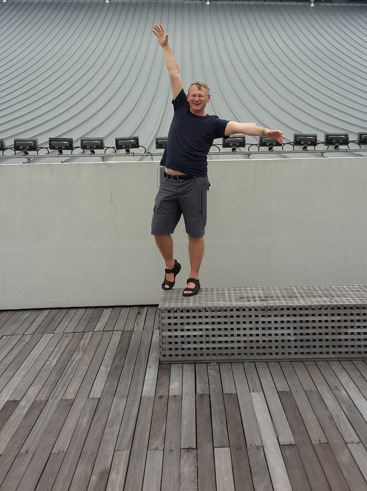}
\end{subfigure}\hfill
\vskip+2ex
\begin{subfigure}[t]{0.33\textwidth}
\centering
\includegraphics[width=0.95\textwidth]{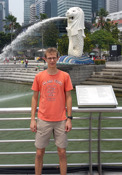}
\end{subfigure}\hfill
\begin{subfigure}[t]{0.33\textwidth}
\centering
\includegraphics[width=0.95\textwidth]{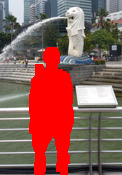}
\end{subfigure}\hfill
\begin{subfigure}[t]{0.33\textwidth}
\centering
\includegraphics[width=0.95\textwidth]{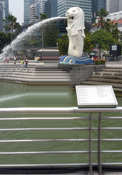}
\end{subfigure}
\vskip+2ex
\begin{subfigure}[t]{0.33\textwidth}
\centering
\includegraphics[width=0.95\textwidth]{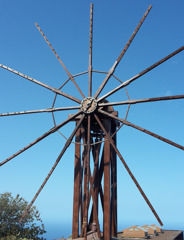}
\caption{Original image}
\end{subfigure}\hfill
\begin{subfigure}[t]{0.33\textwidth}
\centering
\includegraphics[width=0.95\textwidth]{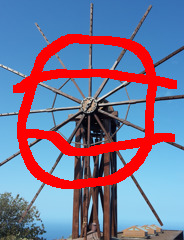}
\caption{Damaged image with $V\backslash U$ colored red}
\end{subfigure}\hfill
\begin{subfigure}[t]{0.33\textwidth}
\centering
\includegraphics[width=0.95\textwidth]{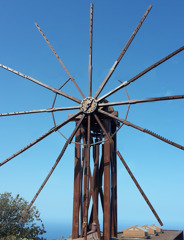}
\caption{Inpainted with minimization of $I_{20}$}
\end{subfigure}
\caption{Some natural image examples for usage of Algorithm \ref{alg:inp_patch_sim}.}
\label{fig:nonlocal_natural}
\end{figure}

In Figure \ref{fig:grid_rand} we compute the tight extension of a $243\times 182$ pixel image, where a pixel is known with probability $0.3$. We use a $8$-Neighborhood grid graph and compute the two approximations of the tight extension, which are mentioned above.

\begin{figure}
\centering
\begin{subfigure}[t]{0.5\textwidth}
\centering
\includegraphics[width=0.95\textwidth]{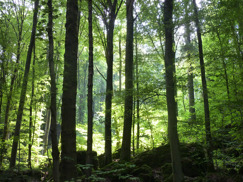}
\caption{Original image}
\label{fig:echt_orig}
\end{subfigure}\hfill
\begin{subfigure}[t]{0.5\textwidth}
\centering
\includegraphics[width=0.95\textwidth]{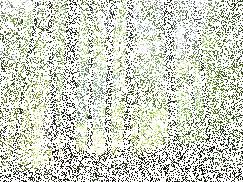}
\caption{Damaged image with $V\backslash U$ painted white}
\label{fig:echt_with}
\end{subfigure}\hfill
\begin{subfigure}[t]{0.5\textwidth}
\centering
\includegraphics[width=0.95\textwidth]{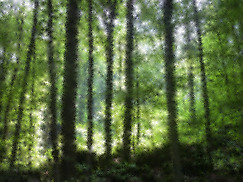}
\caption{Inpainted with componentwise tight extensions}
\label{fig:echt_Es}
\end{subfigure}\hfill
\begin{subfigure}[t]{0.5\textwidth}
\centering
\includegraphics[width=0.95\textwidth]{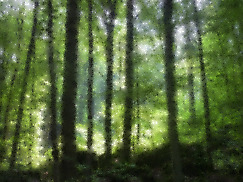}
\caption{Inpainted with minimization of $I_{22}$}
\label{fig:echt_with}
\end{subfigure}\hfill
\caption{Inpainting of a randomly damaged image using grid graphs.}
\label{fig:grid_rand}
\end{figure}

\begin{remark}[YUV color system]
In most cases we use the RGB color system. The components of the RGB values in natural images are in general highly correlated. Therefore we use an orthogonal transform to the so called YUV color system. In the YUV color system the components are in many cases more decoupled. For a vector $\left(\begin{array}{c}R\\G\\B\end{array}\right)\in [0,1]^3$ the transform from the RGB color system to the YUV color system is given by
\begin{equation}\label{yuvrgb}
\left(\begin{array}{c}
Y \\
U \\
V
\end{array} \right)
=A
\left(\begin{array}{c}
R \\
G \\
B
\end{array} \right),
\end{equation} 
where
\begin{equation}
A=
\left( \begin{array}{ccc}
0.299 & 0.587 & 0.114 \\
-0.14713 & -0.28886 & 0.436 \\
0.615 & -0.51498 & -0.10001
\end{array} \right)
\end{equation}
The transform from YUV to RGB is given by the inverse matrix.
\begin{equation}\label{yuvrgb}
\left(\begin{array}{c}
R \\
G \\
B
\end{array} \right)
=
A^{-1}
\left(\begin{array}{c}
Y \\
U \\
V
\end{array} \right) .
\end{equation} 
Since the matrix $A$ is not orthogonal and the generation of the $k$-nearest neighborhood graph depends on the scaling of the image, we additionally scale the transform by a factor $s>0$.
\begin{equation}
\left(\begin{array}{c}
Y \\
U \\
V
\end{array} \right)
=s A
\left(\begin{array}{c}
R \\
G \\
B
\end{array} \right),
\end{equation}
\end{remark}
 
In Figure \ref{fig:rechteck2_RGBYUV} we apply Algorithm \ref{alg:inp_patch_sim} on the image from Figure \ref{fig:rechteck2_withmask}. We use the same parameters as in Figure \ref{fig:nonlocal_rechteck}. We observe that in the YUV color system some edges are more smoothed and other edges are less smoothed than in the RGB color system. This is caused by the fact that the transformation of the RGB to the YUV color system is no isometry. Thus two colors with high distance in the RGB color system can have a small distance in the YUV color system and vice versa.

\begin{figure}
\centering
\begin{subfigure}[t]{0.5\textwidth}
\centering
\includegraphics[width=0.95\textwidth]{rechteck2_komponentenweise_nonlocal}
\caption{Inpainted with componentwise tight extensions in the RGB color system.}
\label{fig:rechteck2_komp_nichtYUV}
\end{subfigure}\hfill
\begin{subfigure}[t]{0.5\textwidth}
\centering
\includegraphics[width=0.95\textwidth]{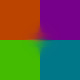}
\caption{Inpainted with componentwise tight extensions in the YUV color system.}
\label{fig:rechteck2_komp_YUV}
\end{subfigure}\hfill
\begin{subfigure}[t]{0.5\textwidth}
\centering
\includegraphics[width=0.95\textwidth]{rechteck2_Es_ADMM_nonlocal}
\caption{Inpainted with minimization of $I_{20}$ in the RGB color system.}
\label{fig:rechteck2_Es_nichtYUV}
\end{subfigure}\hfill
\begin{subfigure}[t]{0.5\textwidth}
\centering
\includegraphics[width=0.95\textwidth]{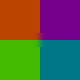}
\caption{Inpainted with minimization of $I_{20}$ in the YUV color system.}
\label{fig:rechteck2_Es_YUV}
\end{subfigure}\hfill
\caption{Comparison between the the usage of the RGB and YUV color system in Algorithm \ref{alg:inp_patch_sim}.}
\label{fig:rechteck2_RGBYUV}
\end{figure}


\section{Future work}

For the scalar case in Section \ref{sec:plap_inflap}, it is shown that there is a unique absolute minimal extension of Lipschitz continuous boundary values. In the vector valued case, we have given in Section \ref{sec:vec-valLip} the formulation of a tight function. In order to find a formulation for an ``optimal'' extension of Lipschitz continuous boundary values, it would be interesting to investigate existence and uniqueness of tight extensions $f\colon \R^d\supset \overline{\Omega}\to \R^m$ of Lipschitz continuous boundary values $g\colon \partial\Omega\to \R^m$.\\

In the discrete case in Section \ref{sec:opt_Lip_ext_graphs}, we have showed, that the formulations of tightness and discrete $\infty$-harmonic functions coincide in the scalar case. We formulated an approximation algorithm to find the unique discrete $\infty$-harmonic extension of a function $g\colon V\supset U \to \R$. For vector-valued functions $g\colon U \to \R^m$ we showed existence and uniqueness of a tight extension $f\colon V\to \R^m$ as the limit of the minimizers some energy functionals $I_p$ as $p\to \infty$. We gave an approximation algorithm to minimize these functionals $I_p$. To compute these minimizers we are stuck in numerical precision problems for big $p$. Can one resolve these precision problems? Maybe we find an algorithm to minimize the energy functionals from Remark \ref{rem:logexp} instead of the $I_p$.\\

Under the assumption, that we can compute the tight extensions on graphs, we formulated in Section \ref{sec:numexamples} an inpainting algorithm. It would be interesting to investigate, under which conditions Algorithm \ref{alg:inp_patch_sim} convergences.

\newpage

\bibliographystyle{abbrv}
\bibliography{database}

\end{document}